\theoremstyle{plain}
\newtheorem{theorem}{Theorem}[section]
\newtheorem{corollary}[theorem]{Corollary}
\newtheorem{lemma}[theorem]{Lemma}
\newtheorem{claim}[theorem]{Claim}
\newtheorem{proposition}[theorem]{Proposition}
\newtheorem{definition-lemma}[theorem]{Definition-Lemma}
\newtheorem{defn}[theorem]{Definition}
\newtheorem{remark}[theorem]{Remark}
\newtheorem{conjecture}[theorem]{Conjecture}
\def\ol#1{\overline{#1}}
\def\ideal#1.{I_{#1}}
\def\ring#1.{\mathcal {O}_{#1}}
\def\fring#1.{\hat{\mathcal {O}}_{#1}}
\def\proj#1.{\mathbb P(#1)}
\def\pr #1.{\mathbb P^{#1}}
\def\af #1.{\mathbb A^{#1}}
\def\Hz #1.{\mathbb F_{#1}}
\def\Hbz #1.{\overline{\mathbb F}_{#1}}
\def\pic#1.{\operatorname {Pic}\,(#1)}
\def\pico#1.{\operatorname{Pic}^0(#1)}
\def\picg#1.{\operatorname {Pic}^G(#1)}
\def\ner#1.{NS (#1)}
\def\rdown#1.{\llcorner#1\lrcorner}
\def\rup#1.{\ulcorner#1\urcorner}
\def\cone#1.{\operatorname {NE}(#1)}
\def\ccone#1.{\overline{\operatorname {NE}}(#1)}
\def\coef#1.{\frac{(#1-1)}{#1}}
\def\vit#1.{D_{\langle #1 \rangle}}
\def\mm#1.{\overline {M}_{0,#1}}
\def\H1#1.{H^1(#1,{\ring #1.})}
\def\ac#1.{\overline {\mathbb F}_{#1}}
\def\adj#1.{\frac {#1-1}{#1}}
\def\spn#1.{\overline{#1}}
\def\ses#1.#2.#3.{0\to #1\to #2\to #3 \to 0}
\def\pek#1.#2.{\Cal P^{#1}(#2)}
\def\plk#1.#2.{\Cal P^{\leq #1}(#2)}
\def\ev#1.{\operatorname{ev_{#1}}}
\def\bminv#1.{(\nu_1,s_1;\nu_2,s_2;\dots ;\nu_{#1},s_{#1};\nu_{r+1})}
\def\zinv#1.{(\nu_1,s_1;\nu_2,s_2;\dots ;\nu_{#1},s_{#1};0)}
\def\iinv#1.{(\nu_1,s_1;\nu_2,s_2;\dots ;\nu_{#1},s_{#1};\infty)}
\def\map#1.#2.{#1 \longrightarrow #2}
\def\rmap#1.#2.{#1 \dasharrow #2}
\def\emb#1.#2.{#1 \hookrightarrow #2}
\def\Supp{\operatorname{Supp}}
\def\dim{\operatorname{dim}}
\def\Null{\operatorname{Null}}
\def\supp{\operatorname{Supp}}
\def\C{\mathbb C}
\def\Q{\mathbb Q}
\def\wh{\widehat}
\def\ddbar{\sqrt{-1}\partial\overline\partial}
\def\e{\Cal E}
\def\e1{E_1}
\def\e2{E_2}
\def\del{\bar {\partial}}
\def\OO{\mathcal O}
\def\bbeta{\boldsymbol \beta}
\def\ggamma{\boldsymbol \gamma}
\def\wt{\widetilde}
\def\ep{\varepsilon}
\def\ddc{\sqrt{-1}\partial\overline\partial}
\newcommand{\po}{\ar@{}[dr]|{\text{\pigpenfont R}}}
\newcommand{\pb}{\ar@{}[dr]|{\text{\pigpenfont J}}}
\newcommand\R{{\mathbb{R}}}
\author{Christopher Hacon}
\address{Department of Mathematics\\
University of Utah\\
155 S 1400 E\\
Salt Lake City, Utah 84112, USA}
\email{hacon@math.utah.edu}
\thanks{Christopher Hacon was partially supported by the NSF research grants no: DMS-1952522, DMS-1840190, DMS-2301374.}
\author{Mihai P\u aun}
\address{Institut für Mathematik\\ Universität Bayreuth \\ 95440 Bayreuth, Germany
}
\email{mihai.paun@uni-bayreuth.de}
\thanks{Mihai P\u aun gratefully acknowledges support from the DFG}
\title{On the Canonical Bundle Formula and Adjunction for Generalized K\"ahler Pairs}
\begin{document}

 \maketitle

 \begin{abstract}
    In this article we prove analogs of Kawamata's canonical bundle formula, Kawamata subadjunction and plt/lc inversion of adjunction for generalized pairs on K\"ahler varieties. We also show that a conjecture of \cite{BDPP13} in dimension $n-1$ implies that the cone theorem holds for any $n$-dimensional K\"ahler generalized klt pair $(X,B+\bbeta)$.
\end{abstract}

\tableofcontents

 Generalized pairs have been playing an increasingly prominent role in higher dimensional birational geometry (see eg. \cite{Birkar21} and references therein). Their analytic counterparts were introduced in \cite{DHY23} (see Definition \ref{d-gpair}) where it is shown that the minimal model program for compact K\"ahler generalized klt 3-fold pairs holds. Note that even in the projective case,  Definition \ref{d-gpair} is more general than the usual definition of generalized pairs since the "nef" part is only assumed to be a positive (1,1) form (instead of a nef divisor).
 This is extremely useful in the K\"ahler context as, in many instances, we can replace the use of an arbitrary ample (or big) divisor by a (modified) K\"ahler class. In particular, using this extra flexibility, \cite{DHY23} shows the finiteness of minimal models for compact K\"ahler generalized klt 3-fold pairs of general type, and that klt Calabi-Yau K\"ahler 3-folds are connected by finite sequences of flops.
 The theory of generalized pairs makes sense in all dimensions and it is hoped that many results from the projective minimal model program will carry through to this setting. In this paper we perform the first steps in this direction.
 We show that adjunction and inversion of adjunction hold for generalized pairs (both in the plt and lc cases), we prove a canonical bundle formula for generalized klt K\"ahler pairs and we show that assuming the BDPP conjecture in dimension $n-1$, then the cone theorem for generalized pairs holds in dimension $n$ (and in particular it holds unconditionally in dimension $4$). More precisely, we show the following.
 \begin{theorem}\label{t-inv}Let $(X,B+\bbeta )$ be a generalized pair and $S$ a component of $B$ of coefficient 1 with normalization $\nu :S^\nu \to S$. Then
    $(X,B+\bbeta )$ is generalized lc (resp. generalized plt) on a neighborhood of $S$ iff $({S^\nu},B_{S^\nu} +\bbeta _{S^\nu})$ is generalized lc (resp. $S$ is normal and $({S},B_{S} +\bbeta _{S})$ is generalized  klt).
\end{theorem}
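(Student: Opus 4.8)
The plan is to prove the two equivalences by passing to a sufficiently high bimeromorphic model and reducing, as far as possible, to inversion of adjunction for ordinary pairs. Fix a log resolution $g\colon Y\to X$ of $(X,B)$ on which the nef part descends, i.e.\ on which $g^*\bbeta=\bbeta_Y+\Theta$ with $\bbeta_Y$ a semipositive form and $\Theta\ge 0$ exceptional; writing $K_Y+S_Y+\Gamma_Y+\Theta+\bbeta_Y=g^*(K_X+B+\bbeta)$ with $S_Y$ the strict transform of $S$ and $\Gamma_Y$ the remaining boundary, the pair $(X,B+\bbeta)$ is generalized lc (resp.\ generalized plt) near $S$ if and only if, over a neighbourhood of $S$, the ordinary pair $(Y,S_Y+\Gamma_Y+\Theta)$ has all coefficients $\le 1$ (resp.\ $<1$ except along $S_Y$); the harmless semipositive $\bbeta_Y$ plays no role in discrepancies. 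Since $Y$ is log smooth, restriction to $S_Y$ introduces no correction in the different, so generalized adjunction gives
\[
K_{S_Y}+(\Gamma_Y+\Theta)|_{S_Y}+\bbeta_Y|_{S_Y}=g_S^*\bigl(K_{S^\nu}+B_{S^\nu}+\bbeta_{S^\nu}\bigr),
\]
and $(S^\nu,B_{S^\nu}+\bbeta_{S^\nu})$ is generalized lc (resp.\ generalized klt) iff $(S_Y,(\Gamma_Y+\Theta)|_{S_Y})$ has coefficients $\le 1$ (resp.\ $<1$). In this picture the ``adjunction'' direction is immediate: a coefficient exceeding the allowed bound along a component meeting $S_Y$ restricts to one on $S_Y$. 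That generalized plt of $(X,B+\bbeta)$ near $S$ forces $S$ normal follows from the connectedness theorem below: a non-normal point of $S$ yields two coefficient-$1$ components over it, hence (by connectedness of the non-klt locus on the fibre) a coefficient-$1$ chain meeting $S_Y$, contradicting generalized klt-ness on $S^\nu$.

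For the inversion direction in the plt/klt case, assume $S$ is normal and $(S,B_S+\bbeta_S)$ is generalized klt, so every component of $\Gamma_Y+\Theta$ of coefficient $\ge 1$ is disjoint from $S_Y$; we must show $(Y,S_Y+\Gamma_Y+\Theta)$ has no component $\ne S_Y$ of coefficient $\ge 1$ whose center meets $S$. If there were one, say $F$, pick $x$ in its center lying on $S$; then $g^{-1}(x)$ meets both $S_Y$ and $F$, and by the connectedness theorem the locus $\{\,\text{coefficient}\ge 1\,\}$ of $(Y,S_Y+\Gamma_Y+\Theta)$ is connected near $g^{-1}(x)$, so some component $F'\ne S_Y$ of coefficient $\ge 1$ meets $S_Y$; then $F'|_{S_Y}\ne 0$ has coefficient $\ge 1$ in $(\Gamma_Y+\Theta)|_{S_Y}$, contradicting generalized klt-ness of $(S^\nu,B_{S^\nu}+\bbeta_{S^\nu})$. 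The K\"ahler-specific ingredient is the connectedness theorem for generalized pairs, which I would prove exactly as in the projective case: push forward an adjoint exact sequence along $g$ and invoke \emph{relative} Kawamata--Viehweg/Nadel vanishing, $R^ig_*$ of an adjoint line bundle twisted by a multiplier ideal vanishing for $i>0$; relative vanishing of this type holds over an arbitrary complex base for a projective bimeromorphic morphism, so the projectivity of $X$ is not needed, and the nef part is harmless because on $Y$ it is a semipositive form.

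For the inversion direction in the lc case --- the hard case --- the reduction of the first paragraph turns the statement into ordinary inversion of adjunction on log canonicity for $(Y,S_Y+\Gamma_Y+\Theta)$ along $S_Y$. This is the theorem of Kawakita, which applies here because log canonicity of a complex-analytic germ depends only on its formal completion, hence on an algebraic model, and the different and the correction divisor $\Theta$ are algebraic data on $Y$; one thereby concludes that the coefficients of $\Gamma_Y+\Theta$ along components with center meeting $S$ are $\le 1$, i.e.\ $(X,B+\bbeta)$ is generalized lc near $S$. The two places where real care is required --- and where the proof is genuinely more than bookkeeping --- are: (i) the nef part $\bbeta$ need not be representable by a semipositive form on any \emph{single} bimeromorphic model, so one must work with an exhausting family of models, check that the induced $\Theta$ and $\bbeta_Y$ behave compatibly under pushforward and restriction, and pass to the limit using that generalized log canonicity is a closed condition on discrepancies; and (ii) even the underlying ordinary lc inversion is a deep input, so the self-contained connectedness argument of the second paragraph, which settles the plt/klt case cleanly, does not by itself suffice in the lc case. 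I expect (i) to be the principal obstacle.
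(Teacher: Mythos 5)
Your plt/klt argument is essentially the paper's: the paper proves Theorem \ref{t-pltinv} exactly by pushing forward the exact sequence $0\to \OO_{X'}(-\lfloor B'\rfloor)\to \OO_{X'}(-\lfloor B'\rfloor+S')\to \OO_{S'}(-\lfloor B'\rfloor+S')\to 0$ and using relative vanishing for the klt pair $(X',\{B'\})$ twisted by $\bbeta_{X'}$ (nef and big over $X$), which is precisely the connectedness mechanism you invoke; the normality of $S$ and both implications come out of the resulting surjection of pushforwards. So for the plt half your plan is correct and matches the paper, modulo the caveat that your decomposition $g^*\bbeta=\bbeta_Y+\Theta$ with $\Theta\geq 0$ exceptional presupposes that $K_X+B$ is $\R$-Cartier (the paper only uses it under that hypothesis); in general one must work directly with the boundary $B'$ determined by the total class $K_{X'}+B'+\bbeta_{X'}=f^*(K_X+B+\bbeta_X)$.

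The lc half is where your proposal has a genuine gap. You propose to reduce to Kawakita's inversion of adjunction for an ordinary pair, arguing that log canonicity of an analytic germ is detected on a formal/algebraic model. But there is no ordinary (let alone algebraic) pair to which Kawakita applies: in a generalized K\"ahler pair $K_X+B$ need not be $\R$-Cartier, and the nef part $\bbeta$ is a transcendental $(1,1)$-class that is not a limit of $\Q$-divisors (the paper stresses this is exactly where projective arguments break down), so it cannot be absorbed into a divisorial boundary or encoded in a formal completion. Working on a single log resolution does not rescue this: adjunction to the strict transform $S'$ only controls coefficients of divisors meeting $S'$, and the whole difficulty of lc inversion is ruling out coefficient $>1$ divisors whose center meets $S$ but which are invisible to $S'$; connectedness does not exclude these because the non-klt locus may be linked through coefficient-exactly-$1$ components. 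You flag the cross-model compatibility of $\bbeta_Y$ and $\Theta$ as ``the principal obstacle'' but leave it unresolved, and that obstacle is in fact the theorem. The paper's proof of Theorem \ref{t-lcinv} takes a different route (Hacon's method from \cite{Hacon14}): pass to a dlt model $\mu:Y\to X$ with boundary $S_Y+\Gamma+\Sigma$, where $\Sigma$ records the excess over lc coefficients; run a relative $K_Y+S_Y+\Gamma+\bbeta_Y$ MMP with scaling of an ample divisor over $X$ (available in the analytic category by \cite{Fuj22}, \cite{DHP22}); show by adjunction to $S_i$ that $\Sigma_i\cap S_i=\emptyset$ at every step, using the hypothesis that $(S^\nu,B_{S^\nu}+\bbeta^{S^\nu})$ is generalized lc; and finally use a relative vanishing/surjectivity argument with $H_i-m\Sigma_i-S_i$ to conclude $\mu(\Sigma)\cap S=\emptyset$, i.e.\ $(X,B+\bbeta)$ is generalized lc near $S$. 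Some substitute for this MMP-based argument is needed; the reduction to Kawakita as you state it does not go through.
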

Next we turn our attention to the following generalization of Kawamata's adjunction theorem for generalized pairs (cf. \cite[Theorem 1]{Kawamata98}).
\begin{theorem}\label{t-Ksad} Let $(X,B+\bbeta)$ be a generalized log canonical pair such that $(X,B'+\bbeta')$ is a generalized klt pair and $W\subset X$ is a minimal log canonical center of $(X,B+\bbeta)$. Then $W$ is normal and $(K_X+B+\bbeta)|_W=K_W+B_W+\bbeta _W$ is a generalized klt pair.
\end{theorem}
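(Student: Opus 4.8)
\emph{Overall strategy.} The plan is to exhibit $W$ as the base of a generalized klt-trivial fibration and then invoke the canonical bundle formula for generalized klt K\"ahler pairs. First I would use the auxiliary generalized klt pair $(X,B'+\bbeta')$ to tie-break: for $0<\epsilon\ll 1$ the convex combination $(X,(1-\epsilon)B+\epsilon B'+(1-\epsilon)\bbeta+\epsilon\bbeta')$ is again generalized klt, because generalized discrepancies depend affinely on the boundary and the nef part and a convex combination of positive $(1,1)$-classes is positive. Combining this with a small effective divisor (or a small K\"ahler class) through $W$, one obtains a boundary $\Delta$ and a positive class $\bbeta_\Delta$ with $K_X+\Delta+\bbeta_\Delta\sim_{\mathbb R}K_X+B+\bbeta$ modulo a class vanishing on $W$, such that $(X,\Delta+\bbeta_\Delta)$ is generalized lc, $W$ is still a minimal generalized lc centre, and there is a \emph{unique} generalized lc place with centre $W$; one recovers the statement for $(X,B+\bbeta)$ at the end by letting $\epsilon\to 0$, using closedness of the relevant cone of $(1,1)$-classes.

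\emph{Restriction to the lc place; normality of $W$.} Next I would pick a sufficiently high model $f\colon Y\to X$ on which the unique generalized lc place is a prime divisor $S$, and set $g:=f|_S\colon S\to W$. Writing the generalized crepant relation $K_Y+S+B_Y+\bbeta_Y=f^*(K_X+\Delta+\bbeta_\Delta)+E$ with $B_Y,E\ge 0$ without common components, $\lfloor B_Y\rfloor=0$ near $S$, and $\bbeta_Y$ the pull-back nef class, divisorial adjunction for generalized pairs (cf.\ Theorem~\ref{t-inv}) produces a generalized klt pair $(S,B_S+\bbeta_S)$ with $K_S+B_S+\bbeta_S=g^*\!\big((K_X+\Delta+\bbeta_\Delta)|_W\big)+E_S$, where for a suitable choice of $f$ the effective divisor $E_S:=E|_S$ is $g$-exceptional. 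The connectedness theorem for generalized lc centres (which follows from the same vanishing used for Theorem~\ref{t-inv}) shows that $g$ has connected fibres, so the Stein factorization $S\to W^*\to W$ identifies $W^*$ with the normalization of $W$; that $W$ is already normal, i.e.\ $W^*=W$, then follows as in \cite[\S1]{Kawamata98} from the semipositivity of the Hodge-type sheaf $g_*\omega_{S/W}$, which is available on K\"ahler total spaces. Thus $g\colon(S,B_S+\bbeta_S)\to W$ is a generalized klt-trivial fibration.

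\emph{Canonical bundle formula and conclusion.} Finally I would apply the canonical bundle formula for generalized klt K\"ahler pairs to $g$: it yields an effective discriminant $B_W\ge 0$ and a positive $(1,1)$-moduli class $\bbeta_W$ with $K_S+B_S+\bbeta_S\sim_{\mathbb R}g^*(K_W+B_W+\bbeta_W)$ and $(W,B_W+\bbeta_W)$ generalized klt. Combining this with the crepant relation above gives $g^*\!\big((K_X+\Delta+\bbeta_\Delta)|_W-K_W-B_W-\bbeta_W\big)\sim_{\mathbb R}-E_S$; since $E_S$ is $g$-exceptional and effective, the negativity lemma forces $E_S=0$ and $(K_X+\Delta+\bbeta_\Delta)|_W=K_W+B_W+\bbeta_W$, and undoing the tie-breaking of the first step yields the desired $B_W,\bbeta_W$ for $(X,B+\bbeta)$. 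The principal difficulties are concentrated in the middle step: proving that the minimal generalized lc centre $W$ is normal, and that $g$ is genuinely a generalized klt-trivial fibration with positive moduli part, require porting the Hodge-theoretic semipositivity statements to a (possibly non-projective) K\"ahler total space; one must also carry out the tie-breaking and the choice of $f$ making $E_S$ exceptional when the nef part $\bbeta$ is merely a positive $(1,1)$-class and not a semiample divisor. Granting these, the reduction to the canonical bundle formula is formal.
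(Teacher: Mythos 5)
Your overall strategy (tie-breaking, restricting to the unique lc place over $W$, then the canonical bundle formula) is the paper's strategy, but the way you set up the fibration has a genuine gap. On a general high model $Y$ you are left with the relation $K_S+B_S+\bbeta_S=g^*\big((K_X+\Delta+\bbeta_\Delta)|_W\big)+E_S$, and Theorem \ref{t-gcbf} simply cannot be applied to $g:(S,B_S+\bbeta_S)\to W$ in this form: it requires $K_S+B_S+\bbeta_S$ itself to be (the pullback of) a class from $W$. You invoke the canonical bundle formula anyway and then use its output to conclude $E_S=0$ via the negativity lemma; that is circular. Moreover the two auxiliary claims in that step are unsubstantiated: ``for a suitable choice of $f$ the divisor $E|_S$ is $g$-exceptional'' is not proved, and the negativity lemma you quote is the birational one, whereas $g$ is a fiber space, so one would need a Lai-type statement for very exceptional vertical divisors (cf. \cite[Lemma 2.9]{Lai11}) -- which again presupposes the numerical triviality over $W$ you do not yet have. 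The paper avoids the problem entirely: after tie-breaking it takes the dlt model of Theorem \ref{t-dltmodel}, whose unique exceptional divisor is the lc place $S'$, so the crepant pullback reads $f^*(K_X+B+\bbeta_X)=K_{X'}+S'+B'+\bbeta_{X'}$ with $B'\geq 0$ and no exceptional correction at all; then Theorem \ref{t-pltinv} gives $(S',B_{S'}+\bbeta_{S'})$ generalized klt with $K_{S'}+B_{S'}+\bbeta_{S'}=\nu^*\big((K_X+B+\bbeta_X)|_W\big)$ exactly, and Theorem \ref{t-gcbf} applies directly. If you want to salvage your version, replace the log resolution by this dlt model (or prove separately that $E_S$ is very exceptional and trivial over $W$ before touching the canonical bundle formula).

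Two secondary points. For normality of $W$ you appeal to Hodge-theoretic semipositivity of $g_*\omega_{S/W}$ ``as in \cite{Kawamata98}'' -- precisely the ingredient that is not available off the shelf over a K\"ahler base and that the paper never uses: Proposition \ref{t-normallcc} proves normality with the relative vanishing $R^1f_*\OO_{X'}(-\lfloor \Delta'\rfloor)=0$ (valid because $-\lfloor \Delta'\rfloor\equiv_X K_{X'}+\{\Delta'\}+\bbeta_{X'}$ with $\bbeta_{X'}$ nef and big over $X$), i.e. the same vanishing already used for plt inversion of adjunction; this also yields connectedness of the fibers of $S'\to W$. Finally, your last step ``undo the tie-breaking by letting $\epsilon\to 0$'' is weaker than what is needed: a limit of generalized klt pairs is a priori only generalized lc. The paper takes no limit: it pulls back the \emph{original} pair $(X,B+\bbeta)$ on the dlt model of the perturbed pair, so the generalized klt-ness of $(W,B_W+\bbeta_W)$ comes directly from the plt-ness of $(X',S'+B'+\bbeta_{X'})$ together with Theorem \ref{t-gcbf}.
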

In order to prove this result, it is necessary to prove the following canonical bundle formula (cf. Theorem \ref{t-gp}).
 \begin{theorem}\label{t-gcbf}
 	Let $f:X\to Y$ be a projective morphism of compact normal K\"ahler varieties such that $f_*\OO_X=\OO_Y$ and $(X,B+\bbeta )$ is a generalized klt (or generalized lc) pair. If  $\gamma \in H^{1,1}_{\rm BC}(Y)$ is such that 
 	$[K_X+B+\bbeta _X ]=f^*\gamma$ then $\gamma =[K_Y+B_Y+\bbeta _Y]$ where $(Y,B_Y+\bbeta _Y)$ is a generalized klt (or generalized lc) pair.
 \end{theorem}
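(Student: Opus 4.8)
The plan is to follow Kawamata's scheme: construct a \emph{discriminant} $\R$-divisor $B_Y$ on $Y$ measuring the failure of generalized log canonicity over codimension one points, declare the residual class to be the \emph{moduli class} $\bbeta_Y$, and then show that $\bbeta_Y$ is ``nef'' in the sense appropriate to Definition \ref{d-gpair} and that $(Y,B_Y+\bbeta_Y)$ inherits the generalized klt, resp.\ generalized lc, property. For the reductions: since $f_*\OO_X=\OO_Y$ the fibres of $f$ are connected, and the statement is compatible both with generalized log resolutions of $(X,B+\bbeta)$ and with bimeromorphic modifications of $Y$ — this compatibility is exactly what makes $B_Y$ and $\bbeta_Y$ well defined as $b$-objects on $Y$ — so I would first replace $X$ by a model on which $\bbeta$ is represented by a smooth semipositive $(1,1)$-form $\bbeta_X$, the union of $\Supp B$ with the exceptional locus is simple normal crossing, and, after a base change $Y'\to Y$ and a further resolution, $f$ is equidimensional with reduced fibres over the generic point of each prime divisor of $Y$ (semistability in codimension one). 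Over such a generic point this last reduction is local and follows from resolution of singularities over a small transverse disc; in particular $f$ stays projective throughout.

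For a prime divisor $P\subset Y$ with generic point $\eta_P$ I set
\[
\gamma_P=\sup\{\,t\in\R\ :\ (X,\,B+tf^{*}P+\bbeta)\ \text{is generalized lc over}\ \eta_P\,\},
\]
the generalized log canonical threshold being computed after pulling $\bbeta_X$ up to a common resolution, and put $B_Y=\sum_P(1-\gamma_P)P$. Because $(X,B+\bbeta)$ is generalized klt (resp.\ lc) and $f$ is of the above form, all but finitely many $\gamma_P$ equal $1$, so $B_Y$ is a well-defined $\R$-divisor; openness of the generalized lc (resp.\ klt) condition gives $0\le 1-\gamma_P\le 1$ (resp.\ $<1$), and since every component of the fibre over $\eta_P$ occurs in $f^{*}P$ with multiplicity $\ge 1$ one also gets $\gamma_P\le 1$, hence $B_Y\ge 0$. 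I then \emph{define} $\bbeta_Y:=\gamma-[K_Y+B_Y]\in H^{1,1}_{\rm BC}(Y)$, so that $[K_Y+B_Y+\bbeta_Y]=\gamma$ and $f^{*}[K_Y+B_Y+\bbeta_Y]=[K_X+B+\bbeta_X]$ hold by construction. What remains is (i) that on a suitably high bimeromorphic model of $Y$ the class $\bbeta_Y$ is represented by a smooth semipositive form (or a decreasing limit of K\"ahler classes), compatibly with all further modifications, so that $(Y,B_Y+\bbeta_Y)$ is a generalized pair; and (ii) that this pair is generalized klt (resp.\ lc).

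The crux — and the step I expect to be the main obstacle — is the positivity of $\bbeta_Y$. Over the Zariski open $Y^{\circ}$ where $f$ is smooth, $B$ is relatively snc, and the discriminant is trivial, each fibre $F$ is smooth projective, $(F,B|_F)$ is klt, and $K_F+B|_F+\bbeta_X|_F$ vanishes in $H^{1,1}_{\rm BC}(F)$; $\bbeta_Y$ records the variation of these fibrewise generalized log Calabi--Yau structures. In the projective case this is the Fujita--Kawamata--Fujino--Fujisawa semipositivity of the moduli part, obtained from the variation of (mixed) Hodge structure attached to $f$. Over a merely K\"ahler base I would argue analytically: using $K_F+B|_F\equiv-\bbeta_X|_F$, for $m$ sufficiently divisible the class $m(K_{X/Y}+B)+m\bbeta_X$ is $f$-trivial and carries, along the fibres, a natural possibly singular Hermitian metric with semipositive curvature built from $\bbeta_X$ and from $B$; the Berndtsson--P\u aun / P\u aun--Takayama positivity of direct images of twisted relative pluricanonical bundles — in the form valid for a projective morphism over a compact K\"ahler base — then endows the relevant rank-one piece of $f_*\OO_X(m(K_{X/Y}+B))$ with a metric whose curvature current, after subtracting $K_Y$ and the discriminant correction $B_Y$ and dividing by $m$, represents $\bbeta_Y$ and is a positive current. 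To upgrade ``positive current'' to ``nef on a model'' I would pass to a log resolution on which this metric has analytic singularities, note that it is smooth and semipositive over $Y^{\circ}$ because the underlying variation is locally unitary there, and control its behaviour across the discriminant via a norm estimate for the Hodge metric (nilpotent orbit theorem), producing a smooth semipositive representative after a blow-up; functoriality of the construction gives independence of the model. The hard part will be precisely this passage from a positive curvature current to a nef class on an appropriate bimeromorphic model, together with the verification that the resulting class is model-independent — presumably the substance of Theorem \ref{t-gp}.

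Finally I would check that $(Y,B_Y+\bbeta_Y)$ is generalized klt (resp.\ lc). For a divisorial valuation $E$ over $Y$, choose compatible higher models $X''\to X$ and $Y''\to Y$ on which $E$ is a divisor and $f''$ is again of the standard form; applying the canonical bundle formula relation on $(X'',B''+\bbeta'')$ expresses the generalized log discrepancy $a(E;Y,B_Y+\bbeta_Y)$ in terms of the generalized log discrepancies of $(X,B+\bbeta)$ over the centre of $E$, and so bounds it from below by the infimum of the latter, which is $>0$ (resp.\ $\ge0$) since $(X,B+\bbeta)$ is generalized klt (resp.\ lc). Together with the coefficient bounds on $B_Y$ and the positivity of $\bbeta_Y$ established above, this yields the theorem.
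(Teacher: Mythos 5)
Your overall skeleton agrees with the paper's: you define the discriminant via generalized lc thresholds over codimension-one points, set $\bbeta_Y:=\gamma-[K_Y+B_Y]$, and obtain pseudo-effectivity of the relative adjoint class by trivializing the transcendental part locally over the base and invoking P\u aun--Takayama type positivity of fibrewise $L^{2/m}$ metrics; this is exactly the role of Theorems \ref{t-gue} and \ref{L2/m1} in the paper. The genuine gap is the step you yourself flag as the crux: the upgrade from ``$\bbeta_Y$ contains a positive current'' to ``$\bbeta_Y$ is nef on a model.'' Your proposed route --- analytic singularities plus a Hodge-metric/nilpotent-orbit estimate across the discriminant, producing a smooth semipositive representative after a blow-up, on the grounds that the fibrewise generalized log Calabi--Yau structure is ``locally unitary'' over $Y^{\circ}$ --- does not work here. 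Since $\bbeta$ is a transcendental $(1,1)$-class (not a limit of $\Q$-divisors), there is no variation of Hodge structure attached to $f$ at all; this is precisely the obstruction the paper points out to importing the projective arguments. Moreover, even when $\bbeta$ is a divisor class, the relevant direct-image metrics are only semipositively curved, not flat, once $B\neq 0$, and the moduli part is not known to admit a smooth semipositive representative: the paper only obtains the stronger conclusion (a positive current with zero Lelong numbers) in Theorem \ref{lelong}, under the additional hypotheses that $\beta$ has a smooth positive representative and that the quasi-smoothness condition $(\star)$ holds.

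What the paper does instead is purely ``restriction-theoretic'': by \cite[Theorem 2.36]{DHP22} (cf.\ Theorem \ref{mp}), nefness of $\bbeta^Z_{Z'}$ follows once its restriction to the normalization of \emph{every} subvariety $W$ is pseudo-effective. Subvarieties contained in the discriminant are handled by adjunction to a minimal stratum $S$ of $\bar B^{=1}$ dominating the relevant component $T$, giving $K_S+B_S+\beta_S=g^*(K_T+B_T+\beta_T)$ and allowing induction on the dimension; subvarieties not contained in the discriminant are handled by base change $X\times_Z\tilde W\to\tilde W$ followed by a second application of Theorem \ref{t-gue}. Both steps, as well as the descent (BP-stability) of the boundary b-divisor in Proposition \ref{p-BPs}, require more than your codimension-one reduction: one needs full weak semistable reduction with good horizontal divisors (equidimensionality, reduced fibres, toroidal structure --- Theorems \ref{t-ws} and \ref{t-aws}, via \cite{AK00}, \cite{Karu99}, and a Hilbert-scheme/GAGA reduction to the algebraic case, which is where projectivity of $f$ enters), since otherwise the restricted family over $\tilde W$ need not be normal, nor a generalized pair which is lc over an open subset, and the discriminant b-divisor need not descend. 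Without this mechanism, your argument establishes pseudo-effectivity of $\bbeta_Y$ but not b-nefness, so the theorem is not yet proved.
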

 Note that we expect semistable reduction to hold (unconditionally) for morphisms of compact analytic varieties, however the necessary references are not yet available. In the case of projective morphisms, we can deduce semistable reduction from the algebraic case (see \cite{AK00}, \cite{Karu99}).
 It is possible that the results of \cite{BdSB23} are already sufficient for our purposes, but the proof would appear to be more involved and we do not pursue it here. The proof of this result heavily uses Theorem \ref{L2/m1} (which is a generalization of a result of Guenancia \cite{Gue20}), which roughly speaking, states that $\bbeta _Y$ is pseudo-effective. By \cite[Theorem 2.36]{DHP22}, to show that 
 $\bbeta_Y$ is b-nef, it suffices to show that $\bbeta_Y|_Z$ is pseudo-effective for any subvariety $Z\subset X$. This can be checked by using semistable reduction and applying Theorem \ref{L2/m1}.

 Finally, assuming a key conjecture of Boucksom-Demailly-Paun-Peternell \cite[Conjecture 0.1]{BDPP13}, we show that the cone theorem for generalized klt pairs holds in arbitrary dimension (and unconditionally for pseudo-effective pairs in dimension $\leq 4$). This provides some evidence that the minimal model program holds in arbitrary dimension for generalized klt pairs. We refer the reader to \cite{DHY23} for the generalized klt minimal model program in dimensions $\leq 3$. %and to [xxx] for work in progress on the existence of  pl-flipping and divisorial contractions and pl-flips.
\begin{conjecture}\label{c-BDPP13}
Let $X$ be a compact K\"ahler manifold. Then the canonical class
$K_X$ is pseudoeffective if and only if $X$ is not uniruled (i.e. not covered by rational
curves).\end{conjecture}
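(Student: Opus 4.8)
Since Conjecture \ref{c-BDPP13} is open in general (it is stated as Conjecture 0.1 in \cite{BDPP13}), the plan is to sketch the strategy along which it is known — in full in the projective case, and for $\dim X\leq 3$ in the K\"ahler case — and to isolate where the transcendental difficulty lies.

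\textbf{The easy implication} ($X$ uniruled $\Rightarrow K_X$ not pseudoeffective) should be dispatched first. Let $\{C_t\}_{t\in T}$ be a covering family of rational curves and suppose for contradiction that there is a closed positive $(1,1)$-current $T_0\in c_1(K_X)$. The set where the local potentials of $T_0$ are $-\infty$ is contained in a countable union of proper analytic subsets, so a very general member $C_t$ is not contained in it; then $T_0|_{C_t}$ is a well-defined positive measure with total mass $K_X\cdot C_t$, whence $K_X\cdot C_t\geq 0$. On the other hand, deformation theory forces $-K_X\cdot C_t\geq 2$ for a general member of a covering family of rational curves, a contradiction.

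\textbf{The hard implication} ($K_X$ not pseudoeffective $\Rightarrow X$ uniruled) would be attacked in two steps, following \cite{BDPP13}. First, produce a movable curve pairing negatively with $K_X$: since $K_X$ lies outside the (closed, convex) pseudoeffective cone $\mathcal E\subset H^{1,1}_{\rm BC}(X)$, Hahn--Banach yields a class $\alpha$ in the interior of the dual cone $\mathcal E^\vee$ with $\langle K_X,\alpha\rangle<0$; the key point is to identify $\mathcal E^\vee$ with the closure of the cone generated by classes $\mu_*(\t\omega_1\wedge\cdots\wedge\t\omega_{n-1})$ ($\mu:\t X\to X$ a modification, $\t\omega_i$ K\"ahler), which gives, after replacing $X$ by a modification and approximating, an honest curve $C$ sweeping out $X$ with $-K_X\cdot C>0$. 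Second, apply bend and break: a covering family of curves with $-K_X\cdot C>0$ must degenerate to a connected cycle through a general point containing a rational curve, whence $X$ is uniruled.

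\textbf{The main obstacle} is that both steps are transcendental precisely where a K\"ahler manifold fails to resemble a projective one. The duality $\mathcal E^\vee=\overline{\mov}(X)$ is the main theorem of \cite{BDPP13} in the projective case but is not known in the K\"ahler case, and a positive $(n-1,n-1)$-class pairing negatively with $K_X$ need not be represented by an \emph{algebraic} curve; moreover, the bend-and-break step above is run in the projective case by spreading out to characteristic $p$, a tool with no K\"ahler counterpart. Thus the genuinely new input required is a purely analytic mechanism turning ``negativity against a movable class'' into rational curves — e.g. via positivity of direct images, $L^2$/H\"ormander estimates, or the existence of a full K\"ahler MMP. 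This is presently available only for $\dim X\leq 3$ (via the K\"ahler minimal model program), which is exactly what makes the cone theorem unconditional in dimension $4$; beyond that the conjecture remains open.
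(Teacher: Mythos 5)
This statement is Conjecture \ref{c-BDPP13} of the paper, quoted from \cite{BDPP13}: the paper gives no proof of it (it only records that it is known in dimension $\leq 3$ and then uses it as a hypothesis in Theorem \ref{t-gkltcone} and Proposition \ref{p-ray}), so there is no argument of the authors to compare yours against. Your assessment is accurate — the easy implication (uniruled $\Rightarrow K_X$ not pseudoeffective) is correctly argued, and your description of why the projective proof (duality of the pseudoeffective and movable cones plus characteristic-$p$ bend and break) does not transfer to the K\"ahler setting, with the case $\dim X\leq 3$ handled by the K\"ahler MMP, matches the current state of the art; just be clear that what you have written is a status report on an open conjecture, not a proof.
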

Note that the above conjecture is known to hold in dimension $\leq 3$.
Following ideas from \cite{CH20} and using Theorem \ref{t-gcbf} we then prove the following result.
\begin{theorem}\label{t-gkltcone} Assume that Conjecture \ref{c-BDPP13} holds in dimension $n$ (resp. in dimension $n-1$). Let $X$ be a compact $\Q$-factorial K\"ahler variety of dimension $n$ such that $(X,B+\bbeta )$ is generalized klt (resp. and $K_X+B+\bbeta_X$ is pseudo-effective), then
there are at most countably many rational curves $\{\Gamma _i\}_{i\in I}$ such that 
 \[\overline{\rm NA}(X)=\overline{\rm NA}(X)_{K_X+B+\bbeta _X \geq 0}+\sum _{i\in I}\mathbb R ^+[\Gamma _i],\]
 where %and for all but finitely many $\Gamma _i$, we have 
 $0<-(K_X+B+\bbeta_X )\cdot \Gamma _i\leq 2n$. Moreover, if $B+\bbeta_X $ (or $K_X+B+\bbeta_X$) is big, then $I$ is finite.
\end{theorem}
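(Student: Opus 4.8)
The plan is to adapt the strategy of \cite{CH20} to the generalized K\"ahler setting. Beyond the classical Mori-theoretic skeleton, the two new inputs are Conjecture~\ref{c-BDPP13} (to produce rational curves whenever pseudo-effectivity fails) and the canonical bundle formula of Theorem~\ref{t-gcbf}, together with the adjunction results of Theorems~\ref{t-inv}--\ref{t-Ksad} (to guarantee that generalized klt-ness is inherited by the subvarieties and fibration bases that occur, so the argument can recurse, and to control the boundary terms produced by bend-and-break). The inclusion $\overline{\rm NA}(X)_{K_X+B+\bbeta_X\geq 0}+\sum_{i}\R^+[\Gamma_i]\subseteq\overline{\rm NA}(X)$ is automatic, so the substance is the reverse inclusion, the length bound, and the local finiteness of the rays $\R^+[\Gamma_i]$ away from $(K_X+B+\bbeta_X)^{\perp}$. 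As in the projective theory (Koll\'ar--Mori), once one has the description of $\overline{\rm NA}(X)$ and its Kleiman-type duality with the nef cone for normal compact K\"ahler $X$, these structural statements follow formally from the following production statement, on which I therefore concentrate: \emph{if $K_X+B+\bbeta_X$ is not nef, there is a rational curve $\Gamma$ with $0<-(K_X+B+\bbeta_X)\cdot\Gamma\leq 2n$ whose class spans a $(K_X+B+\bbeta_X)$-negative extremal ray.}

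Fix a $(K_X+B+\bbeta_X)$-negative extremal ray $R$ and let $V=\bigcup_{[C]\in R}C$ be its locus. \emph{Case 1: $V=X$.} Then $R$ is represented by a covering family of curves $C$; since $B\geq 0$ and $\bbeta_X$ is pseudo-effective, $(B+\bbeta_X)\cdot C\geq 0$, so $(K_X+B+\bbeta_X)\cdot R<0$ forces $K_X\cdot C<0$; hence $K_X$ is not pseudo-effective, and therefore neither is $K_{X'}$ for a resolution $\mu:X'\to X$ (as $\mu_*$ preserves pseudo-effectivity). By Conjecture~\ref{c-BDPP13} in dimension $n$, $X'$, and hence $X$, is uniruled. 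Pass to a resolved projective model of the maximal rationally connected fibration $f:X\dashrightarrow Z$, with general fibre $F$ smooth and rationally connected. Since $Z$ is not uniruled, $K_Z$ is pseudo-effective by Conjecture~\ref{c-BDPP13} in dimension $<n$; combined with the canonical bundle formula (Theorem~\ref{t-gcbf}, in a form adapted to $f$) this forces $(K_X+B+\bbeta_X)|_F$ to be non-pseudo-effective, for otherwise $K_X+B+\bbeta_X$ would itself be pseudo-effective. Running bend-and-break for rational curves sweeping out the generalized klt fibre $(F,(B+\bbeta_X)|_F)$ (noting $K_X|_F=K_F$) then produces a rational curve $\Gamma\subset F$ with $0<-(K_X+B+\bbeta_X)\cdot\Gamma\leq 2\dim F\leq 2n$; passing to a rational curve spanning an extremal sub-ray of its class preserves these inequalities.

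\emph{Case 2: $V\subsetneq X$.} This is the only case occurring under the weaker hypothesis that $K_X+B+\bbeta_X$ is pseudo-effective, since a pseudo-effective class is non-negative on any covering family; accordingly it needs only Conjecture~\ref{c-BDPP13} in dimension $\leq n-1$, which is known for $n\leq 4$. Here $\dim V\leq n-1$. Using the flexibility of K\"ahler classes in place of Kawamata's linear-systems trick, perturb $B+\bbeta_X$ by a small positive multiple of a current with logarithmic poles along a resolution of $V$ so as to create a minimal log canonical centre $W$ through $V$; by Theorems~\ref{t-inv} and~\ref{t-Ksad}, $W$ is normal and carries an induced generalized klt pair $(W,B_W+\bbeta_W)$, with $K_W+B_W+\bbeta_W$ non-pseudo-effective against the curves of $R$ lying in $V\subseteq W$ (the construction, normalization, and klt-ness of this pair are exactly what Theorem~\ref{t-gcbf} supplies via Theorem~\ref{t-Ksad}). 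Then $W$ is uniruled by Conjecture~\ref{c-BDPP13} in dimension $\leq n-1$, and bend-and-break on (a resolution of) $W$ produces a rational curve which, once the perturbation is removed, gives the desired $\Gamma\subset X$; matching the negativity and the bound $\leq 2n$ back on $X$ requires care with the coefficients. This settles the ``resp.'' statement and the unconditional case $n\leq 4$.

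Finally, local finiteness of the $(K_X+B+\bbeta_X)$-negative extremal rays follows from the uniform length bound by the usual compactness argument (a limit of such rays would lie on $(K_X+B+\bbeta_X)^{\perp}$). For the last assertion, if $B+\bbeta_X$ (or $K_X+B+\bbeta_X$) is big, write its class as that of a K\"ahler class plus an effective $\R$-divisor $E$ (an analytic Kodaira lemma): any $(K_X+B+\bbeta_X)$-negative extremal ray must meet $E$ negatively, hence lie in the finitely many components of $\Supp E$, and running the above with $B+\bbeta_X$ replaced by $(1-\varepsilon)(B+\bbeta_X)+\varepsilon\omega$ for $\omega$ K\"ahler, together with the local finiteness just established, leaves only finitely many such rays. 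I expect the principal obstacle to be Case~1: extracting a rational curve of \emph{bounded} $(K_X+B+\bbeta_X)$-degree from the failure of pseudo-effectivity on a singular compact K\"ahler variety, which forces one to orchestrate BDPP, the geometry of the rationally connected fibration, and the canonical bundle formula simultaneously, and to verify that bend-and-break on a general fibre genuinely returns curves of controlled degree with respect to $K_X+B+\bbeta_X$ and not merely with respect to $K_{X'}$.
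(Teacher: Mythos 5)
Your plan has the right overall flavor (BDPP plus the canonical bundle formula and subadjunction, MRC fibration, bend-and-break, following \cite{CH20}), but it has a genuine structural gap at its starting point. You ``fix a $(K_X+B+\bbeta_X)$-negative extremal ray $R$ and let $V=\bigcup_{[C]\in R}C$ be its locus,'' but in the K\"ahler category $\overline{\rm NA}(X)$ is a cone of classes of positive currents: before the cone theorem is proved there is no reason an extremal ray is spanned by curve classes, and the ``locus'' of such a ray is not defined, so both your Case 1/Case 2 dichotomy and the later phrase ``non-pseudo-effective against the curves of $R$'' are circular. The paper never selects extremal rays; it perturbs to the nef threshold class $\alpha=[K_X+B+\bbeta_X+\omega]$, nef and big but not K\"ahler, and works with an irreducible component $Z$ of ${\rm Null}(\alpha)$. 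That this locus is analytic and carries a K\"ahler current with weak analytic singularities whose Lelong set is exactly ${\rm Null}(\alpha)$ is the content of Theorem \ref{pro:null-non-kahler} (the singular Collins--Tosatti statement, a substantial part of the paper), and it is indispensable: it provides the analytic subvariety on which to restrict and the current with which to run the lc-threshold/tie-breaking construction. Your proposal never invokes anything playing this role.

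The second gap is in the step where you invoke Conjecture \ref{c-BDPP13} on the center $W$. To apply it you must show that the canonical class of (a resolution of) $W$ is not pseudo-effective; negativity of $K_W+B_W+\bbeta_W$ on some curves would not suffice even if you had such curves. In the paper this is extracted from the numerical-dimension drop $\nu_{\rm num}(\alpha|_{Z^\nu})<\dim Z$, valid precisely because $Z\subset{\rm Null}(\alpha)$, combined with the subadjunction of Remark \ref{r-ad} (resting on Theorem \ref{t-gcbf}): since $B_{Z^\nu}\geq 0$ and the moduli class is pseudo-effective, the intersection computation forces $K_{Z^\nu}$ to be non-pseudo-effective. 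Moreover, uniruledness of $Z$ (or $W$) alone does not produce curves that are $\alpha$-trivial and of degree $\leq 2n$ against $K_X+B+\bbeta_X$; in Proposition \ref{p-ray} this requires passing to the MRC fibration of a resolution, applying BDPP to the non-uniruled base, transferring non-pseudo-effectivity to the projective rationally connected fiber via \cite[Theorem 5.2]{CH20}, and then using the projective cone theorem, the fact that $\alpha_F$ is not big (so the associated contraction is of fiber type), and bend-and-break to get $\alpha$-trivial rational curves with $0<\omega\cdot C\leq 2n$; the cone statement is then deduced by the standard scaling argument as in Corollary \ref{c-cone}. You flag exactly this degree-matching as the ``principal obstacle'' but supply no mechanism for it, and together with the two points above this is where the proof is missing rather than merely compressed.
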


We now turn to a more detailed description of some of the key results in this paper.
The most important results in this paper concern versions of the canonical bundle formula (see Theorems \ref{t-gcbf} and \ref{lelong}).
The typical set up for the canonical bundle formula is an algebraic fiber space $f:X\to Y$ where say $X,Y$ are normal projective varieties, $f_*\OO _X=\OO _Y$, and a log canonical pair $(X,B)$ such that $K_X+B\sim _{\Q,Y}0$.
We can then write \[K_X+B\sim _\Q f^*(K_Y+B_Y+M_Y)\] where $B_Y$ is the boundary part that measures the singularities of $f$, and the moduli part $M_Y$ is a $\Q$-divisor class which measures the variation of the fibers of $f$. For example if $f$ is an elliptic fbration, then $M_Y=j^*\OO _{\mathbb P ^1}(\frac 1 {12})$ where $j$ denotes the $j$ function, and if $X_P$ is a smooth  fiber of multiplicity $m$ over a codimension 1 point $P\in Y$ and $B=0$, then the coefficient of $B_Y$ along $P$ is $1-\frac 1m$.

The canonical bundle formula, roughly speaking, states that if the morphism $f$ is sufficiently well prepared (eg. $B=0$ and all fibers have simple normal crossings), then $(Y,B_Y)$ is log canonical and the  the moduli part $M_Y$ is a nef $\Q$-divisor. In particular $(Y,B_Y)$ is a generalized log canonical pair (and in fact this is the key motivation for introducing generalized pairs \cite{BZ16}).

 Results along this line are established in \cite{Kawamata98}, see \cite{Kollar07} for a detailed discussion.
 The positivity of the nef part is deduced from general positivity properties of pushforwards of the canonical bundle (see \cite{Kollar86}).
 The intuition here is that after performing several reductions, we can in fact assume that the moduli part coincides with $f_*\omega _{X/Y}$. 
 
While the canonical bundle formula has a large number of extremely important applications (eg. to sub-adjunction \cite{Kawamata98}), it is clear that in order to run proofs by induction on the dimension, it is important to establish versions of the canonical bundle formula that work for generalized pairs of the form $(X,B+M)$. This is achieved in \cite{Fil20}. Note that on projective varieties, nef classes are limits of ample divisors and hence one hopes the result for generalized pairs  \cite{Fil20} follow as a limit of the result for the usual pairs \cite{Kawamata98}.

In the K\"ahler context it is more practical to work with generalized pairs $(X,B+\beta)$ where $\beta \in H^{1,1}_{\rm BC}(X)$ is a nef class (see Definition \ref{d-gpair}) and $f:X\to Y$ is a holomorphic map of K\"ahler manifolds such that $K_X+B+\beta =f^*\gamma$ for some $\gamma \in H^{1,1}_{\rm BC}(Y)$. One can then define the boundary $B_Y$ and moduli parts $\beta _Y\in H^{1,1}_{\rm BC}(Y)$ just as in the projective case. Unluckily, it does not follow that $\beta $ is a limit of $\Q$-divisors and hence the arguments from the projective case do not apply in the K\"ahler case.

Our strategy is to first prove an analog of the positivity of $f_*\omega _{X/Y}$. We show that if $K_X+B+\beta =f^*\gamma$ and $(X,B+\beta) $ is generalized log canonical, then $K_{X/Y}+B+\beta$ is pseudo-effective (the precise statement is contained in Theorem \ref{t-gue}, which is an easy consequence of Theorem \ref{L2/m1} that generalizes \cite{Gue20}).
Once the morphism $f:X\to Y$ is sufficiently prepared (and $B$ is re-chosen appropriately), we have $K_{X/Y}+B+\beta=f^*\beta _Y$ and hence $\beta _Y$ is also pseudo-effective.
By \cite{DHP22}, it is known that to show that $\beta _Y$ is nef, it suffices to show that $\beta _Y|_W$ is nef where $W$ is (the normalization of) any subvariety of $Y$. To verify this we consider $f_W:X_W\to W$ (where $(\ldots )_W$ denotes restriction over $W$) and then apply Theorem \ref{t-gue} to the induced pair $(X_W,B_W+\beta _W)$. For the details of the proof see Theorem \ref{t-gp}. Note that for technical reasons we have to assume that $f$ is a projective morphism, but we expect the result will hold without this assumption.

Therefore, the technical heart of this paper is Theorem \ref{L2/m1} (which is the key ingredient in the proof of Theorem \ref{t-gue}).
To gain some intuition, note that in the setting of Theorem \ref{L2/m1} we consider $K_{X/Y}+B+\beta\equiv f^*\gamma+L$ where $L$ is a $\Q$-line bundle such that $\kappa (L|_{X_y})\geq 0$ for general $y\in Y$. Restricting over open subsets $U\subset Y$ we may trivialize $\gamma$ and treat $\beta |_{X_U}$ as a $\Q$-line bundle $F_U=\mathcal O_{X_U}(L-K_{X/Y}-B)$. Following \cite{PT18}, we construct a positive current $\Theta _U\geq 0$ fiber-wise from the $m$-th root of the sections of $mL|_{X_y}$ (for $m>0$ sufficiently big and divisible).
These currents then glue together to give a positive current $\Theta \equiv K_{X/Y}+B+\beta$.

Finally, we remark that it is possible to give direct arguments with analytic techniques to prove stronger versions of the canonical bundle formula Theorem \ref{t-gp}.
In fact Theorem \ref{lelong} shows that if we further assume that $\beta$ contains a smooth positive
representative, then we can conclude the stronger fact that $\beta _Y$  is a closed positive current with zero Lelong numbers.

{\bf Acknowledgment.} The authors would like to thank J. Cao, O. Das and M. Temkin for useful communications. Thanks go equally to S. Boucksom and C.-M. Pan for interesting discussions about Lelong numbers.
%%%%%%%%%%%%%%%%%%%%%%%%%
 \section{Preliminaries} Here we recall some definitions and results from \cite{DHY23}. We will say that $S$ is relatively compact if $S\subset S'$ is an open subset whose closure is compact. Similarly $\pi:X\to S$ is a proper morphism to a relatively compact space $S$ if there is a proper morphism $\pi':X'\to S'$ where  $S\subset S'$ is an open subset whose closure is compact and $X=\pi '^{-1}(S)$.
 \begin{defn}\label{d-gpair}
 	Let $\pi :X\to S$ be a proper morphism of normal K\"ahler
varieties such that $S$ is relatively compact and $X$ is a normal compact K\"ahler variety, $\nu :X'\to X$ a resolution of singularities, $B'$ an $\R$-divisor on $X'$ with simple normal crossings support, and $\beta '\in H^{1,1}_{\rm BC}(X')$, such that
\begin{enumerate}
\item $B:=\nu _*B'\geq 0$,
%\item $\beta'$ is a positive current,  
\item $[ \beta ']\in H^{1,1}_{\rm BC}(X')$ is nef over $S$, and 
\item $[K_{X'}+B'+\beta ']=\nu^* \gamma $, where $\gamma \in H^{1,1}_{\rm BC}(X)$.
\end{enumerate}
  Then we let $\beta =\nu _*\beta '$ and we say that $\nu: (X',B'+\beta')\to (X,B+\beta)$  is a generalized pair (over $S$).
We will often abuse notation and say that $(X/S,B+\beta)$ (or $(X,B+\beta)$) is a generalized pair (over $S$) and $\nu: (X',B'+\beta')\to (X,B+\beta)$ is a  resolution.
If moreover ${\rm Ex}(\nu)$ is a divisor such that ${\rm Ex}(\nu)+B'$ has simple normal crossings, then we say that $\nu$ is a log resolution and if ${\rm Ex}(\nu)$ supports a relatively ample divisor, then $\nu$ is projective.
We will often assume that $X=S$ and omit $\pi :X\to S$.
\end{defn}
\begin{remark}
Note that we can define the corresponding nef b-(1,1) form $\boldsymbol{\beta}:=\overline{\beta'}$ as follows. For any bi-meromorphic morphism $p :X''\to X'$ we define $\boldsymbol{\beta}_{X''}=p ^* \beta '$ and for any bi-meromorphic morphism $q :X''\to X'''$ we let $\boldsymbol{\beta}_{X'''}=q_*\boldsymbol{\beta}_{X''}$. Using the projection formula, one easily checks that $q_*\boldsymbol{\beta}_{X''}$ is well defined (i.e. $\boldsymbol{\beta}_{X'''}$ does not depend on the choice of the common resolution $X''$ of $X'$ and $X'''$) and that for any bi-meromorphic morphism $r:X_1\to X_2$ of birational models of $X$, we have $r_* \boldsymbol{\beta}_{X_1}=\boldsymbol{\beta}_{X_2}$.
We say that $\overline{\beta'}$ descends to $X'$. Note that for any bi-meromorphic morphism $p :X''\to X'$, we also have
$\overline{\beta'}=\overline{\beta''} $ where $\beta '' =\boldsymbol{\beta}_{X''}$, and so $\boldsymbol{\beta}$ also descends to $X''$.

Similarly, if $\nu :Y\to X'$ is a proper morphism, then write $K_Y+B_Y=\nu ^*(K_{X'}+B')$. For any proper morphism $\mu:Y\to Y'$ we have $B_{Y'}=\mu _* B_Y$. 
	In this way we have defined a b-divisor  $\mathbf B$ (whose trace $\mathbf B_Y$ on $Y$ is $B_Y$). Since the b-divisor $\mathbf {K+B}=\overline{K_{X'}+B_{X'}}$ and the b-$(1,1)$-form {\mathversion{bold}$\beta$}$=\overline{\beta _{X'}}$ descend to $X'$, we say that the generalized pair $(X,B+\beta)$ descends to $X'$.

     We will often denote the generalized pair $\nu: (X',B'+\beta')\to (X,B+\beta)$ by $(X,B+\bbeta )$ where $\bbeta =\overline{\beta '}$.
     Note that then $\beta '=\bbeta _{X'}$ and $B'=\nu ^*(K_X+B+\beta)-(K_{X'}+\beta ')$.
\end{remark}

We define the \emph{generalized discrepancies} $a(P;X,B+\bbeta)=-{\rm mult}_P(\mathbf B_Y)$, where $P$ is a prime divisor on a bimeromorphic model $Y$ of $X$. %Note that $a(P;X,B+\bbeta)$ does not depend on the choice of $\beta '=\bbeta _{X'}$, i.e., for any smooth closed form $\alpha '\in [ \bbeta _{X'}]$, we let $\aalpha =\overline{\alpha '}$ and then $a(P;X,B+\bbeta)=a(P;X,B+\aalpha)$ for any divisor $P$ over $X$.
We say that $(X,B+\bbeta )$ is \emph{generalized klt} or generalized Kawamata log terminal (resp. \emph{generalized lc} or generalized log canonical) if for any log resolution $\nu:X'\to X$, we have $\lfloor \mathbf B_{X'} \rfloor\leq 0$, i.e. $a(P;X,B+\bbeta)>-1$ for all prime divisors $P$ over $X$ (resp. $a(P;X,B+\bbeta)\geq -1$ for all prime divisors $P$ over $X$). This can be checked on a single given log resolution.
We say that $(X,B+\bbeta )$ is \emph{generalized dlt} (divisorially log terminal) if there is an open subset $U\subset X$
such that $(U,(B+\bbeta)|_U)$ is a log resolution (of itself) and $-1\leq a(P;X,B+\bbeta)\leq 0$ for any prime divisor $P$ on $U$
and $-1< a(P;X,B+\bbeta)$ for any prime divisor $P$ over $X$ with center contained in $X\setminus U$.

\begin{lemma}\label{l-klt}
	Let $(X,B+\bbeta)$ be a generalized klt (resp. generalized dlt) variety. If $K_X+B$ is $\mathbb Q$-Cartier, %with rational singularities,
	then $(X,B)$ is klt (resp. dlt).
\end{lemma}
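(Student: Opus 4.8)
The plan is to compare the ordinary log discrepancies of $(X,B)$ with the generalized log discrepancies of $(X,B+\bbeta)$ on a common resolution and check that discarding the nef part can only raise discrepancies. Fix a log resolution $\nu\colon X'\to X$ to which the generalized pair descends, so that $\beta':=\bbeta_{X'}$ is nef over $S$ (hence over $X$, since $\nu$-exceptional curves are contracted over $S$), $B'=\mathbf B_{X'}$, and $[K_{X'}+B'+\beta']=\nu^*[K_X+B+\bbeta]$. Since $K_X+B$ is $\Q$-Cartier we may form $\nu^*(K_X+B)$ and write $K_{X'}+\Theta=\nu^*(K_X+B)$, so that $\nu_*\Theta=B=\nu_*B'$ and $[K_{X'}+\Theta]=\nu^*[K_X+B]$. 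Subtracting the two identities gives
\[
[B'-\Theta]=\nu^*\big([K_X+B+\bbeta]-[K_X+B]\big)-[\beta']=\nu^*[\beta]-[\beta'],
\]
where $\beta=\nu_*\beta'=\bbeta_X$. The $\R$-divisor $D:=B'-\Theta$ is $\nu$-exceptional, as $\nu_*D=B-B=0$, and $-[D]=[\beta']-\nu^*[\beta]$ is nef over $X$ because $[\beta']$ is and $\nu^*[\beta]$ is numerically trivial on $\nu$-contracted curves; hence by the negativity lemma $D\geq 0$, i.e. $\Theta\leq B'$. Running the same computation on any log resolution $\mu\colon Y\to X$ of $(X,B+\bbeta)$ that dominates $X'$ (so that $\bbeta$ still descends, with $\bbeta_Y$ the pullback of $\beta'$, still nef over $S$) yields $\Theta_Y\leq\mathbf B_Y$, and therefore $a(P;X,B)=-\mult_P\Theta_Y\geq -\mult_P\mathbf B_Y=a(P;X,B+\bbeta)$ for every prime divisor $P$ over $X$. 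Note also that $\nu$ is automatically a log resolution of $(X,B)$, since $\Supp(\nu_*^{-1}B)\subseteq\Supp(B')$.

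Given this, both statements follow formally. If $(X,B+\bbeta)$ is generalized klt then $\lfloor B'\rfloor\leq 0$, so $\lfloor\Theta\rfloor\leq\lfloor B'\rfloor\leq 0$; together with $B\geq 0$, $K_X+B$ being $\Q$-Cartier, and $\nu$ being a log resolution of $(X,B)$, this is exactly the statement that $(X,B)$ is klt. If $(X,B+\bbeta)$ is generalized dlt, let $U\subseteq X$ be the open set from the definition; then $U$ is smooth and $B|_U=\mathbf B_U$ has simple normal crossings, so $(U,B|_U)$ is a log resolution of itself. For a prime divisor $P\subseteq U$ we get $a(P;X,B)=-\mult_P B=a(P;X,B+\bbeta)\in[-1,0]$, and for a prime divisor $P$ over $X$ with center in $X\setminus U$ we get $a(P;X,B)\geq a(P;X,B+\bbeta)>-1$. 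Hence $(X,B)$ is dlt.

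The only genuine input beyond bookkeeping with the b-divisor $\mathbf B$ and the classes in $H^{1,1}_{\rm BC}$ is the negativity lemma in the K\"ahler/analytic setting, applied to the $\nu$-exceptional divisor $D$ whose negative has a class nef over $X$; I expect verifying that the negativity lemma is available in this generality (and that "nef over $S$" indeed implies the relative nefness needed) to be the one point requiring care. The subsidiary subtlety worth flagging is that $\bbeta$ is only a Bott--Chern class, not a $\Q$-divisor, so $[\beta]$ must be pulled back as a class in $H^{1,1}_{\rm BC}$; this is precisely where the hypothesis that $K_X+B$ is $\Q$-Cartier enters, making $[K_X+B]$, and hence $[\beta]=\gamma-[K_X+B]$, a well-defined class one is allowed to pull back.
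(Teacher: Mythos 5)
Your proposal is correct and follows essentially the same route as the paper: pull back both $K_X+B+\bbeta$ and $K_X+B$ to a log resolution, note that the difference $E=B'-B^\sharp$ is exceptional with $-E$ nef over $X$ (the $\Q$-Cartier hypothesis being exactly what makes the pullback of $\bbeta_X$ legitimate), and apply the negativity lemma to get $B^\sharp\leq B'$ and hence the discrepancy comparison. Your explicit treatment of the dlt case is just the expansion of the paper's ``follows similarly.''
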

\begin{proof}
	Let $f:X'\to X$ be a log resolution, $K_{X'}+B'+\bbeta _{X'}=f^*(K_X+B+\bbeta_X)$,
	where $\lfloor B'\rfloor\leq 0$, as $(X,B+\bbeta)$ is generalized klt.
	Let $K_{X'}+B^\sharp=f^*(K_X+B)$,
	then \[E:=K_{X'}+B'-f^*(K_X+B)\equiv f^*\bbeta_X-\bbeta_{X'}\] and so $E$ is exceptional and $-E$ is nef over $X$.  By the negativity lemma $E\geq 0$.
	But then \[B'=E+f^*(K_X+B)-K_{X'}=B^\sharp +E\] and so $\lfloor B^\sharp \rfloor\leq 0$, i.e. $(X,B)$ is klt. The statement about dlt singularities follows similarly.
\end{proof}
In dimension 2, the situation is particularly simple as shown by the following lemma.
\begin{lemma}\label{l-sfs}
If $(X,B+\bbeta)$ is a generalized klt, dlt, lc surface, then $K_X+B$ is $\mathbb R$-Cartier, $(X,B)$ is klt, dlt, lc and $\bbeta_X$ is nef.
\end{lemma}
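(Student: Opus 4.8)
The plan is to work locally around each singular point of $X$ and exploit the negative definiteness of the exceptional locus of a resolution together with the hypothesis that the nef part is nef.

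First I would pass to the minimal resolution $\nu\colon Y\to X$, letting $B_Y=\mathbf B_Y$ be the trace of the b-divisor $\mathbf B$ and $\bbeta_Y$ the trace of $\bbeta$, so that $[K_Y+B_Y+\bbeta_Y]=\nu^*\gamma$, with $\bbeta_Y$ still nef over $S$. Write $B_Y=\tilde B+\sum_i a_iE_i$ where $\tilde B=\nu_*^{-1}B\ge 0$ and, since $(X,B+\bbeta)$ is generalized lc, $a_i\le 1$ (and $<1$ in the klt case). Intersecting $[K_Y+B_Y+\bbeta_Y]=\nu^*\gamma$ with a $\nu$-exceptional curve $E_j$ gives $(K_Y+B_Y)\cdot E_j=-\bbeta_Y\cdot E_j\le 0$. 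Because $\nu$ is the minimal resolution, $K_Y$ is $\nu$-nef, so $(\sum_i a_iE_i)\cdot E_j\le 0$ for all $j$ and the negativity lemma yields $a_i\ge 0$, i.e.\ $a_i\in[0,1]$.

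The heart of the proof is that $K_X+B$ is $\mathbb R$-Cartier. For any Weil $\mathbb R$-divisor $D$ on the normal surface $X$ the numerical pullback $\nu^\# D=\tilde D+\sum_i c_iE_i$, characterized by $(\nu^\# D)\cdot E_j=0$ for all $j$, exists and is unique by negative definiteness; applying the negativity lemma to $\sum_i(a_i-c_i)E_i$ gives $c_i\le a_i\le 1$. Combining $(K_Y+B_Y)\cdot E_i\le 0$ with the adjunction identity $(K_Y+E_i)\cdot E_i=2g(E_i)-2$ and the bounds $a_i\in[0,1]$, $\tilde B\ge 0$, one finds $g(E_i)\le 1$ for every exceptional curve, and a genus one curve forces $a_i=1$, $\bbeta_Y\cdot E_i=0$, $\tilde B\cdot E_i=0$ and $a_j=0$ for each $E_j$ meeting it; this is precisely the step that excludes the non-$\mathbb Q$-Cartier configurations such as the cone over a curve of genus $\ge 2$. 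A short computation with the projection and adjunction formulas then shows that $\nu^\#(K_X+B)$ restricts to the trivial bundle on every exceptional curve (degree zero on a rational one, $\omega_{E_i}=\mathcal O_{E_i}$ on a genus one one), and, using $\mathcal O_Y(K_Y+C)|_C=\omega_C$ together with the fact that a cycle of rational curves has arithmetic genus one (and is contractible only in the cusp configuration), also on any such cycle occurring over a singular point. By the theorem on formal functions --- the relevant obstruction groups vanishing because the exceptional divisor is negative definite --- it follows that $\nu^\#(K_X+B)$ descends, so $K_X+B$ is $\mathbb R$-Cartier (the passage from $\mathbb Q$ to $\mathbb R$ coefficients of $B$ being routine).

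Granting this, the rest is formal. That $(X,B)$ is klt (resp.\ dlt, lc) follows as in the proof of Lemma \ref{l-klt}: with $K_Y+B^\sharp=\nu^*(K_X+B)$, the divisor $E:=K_Y+B_Y-\nu^*(K_X+B)$ is $\nu$-exceptional and numerically equivalent over $X$ to $\nu^*\bbeta_X-\bbeta_Y$, whose negative is $\nu$-nef, so $E\ge 0$ by negativity and $B^\sharp=B_Y-E\le B_Y$; hence $\lfloor B^\sharp\rfloor\le 0$ in the klt (resp.\ dlt) case and $B^\sharp$ has coefficients $\le 1$ in the lc case. Finally, for any curve $C\subset X$ the projection formula and the negativity lemma give $\bbeta_X\cdot C=\bbeta_Y\cdot\nu^\# C\ge 0$, since $\nu^\# C$ is effective and $\bbeta_Y$ is nef, while $\bbeta_X^2=\bbeta_Y\cdot\nu^*\nu_*\bbeta_Y=\bbeta_Y^2+\bbeta_Y\cdot D\ge 0$ with $D:=\nu^*\nu_*\bbeta_Y-\bbeta_Y\ge 0$ exceptional, and $\bbeta_X\cdot\omega_X\ge 0$; by the numerical characterization of nef classes on a compact Kähler surface this shows $\bbeta_X$ is nef. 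The main obstacle is the $\mathbb R$-Cartier statement, equivalently the exclusion of the wild (non-$\mathbb Q$-Cartier) normal surface singularities, which is exactly what the bound $g(E_i)\le 1$ above accomplishes.
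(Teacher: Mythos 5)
Your overall strategy is the standard one (and presumably the one in \cite{DHY23}, to which the paper simply refers for this lemma): pass to the minimal resolution, use that $-(K_Y+B_Y)\cdot E_j\geq 0$ for exceptional curves (because $\bbeta_Y$ is the pushforward of a class that is nef on a model where $\bbeta$ descends, hence has nonnegative degree on every curve), deduce via the negativity lemma that $(X,B)$ is numerically lc for Mumford's pullback $\nu^\#$, extract $\mathbb R$-Cartierness of $K_X+B$, and then conclude klt/dlt/lc-ness as in Lemma \ref{l-klt} and nefness of $\bbeta_X$ by a numerical criterion. The preparatory steps and the final steps are essentially fine (though for the nef part you should check nefness of $\nu^*\bbeta_X=\bbeta_Y+E$ on the smooth model $Y$, where the Buchdahl--Lamari type criterion you invoke is actually available, rather than intersecting classes on the singular $X$).

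The genuine gap is in the step you yourself identify as the heart of the proof, the $\mathbb R$-Cartier statement. First, your justification that the descent works because ``the relevant obstruction groups vanish since the exceptional divisor is negative definite'' is false: simple elliptic and cusp singularities have negative definite exceptional loci and nevertheless $R^1\nu_*\mathcal O_Y\neq 0$; in those cases $\mathcal O_Y(K_Y+E_0)$ (resp. $\mathcal O_Y(K_Y+Z)$ for the cycle $Z$) descends because its restriction to $E_0$ (resp. $Z$) is trivial and the infinitesimal obstructions vanish for degree reasons ($\mathcal O_{E_0}(-nE_0)$ has positive degree), not because of negative definiteness. Second, and more seriously, in the cases where all exceptional curves are rational --- the numerically klt case and the strictly lc quotients of simple elliptic/cusp singularities --- the coefficients of $\nu^\#(K_X+B)$ are fractional, indeed real since $B$ has $\mathbb R$-coefficients, so ``restricts to the trivial bundle on each exceptional curve, hence descends by formal functions'' does not parse: there is no line bundle to descend. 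What is actually needed there is that the singularity is rational (via Artin's criterion, using the discrepancy bounds, or the classification of numerically lc dual graphs as in Koll\'ar--Mori, \S 4.1, or Fujino's work on log surfaces) and that rational surface singularities are $\mathbb Q$-factorial; only then is $K_X+B$ $\mathbb R$-Cartier, with the simple elliptic and cusp points handled by Gorensteinness as above. As written, this central piece of the lemma is asserted rather than proved, and the stated reason for it would fail exactly at the non-rational lc points.
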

\begin{proof}
	See \cite{DHY23}.
\end{proof}
The next result shows that, working locally over $X$, generalized klt pairs behave similarly to the usual klt pairs, and in particular they have rational singularities.
\begin{theorem}\label{t-gkltlocal}
 Let $(X,B+\bbeta)$ be a generalized klt pair, then $X$ has rational singularities and if we replace $X$ by a relatively compact Stein open subset, then the following hold: 
 \begin{enumerate}
     \item there exists a small bimeromorphic morphism $\mu :X^\sharp \to X$ such that $X^\sharp$ is $\mathbb Q$-factorial,
    \item if $K_{X^\sharp}+B^\sharp+\bbeta _{X^\sharp}=\mu ^*(K_X+B+\bbeta _X)$, then $\beta _{X^\sharp}\equiv _X \Delta ^\sharp$ so that $(X^\sharp,B^\sharp+\Delta ^\sharp)$ is klt, and
    \item if $\Delta =\mu _*\Delta  ^\sharp$, then $(X,B+\Delta)$ is klt.
   \end{enumerate}
 \end{theorem}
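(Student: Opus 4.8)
The plan is to establish the three numbered assertions first and then deduce that $X$ has rational singularities. Since every statement is local on $X$, I would shrink $X$ to a relatively compact Stein open subset and, where it helps, to a Stein neighbourhood with $H^{1,1}_{\rm BC}(X)=0$; in particular I may take $X=S$, so that on any bimeromorphic model $p:X'\to X$ to which $\bbeta$ descends the class $\beta':=\bbeta_{X'}$ is nef over $X$, and $\gamma:=[K_X+B+\bbeta_X]=0$.

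\emph{The $\Q$-factorialization.} Fix a log resolution $g:X'\to X$ on which $\bbeta$ descends, so $K_{X'}+B'+\beta'=g^*\gamma=0$ with $\lfloor B'\rfloor\le 0$, and let $E$ be the reduced $g$-exceptional divisor. Put $\Theta':=g^{-1}_*B+E$; then $(X',\Theta'+\bbeta)$ is generalized dlt over $X$ and $K_{X'}+\Theta'+\beta'\equiv_X F$ for the effective $g$-exceptional $\Q$-divisor $F=\sum_i\bigl(1+a(E_i;X,B+\bbeta)\bigr)E_i$, which is supported on all of $E$ because $(X,B+\bbeta)$ is generalized klt. Running the $(K_{X'}+\Theta'+\beta')$-MMP over $X$ contracts exactly $F$, hence exactly the $g$-exceptional divisors, and terminates with a relative minimal model $\mu:X^\sharp\to X$ which is therefore small, projective, and has $X^\sharp$ $\Q$-factorial; this is (1). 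By construction $\mu$ is crepant, $K_{X^\sharp}+B^\sharp+\bbeta_{X^\sharp}=\mu^*\gamma=0$, and since $\mu$ is small the generalized discrepancies are unchanged, so $(X^\sharp,B^\sharp+\bbeta_{X^\sharp})$ is again generalized klt.

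\emph{Absorbing $\bbeta$ and descending.} Since $X^\sharp$ is $\Q$-factorial, $K_{X^\sharp}+B^\sharp$ is $\Q$-Cartier, so $(X^\sharp,B^\sharp)$ is klt by Lemma~\ref{l-klt}; and $\bbeta_{X^\sharp}=-(K_{X^\sharp}+B^\sharp)$ is represented by a $\Q$-Cartier divisor which is nef over $X$. Feeding this nef-over-$X$ class into the relative base-point-free/semiampleness theorem for the klt pair $(X^\sharp,B^\sharp)$ (first contracting the face on which it is trivial, if it is not relatively big) shows it is semiample over $X$; a general member of a sufficiently divisible relative linear system then gives an effective $\Q$-divisor $\Delta^\sharp\sim_{\Q,X}\bbeta_{X^\sharp}$ with $(X^\sharp,B^\sharp+\Delta^\sharp)$ still klt, using that klt is an open condition. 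This is (2). Now set $\Delta:=\mu_*\Delta^\sharp\ge 0$. Then $K_{X^\sharp}+B^\sharp+\Delta^\sharp\sim_{\Q,X}K_{X^\sharp}+B^\sharp+\bbeta_{X^\sharp}=0$; as $\mu$ is small this divisor has no $\mu$-exceptional component and is numerically $\mu$-trivial, so by the negativity lemma it equals $\mu^*(K_X+B+\Delta)$. Hence $K_X+B+\Delta$ is $\Q$-Cartier, $\mu$ is crepant for $(X,B+\Delta)$, and the discrepancies of $(X,B+\Delta)$ coincide with those of $(X^\sharp,B^\sharp+\Delta^\sharp)$, so $(X,B+\Delta)$ is klt; this is (3). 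Finally $X$ underlies a klt pair, and therefore has rational singularities by the analytic analogue of the fact that klt pairs have rational singularities (equivalently, once $\bbeta$ has been absorbed into a divisor as above, $R^ig_*\OO_{X'}=0$ follows from relative Kawamata--Viehweg vanishing on $g$).

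\emph{Expected main obstacle.} The analytically substantial steps are the construction of $\mu$ and the replacement of $\bbeta_{X^\sharp}$ by a divisor. The first requires the relative minimal model program for generalized dlt compact Kähler pairs, which is available unconditionally only in low dimension (e.g.\ \cite{DHY23}); in general one must extract from the partial analytic theory precisely the divisorial contractions and flips needed to contract $F$ over a relatively compact Stein base. The second requires the relative base-point-free and contraction theorems in the Kähler setting, so that the nef-over-$X$ class $\bbeta_{X^\sharp}$ can be traded for an honest effective $\Q$-divisor in the same relative numerical class without losing klt-ness. Everything else is routine discrepancy bookkeeping together with Lemma~\ref{l-klt} and the negativity lemma.
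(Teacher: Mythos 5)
Your overall architecture is the right one (extract a small $\Q$-factorial model by an MMP over the Stein base, trade $\bbeta$ for an effective divisor, then descend via base-point-freeness), but the order in which you do the two main steps creates genuine gaps that the paper's proof is specifically designed to avoid. First, your Step 1 runs the $(K_{X'}+\Theta'+\beta')$-MMP, i.e.\ a \emph{generalized} dlt MMP over $X$ in arbitrary dimension; as you yourself note, no such MMP is available, and you do not close this gap. The paper's key move is to perform the ``absorb $\bbeta$ into a divisor'' step \emph{before} the MMP, on the projective log resolution $\nu:X'\to X$: there $\beta'\equiv_X F-(K_{X'}+B^*)$ is an $\R$-divisor class, nef over $X$, and since $\nu$ is projective and $X$ is Stein one may arrange it to be big over $X$, hence $\beta'\equiv_X\Delta'$ for an effective $\Delta'$ with $(X',B^*+\Delta')$ klt. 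One then runs the ordinary relative klt MMP of \cite{Fuj22}, \cite{DHP22}; negativity forces $F^\sharp=0$, so the output is the desired small $\Q$-factorialization, and $\Delta^\sharp\equiv_X\beta^\sharp$ with $(X^\sharp,B^\sharp+\Delta^\sharp)$ klt comes along for free.

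Second, even granting your Step 1, your Step 2 asserts that $\bbeta_{X^\sharp}=-(K_{X^\sharp}+B^\sharp)$ is nef over $X$, which is what the relative base-point-free theorem would need. This is unjustified: $\bbeta$ descends to $X'$, not to $X^\sharp$, and the trace of a b-nef class on the output of a sequence of flips and divisorial contractions need not be nef; all the MMP gives you is nefness of $K_{X^\sharp}+B^\sharp+\bbeta_{X^\sharp}$ (which is anyway $\equiv_X 0$ once $F^\sharp=0$), and relative bigness would also have to be arranged. In the paper this problem never arises because the divisor $\Delta'$ is chosen upstairs and carried through the MMP; the base-point-free theorem (\cite[Theorem 4.8]{Nak87}) is then applied only to the relatively numerically trivial klt divisor $K_{X^\sharp}+B^\sharp+\Delta^\sharp$ to upgrade $\equiv_X 0$ to $\sim_{\Q,X}0$, which is what makes $K_X+B+\Delta$ $\Q$-Cartier and yields (3) — note that your appeal to the negativity lemma at this point does not by itself produce $\Q$-Cartierness of $K_X+B+\Delta$; the $\sim_{\Q,X}0$ statement is the real content there. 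The deduction of rational singularities from (3) (via \cite[Theorem 3.12]{Fuj22}) is fine.
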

 \begin{proof} This follows from \cite{DHY23} but we include a proof for the convenience of the reader. Note that rational singularities is a local property and hence follows from (3) and \cite[Theorem 3.12]{Fuj22}.
%(1) immediately follows from (4).

 (1-2)  Let $\nu :X'\to X$ be a projective log resolution of $(X,B+\beta)$ and write $K_{X'}+B'+\beta '=\nu ^*(K_X+B+\beta)$ so that $\bbeta =\overline {\beta '}$. Let $E$ be the reduced exceptional divisor and for $0<\epsilon \ll 1$, let $B^*=(B')^{>0}+\epsilon E$ and $F=(B')^{<0}+\epsilon E$, then $K_{X'}+B^*+\beta '=\nu ^*(K_X+B+\beta)+F$ where the support of $F$ equals the set of all exceptional divisors, and $(X',B^*+\beta ')$ is generalized klt.
 In particular $\beta'\equiv _X F-(K_{X'}+B^*)$ where $F-(K_{X'}+B^*)$ is an $\mathbb R$-divisor, nef over $X$. As $\nu$ is projective and $X$ is Stein, we may assume that $F-(K_{X'}+B^*)$ is big. But then $\beta '\equiv_X \Delta'$, where $\Delta ' >0$ is an effective $\mathbb R$-divisor such that
 $(X',B^*+\Delta ')$ is klt. We may therefore run the relative $K_{X'}+B^*+\Delta '$ mmp (\cite{Fuj22} and \cite{DHP22}) and hence we may assume that we have a birational map $\psi: X'\dasharrow X^\sharp$ such that if $F^\sharp =\psi _*F$, $B^\sharp =\psi _*B^*$, $\beta^\sharp =\psi _*\beta '$  and $\Delta ^\sharp =\psi _*\Delta '$, then
 \[ F^\sharp \equiv _X K_{X^\sharp }+B^\sharp +\beta^\sharp \equiv_X K_{X^\sharp }+B^\sharp +\Delta ^\sharp \] is nef over $X$ so that $F^\sharp =0$ by the negativity lemma. Therefore $\mu:X^\sharp\to X$ is a small bimeromorphic morphism, $B^\sharp=\mu ^{-1}_*B$ and $X^\sharp$ is $\mathbb Q$-factorial. 
 Clearly $(X^\sharp,B^\sharp+\Delta ^\sharp)$ is klt.
 Note that each step of the above mmp preserves the 
 numerical equivalence $\beta ^\sharp\equiv _X\Delta ^\sharp$, and in particular $K_{X^\sharp}+B^\sharp+\beta ^\sharp=\mu ^*(K_X+B+\beta)$.
 
 (3) By the base point free theorem \cite[Theorem 4.8]{Nak87}, we have $K_{X^\sharp}+B^\sharp+\Delta ^\sharp \sim _{\mathbb Q,X}0 $ and the claim follows.\end{proof}
 The following result is a technical result that is useful in many situations, especially when proving results by induction on the dimension. It shows that up to replacing $X$ by a higher model, the locus of non-klt singularities is contained inside the reduced boundary of a  carefully chosen strongly $\Q$-factorial dlt pair. Recall that a $\Q$-line bundle is a reflexive rank 1 sheaf such that there exists a positive integer $m$ such that the reflexive hull $L^{[m]}:=(L^{\otimes m_0})^{\vee \vee}$ is a  line bundle.
 A variety $X$ is strongly $\Q$-factorial if every reflexive rank 1 sheaf is a $\Q$-line bundle.
 \begin{theorem}[DLT models]\label{t-dltmodel} Let $(X,B+\bbeta )$ be a generalized pair, where $X$ is relatively compact Stein. Then there exists a projective birational morphism $f^{\rm m}:X^{\rm m}\to X$ such that $X^{\rm m}$ is strongly $\Q$-factorial, all exceptional divisors $P$ have discrepancy $a(X,B+\bbeta ,P)\leq -1$ and
 $(X^{\rm m}, {(f^{\rm m})}^{-1}_*B+{\rm Ex}(f^{\rm m}))$ is generalized dlt.
\end{theorem}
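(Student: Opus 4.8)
The plan is to construct $X^{\rm m}$ by a two-step procedure: first pass to a $\Q$-factorial model where we have a klt pair to work with, then run a carefully chosen MMP to produce the dlt model. I would begin by invoking Theorem \ref{t-gkltlocal} or rather its underlying construction: take a projective log resolution $\nu:X'\to X$ of $(X,B+\bbeta)$, write $K_{X'}+B'+\beta'=\nu^*(K_X+B+\bbeta_X)$ with $\bbeta=\overline{\beta'}$, and let $E$ be the reduced $\nu$-exceptional divisor. The key is then to choose the boundary on $X'$ so that its round-down is exactly $E$: set $\Gamma'=B'^{\leq 1}+E$ where $B'^{\leq 1}$ denotes the part of $B'$ with coefficients $\leq 1$ (truncated at $1$), together with all components of $B'$ of coefficient $>1$ also capped at coefficient $1$ — arranging that $\lfloor\Gamma'\rfloor = (\text{strict transform of }\lfloor B\rfloor) + E$ and that $(X',\Gamma'+\beta')$ is generalized dlt by construction (the snc hypothesis on $\mathrm{Ex}(\nu)+B'$ from the definition of log resolution is exactly what we need).

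Next, following the argument in the proof of Theorem \ref{t-gkltlocal}(1-2), since $X$ is relatively compact Stein and $\nu$ is projective, I would perturb to reduce to an honest klt pair: for $0<\epsilon\ll 1$ one writes $\beta'\equiv_X \Delta'$ for some effective $\R$-divisor $\Delta'$ with $(X',\Gamma'+\Delta')$ having the same round-down structure and being dlt in the usual sense (after a small perturbation of coefficients, using that Stein + projective morphism lets us assume the relevant class is big and thus realize the nef part by an effective divisor supported appropriately). The point is to run the $(K_{X'}+\Gamma'+\Delta')$-MMP over $X$, using \cite{Fuj22} and \cite{DHP22} for its existence and termination in this relative Stein setting. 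Over $X$ we have $K_{X'}+\Gamma'+\Delta'\equiv_X \Gamma' + \Delta' - B'^{>0}_{\text{extra}} + (\text{exceptional stuff})$; more precisely one should arrange $K_{X'}+\Gamma'+\beta' = \nu^*(K_X+B+\bbeta_X) + G$ where $G$ is effective and $\nu$-exceptional with $\mathrm{Supp}\,G$ containing no components we want to keep — this is what forces every exceptional divisor surviving on $X^{\rm m}$ to have discrepancy $\leq -1$.

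The output $f^{\rm m}:X^{\rm m}\to X$ is then obtained as the end product: $X^{\rm m}$ is $\Q$-factorial (each MMP step preserves this), and since $X$ is relatively compact Stein we upgrade to \emph{strongly} $\Q$-factorial exactly as in Theorem \ref{t-gkltlocal} — on a Stein base, $\Q$-factoriality of a variety with rational (klt) singularities gives that every reflexive rank one sheaf is $\Q$-Cartier, because such sheaves are detected by divisor classes and $H^1(X^{\rm m},\OO)$-type obstructions vanish. Because the MMP was run with boundary $\Gamma'+\Delta'$ whose round-down is $E+\lfloor\text{strict transform}\rfloor$, the pair $(X^{\rm m},(f^{\rm m})^{-1}_*B+\mathrm{Ex}(f^{\rm m}))$ inherits the generalized dlt property: its generalized discrepancies lie in $[-1,0]$ over the open dlt locus and are $>-1$ for divisors centered in the complement, by the standard comparison of discrepancies under an MMP that only contracts $(K+\Gamma'+\Delta')$-negative loci. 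One must also check that \emph{all} exceptional divisors of $f^{\rm m}$ have $a(X,B+\bbeta,P)\leq -1$: a component of $\mathrm{Ex}(\nu)$ survives on $X^{\rm m}$ only if it was not contracted, and our choice of $G\geq 0$ supported on the exceptional locus guarantees $\mathrm{mult}_P(\mathbf B_{X^{\rm m}})\geq 1$, i.e. discrepancy $\leq -1$, for each such $P$.

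The main obstacle I anticipate is the bookkeeping in step two: one must choose the auxiliary effective divisor $\Delta'$ representing $\beta'$ (and a small $\epsilon$-perturbation of the coefficients of $\Gamma'$) delicately enough that (a) the pair stays dlt in the classical sense so the MMP of \cite{Fuj22}, \cite{DHP22} applies, (b) the numerical class $K_{X'}+\Gamma'+\Delta'$ relative to $X$ is such that the MMP contracts precisely the ``non-log-canonical'' exceptional divisors while preserving the reduced boundary $E$ plus the strict transform of $\lfloor B\rfloor$, and (c) the final numerical equivalence $\beta^{\rm m}\equiv_X\Delta^{\rm m}$ survives so that the generalized dlt condition can be read off. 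Handling non-$\Q$-Cartier $K_X$ (so that we cannot directly compare $K_{X'}$ with $\nu^*K_X$) is avoided by working throughout with the well-defined class $\nu^*(K_X+B+\bbeta_X)$ rather than $\nu^*K_X$ separately, just as in Lemma \ref{l-klt} and Theorem \ref{t-gkltlocal}; this is a point to be careful about but not a genuine difficulty.
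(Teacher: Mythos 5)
Your overall strategy is the same as the paper's (which follows \cite[Theorem 3.2]{Fil20} and Hacon's trick from \cite{KK10}): take a projective log resolution, use that $X$ is relatively compact Stein to trade the nef part for effective divisors, run a relative MMP via \cite{Fuj22}, \cite{DHP22}, and detect the contracted divisors with the negativity lemma. But the pivotal step is mis-stated in a way that breaks the argument. You claim one can ``arrange $K_{X'}+\Gamma'+\beta'=\nu^*(K_X+B+\bbeta_X)+G$ with $G$ effective and exceptional.'' The theorem is for an \emph{arbitrary} generalized pair: it is not assumed generalized lc, and indeed its main use (e.g.\ in the proof of Theorem \ref{t-lcinv}) is exactly when there are divisors of discrepancy $<-1$ (or components of $B$ of coefficient $>1$). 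For such a divisor $P$ the coefficient of $P$ in $\Gamma'-B'$ is $1+a(P;X,B+\bbeta)<0$, so $G$ cannot be made effective, and your stated mechanism does not run; it is also phrased backwards --- effectivity of $G$ together with negativity would force the components of ${\rm Supp}(G)$ (those of discrepancy $>-1$) to be contracted, rather than give ${\rm mult}_P(\mathbf B_{X^{\rm m}})\geq 1$ for the survivors. The correct version, which is what the paper does, is to split the discrepancy correction into effective pieces with pairwise disjoint supports ($E^+-E$ for $a\leq -1$, $F$ for $-1<a<0$, $G$ for $a>0$), add a relatively ample exceptional $-\mu C$ so that $-\mu C+\beta'$ and $\mu(\epsilon E-C)+\beta'$ (note $\beta'$ alone is only nef over $X$, not big, so your parenthetical does not suffice) can be replaced over the Stein base by general effective $\R$-divisors $H_1,H_2$ with snc support and zero round-down, and then choose $0<\mu\ll\nu\ll 1$ so that after the relative MMP the negativity lemma applied to $\mu C^{\rm m}+(E^+-E)^{\rm m}-G^{\rm m}-\nu F^{\rm m}$, whose supports are disjoint, forces $G^{\rm m}=F^{\rm m}=0$.

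Two further points. First, you cannot directly run the MMP for a boundary containing reduced components together with a transcendental nef-over-$X$ class: the paper's parameters $\epsilon,\mu$ are there precisely to produce a genuinely klt pair $(X',\Delta_{\epsilon,\mu,\nu})$ with $\lfloor\Delta_{\epsilon,\mu,\nu}\rfloor=0$ that is numerically equivalent over $X$ to the dlt pair $(X',f^{-1}_*\{B\}+E+(1+\nu)F+H_1)$ one actually cares about; you flag this as ``bookkeeping,'' but it is where the construction lives, not an afterthought. Second, your justification of strong $\Q$-factoriality (Stein base plus rational singularities implying every reflexive rank one sheaf is a $\Q$-line bundle) is not valid as stated in the analytic category: $\Q$-factoriality does not obviously control arbitrary reflexive rank one sheaves. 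The paper instead quotes that the output of an MMP starting from a smooth (hence strongly $\Q$-factorial) variety is strongly $\Q$-factorial, \cite[Lemma 2.5]{DH20}, and you should do the same.
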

\begin{proof}
The proof is similar to the proof in the case of the usual generalized pairs (see \cite[Theorem 3.2]{Fil20}), which in turn is based on ideas of Hacon (see \cite{KK10}). We include the argument for the convenience of the reader. Recall that $X$ is strongly $\Q$-factorial if for every reflexive rank 1 sheaf $F$ there exists an integer $m>0$ such that $(F^{\otimes m})^{**}$ is locally free (see \cite[Definition 2.2]{DH20}).

Let $f:X'\to X$ be a log resolution of the generalized pair $(X,B+\bbeta)$ and write $K_{X'}+B'+\beta '=f^*(K_X+B+\beta)$ where $\beta =\bbeta _X$ and $\beta '=\bbeta _{X'}$. 
We may assume that $f$ is defined by a sequence of blow ups over centers of codimension $\geq 2$  in $X$, and hence $f$ is a projective morphism and so we have $C\geq 0$ an $f$-exceptional divisor such that $-C$ is relatively ample.
We write $B'=f^{-1}_*\{ B\}+E^++F-G$ where $E^+,F,G$ are supported on the divisors of discrepancy $a\leq -1,\ -1<a <0,\ a >0$ respectively and we let $E={\rm red}(E^+)$ be the reduced divisor with the same support as $E$.
For any $0<\epsilon, \mu, \nu<1$, we have
\[E+(1+\nu)F-\mu C+\beta '=(1-\epsilon \mu)E+(1+\nu)F+\mu(\epsilon E-C)+\beta '.\]
Note that $-\mu C+\beta '\equiv _X -\mu C-(K_{X'}+B')$ is an ample $\R$-divisor (over $X$) and hence for $0<\epsilon \ll 1$,
$\mu(\epsilon E-C)+\beta '$ is also numerically equivalent to an ample $\R$-divisor (over $X$). Since $X$ is Stein, we may write \[-\mu C+\beta '\equiv _X H_{1}\qquad  {\rm and}\qquad
\mu(\epsilon E-C)+\beta '\equiv _X H_{2}\] where $B'+H_{1}+H_{2}$ has simple normal crossing support and $\lfloor H_{1}\rfloor =\lfloor H_{2}\rfloor =0$. 
Let \[\Delta_{\epsilon, \mu,\nu}:=f^{-1}_*\{ B\}+(1-\epsilon \mu)E+(1+\nu)F+H_{2}\equiv _Xf^{-1}_*\{ B\}+E+(1+\nu)F+H_{1},\] then $(X',\Delta_{\epsilon, \mu,\nu})$ is klt for $0<\nu\ll 1$, and by \cite{DHP22} there is a $\Q$-factorial minimal model $\psi:X'\dasharrow X^{\rm m}_{\epsilon, \mu,\nu}$ over $X$ so that
$K_{X^{\rm m}_{\epsilon, \mu,\nu}}+\Delta_{\epsilon, \mu,\nu}^{\rm m}$ is nef over $X$. By the equations above, this is also a minimal model for
the dlt pair $(X',f^{-1}_*\{ B\}+E+(1+\nu)F+H_{1})$. 
Let $B^{\rm m}_{\epsilon, \mu,\nu}=\psi_*(f^{-1}_*\{ B\}+E+F)$, then $\Delta^{\rm m} _{\epsilon, \mu,\nu}=B^{\rm m}_{\epsilon, \mu,\nu}+\psi_*(\nu F+H_1)$, and 
$(X^{\rm m}_{\epsilon, \mu,\nu}, B^{\rm m}_{\epsilon, \mu,\nu})$ is dlt.
Define \[N:= K_{X^{\rm m}_{\epsilon, \mu,\nu}}+B_{\epsilon, \mu,\nu}^{\rm m}+\nu F^{\rm m}_{\epsilon, \mu,\nu} + H^{\rm m}_{1,\epsilon, \mu,\nu}\equiv _X K_{X^{\rm m}_{\epsilon, \mu,\nu}}+\Delta_{\epsilon, \mu,\nu}^{\rm m}\]
\[T:=K_{X^{\rm m}_{\epsilon, \mu,\nu}}+B_{\epsilon, \mu,\nu}^{\rm m}+(E^+-E)^{\rm m}_{\epsilon, \mu,\nu}-G^{\rm m}_{\epsilon, \mu,\nu}+\beta ^{\rm m}_{\epsilon, \mu,\nu}\equiv_X 0 .\]
We then have
\[T-N\equiv _X \mu C^{\rm m}+(E^+-E)^{\rm m}_{\epsilon, \mu,\nu}-G^{\rm m}_{\epsilon, \mu,\nu}-\nu F^{\rm m}_{\epsilon, \mu,\nu}=:D^{\rm m}_{\epsilon, \mu,\nu},\]
where $-D^{\rm m}_{\epsilon, \mu,\nu}$ is nef and the pushforward of $D^{\rm m}_{\epsilon, \mu,\nu}$ to $X$ is effective. By the negativity lemma, $D^{\rm m}_{\epsilon, \mu,\nu}\geq 0$.  
The divisors $C$, $E^+-E$, $F$ and $G$ are independent of ${\epsilon, \mu,\nu}$, thus if $0<\mu \ll \nu \ll 1$, then $G^{\rm m}_{\epsilon, \mu,\nu}=\nu F^{\rm m}_{\epsilon, \mu,\nu}=0$. Then let $X^{\rm m}:=X^{\rm m}_{\epsilon, \mu,\nu}$, then $X^{\rm m}$ is strongly $\Q$-factorial as it is the output of a minimal model program (see \cite[Lemma 2.5]{DH20}) and  $(X^{\rm m}, {(f^{\rm m})}^{-1}_*B+{\rm Ex}(f^{\rm m}))$ is generalized dlt. %and we have $K_{X^{\rm m}_{\epsilon, \mu,\nu}}+B^{\rm m}_{\epsilon, \mu,\nu}+\beta ^{\rm m}={f^{\rm m}}^*(K_X+B+\beta)$, $B^{\rm m}_{\epsilon, \mu,\nu}={(f^{\rm m})}^{-1}_*B+{\rm Ex}(f^{\rm m})$ where $(X^{\rm m}, {(f^{\rm m})}^{-1}_*B+{\rm Ex}(f^{\rm m}))$ is generalized dlt.
\end{proof}
%%%%%%%%%%%%%%%%%%%%%
\subsection{Boundary and moduli parts}
Throughout this section we will assume that $\pi:Z\to S$ is a proper morphism of relatively compact normal analytic varieties, $f:X\to Z$ is a proper morphism of normal K\"ahler varieties such that $f_*\OO _X=\OO _Z$ and $(X/S,B+\bbeta )$ a generalized pair which is generalized log canonical over an open subset of $Z$. 
Recall that by assumption there is a log resolution $\nu :X'\to X$  of $(X,B+\bbeta )$, i.e. 
\begin{enumerate}
\item $\bbeta=\overline {\bbeta _{X'}}$ (that is $\bbeta$ descends to $X'$), 
\item $\bbeta _{X'}$ is nef over $S$, \item ${\rm Ex}(\nu)$ is a divisor and $\nu ^{-1}(B)\cup {\rm Ex}(\nu)$ has simple normal crossings.\end{enumerate}
We let $K_{X'}+B_{X'}+\bbeta _{X'}=\nu ^*(K_X+B+\bbeta _X)$ and we say that $K_{X'}+B_{X'}+\bbeta _{X'}$ is the log-crepant pull-back of $K_X+B+\bbeta _X$.
\begin{defn} For any prime divisor $Q$ on $Z$, let \[a_Q=a_Q(X,B+\bbeta)= {\rm sup}\{t \in \mathbb R|(X,B+tf^*Q +\bbeta) \ {\rm is\ glc\ over}\ \eta _Q\}.\]
In this definition "glc" means that there is an analytic open subset $Z^0\subset Z$ intersecting $Q$ such that $(X,B+a_Qf^*Q +\bbeta)$ is glc but not gklt over $Z^0$. Note that as $Z$ is normal, $Z_{\rm sing}$ has codimension at least 2 and so  $Q$ is Cartier in codimension 1. Since $(X,B+\bbeta )$ is a generalized log canonical pair, $f$ is proper, and $Z$ is relatively compact, it is easy to see that $a_Q=1$ for all but finitely many divisors $Q\subset Z$.
We can then define the {\it boundary divisor} $B_Z=B(X/Z,B+\bbeta) =\sum (1-a_Q)Q$.
\end{defn}
\begin{remark} If $\eta :X'\to X$ and $\mu :Z'\to Z$ are proper bimeromorphic maps of normal varieties and $f':X'\to Z'$ is holomorphic,  $K_{X'}+B'+\bbeta_{X'}=\eta ^*(K_X+B+\bbeta _X)$, then we say that $(X',B'+\bbeta)$ is the induced generalized pair. If $Q'=\mu ^{-1}_*Q$, then it is easy to see that $a_Q(X,B+\bbeta)=a_{Q'}(X',B'+\bbeta)$. In particular, if $B_{Z'}$ is the boundary divisor for $(X',B'+\bbeta)$, then $\mu _* B_{Z'}=B_Z$, i.e. the boundary divisor defined by the above formula is in fact a b-divisor which we denote by $\mathbf B ^Z$ (so that $B_Z=\mathbf B ^Z_Z$ and $B_{Z'}=\mathbf B ^Z_{Z'}$). 
\end{remark}
\begin{defn} If $K_X+B+\bbeta _X \equiv f^*\gamma$ for some $\del, \partial$ closed form $\gamma$, then we define the {\it moduli part} $\beta _Z:=\gamma -(K_Z+B_Z)$  of $(X/Z,B_{X}+\bbeta_{X})$. If $K_Y+B_Y+\bbeta _Y$ is the
log-crepant pull-back of $K_X+B+\bbeta _X$ and $\beta _{Z'}$ is the moduli part of $(X'/Z',B_{X'}+\bbeta_{X'})$, then it is easy to see that $\mu _* \beta _{Z'}=\beta _Z$ and so we have a b-(1,1) form $\bbeta ^Z$ such that 
$\beta _Z=\bbeta ^Z_Z$. \end{defn}
\begin{defn} 
The pair $(X/Z, B+\bbeta )$ is said to be BP stable %(resp. BP semi-stable) 
(over $Z$) if $\mathbf K + \mathbf B=\overline {(K_Z+B_Z)}$ i.e. if $K_Z+B_Z$ is $\R$-Cartier and for any contraction $f ' : X'\to  Z'$ which is birationally
equivalent to $f$ and such that the induced maps $\mu :Z'\to Z$ and $\eta :X' \to X$
are projective birational morphisms, then
$K_{Z'} + \mathbf B^Z_{Z'} = \mu ^*(K_Z + \mathbf B^Z_Z )$. Note that if $K_X+B+\bbeta _X \equiv f^*\gamma$, then the moduli part also descends to $Z$ i.e. $\bbeta ^Z=\overline {\beta _Z}$
\end{defn}

Suppose now that $\eta :X'\to X$ is a proper generically finite morphism. We define the pull-back $\bbeta ':=\eta ^*\bbeta$ as follows. Let $\nu :Y\to X$ be a log resolution of $(X,B+\bbeta)$ and $Y'$ be a resolution of the normalization of the main component of $Y\times _XX'$ so that $\rho :Y'\to Y$ is a generically finite holomorphic map and  $\nu ':Y'\to X'$ is a bimeromorphic map. Then let $\bbeta '=\overline {({\rho }^*\bbeta _{Y})}$ and $K_{Y'}+B_{Y'}={\rho }^*(K_{Y}+B_Y)$. Since $\bbeta '_{Y'}={\rho }^*\bbeta _{Y}$ is nef over $S$,  %$(Y',B_{Y'})$ is a sub-pair such that $\nu '_*B_{Y'}\geq 0$, then 
$(Y',B_{Y'}+\bbeta '/S)$ defines a generalized pair.   
Now let $B'=\nu '_*B_{Y'}$, then \[K_{X'}+B'+\bbeta'_{X'}=\nu'_*(\rho ^*(\nu^*(K_X+B_X+\bbeta _X)))=\eta ^*(K_X+B_X+\bbeta _X)\] by the projection formula.
We will say that $(X',B'+\bbeta ')$ is the {\it log crepant pull-back} of $(X,B+\bbeta)$.
\begin{lemma}\label{l-desc} If $\bbeta$ descends to $X$, then $\bbeta '$ descends to $X'$.\end{lemma}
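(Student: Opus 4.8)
The plan is to pin down the trace of $\bbeta'$ on $X'$ and show it equals $\eta^*\bbeta_X$; descent on $X'$ then follows formally. The one structural input is the commutative square produced by the fibre product: since $Y'$ dominates the main component of $Y\times_XX'$, the induced maps fit into $\nu\circ\rho=\eta\circ\nu'$ as holomorphic maps $Y'\to X$ (both sides factor through $Y\times_XX'\to X$).

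First I would use the hypothesis. Applied to the (bimeromorphic) log resolution $\nu:Y\to X$, the statement that $\bbeta$ descends to $X$ says precisely $\bbeta_Y=\nu^*\bbeta_X$. Pulling back by $\rho$, and invoking functoriality of pullback on Bott--Chern classes together with the square $\nu\rho=\eta\nu'$,
\[\bbeta'_{Y'}=\rho^*\bbeta_Y=\rho^*\nu^*\bbeta_X=(\nu\rho)^*\bbeta_X=(\eta\nu')^*\bbeta_X=\nu'^*\bigl(\eta^*\bbeta_X\bigr).\]
Thus the trace of $\bbeta'$ on $Y'$ is the $\nu'$-pullback of the class $\eta^*\bbeta_X\in H^{1,1}_{\rm BC}(X')$. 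Note that it is \emph{not} enough to know that $\bbeta'=\ol{\rho^*\bbeta_Y}$ descends to $Y'$: a b-$(1,1)$ form descending to $Y'$ need not be pulled back from the smaller model $X'$ (it could be the class of a $\nu'$-exceptional divisor). The effective content of the hypothesis "$\bbeta$ descends to $X$" is precisely that $\bbeta_Y$, hence $\rho^*\bbeta_Y$, is itself a pullback.

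It remains to propagate this to an arbitrary model. Given a bimeromorphic morphism $q:X''\to X'$, pick a common resolution $W$ with bimeromorphic maps $s:W\to Y'$ and $t:W\to X''$ such that $\nu'\circ s=q\circ t$ (e.g. resolve the main component of $Y'\times_{X'}X''$). Then, by the displayed identity and functoriality,
\[\bbeta'_W=s^*\bbeta'_{Y'}=s^*\nu'^*\bigl(\eta^*\bbeta_X\bigr)=t^*q^*\bigl(\eta^*\bbeta_X\bigr),\]
and pushing forward along the bimeromorphic map $t$ (projection formula, $t_*t^*=\mathrm{id}$) gives $\bbeta'_{X''}=q^*\bigl(\eta^*\bbeta_X\bigr)$. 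Taking $X''=X'$ with $q=\mathrm{id}$ identifies $\bbeta'_{X'}=\eta^*\bbeta_X$, and then for every $X''$ we obtain $\bbeta'_{X''}=q^*\bbeta'_{X'}$; that is, $\bbeta'$ descends to $X'$.

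This is essentially bookkeeping. The only points demanding care are the commutative square $\nu\rho=\eta\nu'$ coming from the fibre product, and the conceptual remark above that "descending to $Y'$" carries no information about $X'$ on its own — one must use that $\bbeta_Y$ is pulled back from $X$. A minor additional check, if desired, is that the class $\eta^*\bbeta_X$ (hence $\bbeta'$ and its descent to $X'$) does not depend on the chosen resolutions $Y$ and $Y'$, which is immediate from the identity $\bbeta'_{X'}=\eta^*\bbeta_X$.
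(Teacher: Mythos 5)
Your proposal is correct and follows essentially the same route as the paper: the heart of both arguments is the identity $\bbeta'_{Y'}=\rho^*\bbeta_Y=\rho^*\nu^*\bbeta_X=\nu'^*(\eta^*\bbeta_X)$ obtained from the hypothesis $\bbeta_Y=\nu^*\bbeta_X$ and the commutative square $\nu\circ\rho=\eta\circ\nu'$. The only difference is that you spell out, via common resolutions and $t_*t^*=\mathrm{id}$, the formal propagation to arbitrary models, which the paper treats as immediate from the definition of descent once the trace on $Y'$ is identified as $\nu'^*(\eta^*\bbeta_X)$.
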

\begin{proof} By assumption $\nu ^*\bbeta_X = \bbeta _{Y}$. Then \[\nu '^*(\eta ^* \bbeta_X )=\rho ^*\nu ^*\bbeta_X  =\rho ^* \bbeta _{Y}=\bbeta ' _{Y'}.\] Thus $\bbeta '=\overline {\eta ^*\bbeta_X }$, i.e. $\bbeta'$ descends to $X'$. 
\end{proof}

Given a generically finite map $\mu:Z'\to Z$ and $h:X'\to X\times _Z Z'$ a proper bimeromorphic map from a normal variety to the main component, then we obtain a base change diagram  
\begin{equation}\label{diag-bsch} 
\begin{tikzcd}
X' \arrow{r}{{\eta}} \arrow[swap]{d}{f'} &  X \arrow{d}{f}\\
Z' \arrow{r}{\mu } & Z
\end{tikzcd}
\end{equation}
\begin{lemma}\label{l-fin} Let $ B_Z=B(X/Z,B+\bbeta )$ be the boundary divisor for $f:(X,B+\bbeta )\to Z$ and $B_{Z'}=B(X'/Z',B_{X'}+\bbeta ' )$ be the boundary divisor for $f':(X',B_{X'}+\bbeta ' )\to Z '$ where $\bbeta'=\eta ^*\bbeta$ and $K_{X'}+B_{X'}+\bbeta _{X'}$ is the log-crepant pull-back of $K_X+B+\bbeta _X$. If $\mu$ is finite, then $K_{Z'}+B_{Z'}=\mu ^*(K_Z+B_Z)$.
\end{lemma}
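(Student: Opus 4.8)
The plan is to check the asserted equality of $\mathbb R$-divisors on $Z'$ one prime divisor at a time. Fix a prime divisor $Q'\subset Z'$, set $Q=\mu(Q')$, and let $r=r_{Q'}\ge 1$ be the ramification index of $\mu$ along $Q'$. Since $Z,Z'$ are normal they are regular in codimension one, so over $Z\setminus Z_{\rm sing}$ (whose complement is negligible for an equality of divisors) $K_Z+B_Z$ is $\R$-Cartier, $\mu^*(K_Z+B_Z)$ is defined, and the Hurwitz/ramification formula gives $K_{Z'}=\mu^*K_Z+\sum_{Q'}(r_{Q'}-1)Q'$ (we are in characteristic $0$, so ramification is tame), while $\mu^*B_Z=\sum_{Q'}(1-a_{\mu(Q')})\,r_{Q'}\,Q'$. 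Writing $a_{Q'}=a_{Q'}(X'/Z',B_{X'}+\bbeta')$ and $a_Q=a_Q(X/Z,B+\bbeta)$, so that $\operatorname{mult}_{Q'}B_{Z'}=1-a_{Q'}$, a comparison of coefficients along $Q'$ shows that the claimed identity $K_{Z'}+B_{Z'}=\mu^*(K_Z+B_Z)$ reduces to proving, for every $Q'$,
\[ a_{Q'}=r\cdot a_Q. \]

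To prove this I will compute in a neighbourhood of the generic point $\eta_{Q'}$ of $Q'$. At the codimension-one points $\eta_Q,\eta_{Q'}$ the local rings of $Z,Z'$ are discrete valuation rings and $Q$ is Cartier, so $\mu^*Q=rQ'$ near $\eta_{Q'}$; combined with the commutativity $\mu\circ f'=f\circ\eta$ of diagram \eqref{diag-bsch} this yields
\[ f'^*Q'=\tfrac1r\,\eta^*f^*Q \]
as $\R$-divisors in a neighbourhood of $f'^{-1}(\eta_{Q'})$. Adding $t\,f'^*Q'$ to the log-crepant relation $K_{X'}+B_{X'}+\bbeta'_{X'}=\eta^*(K_X+B+\bbeta_X)$ gives, near $f'^{-1}(\eta_{Q'})$,
\[ K_{X'}+B_{X'}+t\,f'^*Q'+\bbeta'_{X'}=\eta^*\!\Big(K_X+B+\tfrac tr\,f^*Q+\bbeta_X\Big), \]
so that, over $\eta_{Q'}$, the generalized pair $(X',B_{X'}+t\,f'^*Q'+\bbeta')$ is exactly the log-crepant pull-back, under the generically finite morphism $\eta$, of $(X,B+\tfrac tr\,f^*Q+\bbeta)$ over $\eta_Q$ (here $\bbeta'=\eta^*\bbeta$, which still descends appropriately by Lemma \ref{l-desc}).

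It remains to observe that being generalized log canonical — and more precisely the value of the generalized log canonical threshold — is unchanged under log-crepant pull-back by a generically finite morphism. This is the analogue for generically finite maps of the birational invariance used earlier to make $\mathbf B^Z$ into a $b$-divisor, and it follows by checking generalized discrepancies on a common log resolution, where the Hurwitz formula shows that a ramification index $e\ge 1$ rescales $1+a(P;\,\cdot\,)$ by $e$ and hence preserves the inequalities $a(P;\,\cdot\,)\ge-1$, $a(P;\,\cdot\,)>-1$ as well as the threshold. Granting this, $(X',B_{X'}+t\,f'^*Q'+\bbeta')$ is glc over $\eta_{Q'}$ iff $(X,B+\tfrac tr\,f^*Q+\bbeta)$ is glc over $\eta_Q$, i.e. iff $\tfrac tr\le a_Q$; taking the supremum over such $t\in\R$ gives $a_{Q'}=r\,a_Q$, as required (this also shows that $(X',B_{X'}+\bbeta')$ is glc over the preimage of the glc locus of $Z$, so that $B_{Z'}$ is defined). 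The step that needs genuine care is this last invariance statement; the reduction to $a_{Q'}=r\,a_Q$, the identity $f'^*Q'=\tfrac1r\,\eta^*f^*Q$, and interpreting $\mu^*(K_Z+B_Z)$ in codimension one are all routine.
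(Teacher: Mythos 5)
Your reduction is sound: interpreting the equality in codimension one, Riemann--Hurwitz plus the coefficient count correctly reduces the lemma to $a_{Q'}=r\,a_Q$ for each prime $Q'\subset Z'$, and the identity $f'^*Q'=\tfrac1r\,\eta^*f^*Q$ over a neighbourhood of $\eta_{Q'}$ (which avoids the other components of $\mu^{-1}(Q)$) is also correct. The gap is in the final step, which you yourself flag: the claim that the generalized lc threshold ``over $\eta_{Q'}$'' upstairs equals the one ``over $\eta_Q$'' downstairs does not follow just from checking discrepancies on a common log resolution with the Hurwitz rescaling. That rescaling compares a divisor $P'$ over $X'$ with the divisor $P$ over $X$ it lies over, and since any $P'$ whose centre on $Z'$ dominates $Q'$ restricts to a divisorial valuation whose centre on $Z$ dominates $Q$, you do get the inequality $a_{Q'}\ge r\,a_Q$. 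The reverse inequality is the delicate one: given a non-glc place $P$ of $(X,B+\tfrac tr f^*Q+\bbeta)$ whose centre dominates $Q$, you must produce an extension of the valuation $v_P$ to $K(X')$ whose centre on $Z'$ dominates the \emph{chosen} component $Q'$, and not merely some component of $\mu^{-1}(Q)$. When $\mu^{-1}(Q)$ is reducible this is not automatic: the localized morphism $\Spec\OO_{Z',\eta_{Q'}}\to\Spec\OO_{Z,\eta_Q}$ is not finite, so one cannot simply invoke crepant finite base change after localizing, and a priori all divisorial components of the preimage of (the centre of) $P$ on a common model could map onto the other components $Q''$ of $\mu^{-1}(Q)$.

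This can be repaired, but it needs an extra idea rather than a routine verification: either pass to the Galois closure $Z''\to Z'\to Z$, prove the statement for the Galois cover $Z''\to Z$ using the transitive action on the primes over $Q$ (which forces all the local thresholds and ramification indices over $Q$ to agree), and then descend to $Z'$ via the tower and the easy inequality for $Z''\to Z'$; or argue valuation-theoretically that $v_P$ and $\operatorname{ord}_{Q'}$ admit a common extension to $K(X')=K(X)\otimes_{K(Z)}K(Z')$, which uses that $K(Z)$ is algebraically closed in $K(X)$ (i.e.\ $f_*\OO_X=\OO_Z$), so that $\operatorname{Gal}$ of a Galois closure of $K(Z')/K(Z)$ lifts to automorphisms over $K(X)$. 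Note that the paper itself does not reprove this: it simply quotes \cite[Theorem 3.2]{Amb99}, where precisely this point is dealt with; so if you want a self-contained argument you should add the Galois-closure (or compositum-of-valuations) step, after which your Hurwitz computation does close the proof.
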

\begin{proof} %It $Q'\subset Z'$ is an exceptional divisor, then $\mu _*Q'=0$ and so it suffices to consider non exceptional prime divisors $Q'$ and $Q=\mu (Q')$ the corresponding prime divisor on $Z$. Restricting to some open neigborhoods of $Q',Q$, we may assume that $Z'\to Z$ is finite. The proof now follows easily by the arguments of \cite[Theorem 3.2]{Amb99}.
See \cite[Theorem 3.2]{Amb99}.
\end{proof}

\begin{defn}A morphism $f:X\to Z$ is {\it weakly semistable} \cite[Definition 0.1]{AK00} if
\begin{enumerate}
\item $X$ and $Z$ admit toroidal structures $U_X \subset X$ and $U_Z \subset Z$, with $U_X=f^{-1}(U_Z)$;
\item  with this structure, the morphism $f$ is toroidal;
\item  the morphism $f$ is equidimensional;
\item  all the fibers of the morphism $f$ are reduced; and
\item  $Z$ is nonsingular.
\end{enumerate}
See \cite[Section 1]{AK00} for a discussion on toroidal morphisms.
If $(X,B)$ is a pair, then we say that $(X,B)$ has {\it good horizontal divisors} if for every $x\in X$ there exists a local model $X_\sigma =X_{\sigma '}\times \mathbb A ^l$ where the horizontal divisors (i.e. the divisors dominating $Z$) are exactly the pull-backs of the coordinate hyperplanes in $\mathbb A ^l$ \cite[Definition 9.1]{Karu99}.   \end{defn}
%\begin{defn} Let $f : X \to Z$ be a proper map of normal analytic varieties, $B$ is a $\Q$-Cartier divisor containing no component of any fiber of $f$. Then $(X,B)$ is locally stable over $Z$ if $K_{X/Z}+B$ is $\Q$-Cartier and  $(X_z, B_z)$ is slc for every $z \in Z$  see \cite[Definition–Theorem 4.7]{Kollar22}. \end{defn}
We recall the following facts.
\begin{theorem}\label{t-ws}
\begin{enumerate}
%\item Local stability is preserved by base change \cite[Theorem 4.8]{Kollar22}. 
\item Weak semistability is preserved by base change (see \cite[Lemma 8.3]{Karu99}).
\item Let $f:X\to Z$ be a projective morphism of quasi-projective varieties and $W\subset X$ a closed subset a proper subscheme, then there exists a diagram \begin{equation}\label{diag-mod} 
\begin{tikzcd}
U_{X'}\subset X' \arrow{r}{{\mu_X}} \arrow[swap]{d}{f'} &  X \arrow{d}{f}\\
U_{Z'}\subset Z' \arrow{r}{\mu_Z} & Z
\end{tikzcd}
\end{equation}
such that $X',Z'$ are nonsingular, $\mu_{Z}$ and $\mu_X$ are birational, $f'$ is toroidal and $f^{-1}(W)\subset X'\setminus U_{X'}$ is a snc divisor \cite[Theorem 2.1]{AK00}.
\item There exists a finite surjective morphism $Z''\to Z'$ such that
denoting by $X''$ the normalization of $X \times _{Z'}Z''$, we have that $U_{X''}\subset X''$ and $U_{Z''}\subset Z''$ are toroidal embeddings,
the projection $f'' : X''\to Z''$ is an equidimensional toroidal morphism with reduced fibers, and
$(f'')^{-1}(U_{Z''})=U_{X''}$.
If $(X,B)$ is a pair, then we may assume $(X'',B'')\to Z''$ is weakly semistable with good horizontal divisors where $B''$ is the support of the inverse image of $B$ and the $X''\to X$ exceptional divisors see \cite[Proposition 5.1]{AK00} and \cite[Theorem 9.5]{Karu99}.
\end{enumerate}\end{theorem}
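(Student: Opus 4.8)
The statement is a compilation of facts from the theory of toroidal morphisms and semistable reduction over a field of characteristic zero, so the plan is not to prove anything new but to indicate how each clause is assembled from the cited literature. There is no genuine obstacle, only some bookkeeping in part (3); I describe the three parts in turn.

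For (1), I would take a weakly semistable $f:X\to Z$ and a morphism $\mu:Z'\to Z$ with $Z'$ nonsingular, set $X':=X\times_Z Z'$, and verify the five defining conditions of weak semistability one at a time: the toroidal embeddings $U_X\subset X$, $U_Z\subset Z$ pull back to toroidal embeddings on $X'$ and $Z'$ (with $U_{X'}=f'^{-1}(U_{Z'})$) and $f'$ remains toroidal; equidimensionality is stable under base change; reducedness of the fibers is preserved since we are in characteristic zero; and nonsingularity of $Z'$ is part of the hypothesis. This is \cite[Lemma 8.3]{Karu99}. For (2), the plan is to quote the toroidalization theorem of Abramovich--Karu \cite[Theorem 2.1]{AK00} directly: applied to the projective morphism $f:X\to Z$ of quasi-projective varieties and the proper closed subset $W$, it produces birational modifications $\mu_X,\mu_Z$ as in diagram \eqref{diag-mod} with $X',Z'$ nonsingular, $f'$ toroidal, and the total transform of $W$ contained in the non-toroidal boundary snc divisor $X'\setminus U_{X'}$ (the necessary resolutions of singularities being incorporated into that theorem).

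For (3), starting from the toroidal contraction $f':X'\to Z'$ produced in (2), I would apply the base-change construction of \cite[Proposition 5.1]{AK00}: there is a finite surjective morphism $Z''\to Z'$ such that the normalization $X''$ of $X\times_{Z'}Z''$ carries a toroidal embedding $U_{X''}\subset X''$, the induced $f'':X''\to Z''$ is equidimensional and toroidal with reduced fibers, and $(f'')^{-1}(U_{Z''})=U_{X''}$; since $Z''$ is then nonsingular, $f''$ is weakly semistable. To obtain in addition the good horizontal divisors property for the pair $(X'',B'')$, where $B''$ is the reduced inverse image of $B$ together with the $X''\to X$ exceptional divisors, I would invoke \cite[Theorem 9.5]{Karu99} together with \cite[Proposition 5.1]{Karu99}, possibly after one further toroidal blow-up of $X'$ (to separate the horizontal components of $B''$ into coordinate hyperplanes in the local models) and one further finite base change of $Z'$.

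The only step that is not purely formal is (3): arranging the good horizontal divisors condition simultaneously with weak semistability for the pair is the substance of Chapter~9 of \cite{Karu99} and requires iterating toroidal modifications and finite base changes until the local models stabilize. I would also flag explicitly that this entire argument is algebraic, which is exactly why the applications in this paper restrict to \emph{projective} morphisms $f$; the analytic analog of semistable reduction that one would ideally use is, as noted in the introduction, not yet available in the literature.
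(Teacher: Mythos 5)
Your proposal matches the paper exactly: the paper gives no proof of Theorem \ref{t-ws} beyond the inline citations, recalling (1) from \cite[Lemma 8.3]{Karu99}, (2) from \cite[Theorem 2.1]{AK00}, and (3) from \cite[Proposition 5.1]{AK00} together with \cite[Theorem 9.5]{Karu99}, which are precisely the sources you assemble. Your added bookkeeping on how each clause follows is fine and consistent with those references.
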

\begin{lemma}\label{l-semi} Suppose  that $(X/Z,{\rm Supp}(B))$ is weakly semistable with good horizontal divisors, $\bbeta$ descends to $X$ and $(Z,\Sigma)$ is a simple normal crossings pair such that $f(B^v)\subset \Sigma$. Then, the corresponding boundary divisor $B_Z$ descends to $Z$, i.e. $(X/Z,B)$ is BP-stable.  \end{lemma}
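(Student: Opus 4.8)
The plan is to reduce the statement to the classical (non‑generalized) canonical‑bundle‑formula bookkeeping and then to exploit the toroidal structure.

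First, since $\bbeta$ descends to $X$, for the given log resolution $\nu:X'\to X$ of $(X,B+\bbeta)$ we have $\bbeta_{X'}=\nu^*\bbeta_X$, hence $K_{X'}+B_{X'}=\nu^*(K_X+B)$; thus the generalized log discrepancies $a(P;X,B+\bbeta)$ coincide with the ordinary log discrepancies of $(X,B)$ for every prime divisor $P$ over $X$, and the same remains true after adding any multiple of $f^*Q$. Therefore $a_Q(X,B+\bbeta)=a_Q(X,B)$ for every prime divisor $Q\subset Z$, and by Lemma~\ref{l-desc} (for base changes) together with the Remark preceding the lemma (for bimeromorphic modifications of $Z$) this identification is stable; so the boundary $b$-divisor $\mathbf B^Z$, and the property of being BP-stable, depend only on $(X/Z,B)$. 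I may therefore assume $\bbeta=0$.

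Next I would determine the shape of $B_Z$. As $Z$ is nonsingular, $K_Z+B_Z$ is automatically $\R$-Cartier, so only the $b$-divisor identity $\mathbf K+\mathbf B^Z=\overline{K_Z+B_Z}$ remains. Fix a prime divisor $Q\subset Z$. By equidimensionality and reducedness of the fibres, $f^*Q=\sum_i E_i$ is the reduced sum of the components of $f^{-1}(Q)$. If $\eta_Q\in U_Z$, then near the generic fibre over $Q$ we have $f^{-1}(\eta_Q)\subset U_X$, so $X$ is smooth there, $B=B^h$ is horizontal with simple normal crossing support (good horizontal divisors), and $(X,B+tf^*Q)$ is log smooth over $\eta_Q$ with all coefficients $\le 1$ for $t\le 1$, hence log canonical there: thus $a_Q=1$ and $\mathrm{mult}_Q B_Z=0$. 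If instead $Q$ lies in the toroidal boundary of $Z$, then over $\eta_Q$ the pair $(X,B+tf^*Q)=(X,\,B^h+\sum_i(b_i+t)E_i)$ with $b_i=\mathrm{mult}_{E_i}B$ is a toroidal pair, which is log canonical exactly when all its boundary coefficients are $\le 1$; hence $\mathrm{mult}_Q B_Z=1-a_Q=\max_{i:\,f(E_i)=Q}\mathrm{mult}_{E_i}B$. Because every such $E_i$ is vertical and lies in $\mathrm{Supp}(B)$, one gets $f(E_i)\subset f(B^v)\subset\Sigma$, so $\mathrm{Supp}\,B_Z\subseteq\Sigma$ is a simple normal crossing divisor on the smooth variety $Z$.

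Finally, for descent I must show that for every projective bimeromorphic $\mu:Z'\to Z$ with induced $\eta:X'\to X$, $f':X'\to Z'$ and log-crepant pull-back $K_{X'}+B_{X'}=\eta^*(K_X+B)$, one has $K_{Z'}+B_{Z'}=\mu^*(K_Z+B_Z)$; equivalently, for every prime divisor $Q'$ on $Z'$, that $a_{Q'}(X'/Z',B_{X'})=1+a(Q';Z,B_Z)$. For $Q'$ not $\mu$-exceptional this is immediate from the Remark preceding the lemma, since $\mu$ is an isomorphism near $\eta_{Q'}$. For $Q'$ $\mu$-exceptional whose centre on $Z$ is disjoint from $\mathrm{Supp}\,B_Z$ — in particular whenever that centre is not contained in $\Sigma$ — the right-hand side equals $1+a(Q';Z)$, and the identity follows from a product/simple-normal-crossing computation, using that over $U_Z$ the morphism $f$ is smooth with simple normal crossing horizontal boundary. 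The remaining case, $Q'$ $\mu$-exceptional with centre inside $\mathrm{Supp}\,B_Z\subseteq\Sigma$, is the crux: after replacing $X'$ by a higher model (which changes neither $f'$ over $\eta_{Q'}$ nor $a_{Q'}$) and after a local toroidal modification of $Z'$ realising $Q'$, both $a_{Q'}(X'/Z',B_{X'})$ and $a(Q';Z,B_Z)$ become piecewise-linear functions of the cone data of the weakly semistable structure, and the identity reduces to the compatibility of the formula of the previous paragraph with subdivisions of fans and with finite base changes — the latter being precisely Lemma~\ref{l-fin}. This is the argument of Ambro~\cite{Amb99} and Karu~\cite{Karu99} (see also \cite{Kollar07} and \cite{Fil20}). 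I expect this last step to be the main obstacle: one must match the toroidal computation of the log canonical threshold $a_{Q'}$ with the discrepancy $a(Q';Z,B_Z)$ while keeping precise control of all multiplicities under base change — which is exactly why the hypotheses ``weakly semistable'' and ``good horizontal divisors'' are imposed, forcing $f^*Q$ to be reduced and the horizontal part of $B$ to behave like a simple normal crossing divisor.
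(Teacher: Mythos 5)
Your proposal is correct and follows essentially the same route as the paper: the paper's entire proof is to observe that since $\bbeta$ descends to $X$ it can be disregarded, and then to invoke the known BP-stability argument for ordinary pairs in the weakly semistable setting (citing \cite[4.13]{Fil20}), which is exactly your reduction plus your deferral of the toroidal discrepancy computation to \cite{Amb99}, \cite{Karu99}, \cite{Fil20}. Your extra bookkeeping of $B_Z$ and the case analysis is a harmless expansion of that citation, not a different method.
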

\begin{proof} Since $\bbeta $ descends to $X$, we can disregard $\bbeta$, and the claim follows from the usual argument for pairs, see eg. \cite[4.13]{Fil20}.\end{proof}
%\begin{lemma}\label{l-semi-finite} Suppose that $f:(X,B)\to Z$ is a morphism of normal compact analytic varieties, $f_*\OO _X=\OO_Z$, $(X,B)$ is a sub pair, which is glc over an open subset of $Z$, $m:Z'\to Z$ is finite, $f':X'\to Z'$ is holomorphic, $X'\to X\times _ZZ'$ is biholomorphic, $(X',B')$ is the induced sub-pair, $(X',B')\to Z'$ is weakly semistable with good horizontal divisors. \end{lemma}
\begin{proposition}\label{p-BPs} Let $f:(X,B+\bbeta)\to Z$ be a locally projective morphism of relatively compact normal analytic varieties such that $(X,B+\bbeta )$ is a pair which is glc over an open subset of $Z$. Then $\mathbf B ^Z$ descends to some model $Z'$ i.e. if $f':X'\to Z'$ is bimeromorphic to $f:X\to Z$ and $(X',B'+\bbeta)$ is the induced pair, $\bbeta =\overline{\bbeta _{X'}}$ then $(X'/Z',B')$ is BP-stable.
\end{proposition}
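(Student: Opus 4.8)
The plan is to reduce Proposition \ref{p-BPs} to the weakly semistable situation handled in Lemma \ref{l-semi} by means of semistable reduction, and then to transport the conclusion back through the auxiliary finite base change using Lemma \ref{l-fin}. I begin with two harmless reductions. Since $\mathbf B^Z$ is unchanged under a log-crepant modification of $(X,B+\bbeta)$ (Remark after the definition of $B_Z$), and under such a modification $\bbeta$ may be assumed to descend to the source (Definition \ref{d-gpair} and Lemma \ref{l-desc}) while $f$ stays locally projective, I may assume that $\bbeta$ descends to $X$. Second, the condition that $(X'/Z',B')$ be BP-stable on a fixed model $Z'$ — that $K_{Z'}+\mathbf B^Z_{Z'}$ is $\mathbb R$-Cartier and is compatible with pullback to every projective bimeromorphic $W\to Z'$ — is local on $Z'$, so after covering $Z$ by finitely many relatively compact open subsets over which $f$ is projective (possible since $Z$ is relatively compact), establishing the statement over each, and passing to a common model, I may assume $f$ is projective.

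Next I would apply semistable reduction. Reducing to the algebraic setting as noted in the introductory discussion, Theorem \ref{t-ws} produces a projective bimeromorphic $\mu_1 : Z_1\to Z$ with $Z_1$ smooth, a finite surjective $\rho : Z_2\to Z_1$ with $Z_2$ smooth, and models $X_1\to X$ and $X_2\to X_1$ (normalizations of the main components of the relevant fiber products) such that $f_2 : X_2\to Z_2$ is weakly semistable, the pair $(X_2,\Gamma_2)$ has good horizontal divisors — where $\Gamma_2$ is the reduced divisor supporting the total transform of $B$, all horizontal divisors, and all $X_2$-exceptional divisors — and $Z_2$ carries an snc divisor $\Sigma_2$ with $f_2(\Gamma_2^v)\subset\Sigma_2$. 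By Lemma \ref{l-desc}, $\bbeta$ still descends to $X_1$ and $X_2$. Writing $\mathbf B^{Z_i}$ for the boundary b-divisor of the fibration induced over $Z_i$, the Remark after the definition of $B_Z$ identifies $\mathbf B^{Z_1}$ with $\mathbf B^Z$ as a b-divisor over $Z$, so it suffices to prove that $\mathbf B^{Z_1}$ descends to $Z_1$.

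By Lemma \ref{l-semi} applied to $f_2$ — the horizontal part of the boundary playing no role, since $\mathbf B^{Z_2}$ only records the vertical behaviour over each prime divisor and $(X,B+\bbeta)$ is glc over the generic point of $Z$ — the b-divisor $\mathbf B^{Z_2}$ descends to $Z_2$; as $Z_2$ is smooth, $K_{Z_2}+B_{Z_2}$ is automatically $\mathbb R$-Cartier and $\mathbf K+\mathbf B^{Z_2}=\overline{K_{Z_2}+B_{Z_2}}$. Lemma \ref{l-fin} applied to $\rho$ gives $K_{Z_2}+B_{Z_2}=\rho^*(K_{Z_1}+B_{Z_1})$. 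Now let $g : W\to Z_1$ be an arbitrary projective bimeromorphic morphism, let $W_2$ be the normalization of the main component of $W\times_{Z_1}Z_2$, with finite $\nu : W_2\to W$ and bimeromorphic $g_2 : W_2\to Z_2$ so that $g\circ\nu=\rho\circ g_2$, and write $B_W$ and $B_{W_2}$ for the induced boundary divisors. Then
\[\nu^*(K_W+B_W)=K_{W_2}+B_{W_2}=g_2^*(K_{Z_2}+B_{Z_2})=g_2^*\rho^*(K_{Z_1}+B_{Z_1})=\nu^*g^*(K_{Z_1}+B_{Z_1}),\]
by Lemma \ref{l-fin} for $\nu$, then BP-stability over $Z_2$, then the previous relation, then commutativity; here $g^*(K_{Z_1}+B_{Z_1})$ is $\mathbb R$-Cartier because $Z_1$ is smooth. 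Since $\nu$ is finite surjective, $\nu_*\nu^*$ is multiplication by $\deg\nu$ on Weil $\mathbb R$-divisors, hence $\nu^*$ is injective and $K_W+B_W=g^*(K_{Z_1}+B_{Z_1})$. As $g$ was arbitrary, $\mathbf K+\mathbf B^{Z_1}=\overline{K_{Z_1}+B_{Z_1}}$, i.e.\ $(X_1/Z_1,B_1)$ is BP-stable; taking $Z'=Z_1$ completes the proof.

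The main obstacle is the semistable reduction step: this tool is at present available only algebraically, so one must descend from the locally projective analytic morphism $f$ to that setting — through relatively compact pieces over which $f$ is projective, as flagged in the introduction — and then match the reduced boundary $\Gamma_2$ furnished by Theorem \ref{t-ws} with the log-crepant boundary that defines $\mathbf B^Z$. Once these preparations are in place the rest is a formal computation with b-divisors, the only subtle point being that Lemma \ref{l-fin} is applied at the level of Weil divisors so that the injectivity of $\nu^*$ can be exploited.
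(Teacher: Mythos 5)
Your proposal is correct and follows essentially the same route as the paper: reduce to $\bbeta$ descending to $X$, localize over $Z$ and pass to the algebraic/projective setting, apply the weak semistable reduction of Theorem \ref{t-ws} so that Lemma \ref{l-semi} gives BP-stability over the finite cover, and then descend via Lemma \ref{l-fin} applied over an arbitrary birational modification of the intermediate base. Your "injectivity of $\nu^*$" step is exactly the paper's "pushing forward" conclusion, so the two arguments coincide up to notation.
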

\begin{proof} We may assume that $\bbeta$ descends to $X$ and hence we may disregard $\bbeta$ in what follows. 
The question is local over $Z$ and hence we may assume that $f$ is projective and $Z$ is relatively compact and Stein. 
By GAGA (see \cite[Apendix B, C]{AT19} for the details) we may assume that $f:X\to Z$ is a projective morphism of quasi-projective varieties.

Let $f':X'\to Z'$ and $f'':X''\to Z''$ be the morphisms defined by Theorem \ref{t-ws}. We may assume that $\pi _X^{-1}({\rm Supp}(B))+{\rm Ex}(\pi _X)$ has good horizontal divisors (over $Z''$) where $\pi _X:X''\to X$. 
Let $K_{X''}+B''=\pi _X^*(K_{X}+B)$, then $(X''/Z'',B'')$ is BP-stable by Lemma  \ref{l-semi}.
We must show that if $\nu_{Z'}:Z_1'\to Z'$ is any birational map, then $K_{Z'_1}+ B_{Z'_1}=\nu _{Z'}^*(K_{Z'}+ B_{Z'})$ where $B_{Z'_1}$ is the corresponding boundary divisor. 
Since $Z''\to Z'$ is finite, then $K_{Z''}+ B_{Z''}=\mu _{Z'}^*(K_{Z'}+ B_{Z'})$ by Lemma \ref{l-fin} (where $\mu _{Z'}:Z''\to Z'$ and $B_{Z''}$ is the corresponding boundary divisor).
Let $Z''_1$ be the normalization of $Z''\times _{Z'}Z'_1$, then $\mu_{Z'_1}:Z''_1\to Z_1'$ is finite and hence 
$\mu_{Z'_1}^*(K_{Z'_1}+B_{Z_1'})=K_{Z''_1}+ B_{Z_1''}$.
Let $\nu_{Z''} :Z_1''\to Z''$. Since $(X''/Z'',B'')$ is BP-stable and $ \mu_{Z'}\circ \nu _{Z''}=\nu _{Z'}\circ \mu_{Z'_1}$, then
\[\mu_{Z'_1}^*(K_{Z'_1}+ B_{Z_1'})=K_{Z''_1}+ B_{Z_1''}=\nu_{Z''} ^*(K_{Z''}+ B_{Z''})=\nu_{Z''} ^*(\mu_{Z'}^*(K_{Z'}+ B_{Z'}))=\mu_{Z'_1}^*(\nu_{Z'} ^*(K_{Z'}+ B_{Z'})).\]
Pushing forward, it follows that $K_{Z'_1}+ B_{Z_1'}=\nu_{Z'} ^*(K_{Z'}+ B_{Z'})$.

\end{proof}
\begin{theorem}\label{t-aws} Let $(X,B)$ be a pair and $f:X\to Z$ be a projective morphism of compact complex manifolds with connected fibers, then there exists a birational morphism $Z'\to Z$ and a finite morphism $Z''\to Z'$ and a morphism $f'':X''\to Z''$ birational to $X\times _ZZ''$ such that $(X''/Z'',B'')$ is weakly semistable with good horizontal divisors where $B''$ is the support of the inverse image of $B$ and the $X''\to X$ exceptional divisors.\end{theorem}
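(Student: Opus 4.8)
The plan is to deduce the statement from the algebraic weak semistable reduction recalled in Theorem~\ref{t-ws}, by presenting $f$ in a $\pgl$-equivariant way as a pull-back of a universal family over a projective parameter variety and then descending the algebraic construction.

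First I would fix an $f$-very ample line bundle $A$; replacing $A$ by a large power, we may assume $\mathcal E:=f_*A$ is locally free of some rank $N+1$ and that $f$ factors as a closed embedding $X\hookrightarrow \mathbb P_Z(\mathcal E)$ followed by the projection (with $\Supp B\subset \mathbb P_Z(\mathcal E)$ as well). Set $G=\pgl_{N+1}$ and let $P\to Z$ be the holomorphic principal $G$-bundle with $\mathbb P_Z(\mathcal E)=P\times^G\mathbb P^N$, so that over $P$ the bundle is trivial and $X_P:=X\times_ZP$, $B_P$ become families of closed subschemes of $\mathbb P^N$, flat with constant Hilbert polynomials over a Zariski-dense open subset of $P$. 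Let $H_0$ be the relevant reduced irreducible component of the projective scheme parametrizing pairs $D\subseteq Y$ of closed subschemes of $\mathbb P^N$ with the appropriate Hilbert polynomials, equipped with its algebraic $G$-action and the universal pair $(\mathcal U,\mathcal D)\to H_0$; after a Stein factorization we may assume $\mathcal U\to H_0$ has connected fibers. Resolving $G$-equivariantly the graph of the resulting meromorphic map from $P$ to $H_0$, and pushing down by $G$, replaces $Z$ by a bimeromorphic modification, which we absorb into the eventual $Z'\to Z$; afterwards we have a $G$-equivariant holomorphic map $c\colon P\to H_0$ with $c^*(\mathcal U,\mathcal D)=(X_P,B_P)$.

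Next I would apply Theorem~\ref{t-ws}(2)--(3) to the projective morphism $\mathcal U\to H_0$ and the pair $(\mathcal U,\mathcal D)$, in the functorial and hence $G$-equivariant form of that construction. This yields a $G$-equivariant tower $\mathcal U''\to H''\to H'\to H_0$ with $H'\to H_0$ birational, $H''\to H'$ finite, $\mathcal U''\to H''$ weakly semistable with good horizontal divisors, $\mathcal U''$ bimeromorphic to the main component of $\mathcal U\times_{H_0}H''$, and $\mathcal D''$ the support of the inverse image of $\mathcal D$ together with the exceptional divisors of $\mathcal U''\to\mathcal U$. Pulling this tower back along $c$ and taking the quotient by the free and proper $G$-action (on $P$ and on the $G$-spaces fibered over it) produces a tower $X''\to Z''\to Z'\to Z$. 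Over any trivializing open of the $G$-torsor $H''\times_{H_0}P\to Z''$ the morphism $X''\to Z''$ is a base change of $\mathcal U''\to H''$, so by Theorem~\ref{t-ws}(1) (together with the analogous stability of the ``good horizontal divisors'' condition) $f''\colon X''\to Z''$ is weakly semistable with good horizontal divisors; similarly $Z'\to Z$ is bimeromorphic and $Z''\to Z'$ is finite, since the corresponding properties of $H'\to H_0$ and $H''\to H'$ are preserved under base change along $c$ and under the descent along the $G$-torsor. Finally $X''$ is bimeromorphic to the main component of $X\times_ZZ''$ and $B''$ is the support of the inverse image of $B$ together with the $X''\to X$ exceptional divisors, as required; the hypotheses $f_*\OO_X=\OO_Z$ and connectedness of the fibers of $f$ enter through these reductions and are inherited by $\mathcal U\to H_0$.

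The main obstacle is the input that the algebraic weak semistable reduction of Theorem~\ref{t-ws} can be performed $G$-equivariantly, i.e. functorially for the $G$-action on $\mathcal U\to H_0$; this is a genuine strengthening of Theorem~\ref{t-ws} as stated, and one must invoke a functorial version of semistable reduction. Equivalently, one can work analytically locally on $Z$, where $\mathbb P_Z(\mathcal E)$ and $P$ trivialize, pull back a single algebraic semistable reduction along the local classifying maps, and glue using the transition $G$-valued cocycle; but naive gluing fails precisely because the finite cover $H''\to H'$, unlike a purely birational modification, is not recovered from its restriction to a dense open, so functoriality is exactly what forces the local pieces to agree on overlaps. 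The remaining points --- securing constancy of the Hilbert polynomials after modifying $Z$, identifying $\mathcal U''\times_{H''}Z''$ with the normalization of the main component of $X\times_ZZ''$, and tracking $B$ and the exceptional divisors through base change --- are routine given Theorem~\ref{t-ws}.
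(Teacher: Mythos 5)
Your reduction to the algebraic case is the right instinct, but, as you yourself flag, the whole argument hinges on running the weak semistable reduction of Theorem \ref{t-ws} $G$-equivariantly over the parameter space $H_0$, and this is a genuine gap rather than a technicality. Theorem \ref{t-ws} is not functorial: the birational modification $\mathcal Z'\to\mathcal Z$ could plausibly be made equivariant (functorial resolution and principalization are), but the finite base change $\mathcal Z''\to\mathcal Z'$ in the Abramovich--Karu construction is obtained from non-canonical choices (Kawamata-type coverings and subdivisions of the toroidal structure), and no equivariant or functorial version of this step is available. Without it, the tower over $H_0$ carries no $\pgl_{N+1}$-action, nothing descends along the torsor $P\to Z$, and, as you correctly observe, the local-gluing fallback fails exactly because the finite cover is not determined by its restriction to a dense open subset. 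So the proposal, as written, does not close.

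The paper's proof avoids descent altogether, and this is the repair you should make: do not ask for equivariance, ask only for a single generically finite algebraic chart for the whole family. One replaces $Z$ by a higher bimeromorphic model and takes $\mathcal Z$ to be a resolution of the image of $Z$ in the corresponding component of the Hilbert scheme, so that $Z\to\mathcal Z$ is a surjective, generically finite morphism of smooth manifolds and $(X,B)=(\mathcal X,\mathcal B)\times_{\mathcal Z}Z$ over a dense open subset of $Z$. Theorem \ref{t-ws} is then applied once, non-equivariantly, to the universal family $(\mathcal X,\mathcal B)\to\mathcal Z$, and the resulting tower is simply pulled back: $Z'=Z\times_{\mathcal Z}\mathcal Z'$, $Z''$ an appropriate resolution of $Z\times_{\mathcal Z}\mathcal Z''$ (chosen so that the preimage of $\mathcal Z''\setminus U_{\mathcal Z''}$ is a simple normal crossings divisor), and $X''=X\times_Z Z''$, $B''$ the inverse image of $B$ plus the $X''\to X$ exceptional divisors. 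The only input needed at this last stage is Theorem \ref{t-ws}(1), the stability of weak semistability (with good horizontal divisors) under base change, applied along $Z''\to\mathcal Z''$. The $\pgl$-ambiguity that forced you to introduce the torsor is absorbed into the generic finiteness of $Z\to\mathcal Z$, which costs nothing because the statement of Theorem \ref{t-aws} already permits a birational modification of $Z$ followed by a finite cover.
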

\begin{proof} Let
$\phi:(\mathcal X,\mathcal B) \to \mathcal Z$ the corresponding component of the Hilbert scheme. Replacing $\mathcal Z$ by a resolution of the image of $Z$ and replacing $Z$ by a higher model, we may 
assume that $Z\to \mathcal Z$ is surjective, generically finite of smooth manifolds, and $(X,B)=(\mathcal X,\mathcal B)\times _{\mathcal Z} Z$ over an open subset of $Z$. By Theorem \ref{t-ws}, there 
are 
\begin{enumerate} 
\item birational morphisms $\mu _{\mathcal Z}:\mathcal Z'\to \mathcal Z$ and $\mu _{\mathcal X}:\mathcal X'\to \mathcal X$ such that the induced map $\phi ':\mathcal X'\to \mathcal Z'$ is a toroidal morphism of smooth varieties,
\item a finite surjective morphism $\mathcal Z''\to \mathcal Z'$ such that denoting by $\mathcal X''$ the normalization
of $\mathcal X''\times _{\mathcal Z'}\mathcal Z''$ then $\phi '':\mathcal X''\to \mathcal Z''$ is an equidimensional toroidal morphism with reduced fibers and  $(\mathcal X'', \mathcal B'') \to \mathcal  Z''$ is weakly semistable with good
horizontal divisors where $\mathcal B''$ is the support of the inverse image of $\mathcal B$ and the $\mathcal X''\to \mathcal X$
exceptional divisors. 
\end{enumerate}
We let $Z'=Z\times _{\mathcal Z}\mathcal Z'$ and $Z''$ be an appropriate resolution of $Z\times _{\mathcal Z}\mathcal Z''$ (so that the inverse image of $\mathcal Z''\setminus U_{\mathcal Z''}$ is a divisor with simple normal crossings) and $X''=X\times _Z Z''$,  %. If $Z''=Z'\times _{\mathcal Z'}\mathcal Z''$ and 
$B''=B\times _{Z}Z''$, then   $f'':X''\to Z''$ is weakly semistable and $(X'',B'')$ has good horizontal divisors where $B''$ is the inverse image of $B$ plus the $X''\to X$ exceptional divisors.\end{proof}
\begin{remark} The morphism $f'':X''\to Z''$ is quasi-smooth in the following sense (see \cite{Taka} pages 1736-1737).
For every $x''\in X''$, there exists an open subset $x''\in U''\subset X''$ such that $(U'',{\rm Supp}(D'')|_{U''})=(\tilde U'',D_{\tilde U''})/G$ where $(\tilde U'',D_{\tilde U''})$ is a log smooth toric variety and $G$ is a finite abelian group and $f''\circ \pi:(\tilde U'',D_{\tilde U''})\to Z''$  is toric and flat and $\pi :\tilde U''\to U''$ is the quotient map.
In particular we can pick local coordinates $(x_1,\ldots , x_{n+m})$ on $U''$ and $(t_1,\ldots , t_m)$ on $Z''$ such that $(f''\circ \pi)^*(t_i)=\prod x_{j}^{k_{i,j}})$ where the $k_{i,j}$ are non-negative integers such that
$k_{i,j}\ne 0$  for at most one $i$ for each index $j$.
\end{remark}
%%%%%%%%%%%%%%%%%%%%%%%%%%%%%%%%%%%%%%%%%%%%%
\section{Adjunction of generalized pairs}
In this section we will prove a version of Kawamata's canonical bundle formula and of Kawamata's sub-adjunction for generalized K\"ahler pairs. We will also  prove plt and log canonical inversion of adjunction in this setting. 
\subsection{Towards a canonical bundle formula}
Let $f:X\to Z$ and $Z\to S$ be proper morphisms of normal relatively compact K\"ahler varieties such that $f_*\OO _X=\OO _Z$ and $(X/S,B+\bbeta )$ is a generalized  pair.
Suppose that $K_X+B+\bbeta _X=f^*\gamma $ for some $\del, \partial$ closed current $\gamma$ on $Z$.
Let $\ggamma=\overline \gamma$ so that $\ggamma _{Z'}=\mu ^*\gamma$ for any bimeromorphic map $\mu :Z'\to Z$.
We define the boundary b-divisor $\mathbf B^Z$ as above and for any base change diagram \eqref{diag-bsch}, we define $B_{Z'}$ accordingly.
In particular if $\mu :Z'\to Z$ is birational, then $B_{Z'}=\mathbf B ^Z_{Z'}$.
We also define the moduli part $\bbeta ^Z$ via
\[\bbeta ^Z=\ggamma-(\mathbf K + \mathbf B^Z),\qquad i.e.\qquad  K_{Z'}+\mathbf B^Z _{Z'}+\bbeta ^Z_{Z'}=\ggamma_{Z'}.\]
One expects, similarly to the algebraic case, that $\mathbf B ^Z,\bbeta ^Z$ descend to some model $Z'$, $(Z',\mathbf B ^Z_{Z'}) $ is klt (lc) and that $\bbeta ^Z_{Z'}$ is nef. In other words we conjecture the following.
\begin{conjecture} Let $f:(X,B+\bbeta )\to Z$ be a generalized klt (lc) pair as above, then $(Z,B_Z+\bbeta ^Z)$ is a generalized klt (lc) pair.
\end{conjecture}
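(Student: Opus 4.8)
The plan is to follow the projective blueprint for Kawamata's canonical bundle formula, replacing the use of positivity of $f_*\omega_{X/Z}$ by the analytic positivity statement of Theorem \ref{t-gue} (and ultimately Theorem \ref{L2/m1}), and to treat the boundary part and the moduli part separately. For concreteness I would first establish the statement when $\gamma$ is a Bott--Chern class, i.e.\ the situation of Theorem \ref{t-gcbf}, and under the technical hypothesis (which is not expected to be essential) that $f$ is a projective morphism. Throughout, $\bbeta$ may be assumed to descend to the total space $X$, so that it plays no role in the computation of $B_Z$.

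\emph{Boundary part.} Since the analysis of $\mathbf B^Z$ is then purely divisorial (Lemma \ref{l-desc}, Lemma \ref{l-semi}), I would work locally over $Z$, pass via GAGA to a projective morphism of quasi-projective varieties, apply the weak semistable reduction of Theorem \ref{t-ws} and Theorem \ref{t-aws} to produce a base change $f'':X''\to Z''$ which is weakly semistable with good horizontal divisors, deduce BP-stability of $(X''/Z'',B'')$ from Lemma \ref{l-semi}, and descend the conclusion along the finite and birational maps back to $Z$ via Lemma \ref{l-fin} and the argument of Proposition \ref{p-BPs}. This shows that $\mathbf B^Z$ descends to some model $Z'$ with $K_{Z'}+\mathbf B^Z_{Z'}=\mu^*(K_Z+B_Z)$, and the log canonicity (resp.\ kltness, read off from discrepancies on the snc model, cf.\ Lemma \ref{l-klt} and Theorem \ref{t-gkltlocal}) of $(Z',\mathbf B^Z_{Z'})$ follows from the corresponding property of $(X,B+\bbeta)$.

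\emph{Moduli part.} After passing to the prepared model we may arrange $K_{X/Z}+B+\bbeta_X=f^*\beta_Z$, so Theorem \ref{t-gue} (an immediate consequence of Theorem \ref{L2/m1}, which generalizes \cite{Gue20}) gives that $\beta_Z$ is pseudo-effective. To promote this to b-nefness of $\bbeta^Z$, I would invoke \cite[Theorem 2.36]{DHP22}: it suffices to check that $\bbeta^Z_{Z'}|_W$ is pseudo-effective for the normalization of every subvariety $W\subset Z'$. For each such $W$ I would restrict the family, $f_W:X_W\to W$, run semistable reduction again over $W$, and apply Theorem \ref{t-gue} to the induced generalized pair $(X_W,B_W+\bbeta_W)$, obtaining the required pseudo-effectivity of $\bbeta^Z|_W$, hence b-nefness of $\bbeta^Z$. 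Combining the two parts, $(Z',\mathbf B^Z_{Z'}+\bbeta^Z_{Z'})$ is a generalized klt (lc) pair, which is exactly the assertion that $(Z,B_Z+\bbeta^Z)$ is.

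\emph{Main obstacle.} I expect the real work to lie in Theorem \ref{L2/m1}. Following \cite{PT18}, one builds fibre-wise from the $m$-th roots of pluricanonical sections on $X_y$ (for $m$ large and divisible) a family of positive currents $\Theta_U\geq 0$ over open $U\subset Z$, glues them to a global closed positive current $\Theta\equiv K_{X/Z}+B+\bbeta$, and must control its singularities; this is the analytic heart and requires the $L^2$-extension and relative Bergman kernel machinery together with care in the gluing over the base. A second, more structural difficulty is the absence of semistable reduction for arbitrary morphisms of compact analytic varieties, which forces the projectivity hypothesis on $f$ and the detour through GAGA and the algebraic results of \cite{AK00} and \cite{Karu99}. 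By contrast, the klt/lc dichotomy and the b-divisor bookkeeping should be routine.
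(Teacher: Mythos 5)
First, note that the statement you are proving is stated in the paper as a conjecture; the paper itself only establishes it under additional hypotheses (Theorem \ref{t-gp}: $f$ a surjective projective morphism of compact K\"ahler varieties with connected fibers and $K_X+B+\bbeta_X=f^*\gamma$), and your proposal works in exactly that setting, so the scope matches. Your boundary-part argument (Proposition \ref{p-BPs} via GAGA, Theorem \ref{t-ws}, Theorem \ref{t-aws}, Lemmas \ref{l-semi} and \ref{l-fin}) and your overall plan for the moduli part (Theorem \ref{t-gue}/\ref{L2/m1} plus the criterion \cite[Theorem 2.36]{DHP22} reducing b-nefness to pseudo-effectivity on all subvarieties) coincide with the paper's strategy.

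There is, however, a genuine gap in the moduli part: your recipe ``for each subvariety $W\subset Z'$ restrict the family, $f_W:X_W\to W$, and apply Theorem \ref{t-gue}'' fails precisely when $W$ is contained in ${\rm Supp}(B_Z)$, i.e.\ in the discriminant locus $Z''\setminus U_{Z''}$. Over such a $W$ every fiber is degenerate, the induced boundary on $X_W$ can acquire coefficients $>1$, so the restricted pair need not be generalized lc over any open subset of $W$ (a hypothesis of Theorem \ref{t-gue}), and in any case the moduli part of the restricted family need not compute $\bbeta^Z|_W$. The paper treats this case by a completely different mechanism: after a relative MMP (using \cite{Fuj22} and \cite[Lemma 2.9]{Lai11}) that contracts $B^-$ and the superfluous vertical components to reach a model $\bar X$ with $K_{\bar X}+\bar B+\bbeta_{\bar X}\equiv\bar f^*\gamma''$ and $\bar B\geq \bar f^*B_{Z''}$, it performs generalized adjunction to a minimal stratum $S$ of $(\bar B)^{=1}$ dominating each component $T$ of $B_{Z''}$, obtains $K_S+B_S+\beta_S=g^*(K_T+B_T+\beta_T)$ with $\beta_{Z''}|_T=\beta_T$, and concludes by induction on the dimension that $\beta_T$ is nef; your proposal contains neither this MMP preparation nor the induction. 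Even in the complementary case $W\not\subset{\rm Supp}(B_Z)$, the paper needs weak semistability (preserved under base change) to guarantee that $\tilde X''_{\tilde W}$ is normal, not contained in ${\rm Supp}(\tilde B'')$, and that the pushed-forward pair is glc over an open subset of $\tilde W$ before Theorem \ref{t-gue} applies; these verifications are the technical content you would still need to supply.
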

We will prove this conjecture under additional conditions.
As a first step, we prove the following result which is a generalization of the main result of \cite{Gue20}.
\begin{theorem}\label{t-gue} Let $f:X\to Z$ be a surjective projective map with connected fibers between normal compact  K\"ahler varieties, $(X,B+\bbeta)$ a generalized pair which is log canonical over an open subset of $Z$, $Z$ is smooth, $\gamma$ a real (1,1)-class on $Z$, and $L$ a $\Q$-line bundle such that $h^0(X_z,L^{[m]}|_{X_z})\ne 0$ for $m\gg 0$ sufficiently divisible, $z\in Z$ general.
If  $K_X+B+\bbeta_X =f^*\gamma +L$, then $ K_{X/Z}+B+\bbeta_X$  is pseudo-effective.
\end{theorem}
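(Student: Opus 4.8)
The plan is to deduce the statement from the fiber\-wise positivity result Theorem~\ref{L2/m1}. Using $K_{X/Z}=K_X-f^*K_Z$, the hypothesis $K_X+B+\bbeta_X=f^*\gamma+L$ rewrites as
\[K_{X/Z}+B+\bbeta_X=f^*(\gamma-K_Z)+L=:f^*\gamma'+L,\]
with $\gamma'=\gamma-K_Z$ a real $(1,1)$-class on the smooth manifold $Z$. The assumption $h^0(X_z,L^{[m]}|_{X_z})\neq 0$ for general $z\in Z$ and $m$ sufficiently divisible is precisely the condition $\kappa(L|_{X_z})\geq 0$ required in Theorem~\ref{L2/m1}, and the remaining hypotheses there --- $f$ projective, surjective, with connected fibers, $X$, $Z$ normal compact K\"ahler with $Z$ smooth, and $(X,B+\bbeta)$ generalized log canonical over an open subset of $Z$ --- are all among our assumptions. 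If necessary one first passes to a log resolution $\nu:X'\to X$ (on which the nef form $\bbeta_{X'}$ actually lives), applies the theorem to the positive part of $B_{X'}$, and pushes the resulting current forward by $\nu_*$; the $\nu$-exceptional (negative) part of the discrepancy divisor pushes to zero, so this reduction is harmless. Theorem~\ref{L2/m1} then produces a closed positive current in the Bott--Chern class of $K_{X/Z}+B+\bbeta_X$, which is exactly the assertion that this class is pseudo-effective.

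For context, the mechanism of Theorem~\ref{L2/m1} is as follows. After passing to a log resolution and replacing the nef form there by a smooth representative $\theta_\epsilon$ with $\theta_\epsilon\geq-\epsilon\omega$, cover $Z$ by a locally finite family of small Stein contractible open sets $U$; on each $U$ the class $\gamma'$ has a local potential $\varphi_U$, so that over $X_U=f^{-1}(U)$ the class $K_{X/Z}+B+\theta_\epsilon$ is realized as $f^*(dd^c\varphi_U)$ plus the curvature current of a singular metric on the $\Q$-line bundle $F_U=\OO_{X_U}(L-K_{X/Z}-B)$. Following \cite{PT18}, one builds on $X_U$ a closed positive current $\Theta_{U,\epsilon}$ whose restriction to a general fiber $X_z$ is the curvature of the metric defined by $|s|^{2/m}$, with $s$ running over a basis of $H^0(X_z,(mF_U)|_{X_z})$ for $m\gg 0$ divisible; positivity in the base directions is the Berndtsson-type plurisubharmonic variation of the fiber-wise $L^2$/Bergman metrics, and the generalization of \cite{Gue20} is what allows this in the generalized log canonical (not merely klt) case. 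The local currents $\Theta_{U,\epsilon}$ are normalized so as to glue to a global closed positive current $\Theta_\epsilon$; letting $\epsilon\to 0$ and extracting a weak limit produces the desired current in the class $K_{X/Z}+B+\bbeta_X$.

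The heart of the matter --- and the reason one needs Theorem~\ref{L2/m1} rather than a direct quotation of \cite{Gue20} or \cite{PT18} --- is the passage from the klt to the log canonical case: near a log canonical center the fiber-wise sections of $(mF_U)|_{X_z}$ degenerate, so the naively constructed metric picks up poles, and one must control their Lelong numbers and integrability to guarantee that $\Theta_\epsilon$ is genuinely a closed positive current lying in the correct Bott--Chern class (and not just closed away from a divisor). A secondary difficulty, specific to the generalized K\"ahler setting, is that $\bbeta_X$ is only a nef $(1,1)$-class and not a $\Q$-Cartier divisor, so one must carry out the entire construction with the smooth approximants $\theta_\epsilon$ and verify weak convergence of the $\Theta_\epsilon$ to a limit current in the right class as $\epsilon\to 0$.
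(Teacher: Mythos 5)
Your overall strategy (pass to a log resolution, apply Theorem \ref{L2/m1}, push forward, and conclude by a limiting argument) is indeed the paper's strategy, but the step where you assert that the remaining hypotheses of Theorem \ref{L2/m1} ``are all among our assumptions'' is exactly where the real work lies, and as written your argument does not go through. Theorem \ref{L2/m1} requires (hypothesis (2)) that the boundary $D=\sum a_iD_i$ be an effective snc divisor with \emph{rational} coefficients satisfying $0<a_i<1$, and (hypothesis (3)) that the class $\alpha$ contain a \emph{smooth positive} representative. Neither is available after your reduction: the positive part $B'$ of the pulled-back boundary has components of coefficient exactly $1$ (the pair is only generalized lc), its coefficients are a priori real, and its vertical components over the locus where the pair fails to be lc can even have coefficients $>1$; and $\bbeta_{X'}$ is only nef, so it need not admit a smooth positive representative. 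Your ``heart of the matter'' paragraph attributes the klt-to-lc passage to the internal mechanism of Theorem \ref{L2/m1}, but that theorem is stated (and used) only with klt-type rational coefficients; the lc case must be dealt with in the proof of Theorem \ref{t-gue} itself, and your proposal contains no device for it. Likewise, re-running the Bergman-kernel construction with approximants $\theta_\epsilon\geq-\epsilon\omega$ and extracting weak limits of currents is not an application of Theorem \ref{L2/m1} but a modification of its proof (the fiberwise positivity from \cite{PT18} needs a genuinely (semi)positive twist), so one cannot simply cite the theorem at that point.

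The paper's proof supplies precisely the missing reductions. After the resolution one writes $B'=B^h+B^v$ and discards the vertical part ($B^v\geq 0$, so it can be added back at the end), moving $E-B^v+\epsilon H'$ into the line bundle $L^\epsilon:=\nu^*L+E-B^v+\epsilon H'$, whose restriction to a general fiber still has sections. The lc horizontal boundary is split as $B^h=B^m+F^m$ with $mF^m=\lfloor B^h\rfloor+\{mB^h\}$, so that $(X',B^m)$ is klt with rational coefficients and $F^m$ has coefficients in $[0,1/m]$; the small divisor $F^m$ is then absorbed, together with $\epsilon(f'^*\omega+H')$, into the class $\beta^{m,\epsilon}:=\bbeta_{X'}+\epsilon(f'^*\omega+H')+F^m$, which \emph{is} Kähler for $m\gg 0$, so that hypothesis (3) holds and the numerical identity $K_{X'}+B^m+\beta^{m,\epsilon}=f'^*(\gamma+\epsilon\omega)+L^\epsilon$ verifies hypothesis (4). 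Theorem \ref{L2/m1} then applies verbatim to give pseudo-effectivity of $K_{X'/Z}+B^m+\beta^{m,\epsilon}$, and the conclusion follows by letting $\epsilon\to 0$ and $m\to\infty$, using only that pseudo-effectivity is a closed condition on classes; no weak limits of currents and no modification of Theorem \ref{L2/m1} are needed. Your proposal needs these coefficient and positivity reductions spelled out; without them the application of Theorem \ref{L2/m1} is not licensed.
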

\begin{proof} Let $\nu :X'\to X$ be a resolution %and $\mu:Z'\to Z$ be resolutions such that $f':X'\to Z'$ is a holomorphic map. , $\gamma'=\mu ^*\gamma$
and write  $K_{X'}+B'+\bbeta _{X'}=\nu ^*(K_X+B+\bbeta _X)+E$ where $B',E\geq 0$ are effective divisors with no common components. We may assume that $B'$ has simple normal crossings support. 
Since %$ K_{X'/Z}+B'+\bbeta_{X'}= \eta ^*(K_{X/Z}+B+\bbeta_X)+G$, and 
$E\geq 0$ is $\nu$-exceptional, by the projection formula
it suffices to show that $K_{X'/Z}+B'+\bbeta_{X'}$ is pseudo-effective. 
Let $B'=B^h+B^v$ where the components of $B^h$ dominate $Z$ and the components of $B^v$ do not dominate $Z$. 
%Replacing $B'$ by $B^h$ and $L'=\nu ^* L$ by $L'-B^v$, we may assume that $(X',B')$ is log canonical.
Note that $(X,B^h)$ is log canonical and it suffices to show that $K_{X'/Z}+B^h+\bbeta_{X'}$ is pseudo-effective. 
%We will assume for simplicity that $(X,B)$ is klt, i.e. $\lfloor B'\rfloor \leq 0$. The log canonical case follows easily with a similar proof as below.
For every $m>0$ we will write \[mF^m:=\lfloor B^h\rfloor +\{mB^h\}\qquad {\rm and}\qquad mB^m:= m(B^h-F^m)=\lfloor mB^h\rfloor-\lfloor B^h\rfloor.\]
In particular $(X',B^m)$ is klt and $F^m$ has fixed support with coefficients in $[0,1/m]$. %$B'=B^m-E^m+F^m$ where $\lfloor mB'-B^{=1}\rfloor =m(B^m-E^m)$, $B^m,E^m\geq 0$ and $mF^m=\{mB'\}+B^{=1}$.
Let $H '$ be a relatively ample divisor and $\omega$ be a K\"ahler form on $Z$ such that ${f'}^*\omega +H'$ is K\"ahler. For any $\epsilon>0$ there is an $m>0$ such that $\beta  ^{m,\epsilon}:=\bbeta_{X'}  +\epsilon ({f'}^*\omega +H ') +F^m$ is K\"ahler and $L^{\epsilon}:=\nu ^*L+E-B^v+\epsilon H'$ is a $\Q$-line bundle such that $h^0(X'_z,(L^{\epsilon})^{[m]}|_{X'_z})\ne 0$ for $m\gg 0$ sufficiently divisible, $z\in Z$ general. We may write
\[K_{X'}+B^m+\beta ^{m,\epsilon}=f'^*(\gamma+\epsilon \omega ) +L^{\epsilon }.\]
%and $B_{Y'}^m$ the boundary part corresponding to $(X',B^m)$.
By Theorem \ref{L2/m1}, $K_{X'/Z}+B^m+\beta ^{m,\epsilon}$ is pseudo-effective.
Since being pseudo-effective is a closed condition, and \[K_{X'/Z}+B^h+\bbeta _{X'} =\lim \left(K_{X'/Z}+B^m+\beta ^{m,\epsilon}\right) ,\] it follows that  $K_{X'/Z}+B^h+\bbeta_{X'}$ is pseudo-effective.
\end{proof}
\begin{theorem}\label{t-gp} Let $(X,B+\bbeta )$ be a generalized klt (lc) pair, $f:X\to Z$ a surjective projective morphism of normal compact K\"ahler varieties with connected fibers, such that $K_X+B+\bbeta _X=f^*\gamma $ for some $\del, \partial$ closed current $\gamma$ on $Z$. Then $(Z,B_Z+\bbeta ^Z)$ is a generalized klt (lc) pair, i.e. \begin{enumerate} \item $\bbeta ^Z$ is b-nef, that is $\bbeta ^Z$ descend to some model $Z'$ so that $\bbeta ^Z_{Z'}$ is nef, and
\item $(Z',\mathbf B ^Z_{Z'}) $ is  sub-klt (sub-lc). \end{enumerate}

\end{theorem}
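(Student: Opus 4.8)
\emph{Overview and the statement (2).} The plan is to follow the strategy of the introduction: reduce to a well-prepared model, establish pseudo-effectivity of the moduli class using Theorem \ref{t-gue}, upgrade this to b-nefness via the descent criterion of \cite[Theorem 2.36]{DHP22}, and deduce the singularity statement (2) directly from the definition of the b-divisor $\mathbf B^Z$. Statement (2) is the formal part: for a prime divisor $P$ over any model $Z'$ of $Z$ the coefficient of $\mathbf B^Z$ at $P$ is $1-a_P$, where $a_P$ is computed from an induced contraction $(X'',B''+\bbeta)\to Z'$ which, being log-crepant to $f:(X,B+\bbeta)\to Z$, is again generalized lc (resp. generalized klt). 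Taking $t=0$ in the definition of $a_P$ gives $a_P\ge 0$; in the generalized klt case a sufficiently small $t>0$ still keeps the induced pair generalized lc over $\eta_P$ (finitely many divisors over $\eta_P$, all of coefficient $<1$), so $a_P>0$. Hence every coefficient of $\mathbf B^Z_{Z'}$ is $\le 1$ (resp. $<1$), i.e.\ $(Z',\mathbf B^Z_{Z'})$ is sub-lc (resp. sub-klt); the same computation on $Z$, where the relevant fibre-component coefficients lie in $[0,1]$ because $B\ge 0$, gives $B_Z\ge 0$, so $(Z,B_Z+\bbeta^Z)$ is a genuine generalized pair once (1) is known.

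\emph{Reduction to a prepared model.} Since $\mathbf B^Z$ and $\bbeta^Z$ are b-divisors and b-$(1,1)$-forms, it suffices to prove (1) after replacing $f:X\to Z$ by any birationally equivalent contraction with $X'\to X$ and $Z'\to Z$ projective birational and $\bbeta$ descending upstairs. Using Proposition \ref{p-BPs} I may assume $\mathbf B^Z$ descends to the base, so that $K_Z+B_Z$ is $\R$-Cartier; then, invoking Theorem \ref{t-aws} (weak semistable reduction, available because $f$ is projective, after GAGA to pass to the algebraic category) together with, if necessary, a further projective birational modification and a \emph{finite} base change, I may assume — after renaming — that $f:X\to Z$ is weakly semistable with good horizontal divisors, $Z$ is smooth, and the log-crepant boundary has snc support. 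Under the finite base change both $B_Z$ (Lemma \ref{l-fin}) and $\gamma$ (by pullback) transform compatibly, hence so does $\bbeta^Z$; consequently pseudo-effectivity and nefness of the moduli class on a base-changed model descend to the original one (push the positive current forward and divide by the degree).

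\emph{Pseudo-effectivity of $\bbeta^Z_Z$.} On the prepared model I re-choose the vertical part of the boundary: replacing $B$ by $B-f^*B_Z$ and using that $f$ has reduced fibres, the identity $K_X+B+\bbeta_X=f^*(K_Z+B_Z+\bbeta^Z_Z)$ becomes
\[K_{X/Z}+B+\bbeta_X=f^*\bbeta^Z_Z,\]
where now $B$ has effective snc horizontal part with coefficients $\le 1$, while its vertical part is $\le 0$ and restricts trivially to the general fibre of $f$. This last feature is exactly what lets the argument of Theorem \ref{t-gue} apply: there the vertical part $-B^v$ of the boundary is absorbed into the auxiliary $\Q$-line bundle $L^\epsilon$, which only needs sections on the general fibre, and the same absorption handles the present nonpositive vertical part; thus $f^*\bbeta^Z_Z=K_{X/Z}+B+\bbeta_X$ is pseudo-effective. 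Since $f$ is surjective (restrict a closed positive current representing this class to a general relatively very ample complete intersection, on which $f$ is finite, and push forward), $\bbeta^Z_Z$ is pseudo-effective.

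\emph{From pseudo-effective to b-nef, and the main obstacle.} Together with the pseudo-effectivity just established, \cite[Theorem 2.36]{DHP22} reduces b-nefness of $\bbeta^Z$ to showing that $\bbeta^Z_Z|_W$ is pseudo-effective for the normalization of every subvariety $W\subseteq Z$. For this I base change $f$ over $W$, pass to the main component $f_W:X_W\to W$, re-prepare it as in the second step, and identify (using Lemma \ref{l-fin}, Lemma \ref{l-semi} and BP-stability) the moduli class of $f_W$ with $\bbeta^Z_Z|_W$; the third step applied to $f_W$ then gives the needed pseudo-effectivity, and (1) follows. The two points requiring genuine care — which I expect to be the main obstacle — are (i) verifying that after weak semistable reduction the re-chosen boundary really has the shape above, so that the argument of Theorem \ref{t-gue} produces $f^*\bbeta^Z_Z$ itself (and not merely $f^*(B_Z+\bbeta^Z_Z)$ plus an effective divisor) as a pseudo-effective class; and (ii) the compatibility of the boundary $\mathbf B^Z$ and moduli $\bbeta^Z$ under base change to (normalizations of) subvarieties of $Z$. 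The analytic heart, Theorem \ref{L2/m1}/\ref{t-gue}, then enters only as a black box.
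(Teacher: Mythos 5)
There is a genuine gap in the step that upgrades pseudo-effectivity to b-nefness. Your plan is to verify the hypothesis of \cite[Theorem 2.36]{DHP22} uniformly, for \emph{every} subvariety $W\subseteq Z$, by base changing to $f_W:X_W\to W$, identifying the moduli class of $f_W$ with $\bbeta^Z_Z|_W$, and invoking Theorem \ref{t-gue}. This works only when $W$ is \emph{not} contained in the discriminant $\operatorname{Supp}(B_Z)$ (equivalently, in $Z\setminus U_Z$ after semistable reduction). If $W\subseteq T$ for a component $T$ of $B_Z$, then $X_W=X\times_ZW$ is contained in the support of the vertical part of the boundary (which over the generic point of $T$ has a coefficient-one component by the definition of $a_T$; subtracting $f^*B_Z$ as you propose only shifts these coefficients and does not remove $X_W$ from their support). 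Consequently the "restricted pair" on $X_W$ is not defined by restriction, it is not generalized lc over any open subset of $W$, and the identification of a moduli class of $f_W$ with $\bbeta^Z_Z|_W$ via Lemma \ref{l-fin}, Lemma \ref{l-semi} and BP-stability is not available; Theorem \ref{t-gue} simply has nothing to apply to. You flag this compatibility as a point "requiring genuine care," but it is not a technical verification — it is the case where your mechanism fails and a different argument is needed.

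The paper's proof handles exactly this case by a mechanism absent from your proposal: after weak semistable reduction it runs a relative MMP $\psi:X''\dasharrow\bar X$ for $(X'',B^++\delta F+\bbeta_{X''}+\epsilon H'')$ to contract $B^-$ and $F$ (using a negativity/\cite[Lemma 2.9]{Lai11} argument together with weak semistability), which arranges $\bar B\geq \bar f^*B_{Z''}$; then for each component $T$ of $B_{Z''}$ it chooses a \emph{minimal stratum} $S$ of $(\bar B)^{=1}$ dominating $T$, uses divisorial adjunction $(K_{\bar X}+\bar B+\bbeta_{\bar X})|_S=K_S+B_S+\beta_S$ with $K_S+B_S+\beta_S=g^*(K_T+B_T+\beta_T)$, and applies the statement of Theorem \ref{t-gp} itself \emph{by induction on the dimension} to $(S,B_S+\beta_S)\to T$, concluding that $\beta_{Z''}|_T=\beta_T$ is nef, hence pseudo-effective on every $W\subseteq T$. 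Your proposal treats Theorem \ref{t-gue} as the only input and sets up no induction on dimension, no adjunction to a stratum, and no MMP step making such a stratum available; so the case $W\subseteq\operatorname{Supp}(B_Z)$ — which is where the real content of the canonical bundle formula lies — is missing. The remaining parts of your outline (statement (2), descent via Proposition \ref{p-BPs}, the global pseudo-effectivity via Theorem \ref{t-gue}, and the case $W\not\subset\operatorname{Supp}(B_Z)$ by base change) do match the paper's argument.
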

The proof of this result is somewhat technical but the strategy that we will now illustrate is fairly natural. We already know that the moduli part $\bbeta ^Z$ descends to some model, so assume for simplicity that it descends to $Z$. Similarly assume that $\beta=\bbeta _X$ is nef and $B$ has simple normal crossings. We must show that $\beta_Z:=\bbeta ^Z_Z$ is nef. By \cite{DHP22}, it suffices to show that $\beta _Z|_{W}$ is pseudo-effective on (the normalization of) any subvariety $W\subset Z$. When $W=Z$, this follows from Theorem \ref{t-gue}. If $W\ne Z$ the situation is more delicate. Assume further that $f$ is weakly semistable  and that $B_Z$ is a reduced divisor supported on $Z\setminus U_Z$ (cf. Theorems \ref{t-ws}, \ref{t-aws}). 
If $T$ is any component of $B_Z$, then there is a component $S$ of $B^{=1}$ dominating $T$.
By adjunction $(K_X+B+\beta )|_S=K_S+B_S+\beta _S=(f|_S)^*(\gamma |_T)$ and so $\beta _T:=\beta (S/T,B_S+\beta _S)$ is nef by induction on the dimension. Since $\beta _Z|_T=\beta _T$, then $\beta _Z|_W$ is nef and hence pseudo-effective for any $W\subset T$.
Finally, suppose that $W\not \subset {\rm Supp}(B_Z)$. Let $X_W=X\times _ZW$, then we expect that $(X_W,B_W+\beta _W)$ is a generalized pair with mild singularities and $K_{X_W}+B_W+\beta _W=(f|_W)^*(\gamma |_{W})$ so that by induction on the dimension $\beta _Z|_W=\beta (X_W/W,B_W+\beta _W)$ is nef.
Of course there are many technical issues that we will have to address in the proof.
\begin{proof}[Proof of Theorem \ref{t-gp}] 
By Proposition \ref{p-BPs}, the boundary b-divisor $\mathbf B ^Z$ for $(X,B+\bbeta /Z)$  descends to a model $Z'$, and hence so does the moduli part $\bbeta^Z$.
It is well known (and easy to see) that $(Z',\mathbf B ^Z_{Z'}) $ is sub-klt (sub-lc).
Thus, it suffices to show that, after possibly replacing $Z'$ by a higher model, $\bbeta ^Z_{Z'}$ is nef.
%Thus, replacing $X$ and $Z$ by appropriate birational models, we may assume that $K_X+B+\bbeta _X=f^*\gamma $ where $(X,{\rm Supp}(B))$ has simple normal crossings, the coefficients of $B$ are $<1$ (resp. $\leq 1$), $Z$ is smooth, $\bbeta _X$ is nef, and $K_X+B+\bbeta _X=f^*\gamma $.

Let $f'':X''\to Z''$ and $f':X'\to Z'$ be the bi-meromorphic models of $f:X\to Z$ defined in Theorem \ref{t-aws} and let $K_{X''}+B''+\bbeta _{X''}$ and $K_{X'}+B'+\bbeta _{X'}$ be the log pull backs of $K_{X}+B+\bbeta _{X}$. We may assume that $\bbeta$ descends to $X'$. It suffices to show that $\beta _{Z'}$ is nef where 
$\beta _{Z'}:=\bbeta ^Z_{Z'}$ is the moduli part of $(X'/Z',B'+\bbeta _{X'})$.
Let $B_{Z''}$ and $\beta _{Z''}:=\bbeta ^Z_{Z''}$ be the boundary and moduli parts of $(X''/Z'',B''+\bbeta _{X''})$.
Since $\mu _{Z'}:Z''\to Z'$ is finite, then $\beta _{Z''}=\mu _{Z'}^*\beta _{Z'}$
 Thus it suffices to show that $\beta _{Z''}$ is nef (see eg. \cite[Lemma 2.38]{DHP22}).

Let $B''=B^+-B^-$ where $B^+,B^-$ are effective with no common components. 
We may assume that $B_{Z''}$ is the reduced divisor supported on $Z''\setminus U_{Z''}$, 
and we let $F$ be the sum of the components of $(f'')^*(B_{Z''})$ that are not contained in $\lfloor B^+\rfloor$.
Pick $0<\delta \ll 1$, $H''$ a relatively ample divisor and let $\psi: X''\dasharrow \bar X$
be a relative minimal model for $({X''}, B^++\delta F+\bbeta _{X''}+\epsilon H'')$ where $0<\epsilon \ll \delta$. 
We note here that 
\[K_{X''}+B^++\delta F+N_\epsilon\equiv K_{X''}+B^++\delta F+\bbeta _{X''}+\epsilon H''\equiv B^-+\delta F,\] where
$N_\epsilon :=-(K_{X''}+B'')+\epsilon H''\equiv _{Z''}\bbeta _{X''}+\epsilon H''$ is a relatively ample $\Q$-divisor and hence 
the corresponding minimal model exists by \cite{Fuj22}.

We claim that $\psi_*(B^-+\delta F)=0$. To see this, note that since for general $z\in Z$, $K_{X_z}+B_z+\bbeta _{X}|_{X_z}\equiv 0$, then 
$K_{X_{z''}}+B^+_{z''}+\bbeta _{X''}|_{X''_{z''}}\equiv B^-_{z''}$ where $z''$ denotes a preimage of $z$ on $Z''$. As $B^-_{z''}$ is exceptional for $X''_{z''}\to X_z$ and $F_{z''}=0$, then
by standard arguments $\psi$ will contract every component of $B^-_{z''}$. 
In particular $K_{\bar X_{z''}}+\bar B_{z''}+\bbeta _{\bar X}|_{\bar X_{z''}}\equiv 0$ and so
$K_{\bar X_{z''}}+\bar B_{z''}+\bbeta _{\bar X}|_{\bar X_{z''}}+\epsilon \bar H|_{\bar X_{z''}}$ is nef for all $0\leq \epsilon \ll 1$. 
%$(X''_{z''},B^+_{z''}+\bbeta _{X''}|_{X''_{z''}})$ has a good minimal model for general $z''\in Z''$. By standard arguments for projective pairs (see eg. \cite[Section 2.2]{HX13}), we may assume that this is a log canonical model of $(X''_{z''},B^+_{z''}+\bbeta _{X''}|_{X''_{z''}}+\epsilon H''_{z''})$. 
%Since $K_{\bar X_{z''}}+\bar B^+_{z''}+\bbeta _{\bar X}|_{\bar X_{z''}}\equiv 0$ (and $ \bar F_{z''}=\bar F|_{\bar X_{z''}}=0$), it follows that $\bar B^-_{z''}+\bar F_{{z''}}=0$ and hence 
Since $\bar B^-+\bar F$ is vertical, up to adding a vertical divisor which is $\equiv _{Z''}0$, we may assume that %$\bar B^-+\bar F$ is not exceptional over $Z''$. Then, 
for any divisor $P$ in the support of $\bar f_*(\bar B^-+\bar F)$, every divisor $Q$ dominating $P$ has ${\rm mult}_Q(\bar B^-+\bar F)\geq 0$ and ${\rm mult}_Q(\bar B^-+\bar F)= 0$ 
for one such component. By \cite[Lemma 2.9]{Lai11}, if ${\rm mult}_Q(\bar B^-+\bar F)> 0$ for one such $Q$, then there is a component $Q'$ dominating $P$ which is contained in ${\mathbf B}_-(\bar B^- +\delta \bar F/Z'')$. Since $\bar B^-+\delta \bar F+\epsilon H''$ is nef over $Z''$ (for $0<\epsilon \ll 1$), it follows that no such divisor exists and hence $\bar B^-+\bar F$ is  exceptional (over $Z''$).
Since $X''\to Z''$ is weakly semistable, $\bar B^-+\delta \bar F=0$ %We may then assume that ${\rm codim}\bar f(\bar B^- )\geq 2$. By \cite[Lemma 3.3]{Birkar12}, we may assume that $\bar B^-=0$ 
and so $K_{\bar X}+\bar B+\bbeta _{\bar X}\equiv \bar f ^*\gamma ''$. Note that $\bar B\geq \bar f^*B_{Z''}$.
 
By \cite[Theorem 2.36]{DHP22}, it suffices to show that for any subvariety $W\subset Z''$ with normalization $W^\nu\to W$, then  $\bbeta _{Z''}|_{W^\nu}$ is pseudo-effective.  
Let $T\subset Z''$ be any component of $B_{Z''}$. %the snc divisor supported on $Z''\setminus U_{Z''}$, then $T$ is a smooth divisor and  ${\rm mult}_T(\mathbf B_{Z''})=1$. 
There exists a component $S$ of $ (\bar B )^{=1}$ such that ${\rm mult}_S(\bar B)=1$ and $S$ dominates $T$.
We replace $S$ by a minimal stratum of $ (\bar B )^{=1}$ dominating $T$. Then $S$ is normal and by adjunction $(K_{\bar X}+\bar B+\bbeta _{\bar X})|_S=K_{S}+B_S+\beta _S$ where $(S,B_S+\beta _S)$ is generalized dlt,
$(K_{Z''}+B_{Z''})|_T=K_T+B_T$ where $(T,B_T)$ is log canonical, $B_T=(B_{Z''}-T)|_T$. Then \[K_S+B_S+\beta _S=g^*(K_{Z''}+B_{Z''}+\beta _{Z''})|_T=g^*(K_T+B_T+\beta _{T})\] where $\beta _S=\bbeta _{\bar X}|_S$, $g=\bar f|_S$. By standard arguments (see for example the claim in the proof of Theorem 1.1 of \cite{FG14}), we may assume that $g$ has connected fibers. Since $(K_{Z''}+B _{Z''})|_T=K_T+B_T$, it follows that $\beta _{Z''}|_T=\beta _T$ is the mobile part of $(S,B_S+\beta _S/T)$. By induction on the dimension, $\beta _T$ is nef. 

Thus, we may now assume that $W$ is not contained in $Z''\setminus U_{Z''}$.
Replacing $\bar B$ by $\bar B-\bar f^*B _{Z''}$ we will also assume that $ B _{Z''}=0$. 
It suffices to show that  $\bbeta _{Z''}|_{\tilde W}$ is pseudo-effective where $\mu _W:\tilde W\to W$ is a resolution or equivalently that $(K_{\tilde X''/\tilde Z}+B_{\tilde X''}+\bbeta _{\tilde X''})|_{\tilde X''_{\tilde W}}$ is pseudo-effective (see Theorem \ref{mp}.ii). We may assume that $\tilde W\subset \tilde Z$ where $\mu _{Z''}:\tilde Z\to Z''$ is a resolution. We write $\tilde X=\bar X\times _{Z''}\tilde Z$, $\tilde X''= X''\times _{Z''}\tilde Z$ 
and $\tilde X_{\tilde W}=\bar X\times _{Z''}\tilde W$,  $\tilde X''_{\tilde W}=X''\times _{Z''}\tilde W$.
We let $\mu _{\bar X}^*(K_{\bar X/Z''}+\bar B+\bbeta_{\bar X})=K_{\tilde X/\tilde Z}+\tilde B+\bbeta_{\tilde X}$ 
and
$K_{\tilde X''/\tilde Z}+\tilde B''+\bbeta_{\tilde X''}=\tilde \psi ^*(K_{\tilde X/\tilde Z}+\tilde B+\bbeta_{\tilde X})$ where $\mu _{\bar X}:{\tilde X}\to \bar X$ and $\tilde \psi:\tilde X''\dasharrow \tilde X$.
By our construction $\bbeta _{X''}$ is nef over $Z''$ and $\bbeta =\overline {\bbeta_{X''}}$. %, and $\bbeta _{\bar X}$ is a closed $(1,1)$-form with local potentials (see ???). Thus we may write $\bbeta _{X''}=\psi ^*(\bbeta _{\bar X})-E$ where $E\geq 0$ by the negativity lemma. 
Note that $\tilde X''_{\tilde W}$ is not contained in the support of $\tilde B''$ and that $\tilde X''_{\tilde W}$ and $\tilde X_{\tilde W}$ are normal. This last claim follows since $\tilde X''_{\tilde W}$ and $\tilde X_{\tilde W}$ are regular in codimension 1 and all corresponding fibers are S$_2$. 
We now let
\[(K_{\tilde X''/\tilde Z}+B_{\tilde X''}+\bbeta _{\tilde X''})|_{\tilde X''_{\tilde W}}=K_{\tilde X''_{\tilde W}/\tilde W}+B_{\tilde X''_{\tilde W}}+\bbeta _{\tilde X''_{\tilde W}}\]
where $K_{\tilde X''_{\tilde W}/\tilde W}=K_{\tilde X''/\tilde Z}|_{\tilde X''_{\tilde W}}$ (see \cite[Theorem 2.68]{Kollar22}), $B_{\tilde X''_{\tilde W}}=B_{\tilde X''}|_{\tilde X''_{\tilde W}}$ and $\bbeta _{\tilde X''_{\tilde W}}=\bbeta _{\tilde X''}|_{\tilde X''_{\tilde W}}$ is nef. 
%Write $\tilde B''=\tilde B^+-\tilde B^-$ where $\tilde B^+,\tilde B^-$ are effective with no common components.
Similarly we let $(K_{\tilde X/\tilde Z}+B_{\tilde X}+\bbeta _{\tilde X})|_{\tilde X_{\tilde W}}=K_{\tilde X_{\tilde W}/\tilde W}+B_{\tilde X_{\tilde W}}+\bbeta _{\tilde X_{\tilde W}}$.
Note that $K_{\tilde X''_{\tilde W}/\tilde W}+B_{\tilde X''_{\tilde W}}+\bbeta _{\tilde X''_{\tilde W}}=\psi _{\tilde W}^*(K_{\tilde X_{\tilde W}/\tilde W}+B_{\tilde X_{\tilde W}}+\bbeta _{\tilde X_{\tilde W}})$ and so we have a generalized pair \[\psi _{\tilde W}:(\tilde X''_{\tilde W},B_{\tilde X''_{\tilde W}}+\bbeta _{\tilde X''_{\tilde W}})\to (\tilde X_{\tilde W},(\psi _{\tilde W})_*(B_{\tilde X''_{\tilde W}}+\bbeta _{\tilde X''_{\tilde W}})).\] Since $(\psi _{\tilde W})_*(B_{\tilde X''_{\tilde W}})\geq B_{\tilde X_{\tilde W}}$, %and these two divisors agree over an open subset of $\tilde W$, 
then $(\tilde X_{\tilde W},(\psi _{\tilde W})_*(B_{\tilde X''_{\tilde W}}+\bbeta _{\tilde X''_{\tilde W}}))$ is generalized log canonical over an open subset of $\tilde W$.
By Theorem \ref{t-gue}, $K_{\tilde X''_{\tilde W}/\tilde W}+B_{\tilde X''_{\tilde W}}+\bbeta _{\tilde X''_{\tilde W}}$ is pseudo-effective. This concludes the proof.
\end{proof}

\begin{remark} We believe that if moreover $\bbeta$ admits a smooth positive representative, then so does $\bbeta ^Z$ (cf. Theorem \ref{lelong}).
\end{remark}
\subsection{Adjunction}
In what follows, for ease of exposition, we will denote  a generalized pair $(X/Z,B+\bbeta )$ simply by $(X,B+\bbeta )$.
\begin{defn}\label{d-ad}Suppose that $(X,B+\bbeta )$ is a generalized pair and $S$ is a component of $B$ of coefficient 1 with normalization $\nu :S^\nu \to S$, then we define a generalized pair  \[K_{S^\nu}+B_{S^\nu} +\bbeta ^{S^\nu}=(K_X+B+\bbeta )|_{S^\nu}\] as follows.
Let $f:X'\to X$ be a log resolution of  $(X,B+\bbeta )$ so that $X'$ is smooth, $B'$ has simple normal crossings, $\bbeta_{X'}$ is nef, $\bbeta$ descends to $X'$,  and $K_{X'}+B'+\bbeta _{X'}\equiv f^*(K_X+B+\bbeta _X )$. Let $S'=f^{-1}_*S$, then $S'$ is smooth and
by the usual adjunction for sub-klt pairs, we can write $K_{S'}+B_{S'}=(K_{X'}+B')|_{S'}$ where $B_{S'}=(B'-S')|_{S'}$ has simple normal crossings. We may assume that $\beta'$ is smooth and we let $\bbeta ^{S^\nu}=\overline{\beta  _{S'}}$ where $\beta  _{S'}:=\beta'|_{S'}$ is the induced nef (1,1) form. Notice that  $[K_{S'}+B_{S'}+\beta  _{S'}]=g^*(\gamma|_{S^\nu})$ where
$g:S'\to S^\nu$ is the induced morphism and $\gamma =[K_X+B+\beta ]$. 
Let $K_{S^\nu}+B_{S^\nu} +\beta _{S^\nu}:=g_*(K_{S'}+B_{S'}+\beta _{S'})$, then $g: ({S'},B_{S'}+\beta _{S'})\to (S,B_{S^\nu} +\beta _{S^\nu})$ defines a generalized pair; equivalently $(S',B_{S'}+\bbeta ^{S'})$ is a generalized pair.
\end{defn}
If $K_X+B$ is $\mathbb R$-Cartier, then we will write $K_{X'}+B^\sharp=f^*(K_X+B)$ and $\beta _{X'}=f^*\beta_X -E$ where $E\geq 0$ is exceptional.
Note however that $E_{S'}:=E|_{S'}$ may not be $g$-exceptional where $g:S'\to S^\nu$.
If $K_{S^\nu}+B_{S^\nu} =(K_X+B)|_{S^\nu}$ is the usual adjunction, then \[(K_X+B+\beta_X )|_{S^\nu}=K_{S^\nu}+B_{S^\nu} + g_*E_{S'}+g_*\beta_{S'}.\] 
By Lemma \ref{l-sfs}, if $(X,B+\bbeta)$ is generalized lc in codimension 2, then $K_X+B$ is $\mathbb R$-Cartier in codimension 2 and hence the above formula can always be used to compute $(K_X+B+\bbeta )|_{S^\nu}$.
%\begin{conjecture}\label{c-1}[Divisorial inversion of adjunction] Let $(X,B+\beta )$ be a generalized pair and $S$ a component of $B$ of coefficient 1 with normalization $\nu :S^\nu \to S$. Then $(X,B+\beta )$ is  generalized log canonical on a neighborhood of $S$ iff $({S^\nu},B_{S^\nu} +\beta _{S^\nu})$ is generalized log canonical. \end{conjecture}

It is clear that if $(X,B+\bbeta )$ is generalized log canonical (resp. generalized plt) then $({S^\nu},B_{S^\nu} +\beta _{S^\nu})$ is generalized log canonical (resp. generalized klt). %It is also easy to see that if $(X,B+\beta )$ is a generalized  then so is $({S^\nu},B_{S^\nu} +\beta _{S^\nu})$. 
We now will verify that the reverse implication also holds. The following statement is often referred to at {\it plt inversion of adjunction}.

\begin{theorem}\label{t-pltinv}Let $(X,B+\bbeta )$ be a generalized pair and $S$ a component of $B$ of coefficient 1 with normalization $\nu :S^\nu \to S$. Then
    $(X,B+\bbeta )$ is generalized plt on a neighborhood of $S$ iff  $S$ is normal and $({S},B_{S} +\bbeta ^{S})$ is generalized  klt.
\end{theorem}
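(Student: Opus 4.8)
The forward implication is already noted in the text (adjunction preserves singularities in the ``down'' direction), so the plan is to prove the converse: assuming $S$ is normal and $(S,B_S+\bbeta^S)$ is generalized klt, deduce that $(X,B+\bbeta)$ is generalized plt near $S$. The strategy mirrors the classical argument of Kawamata: realize $S$ as a log canonical center, restrict to it, and use the klt-ness of the restricted pair together with inversion statements that are already available. First I would reduce to the case where $X$ is relatively compact Stein, so that the structural results of the Preliminaries apply (Theorems \ref{t-dltmodel}, \ref{t-gkltlocal}). Then I would take a dlt model $f^{\rm m}:X^{\rm m}\to X$ as in Theorem \ref{t-dltmodel} for the pair $(X,B+\bbeta)$, or rather for a pair perturbed so that $S$ has coefficient exactly $1$ and every other exceptional divisor has discrepancy $>-1$; the normality of $S$ lets us take $S^{\rm m}=f^{\rm m}_{*}{}^{-1}S$ and arrange $(X^{\rm m},S^{\rm m}+\lfloor\text{rest}\rfloor^{+}+\bbeta)$ to be generalized dlt.

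The key step is then adjunction along $S^{\rm m}$: by Definition \ref{d-ad} and the discussion following it, $(K_{X^{\rm m}}+B^{\rm m}+\bbeta)|_{S^{\rm m}}=K_{S^{\rm m}}+B_{S^{\rm m}}+\bbeta^{S^{\rm m}}$, and since everything is crepant over $X$, the generalized pair $(S^{\rm m},B_{S^{\rm m}}+\bbeta^{S^{\rm m}})$ is crepant-equivalent to $(S,B_S+\bbeta^S)$, which is generalized klt by hypothesis. The plan is to run the relative MMP for $(X^{\rm m},B^{\rm m}+\bbeta)$ over $X$ and show it terminates with $X$ itself, i.e.\ that all the exceptional divisors (which lie over the non-plt locus) get contracted. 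Here I would use the connectedness/Koll\'ar--Shokurov argument: if $(X,B+\bbeta)$ were not generalized plt near $S$, there would be a divisor $E\neq S^{\rm m}$ with discrepancy $\leq -1$ whose center meets $S$, and restricting the numerical identity $K_{X^{\rm m}}+B^{\rm m}+\bbeta\equiv f^{\rm m*}(K_X+B+\bbeta_X)$ to $S^{\rm m}$ forces a divisor over $S^{\rm m}$ of discrepancy $\leq -1$ for $(S^{\rm m},B_{S^{\rm m}}+\bbeta^{S^{\rm m}})$, contradicting generalized klt-ness. To make ``the center meets $S$'' rigorous I would invoke a connectedness principle for generalized pairs: on the dlt model, $\lfloor B^{\rm m}\rfloor$ is connected in a neighborhood of every fiber of $f^{\rm m}$, which follows from the vanishing $R^{1}f^{\rm m}_{*}\OO(-\lfloor B^{\rm m}\rfloor+\text{(small)})=0$ after replacing $\bbeta$ by a numerically equivalent effective $\Q$-divisor $\Delta^{\rm m}$ as in Theorem \ref{t-gkltlocal}(2), reducing to the usual Koll\'ar--Shokurov connectedness for klt/dlt pairs.

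The main obstacle, I expect, is the passage from the numerical/analytic class $\bbeta$ to an honest effective divisor so that connectedness and the Hodge-theoretic vanishing can be applied: $\bbeta$ is only nef, not a limit of $\Q$-divisors, so the reduction ``$\bbeta\equiv_X\Delta$ with $\Delta$ effective'' must be done carefully and only locally over $X$ (this is exactly what Theorem \ref{t-gkltlocal} provides after a small $\Q$-factorialization, and its dlt analogue follows from the construction in Theorem \ref{t-dltmodel}). A secondary technical point is checking that the adjoined nef class $\bbeta^{S^{\rm m}}$ really is nef and descends, but this is built into Definition \ref{d-ad}. Once these reductions are in place, the argument is the standard Kawamata--Shokurov inversion-of-adjunction proof transported to the generalized setting, and I would conclude: $(X,B+\bbeta)$ has no divisor of discrepancy $\leq -1$ with center meeting $S$ other than $S$ itself, i.e.\ it is generalized plt on a neighborhood of $S$.
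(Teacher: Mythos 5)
Your route is the classical connectedness-lemma proof of plt inversion of adjunction, and it is genuinely different in organization from the paper's argument. The paper never passes to a dlt model, never runs an MMP, and never invokes a connectedness statement: it works on a single projective log resolution $f:X'\to X$, uses the exact sequence
\[0\to \OO _{X'}(-\lfloor B'\rfloor)\to \OO _{X'}(-\lfloor B'\rfloor +S')\to \OO _{S'}(-\lfloor B'\rfloor+S')\to 0,\]
and the relative vanishing $R^1f_*\OO_{X'}(-\lfloor B'\rfloor)=0$ (coming from $-\lfloor B'\rfloor\equiv_X K_{X'}+\{B'\}+\bbeta_{X'}$ with $\bbeta_{X'}$ nef and relatively big over $X$) to get a surjection $f_*\OO_{X'}(-\lfloor B'\rfloor+S')\to f_*\OO_{S'}(-\lfloor B'\rfloor+S')$. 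If $(S,B_S+\bbeta^S)$ is generalized klt, the target is $\OO_S$ while the source sits inside $\OO_X$, and the surjectivity forces $f_*\OO_{X'}(-\lfloor B'\rfloor+S')=\OO_X$ near $S$, hence $-\lfloor B'\rfloor+S'\geq 0$ over a neighborhood of $S$, which is exactly generalized plt-ness; the same sequence read in the other direction gives normality of $S$ in the forward implication. So the paper's proof is a streamlined version of the same vanishing-theoretic mechanism you are trying to use, but applied once and directly.

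Two points in your plan need repair. First, the step ``run the relative MMP for $(X^{\rm m},B^{\rm m}+\bbeta)$ over $X$ and show it terminates with $X$ itself'' cannot work as a strategy: whether the exceptional divisors of the dlt model are contracted is equivalent to the plt-ness you are trying to prove, so nothing can be extracted from it, and indeed you silently abandon it in favor of the connectedness argument. Second, the Koll\'ar--Shokurov connectedness principle for generalized pairs on K\"ahler varieties is not stated anywhere in the paper and is the real content of your approach; it has to be proved, and your reduction via Theorem \ref{t-gkltlocal}(2) is not a legitimate citation, since that theorem presupposes the pair is generalized klt, which is precisely not known here. The reduction can be carried out by the perturbation trick inside the proofs of Theorems \ref{t-gkltlocal} and \ref{t-dltmodel} (locally over a Stein base, on a projective log resolution, trade $\bbeta_{X'}$ for a general effective $\R$-divisor numerically equivalent over $X$ after an exceptional anti-ample perturbation, chosen so as not to create or destroy non-klt places), but the connectedness lemma one then proves rests on the very same relative vanishing $R^1f_*\OO_{X'}(-\lfloor B'\rfloor)=0$ that the paper applies directly. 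In short: your argument can be completed, but only after supplying the generalized connectedness lemma, and once that vanishing is in hand the paper's one-resolution pushforward argument reaches the conclusion (and the normality statement) with considerably less machinery.
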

\begin{proof}
   Let $f:X'\to X$ be a log resolution of the generalized pair $(X,B+\bbeta )$. We may assume that $f$ is a projective morphism. We write $K_{X'}+B'+\bbeta_{X'}=f^*(K_X+B+\bbeta_X)$ and $S'=f^{-1}_*S$. % and $\lfloor B'\rfloor =S'+B^*$.
   %We adopt the notation of Definition \ref{d-ad} so that $g: ({S'},B_{S'}+\beta _{S'})\to (S,B_{S^\nu} +\beta _{S^\nu})$ is the induced generalized pair.
   We have a short exact sequence
   \[0\to \OO _{X'}(-\lfloor B'\rfloor)\to \OO _{X'}(-\lfloor B'\rfloor +S')\to \OO _{S'}(-\lfloor B'\rfloor+S')\to 0.\]
   Since, $-\lfloor B'\rfloor\equiv _X K_{X'}+\{B'\}+\bbeta_{X'}$ where $(X',\{B'\})$ is klt and $\bbeta_{X'}$ is nef (and big) over $X$, then $R^1f_* \OO _{X'}(-\lfloor B'\rfloor)=0$ and so we have a surjection
   \[ \phi:f_* \OO _{X'}(-\lfloor B'\rfloor +S')\to f_* \OO _{S'}(-\lfloor B'\rfloor+S').\] 
   
   If $(X,B+\bbeta )$ is generalized plt (on a neighborhood of $S$), then $ -\lfloor B'\rfloor +S'\geq 0$ is effective and exceptional (over a neighborhood of $S$) and hence  $\lfloor B_{S'}\rfloor =(\lfloor B'\rfloor-S')|_{S'}\leq 0$ so that 
   $(S^\nu, B_{S^\nu}+\bbeta ^S)$ is generalized klt. We also have that $f_* \OO _{X'}(-\lfloor B'\rfloor +S')=\OO _X$ and so $\phi$ factors through 
   \[ f_* \OO _{X'}(-\lfloor B'\rfloor +S')=\OO _X\to \OO _S\subset \nu _* \OO _{S^\nu}=  f_*\OO _{S'}\subset f_* \OO _{S'}(-\lfloor B'\rfloor +S')\] and therefore $\OO _S= \nu _*\OO _{S^\nu}$ and $S$ is normal.
   
   If $(S, B_{S}+\beta _{S})$ is generalized klt (and in particular $S$ is normal), then $0\leq -\lfloor B_{S'}\rfloor =(-\lfloor B'\rfloor+S')|_{S'}$ and $f_* \OO _{S'}(-\lfloor B'\rfloor+S')=\OO _S$, and so we have a surjection $f_* \OO _{X'}(-\lfloor B'\rfloor +S')\to \OO _S$. Since $f_* \OO _{X'}(-\lfloor B'\rfloor +S')\subset \OO _X$, it follows that $f_* \OO _{X'}(-\lfloor B'\rfloor +S')= \OO _X$ on a neighborhood of $S$ and so $-\lfloor B'\rfloor +S'\geq 0$ over a neighborhood of $S$, i.e. $(X,B+\bbeta )$ is generalized plt on a neighborhood of $S$.
  
\end{proof}
The following result is known as {\it log canonical inversion of adjunction}. In the usual pair setting, it was first addressed in \cite{Kawakita07} and refined in\cite{Hacon14} and \cite{Fil20}. The following proof is based on the ideas of \cite{Hacon14}.
\begin{theorem}\label{t-lcinv}Let $(X,B+\bbeta )$ be a generalized pair and $S$ a component of $B$ of coefficient 1 with normalization $\nu :S^\nu \to S$. Then
    $(X,B+\bbeta )$ is generalized lc on a neighborhood of $S$ iff   $({S^\nu},B_{S^\nu} +\bbeta ^{S^\nu})$ is generalized  lc.
\end{theorem}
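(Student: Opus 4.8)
The plan is to reduce log canonical inversion of adjunction to the canonical bundle formula of Theorem \ref{t-gp} (equivalently Theorem \ref{t-gcbf}), following the strategy of \cite{Hacon14}. The ``only if'' direction is already observed above, so the real content is: assuming $(S^\nu,B_{S^\nu}+\bbeta^{S^\nu})$ is generalized lc, show $(X,B+\bbeta)$ is generalized lc near $S$. First I would reduce to a local question: working over a relatively compact Stein neighborhood of a point of $S$, and passing to a dlt model as in Theorem \ref{t-dltmodel}, I may assume $X$ is strongly $\Q$-factorial and that all exceptional divisors of the chosen model have discrepancy $\leq -1$; it then suffices to rule out the existence of a divisor $E$ over $X$ with $a(E;X,B+\bbeta)<-1$ whose center meets $S$. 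Suppose for contradiction such an $E$ exists.

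The key maneuver, as in \cite{Hacon14}, is to ``tie-break'': choose an effective $\R$-divisor $H\geq 0$ (using $\Q$-factoriality and the Stein hypothesis, so that we have enough divisors at our disposal) and a coefficient $t>0$ so that $(X,B+tH+\bbeta)$ is generalized lc near $S$ with a \emph{unique} non-klt place $E_0$ whose center $Z$ contains, or meets appropriately, the bad locus coming from $E$; the point is that after this modification $S$ itself together with $E_0$ pins down the geometry. Running an appropriate relative MMP (available via \cite{Fuj22}, \cite{DHP22} as used throughout the paper — here is where strong $\Q$-factoriality matters) one extracts $E_0$ on a model $g:Y\to X$ with $K_Y+S_Y+(\text{rest})+\bbeta_Y = g^*(K_X+B+tH+\bbeta_X)$, and arranges that $S_Y$ and $E_0$ are the only components of coefficient $1$, with $S_Y\to S$ and $E_0$ both dominating a common log canonical center $W$. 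Now restrict to $S_Y$: by plt/dlt adjunction $(K_Y+S_Y+\cdots+\bbeta_Y)|_{S_Y}=K_{S_Y}+B_{S_Y}+\bbeta_{S_Y}$, and the divisor $E_0|_{S_Y}$ produces a non-klt place of $(S_Y,B_{S_Y}+\bbeta_{S_Y})$ — hence, pushing down, of $(S^\nu,B_{S^\nu}+\bbeta^{S^\nu})$ — while the hypothesis tells us this pair is generalized lc, so $E_0|_{S_Y}$ is an lc place with center mapping into $W\cap S$.

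At this stage one applies the canonical bundle formula. The map $g|_{S_Y}\colon S_Y\to S^\nu$, or rather the fibration over $W$ obtained by taking a minimal lc center, has $K$ plus boundary plus moduli pulled back from the base, and Theorem \ref{t-gp} tells us that the moduli b-$(1,1)$ form $\bbeta^W$ descends and is b-nef and that the induced pair on $W$ is generalized lc. Transporting this back via the adjunction identities relating $(X,B+\bbeta)$ near $W$ to $(S^\nu,\ldots)$ near $W\cap S$ — exactly the mechanism by which \cite{Hacon14} derives a contradiction — one concludes that the putative discrepancy $a(E;X,B+tH+\bbeta)$ is forced to be $\geq -1$, contradicting $a(E;X,B+\bbeta)<-1$ (the extra $tH$ only decreases discrepancies, so this suffices). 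Hence no such $E$ exists and $(X,B+\bbeta)$ is generalized lc near $S$.

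The main obstacle I expect is the tie-breaking/uniqueness step together with the bookkeeping needed to keep $\bbeta$ under control through the MMP and the base changes: one must ensure that after adding $tH$ and running the MMP, the ``nef part'' $\bbeta$ still descends to the relevant models (as in Theorem \ref{t-gp} and Proposition \ref{p-BPs}), that the generically finite base changes preserve the adjunction formulas (Lemmas \ref{l-fin}, \ref{l-desc}), and that the minimal lc center over which one fibers is normal with connected fibers so that Theorem \ref{t-gp} genuinely applies. In the projective setting these are handled in \cite{Hacon14}; here the K\"ahler generalized-pair versions of each ingredient have been assembled in the preceding sections, so the proof is a matter of threading them together carefully rather than introducing a new idea.
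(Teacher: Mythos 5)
Your ``only if'' direction and the reduction to a local Stein setting with a dlt model are fine, but the core of your argument has a genuine gap, and the mechanism you propose is not the one that can close it. First, the tie-breaking step is inconsistent with your contradiction hypothesis: if there is a divisor $E$ over $X$ with $a(E;X,B+\bbeta)<-1$ and center meeting $S$, then adding an effective divisor $tH$ only lowers discrepancies, so you can never arrange that $(X,B+tH+\bbeta)$ is generalized lc near $S$ with a unique non-klt place. Tie-breaking is the tool used in the subadjunction/normality statements (Proposition \ref{t-normallcc}, Theorem \ref{t-Ksad}), where the pair \emph{is} lc and one isolates a single lc place; it cannot repair a non-lc pair. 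Second, the canonical bundle formula (Theorem \ref{t-gp}) is not the right instrument here: it requires a fibration over which $K+B+\bbeta$ is numerically trivial and it outputs positivity of the moduli part on the base, but it gives no control whatsoever on discrepancies of divisors over $X$ whose centers meet $S$. Your decisive sentence — that transporting the conclusion of Theorem \ref{t-gp} back ``forces $a(E;X,B+tH+\bbeta)\geq-1$'' — is precisely the step for which no argument is given, and \cite{Hacon14} does not supply one of that shape.

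The actual difficulty in lc inversion of adjunction, which your sketch never confronts, is that on a dlt model $\mu\colon Y\to X$ with $K_Y+S_Y+\Gamma+\Sigma+\bbeta_Y=\mu^*(K_X+B+\bbeta_X)$ (so $\Sigma\geq0$ records the failure of lc), the hypothesis on $(S^\nu,B_{S^\nu}+\bbeta^{S^\nu})$ only rules out $\Sigma$ meeting $S_Y$ \emph{on the model}; one still has to exclude that the image $\mu(\Sigma)$ meets $S$ on $X$. The paper's proof, following \cite{Hacon14}, handles exactly this: it runs the $K_Y+S_Y+\Gamma+\bbeta_Y$ MMP with scaling of an ample $H$, uses adjunction along $S_i$ (plus the hypothesis) to show $\Sigma_i\cap S_i=\emptyset$ on every model $Y_i$, and then, for $m\gg0$ with $s_i>\tfrac1m\geq s_{i+1}$, applies a relative vanishing theorem to get a surjection $(\mu_i)_*\OO_{Y_i}(H_i-m\Sigma_i)\to(\mu_i)_*\OO_{S_i}(H_i)$; since these pushforwards sit inside $\mathcal I_V\cdot(\mu_{i_0})_*\OO_{Y_{i_0}}(H_{i_0})$ with $V=\mu_{i_0}(\Sigma_{i_0})$, the surjection is impossible unless $V\cap S=\emptyset$. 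No canonical bundle formula enters. To fix your write-up you would need to replace the Theorem \ref{t-gp} appeal by this kind of connectedness/vanishing argument (or an equivalent), and drop the tie-breaking of a non-lc pair.
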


\begin{proof}
Following the arguments above, it is easy to see that if $(X,B+\bbeta )$ is generalized lc on a neighborhood of $S$, then  $({S^\nu},B_{S^\nu} +\bbeta ^{S^\nu})$ is generalized  lc. Thus it suffices to show that if $({S^\nu},B_{S^\nu} +\bbeta ^{S^\nu})$ is generalized  lc, then $(X,B+\bbeta )$ is generalized lc on a neighborhood of $S$.
The question is local over $X$ and so we may assume that $X$ is a relatively compact Stein variety.
%We will follow the proof of \cite{Hacon14}.

Let $\mu:Y\to X $ be a generalized dlt model given by Theorem \ref{t-dltmodel} so that $\mu$ is projective, $Y$ is $\Q$-factorial, all exceptional divisors have discrepancy $a\leq -1$, $(Y, B'_Y+\bbeta )$ is generalized dlt, and $B'_Y:=\mu ^{-1}_*B+{\rm Ex}(\mu)\leq B_Y$ where $K_Y+B_Y+\bbeta _Y=\mu ^*(K_X+B+\bbeta_X)$.
Let $S_Y:=\mu ^{-1}_*S$, $B'_Y=S_Y+\Gamma$ and $B_Y=S_Y+\Gamma+\Sigma$. Let $H$ be a sufficiently ample divisor and run the $K_Y+S_Y+\Gamma+\bbeta _Y$ mmp with scaling of $H$.
\begin{claim} There is a sequence of flips and contractions $\phi_i:Y_i\dasharrow Y_{i+1}$ and real numbers $s_0=1$, $s_i\geq s_{i+1}\geq 0$ such that $K_{Y_i}+S_i+\Gamma _i+\bbeta _{Y_i}+sH_i$ is nef over $X$ for $s_i\geq s\geq s_{i+1}$. 
If the minimal model program terminates, then we may assume that $s_{n+1}=0$, otherwise the we have $\lim s_i=0$.\end{claim}
\begin{proof} For any $s>0$, $\bbeta _Y+sH\equiv _X-(K_Y+B_Y)+sH$ is an ample $\R$-divisor and hence\[S_Y+\Gamma +\bbeta _Y+sH\equiv_X \Delta _s \] where $(Y,\Delta _s)$ is klt.
%$-(K_Y+S+\Gamma +\Sigma +sH)\equiv _X\beta +sH$ is ample over $X$ and so $S+\Gamma+\beta+sH\equiv _X\Delta _s$ where $(Y,\Delta _s)$ is klt.  
By \cite{DHP22} or \cite{Fuj22}, for any $0<\epsilon \ll 1$, we can run the $K_Y+\Delta _\epsilon $ mmp over $X$ with scaling of $(1-s)H$. The claim now follows easily.
\end{proof}
We may assume that for $i\gg i_0$, all $\phi _i$ are flips. 
%\begin{claim} $\phi _{i}|_{S_i}:S_i\dasharrow S_{i+1}$ is an isomorphism in codimension 1 for all $i\gg 0$.\end{claim}\begin{proof}    A discrepancy computation.\end{proof}
\begin{claim} For any $t>0$,  there is an $\R$-divisor $\Theta _t\equiv _X \Gamma +\bbeta _Y +tH$ such that $(Y,S_Y+\Theta _t)$ is plt.\end{claim}
\begin{proof} Since $H$ is ample and $\bbeta_Y$ is nef, then $\bbeta _Y+tH\equiv _X-(K_Y+B_Y)+tH$ is an ample $\R$-divisor and hence  \[S_Y+\Gamma +\bbeta _Y+tH\equiv_X S_Y+\Theta _t \] where $(Y,S_Y+\Theta _t)$ is plt.
%    Since $(Y,B'_Y+\beta _Y=S_Y+\Gamma +\beta _Y)$ is generalized dlt and $H$ is ample (over a Stein space $X$), then $\Gamma +\frac t 2 H \equiv _X \Gamma ^t$ where $(Y,S_Y+\Gamma ^t +\beta _Y)$ is generalized plt. Let $\nu:X'\to Y$ be a log resolution and write $K_{X'}+S_{X'}+\Gamma ^t_{X'} +\beta _{X'}=\nu ^*(K_Y+S_Y+\Gamma ^t +\beta _Y)$ where $S_{X'}=\nu ^{-1}S_Y$, $S_{X'}+\Gamma ^t_{X'}+{\rm Ex}(\nu)$ has simple normal crossings support, $\lfloor \Gamma ^t_{X'}\rfloor \leq 0$ and $\beta _{X'}$ is nef. Let $C$ be a $\nu$-exceptional divisor such that $-C$ is ample over $Y$, then $\frac t 2 \nu ^*H-\epsilon C$ is ample and $\lfloor \Gamma ^t_{X'}+\epsilon C\rfloor \leq 0$ for any $0<\epsilon \ll 1$. Then \[\beta _{X'}+t \nu ^*H-\epsilon C\equiv _X -(K_{X'}+S_{X'}+\Gamma ^t_{X'})+t \nu ^*H-\epsilon C\] is an ample  $\mathbb R$-divisor. Let $G\equiv _X -(K_{X'}+S_{X'}+\Gamma ^t_{X'})+t \nu ^*H-\epsilon C$ be a general $\R$-divisor, then $(X',S_{X'}+\Theta ^t_{X'})$ is sub plt where  $\Theta ^t_{X'}:=\Gamma ^t_{X'}+\epsilon C+G$ and $(Y,S_Y+\Theta _t:=S_Y+\nu _* \Theta ^t_{X'})$ is plt.
\end{proof}
If $t\leq s_i$, then $(Y_i, S_i+\Theta _{t,i})$ is plt and in particular $S_i$ is normal. Suppose that $\Sigma _i\cap S_i\ne \emptyset$, then 
\[\mu _i^*(K_X+B+\bbeta_X )|_{S_i}=(K_{Y_i}+S_i+\Gamma _i+\Sigma _i+\bbeta _{Y_i})|_{S_i}=K_{S_i}+{\rm Diff}_{S_i}(\Gamma _i+\Sigma _i)+\bbeta _{Y_i}|_{S_i}\]
where $K_{S_i}+{\rm Diff}_{S_i}(\Gamma _i+\Sigma _i)$ is not log canonical as ${\rm mult}_P(\Gamma _i+\Sigma _i)>1$, and this implies 
that ${\rm mult}_Q{\rm Diff}_{S_i}(\Gamma _i+\Sigma _i)>1$ where $Q$ is any component of $S_i\cap P$. But then $K_{S_i}+{\rm Diff}_{S_i}(\Gamma _i+\Sigma _i)+\bbeta _{Y_i}|_{S_i}$ is not generalized log canonical and so neither is $({S^\nu},B_{S^\nu} +\bbeta ^{S^\nu})$.
Therefore $\Sigma _i\cap S_i= \emptyset$ for all $i\geq 0$.

Fix $m>0$ such that $m\Sigma$ is an integral divisor and $s_i>\frac 1 m \geq s_{i+1}$. Then
\[H_i-m\Sigma _i -S_i\equiv _X K_{Y_i}+\Theta _{\frac 1 m , i}+(m-1)(K_{Y_i}+S_i+\Gamma _i+\frac 1 m H_i+\bbeta _{Y_i})\]
where $K_{Y_i}+\Theta _{\frac 1 m , i}$ is klt and $K_{Y_i}+S_i+\Gamma _i+\frac 1 m H_i+\bbeta _{Y_i}$ is nef and big over $X$ so that 
$R^1(\mu _i)_* \OO _{Y_i}(H_i-m\Sigma _i-S_i)=0$ and hence we have a surjection
\[(\mu _i)_* \OO _{Y_i}(H_i-m\Sigma _i)\to (\mu _i)_* \OO _{S_i}(H_i-m\Sigma _i)=(\mu _i)_* \OO _{S_i}(H_i). \]
Note that as $Y_{i_0}\dasharrow Y_i$ is a small bimeromorphic map, then the sheaves
\[ (\mu _i)_* \OO _{Y_i}(H_i-m\Sigma _i)=(\mu _{i_0})_* \OO _{Y_{i_0}}(H_{i_0}-m\Sigma _{i_0})\subset (\mu _{i_0})_* \OO _{Y_{i_0}}(H_{i_0})\]
are contained in $\mathcal I _V\cdot (\mu _{i_0})_* \OO _{Y_{i_0}}(H_{i_0})$ for $m\gg 0$ where $V=\mu _{i_0}(\Sigma _{i_0})$ and if $V\cap S\ne \emptyset$, then this contradicts the above surjection.
Therefore $\mu _{i_0}(\Sigma _{i_0})\cap S=\emptyset$ and so $(X,B+\bbeta )$ is generalized log canonical on a neighborhood of $S$.
  
\end{proof}
\begin{proof}[Proof of Theorem \ref{t-inv}] Immediate from Theorems \ref{t-pltinv} and \ref{t-lcinv}.
\end{proof}
\begin{proposition}\label{t-normallcc}
    Let $(X,B+\bbeta)$ be a generalized log canonical pair and $(X,B_0+\bbeta _0)$ a generalized klt pair. If $V$ a minimal log canonical center of $(X,B+\bbeta)$,
    %Suppose that $(X,G+\boldsymbol{\gamma})$ is generalized klt on a neighborhood of $V$,
    then $V$ is normal.
\end{proposition}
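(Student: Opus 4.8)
The plan is to reduce, via a generalized dlt model and the inversion of adjunction results just established, to the situation where $V$ is the birational image of a \emph{normal} minimal stratum, and then extract normality of $V$ from a relative vanishing/connectedness argument. First I would localize: normality can be checked at each point, so we may shrink $X$ to a relatively compact Stein neighbourhood of the generic point $\eta_V$ of $V$; the hypothesis that $(X,B_0+\bbeta_0)$ is generalized klt enters here through Theorem~\ref{t-gkltlocal}, which gives that $X$ has rational singularities and (after shrinking) carries an auxiliary effective klt structure — this is what makes the vanishing and base-change arguments below legitimate. Next, apply Theorem~\ref{t-dltmodel} to obtain a projective birational morphism $\mu\colon Y\to X$ with $Y$ strongly $\Q$-factorial, $(Y,B_Y+\bbeta)$ generalized dlt with $B_Y=\mu^{-1}_*B+\mathrm{Ex}(\mu)$, and $K_Y+B_Y+\bbeta_Y=\mu^*(K_X+B+\bbeta_X)$; since $(X,B+\bbeta)$ is generalized lc, every $\mu$-exceptional divisor has discrepancy exactly $-1$, so $B_Y$ is a genuine boundary and $\lfloor B_Y\rfloor$ carries all the non-klt data. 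The generalized lc centers of $(X,B+\bbeta)$ are exactly the images $\mu(W)$ of the strata $W$ of $\lfloor B_Y\rfloor$, so, $V$ being minimal, we may choose a stratum $S$ of $\lfloor B_Y\rfloor$ that is minimal with respect to inclusion among those with $\mu(S)=V$, and set $g:=\mu|_S\colon S\to V$. Being a stratum of a generalized dlt pair, $S$ is normal.

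The technical core uses the same relative vanishing as in the proof of Theorem~\ref{t-pltinv}: since $-\lfloor B_Y\rfloor\equiv_X K_Y+\{B_Y\}+\bbeta_Y$ with $(Y,\{B_Y\})$ klt and $\bbeta_Y$ nef and big over $X$ (bigness arranged as in the proof of Theorem~\ref{t-dltmodel} by adding a small $\mu$-exceptional ample divisor), we get $R^1\mu_*\OO_Y(-\lfloor B_Y\rfloor)=0$, hence a surjection $\OO_X\twoheadrightarrow \mu_*\OO_{\lfloor B_Y\rfloor}$ and, by cohomology and base change, connectedness of $\lfloor B_Y\rfloor\cap\mu^{-1}(x)$ for every $x$. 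Restricting $\mu$ successively to the components of $\lfloor B_Y\rfloor$ — using Theorem~\ref{t-inv} to propagate the generalized dlt (hence lc) property to each restriction $(S,B_S+\bbeta_S)$, and the analogous vanishing on the strata — one deduces that over $\eta_V$ there is a \emph{unique} minimal stratum, namely $S$; that $\lfloor B_S\rfloor$ does not dominate $V$ (otherwise one produces either a smaller stratum over $V$, contradicting the choice of $S$, or a smaller lc center, contradicting minimality of $V$), so $(S,B_S+\bbeta_S)$ is generalized klt over $\eta_V$; and that $g\colon S\to V$ has connected fibres and is an isomorphism over a neighbourhood of $\eta_V$, i.e.\ $g$ is birational. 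I expect \emph{this} — showing that the minimal stratum maps birationally rather than merely generically finitely onto $V$, equivalently that $V$ is unibranch at $\eta_V$ — to be the main obstacle; it is exactly the place where the connectedness package coming from the vanishing, combined with the inductive use of Theorem~\ref{t-inv}, is indispensable.

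Granting this, the endgame is formal. Since $S$ is normal and $g\colon S\to V$ is birational, $g_*\OO_S$ is the integral closure of $\OO_V$ in the function field of $V$, so the natural inclusion $\OO_V\hookrightarrow g_*\OO_S=\nu_*\OO_{V^\nu}$ is the normalization. On the other hand the surjection $\OO_X\twoheadrightarrow\mu_*\OO_{\lfloor B_Y\rfloor}$ composes with the restriction $\mu_*\OO_{\lfloor B_Y\rfloor}\to\mu_*\OO_S=g_*\OO_S$ (surjective, again by the vanishing applied to the strata of $\lfloor B_Y\rfloor$ containing $S$) to give a surjection $\OO_X\twoheadrightarrow g_*\OO_S$; this composite kills $\mathcal I_V$ and therefore factors as $\OO_X\twoheadrightarrow\OO_V\hookrightarrow g_*\OO_S$, forcing $\OO_V=\nu_*\OO_{V^\nu}$. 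Hence $V$ is normal. Then Proposition~\ref{t-normallcc} follows, and it is the normality established here that allows one to make sense of the restricted generalized pair on $V$ in the subadjunction Theorem~\ref{t-Ksad}.
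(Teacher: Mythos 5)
Your route (dlt model, minimal stratum, connectedness) is genuinely different from the paper's, but the step you yourself flag as the crux --- that the minimal stratum $S$ maps \emph{birationally} onto $V$ --- is not only left unproved, it is false under the hypotheses of the proposition. Take $X=\C^3$ (so $(X,0)$ is klt and the auxiliary generalized klt pair exists), $\bbeta=0$ and $B=\{x^3+y^3+z^3=0\}$, the cone over an elliptic curve. Then $(X,B)$ is lc, the origin $V$ is the minimal lc center, and blowing up the origin gives a log smooth dlt model with $\lfloor B_Y\rfloor=\tilde B+E$; the unique minimal stratum dominating $V$ is the elliptic curve $\tilde B\cap E$, which maps onto the point $V$ with a one-dimensional fibre. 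So no connectedness package will yield birationality: in general the minimal stratum over a minimal lc center is a nontrivial fibration over it --- indeed this is exactly the fibration to which the canonical bundle formula is applied in the proof of Theorem \ref{t-Ksad}. Your endgame, which rests on ``$g$ birational $\Rightarrow g_*\OO_S=\nu_*\OO_{V^\nu}$'', therefore collapses. A second point: the auxiliary pair $(X,B_0+\bbeta_0)$ is not used (only) to get rational singularities via Theorem \ref{t-gkltlocal}; in the paper it is what legitimizes the tie-breaking perturbation, which is the step that organizes the whole proof.

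The repair is that birationality is never needed. The paper localizes to a Stein neighbourhood, uses tie-breaking (this is where the klt pair enters) to arrange that $(X,B+\bbeta)$ has a \emph{unique} lc place and that it dominates $V$, and takes a log resolution $f:X'\to X$ with $B'=S'+\Delta'$, $\lfloor\Delta'\rfloor\leq 0$ exceptional. The same vanishing you invoke gives a surjection $\phi: f_*\OO_{X'}(-\lfloor\Delta'\rfloor)\to f_*\OO_{S'}(-\lfloor\Delta'\rfloor)$ with $f_*\OO_{X'}(-\lfloor\Delta'\rfloor)=\OO_X$; since $S'$ is smooth, $S'\to V$ factors through $V^\nu$, so $\phi$ factors through the chain $\OO_V\subset\nu_*\OO_{V^\nu}\subset f_*\OO_{S'}\subset f_*\OO_{S'}(-\lfloor\Delta'\rfloor)$, and surjectivity forces all inclusions to be equalities, in particular $\OO_V=\nu_*\OO_{V^\nu}$. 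Note this uses only that $S'$ is normal and dominates $V$ --- neither birationality nor connected fibres. Your final paragraph almost runs this argument already: if you drop the birationality claim and instead observe that surjectivity of $\OO_X\to g_*\OO_S$ pinches $\nu_*\OO_{V^\nu}$ between $\OO_V$ and the target, your dlt-model version can be salvaged, but you would still have to prove the surjectivity of $\mu_*\OO_{\lfloor B_Y\rfloor}\to\mu_*\OO_S$ onto the minimal stratum (the inductive vanishing you only sketch), which is precisely the bookkeeping the paper's tie-breaking to a unique lc place avoids.
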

\begin{proof}
The question is local on $X$ and hence we may assume that $X$ is relatively compact and Stein.
    By the usual tie breaking arguments, we may assume that there is a unique log canonical place for $(X,B+\bbeta )$ and that this place dominates
    $V$. %Let $f:Y\to X$ be a dlt model so that $f^*(K_X+B+\beta)=K_Y+B_Y+\beta _Y$ where $\lfloor B_Y\rfloor =S_Y$ 
    Thus there is a log resolution of $(X,B+\bbeta )$, $f:X'\to X$ such that $K_{X'}+B'+\bbeta_{X'} =f^*(K_X+B+\beta)$ where $B'=S'+\Delta '$ and $S'$
    is the unique LC place for $(X,B+\bbeta )$ so that $\lfloor \Delta '\rfloor \leq 0$ is an exceptional divisor.
    By the proof of Theorem \ref{t-pltinv}, we have a surjection \[ \phi:f_*\OO _{X'}(-\lfloor \Delta '\rfloor )\to f_*\OO _{S'}(-\lfloor \Delta '\rfloor ).\] Since $-\lfloor \Delta '\rfloor$ is effective and exceptional, then $f_*\OO _{X'}(-\lfloor \Delta '\rfloor )=\OO _X$.
    Since $S'$ is smooth, $S'\to V$ factors through the normalization $\nu :V^\nu\to V$ and so $\phi$ factors via $f_*\OO _{X'}(-\lfloor \Delta '\rfloor )=\OO _X\to \OO _V$
    and the natural inclusions \[\OO _V\subset  \nu _*\OO _{V^\nu}\subset f_*\OO _{S'}\subset f_*\OO _{S'}(-\lfloor \Delta '\rfloor ).\] It follows that $\OO _V\cong \nu _*\OO _{V^\nu}$ and hence $V$ is normal.
\end{proof}
\begin{theorem} Let $(X,B+\bbeta)$ be a generalized log canonical pair then ${\rm nklt}(X,B+\bbeta)$ is seminormal.
\end{theorem}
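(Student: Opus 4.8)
The plan is to reduce, via a log resolution, to two inputs: that a reduced simple normal crossings divisor is seminormal, and that seminormality descends along a proper surjective morphism which preserves the structure sheaf. Since the statement is local on $X$ and seminormality is a local property, we may assume that $X$ is a relatively compact Stein variety. Choose a log resolution $f\colon X'\to X$ of $(X,B+\bbeta )$, so that $X'$ is smooth, $B'$ has simple normal crossings support, $\bbeta $ descends to $X'$ (hence $\bbeta _{X'}$ is nef over $S$, so over $X$), and $K_{X'}+B'+\bbeta _{X'}=f^*(K_X+B+\bbeta _X)$. Because $\bbeta $ descends to $X'$, for every divisorial valuation $v$ over $X$ the contributions of $\bbeta $ cancel and $a(v;X,B+\bbeta )=a(v;X',B')$; since $(X',B')$ is a simple normal crossings pair and $(X,B+\bbeta )$ is generalized log canonical, the non-klt places of $(X,B+\bbeta )$ are exactly those whose centre on $X'$ is a stratum of $D':=(B')^{=1}$, the reduced sum of the coefficient-one components of $B'$. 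Hence ${\rm nklt}(X,B+\bbeta )=W:=f(D')$ with its reduced structure, and $W$ is closed since $f$ is proper.

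Next I would show that $f_*\OO _{D'}=\OO _W$. Write $\lfloor B'\rfloor =D'-E''$, where $E''\geq 0$ is $f$-exceptional (the negative-coefficient components of $B'$ are $f$-exceptional because $B=f_*B'\geq 0$) and $E''$ has no component in common with $D'$. From $-\lfloor B'\rfloor =K_{X'}+\{B'\}+\bbeta _{X'}-f^*(K_X+B+\bbeta _X)\equiv _X K_{X'}+\{B'\}+\bbeta _{X'}$, with $(X',\{B'\})$ klt and $\bbeta _{X'}$ nef and big over $X$ (as $f$ is birational), relative Kawamata--Viehweg vanishing for generalized pairs (exactly as used in the proof of Theorem \ref{t-pltinv}) gives $R^1f_*\OO _{X'}(-\lfloor B'\rfloor )=0$. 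Twisting $0\to \OO _{X'}(-D')\to \OO _{X'}\to \OO _{D'}\to 0$ by $\OO _{X'}(E'')$ and pushing forward, this vanishing yields a surjection $\OO _X=f_*\OO _{X'}(E'')\twoheadrightarrow f_*\OO _{D'}(E''|_{D'})$. Since that map is induced by restriction to $D'$ of functions pulled back from $X$, it factors through the subsheaf $f_*\OO _{D'}\subseteq f_*\OO _{D'}(E''|_{D'})$; hence $f_*\OO _{D'}=f_*\OO _{D'}(E''|_{D'})$ is a cyclic quotient of $\OO _X$, and being a reduced $\OO _X$-algebra supported on $W$ it is $\OO _W$. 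In particular $h:=f|_{D'}\colon D'\to W$ is proper with $h_*\OO _{D'}=\OO _W$.

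Finally, $D'$ is seminormal: locally it is a union of coordinate hyperplanes in a smooth ambient space, and such coordinate subspace arrangements are seminormal (this is classical for reduced simple normal crossings divisors; cf. \cite{Kollar22}). To conclude, let $\sigma \colon W^{\rm sn}\to W$ be the seminormalization. The base change $\bigl(D'\times _W W^{\rm sn}\bigr)_{\rm red}\to D'$ is finite, a universal homeomorphism, and an isomorphism on residue fields; since it is therefore birational on each irreducible component and $D'$ is seminormal, it is an isomorphism, so $h$ lifts to $h'\colon D'\to W^{\rm sn}$ with $\sigma \circ h'=h$. Then $\OO _W=h_*\OO _{D'}=\sigma _*\bigl(h'_*\OO _{D'}\bigr)\supseteq \sigma _*\OO _{W^{\rm sn}}\supseteq \OO _W$, so $\sigma _*\OO _{W^{\rm sn}}=\OO _W$; as $\sigma $ is finite it is an isomorphism, i.e. $W={\rm nklt}(X,B+\bbeta )$ is seminormal.

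The main obstacle I expect is the middle step: identifying ${\rm nklt}(X,B+\bbeta )$ with $f(D')$ and, above all, running the Kawamata--Viehweg type vanishing in the generalized Kähler setting to upgrade the set-theoretic description to the scheme-theoretic identity $f_*\OO _{D'}=\OO _W$. Once this is in place, the reduction to the seminormality of reduced simple normal crossings divisors and the descent of seminormality are essentially formal.
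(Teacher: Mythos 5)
Your proposal is correct and follows essentially the same route as the paper: a log resolution, the short exact sequence for the reduced coefficient-one divisor twisted by the exceptional divisor $E''$ (the paper's $A$), relative Kawamata--Viehweg vanishing in the generalized setting to get $\OO_X\twoheadrightarrow f_*\OO_{D'}(E''|_{D'})$ and hence $f_*\OO_{D'}=\OO_W$, and then seminormality of snc divisors together with the universal property of seminormalization to conclude $\sigma_*\OO_{W^{\rm sn}}=\OO_W$. The only differences are expository: you spell out the identification ${\rm nklt}(X,B+\bbeta)=f(D')$ and the cyclic-quotient argument that the paper states more briefly.
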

\begin{proof} The corresponding result for log canonical pairs is contained in \cite{Amb98}. We will follow the approach of \cite{Kollar07}.
Let $\nu :X'\to X$ be a log resolution of $(X,B+\bbeta)$ and write $K_{X'}+B'+\bbeta _{X'}=\nu ^*(K_X+B+\bbeta _X)$. If $B'=S-A+\{ B'\}$ where $S,A$ are effective Weil divisors without common components, then $S$ is seminormal as it is a divisor with simple normal crossings and hence $f|_S:S\to W$ factors through the seminormilazation $h:W^{\rm sn}\to W$ via $g:S\to W^{\rm sn}$. We have a short exact sequence
\[0\to \OO _{X'}(A-S)\to \OO _{X'}(A)\to \OO _S(A|_S)\to 0.\]
Since $A-S\equiv _X K_{X'}+\{B'\}+\bbeta _{X'}$ and $\bbeta _{X'}$ is nef and big over $X$, it follows that $R^1\nu _* \OO _{X'}(A-S)=0$ and hence
$\nu _* \OO _{X'}(A)\to \nu _*\OO _S(A|_S)$. Since $A$ is $\nu $-exceptional, $\nu _* \OO _{X'}(A)=\OO _X$ and hence $\nu _*\OO _S(A|_S)=\OO _W$.
But $\nu _*\OO _S(A|_S)\supset h_*g_*\OO _S=h_*\OO _{W^{\rm sn}}$ and so $h_*\OO _{W^{\rm sn}}=\OO _W$ i.e. $h:W^{\rm sn}\to W$ is an isomorphism.
\end{proof}

%%%%%%%%%%%%%%%%%%%%%%

%\section{Sub-adjunction for generalized pairs}

We will now prove that Theorem \ref{t-Ksad} follows from Theorem \ref{t-gcbf}.
%\begin{lemma} Let $f:X\to Y$ be a projective morphism of analytic varieties, and $(X,B)$ a pair, then there exists an open subset $Y^0\subset Y$ and a resolution of the main  such that\end{lemma} \begin{proof} Shrinking $Y$, we may assume that $f$ is flat and so is $f$ restricted to each component of $B$. There is a morphism $\phi:Y\to \mathcal H:={\rm Hilb}(\mathbb P ^N)$ such that $X=\mathcal U \times _H Y$ and ${\rm Supp}(B)= \mathcal S\times _H Y$ where $\mathcal S\subset \mathcal U \to \mathcal H$ are the components of the universal family corresponding to $X$ and the support of $B$. By \cite[Theorem 4.7]{ALT19}, there is a generically finite map $\mathcal Y\to \overline {\phi (Y)}$ and a resolution $\mathcal X$ of the main component of $\mathcal U \times _{\mathcal H}\mathcal Y$ and divisors $\mathcal B\subset \mathcal X$ and $\Sigma \subset \mathcal Y$ such that $\varphi :(\mathcal X,{\rm Supp}(\mathcal B) )\to (\mathcal Y,\Sigma)$ is semistable. We may assume that there are open subsets $\mathcal Y^0\subset \mathcal Y$ and $Y^0\subset Y$ and a morphism $\mathcal Y^0\to Y^0$ and such that  if $(\mathcal X^0, \mathcal B^0)=(\mathcal X, \mathcal B) \times _{\mathcal Y}\mathcal Y^0$ and $(X^0,B^0)=(X,B)\times _Y Y^0$, then $\xi: \mathcal X^0\to X^0$ is a morphism and  $K_{\mathcal X^0}+\mathcal B^0=\xi ^*(K_{X^0}+B^0)$.\end{proof}
\begin{proof}[Proof of Theorem \ref{t-Ksad}] 
We may assume that $W$ has codimension $\geq 2$.
By Proposition \ref{t-normallcc}, $W$ is normal and by a standard tie breaking argument, we may assume that there is a unique log canonical place 
for an auxiliary pair $(X,B^\sharp+\bbeta^\sharp)$. Let $f:X'\to X$ be the corresponding dlt model (Theorem \ref{t-dltmodel}), then $f$ has a unique exceptional divisor $S'$ and $f^*(K_X+B+\bbeta _X)=K_{X'}+S'+B'+\bbeta _{X'}$ where $(X',S'+B'+\bbeta ')$ is plt. Note that by the proof of Proposition \ref{t-normallcc}, $S'\to W$ has connected fibers.
By Theorem \ref{t-pltinv}, $(S', B_{S'}+\bbeta _{S'})$ is generalized klt where $K_{S'}+B_{S'}+\bbeta _{S'}=(K_{X'}+S'+B'+\bbeta _{X'})|_{S'}$.
Let $\nu :S'\to W$, then $K_{S'}+B_{S'}+\bbeta _{S'}=\nu ^*((K_X+B+\bbeta _X)|_W)$ and so by Theorem \ref{t-gcbf}, $(K_X+B+\bbeta _X)|_W\equiv K_W+B_W+\bbeta _W$ where $(W,B_W+\bbeta _W)$ is generalized klt.
\end{proof}
\begin{remark}\label{r-ad} The above arguments show that if $(X,B+\bbeta)$ and $(X,B'+\bbeta ')$ are generalized pairs, $V\subset X$ is a subvariety and $U\subset X$ an open subset such that $(U,(B+\bbeta )|_U)$ is generalized lc, $(U,(B'+\bbeta ')|_U)$ is generalized klt and $V\cap U$ is a minimal log canonical center of $(U,(B+\bbeta )|_U)$, then $(K_X+B+\bbeta)|_{V^\nu}=K_{V^\nu}+B_{V^\nu}+\bbeta ^{V^\nu}$ where $V^\nu \to V$ is the normalization and $(V^\nu, B_{V^\nu}+\bbeta ^{V^\nu})$ is a generalized pair.

\end{remark}

%%%%%%%%%%%%%%%%%%%%%%%%%%%%%%%%%%%%%%%%%%%%%%%%%%%%%%%%%%%%%%%%%%%%%%%%%%%%%%%%%%%%%%%%%%%%%%%%%%%%%%%%%%%%%%%%%%%%%%%%%%%%%%%%%%%%%%%%%%%%%%%%%%%%%%%%%%%%%%%%%%%%%%%%

\section{Cone Theorem for generalized klt pairs}

The results in this section are inspired by \cite{CH20}. They suggest that one of the main obstructions to the higher dimensional minimal model program for K\"ahler varieties is  Conjecture \ref{c-BDPP13}.

\begin{proposition}\label{p-ray}Assume Conjecture \ref{c-BDPP13} in dimension $\leq n-1$.  Let $X$ be a compact $\Q$-factorial K\"ahler $n$-fold such that $(X,B+\bbeta)$ is generalized klt, $K_X+B+\bbeta _X$ is pseudo-effective, and $\omega $ be a K\"ahler form such that $\alpha:=[K_X+B+\bbeta _X+\omega]$ is nef and big but not K\"ahler. Then there is an $\alpha $-trivial  rational curve $C$ such that $0<-(K_X+B+\bbeta _X)\cdot C=\omega \cdot C\leq 2\dim X$.
\end{proposition}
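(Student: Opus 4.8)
The plan is to follow the template of \cite{CH20}. Since $\alpha$ is nef and big but not K\"ahler, the theorem of Collins and Tosatti (identifying the non-K\"ahler locus of a nef and big class with its null locus) shows that $\Null(\alpha):=\bigcup\{V\subset X : \int_V\alpha^{\dim V}=0\}$ is a nonempty proper analytic subset of $X$. The aim is to produce, inside $\Null(\alpha)$, an $\alpha$-trivial rational curve $C$ with $-(K_X+B+\bbeta_X)\cdot C\leq 2\dim X$; note that for any curve $C$ with $\alpha\cdot C=0$ one automatically has $-(K_X+B+\bbeta_X)\cdot C=\omega\cdot C>0$, since $K_X+B+\bbeta_X=\alpha-\omega$ and $\omega$ is a K\"ahler form, so the real content is the rationality of $C$ together with the bound on its degree.

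First I would show that an irreducible subvariety $Z\subseteq\Null(\alpha)$ of minimal possible dimension $d$ --- so that $\int_Z\alpha^{d}=0$, and $\alpha|_W$ is nef and big for every proper subvariety $W\subsetneq Z$, and $1\leq d\leq n-1$ since points are never in $\Null(\alpha)$ and $\Null(\alpha)\subsetneq X$ --- is uniruled, in fact swept out by $\alpha$-trivial curves. Let $\mu\colon\widetilde Z\to Z$ be a resolution, a compact K\"ahler manifold of dimension $d\leq n-1$. If $\widetilde Z$ were not uniruled, then Conjecture \ref{c-BDPP13} in dimension $d\leq n-1$ would force $K_{\widetilde Z}$ to be pseudo-effective; on the other hand $\mu^*(\alpha|_Z)=\mu^*\big((K_X+B+\bbeta_X)|_Z\big)+\mu^*(\omega|_Z)$ is nef with $\int_{\widetilde Z}(\mu^*\alpha|_Z)^{d}=0$, while $\mu^*(\omega|_Z)$ is nef and big. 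Using that $(X,B+\bbeta)$ is generalized klt and $X$ is $\Q$-factorial, one bounds $K_{\widetilde Z}$ from above by $\mu^*\big((K_X+B+\bbeta_X)|_Z\big)$ plus an effective $\mu$-exceptional correction coming from the generalized discrepancies (as in Lemma \ref{l-klt} and Theorem \ref{t-gkltlocal}); then $K_{\widetilde Z}+\mu^*(\omega|_Z)$ is big but, modulo an effective exceptional divisor, is dominated by $\mu^*(\alpha|_Z)$, contradicting either $\int_{\widetilde Z}(\mu^*\alpha|_Z)^{d}=0$ or the minimality of $Z$. Once $Z$ is uniruled, an $\alpha$-trivial covering family is extracted from the structure of the nef, non-big class $\alpha|_Z$: passing to the maximal rationally connected fibration of $\widetilde Z$ and using on its base the duality between pseudo-effective $(1,1)$-classes and movable curves (\cite{BDPP13}) together with the minimality of $Z$, the fibres over which $\alpha$ restricts trivially must cover $Z$.

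Given such a covering family of $\alpha$-trivial curves on $Z\subset X$, each member $C$ satisfies $(K_X+B+\bbeta_X)\cdot C=-\omega\cdot C<0$, so bend-and-break in the compact K\"ahler setting (following \cite{CH20} and the references therein) produces, through a general point of $Z$, a rational curve $C'$ with $0<-(K_X+B+\bbeta_X)\cdot C'\leq 2\dim X$, the factor $2$ being the usual one for a klt boundary; and since $\alpha$ is nef, in any splitting of an $\alpha$-trivial effective class into effective pieces each piece is again $\alpha$-trivial, so $C'$ --- a component of a degeneration of an $\alpha$-trivial curve --- may be taken $\alpha$-trivial, which is the required curve. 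The main obstacle is the uniruledness step of the second paragraph: this is exactly where the pseudo-effectivity of $K_X+B+\bbeta_X$ and Conjecture \ref{c-BDPP13} in dimension $n-1$ are used, and the difficulty is that $Z$ need not be a log canonical centre of $(X,B+\bbeta)$, so the (sub)adjunction results of Section 3 are not directly available and the restriction of $K_X+B+\bbeta_X$ to the general subvariety $Z$ must be controlled by hand; one must also ensure that bend-and-break and the boundedness of low-degree rational curves go through on the a priori non-projective K\"ahler variety $Z$.
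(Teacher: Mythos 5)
There is a genuine gap at the heart of your second paragraph: the claimed bound of $K_{\widetilde Z}$ by $\mu^*\big((K_X+B+\bbeta_X)|_Z\big)$ plus an \emph{effective exceptional} correction does not exist for a general subvariety $Z\subset X$. Lemma \ref{l-klt} and Theorem \ref{t-gkltlocal} compare the generalized pair with an honest klt pair \emph{on $X$ itself}; they say nothing about adjunction to a higher-codimension $Z$ that is not a log canonical centre. For an arbitrary $Z$ the correction term involves the normal directions of $Z$ in $X$ and goes the wrong way (already for a smooth curve of large degree in $\mathbb P^2$ one has $K_Z=(K_X+Z)|_Z$, and $Z|_Z$ is neither exceptional nor negligible), so "controlled by hand" is exactly the step you cannot do. The paper's mechanism is different and is the whole point: Proposition \ref{pro:null-non-kahler} provides a K\"ahler current $\eta\in\alpha$ with weak analytic singularities whose Lelong set is $\Null(\alpha)$; taking $c$ to be the log canonical threshold of $(X,B+\bbeta)$ with respect to $\eta$ at general points of a \emph{maximal}-dimensional component $Z$ of $\Null(\alpha)$ makes $Z$ a generalized log canonical centre of the perturbed pair $(X,B+c\nu_*F+\bbeta+c\boldsymbol{\eta})$, and then generalized subadjunction (Remark \ref{r-ad}, resting on the canonical bundle formula Theorem \ref{t-gcbf}) gives $(K_X+B+c\nu_*F+\bbeta+c\boldsymbol{\eta})|_{Z^\nu}=K_{Z^\nu}+B_{Z^\nu}+\gamma_{Z^\nu}$ with $B_{Z^\nu}\geq 0$ and $\gamma_{Z^\nu}$ pseudo-effective. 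Crucially the perturbation lies in the class $c\alpha$, so it is invisible against $\alpha_{Z^\nu}^k\cdot\omega_{Z^\nu}^{\dim Z-k-1}$ (where $k=\nu_{\rm num}(\alpha|_{Z^\nu})<\dim Z$, so $\alpha_{Z^\nu}^{k+1}\equiv 0$), and the intersection computation forces $K_{Z^\nu}$, hence $K_{Z'}$ for any resolution, to be non--pseudo-effective. Without this construction (or a substitute for it) your uniruledness step has no proof, and this is precisely where Conjecture \ref{c-BDPP13} in dimension $\leq n-1$ enters, giving uniruledness of $Z'$ and pseudo-effectivity of $K_Y$ for the MRC base $Y$.

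Your last step also leans on unproved assertions: bend-and-break and the cone theorem are not available on the possibly non-projective K\"ahler variety $Z$. The paper avoids this by passing to a general fibre $F$ of the MRC fibration $Z'\to Y$, which is rationally connected, hence projective ($h^2(\OO_F)=0$); there one shows, using $K_Y$ pseudo-effective and \cite[Theorem 5.2]{CH20}, that $K_F+(1-\epsilon)\omega_F+t\alpha_F$ is never pseudo-effective, so $\alpha_F$ is not big, and the (projective) cone theorem produces a nontrivial contraction of the $\alpha$-trivial extremal face, which must be of fibre type; bend-and-break on $F$ then yields rational curves with $0<-K_F\cdot C\leq 2\dim F$ that are $\alpha$-trivial, and these sweep out $Z$. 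Your observation that $\alpha$-triviality passes to components of degenerations (since $\alpha$ is nef) is fine, but the production of the low-degree rational curves has to happen on the projective fibres as above, not on $Z$ directly.
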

\begin{proof} By Proposition \ref{pro:null-non-kahler} (see also \cite[Theorem 1.1]{CT15} and \cite[Theorem 3.17]{Bou04} in the smooth case) the restricted non-Kähler locus
${ E_{nK}^{as}} (\alpha)$ coincides with the null-locus  ${\rm Null}(\alpha)$, and 
there exists a K\"ahler current $\eta$ with weak analytic singularities in the class $\alpha$ such that
the Lelong set coincides with ${ E_{nK}^{as}} (\alpha)$. 
Since $\alpha $ is not K\"ahler, then ${\rm Null}(\alpha)$ has a positive dimensional component.
Let $Z$ be a maximal dimensional irreducible component of ${\rm Null}(\alpha)$ and $c$ be the log canonical threshold of $(X,B+\bbeta _X )$ with respect to $\eta$ on a neighborhood of general points of $Z$. This means that 
if we pick a log resolution $\nu :X'\to X$ such that $K_{X'}+B_{X'}+\bbeta _{X'}=\nu ^*(K_X+B+\bbeta _X)$ where $\bbeta _{X'}$ is nef and
$\nu ^*\eta=\eta' +F$ where $F$ is an effective $\R$-divisor, $\eta'\geq 0$ and $F+B_{X'}$ has simple normal crossings, then $Z$ is an irreducible component of $\nu ((B_{X'}+cF)^{=1})$ and $Z$ is not contained in $\nu ((B_{X'}+cF)^{>1})$. If $\boldsymbol{\eta}=\overline {\eta '}$, then $Z$ is a generalized log canonical center of the generalized pair   
$(X,B+c\nu _*F+\bbeta +c\boldsymbol{\eta})$.
By Remark \ref{r-ad}, \[(K_X+B+c\nu _*F+\bbeta  +c\boldsymbol{\eta})|_{Z^\nu}=K_{Z^{\nu}}+B_{Z^{\nu}}+\gamma _{Z^{\nu}}\] where $({Z^{\nu}},B_{Z^{\nu}}+\gamma _{Z^{\nu}})$ is a generalized pair.
%It suffices to show that if $Z'\to Z^\nu$ is a resolution, then $K_{Z'}$ is not pseudo-effective and hence $Z'$ is covered by $K_{Z'}$-negative rational curves. 

By assumption we have $k:=\nu _{\rm num}(\alpha |_{Z^\nu})<\dim Z$ so that $(\alpha |_{Z^\nu})^k\not \equiv  0$ and $(\alpha |_{Z^\nu})^{k+1}\equiv   0.$
But then \[ (K_{Z^{\nu}}+B_{Z^{\nu}}+\gamma _{Z^{\nu}})\cdot \alpha _{Z^{\nu}}^k\cdot \omega _{Z^{\nu}}^{\dim Z-k-1}=- \alpha _{Z^{\nu}}^k\cdot \omega _{Z^{\nu}}^{\dim Z-k}< 0 \]
where $\alpha _{Z^{\nu}}=\alpha |_{Z^{\nu}}$ and $\omega _{Z^{\nu}}=\omega |_{Z^{\nu}}$.
Since $B_{Z^{\nu}}\geq 0$ and $\gamma _{Z^{\nu}}$ is pseudo-effective, then $K_{Z^{\nu}}$ is not pseudo-effective and hence neither is $K_{Z'}$ for any resolution $Z'\to Z^\nu$.

Consider now the MRC fibration $Z'\to Y$, which is non-trivial as $K_{Z'}$ is not pseudo-effective. Passing to a higher model we may assume that it is a morphism with general fiber $F$. Note that $F$ is rationally connected and hence $h^2(\OO _F)=0$ and so $F$ is algebraic.
 %We may assume that $\alpha \cdot C> 0$ for all curves $C\subset X$.
Arguing as above, for any $\epsilon>0$ we have  \[ (K_{Z'}+B_{Z'}+\gamma _{Z'}+(1-\epsilon)\omega _{Z'}+t\alpha _{Z'})\cdot \alpha _{Z'}^k\cdot \omega _{Z'}^{\dim Z-k-1}=\]\[(K_{Z^{\nu}}+B_{Z^{\nu}}+\gamma _{Z^{\nu}}+(1-\epsilon)\omega _{Z^\nu})\cdot \alpha _{Z^{\nu}}^k\cdot \omega _{Z^{\nu}}^{\dim Z-k-1}=-\epsilon  \alpha _{Z^{\nu}}^k\cdot \omega _{Z^{\nu}}^{\dim Z-k}<0,\]
where $\omega _{Z'}=\omega |_{Z'}$ and $\alpha _{Z'}=\alpha |_{Z'}$.
Since $\gamma _{Z'}$ is pseudo-effective and  $B_{Z'}^{<0}$ is $Z'\to Z^\nu$ exceptional, it follows that $(K_{Z'}+(1-\epsilon)\omega _{Z'}+t\alpha _{Z'})\cdot \alpha _{Z'}^k\cdot \omega _{Z'}^{\dim Z-k-1}<0$ 
and hence $K_{Z'}+(1-\epsilon)\omega _{Z'}+t \alpha_{Z'}$ is not pseudo-effective 
for any $t >0$. Since $Y$ is not uniruled, $K_{Y}$ is pseudo-effective and hence by \cite[Theorem 5.2]{CH20},   $K_F+(1-\epsilon) \omega_{F}+t \alpha_{F}$ is also not pseudo-effective for any $t >0$ and in particular $\alpha_{F}$ is not big. By the cone theorem, there are finitely many $K_F+(1-\epsilon) \omega_{F}$ negative extremal rays, and there is a non-empty finite collection of $K_F+(1-\epsilon) \omega_{F}$ negative extremal rays that are $\alpha$ trivial.
%and so $K_F+\epsilon \omega_{F}+\lambda \alpha_{F}$ is either ample or nef and cuts out a $K_F+\epsilon \omega_{F}$ negative face of the $\overline {\rm NE}(F)$. As $K_F+\epsilon \omega_{F}+\lambda \alpha_{F}$ is not pseudo-effective, then it is not ample and so
Let $\eta :F\to \bar F$ be the induced non-trivial morphism contracting this face. Then $ \alpha_{F}=\eta ^*\alpha_{\bar F}$ where $\alpha_{\bar F}$ is ample on $\bar F$.
If $\eta $ is birational, then $\alpha_{F}$ is big which is a contradiction.
Thus, $\eta$ is of fiber type and hence $F$ is covered by $\alpha_{F}$-trivial rational curves $C$. Note that by bend and break, we may assume that $0<-K_F\cdot C\leq 2\dim F$ and hence $(1-\epsilon)\omega _F\cdot C< 2\dim F$. But then $Z'$ is covered by $\alpha_{F}$-trivial rational curves and finally $Z$ is covered by $\alpha$-trivial rational curves $C$ such that $0<\omega \cdot C< \frac 2{1-\epsilon}\dim X$.
Since these curves belong to finitely many numerical classes, we may assume that $0<\omega \cdot C \leq 2\dim X$.
Finally, we observe that 
\[0<-(K_X+B+\bbeta _X)\cdot C=\omega \cdot C\leq 2\dim X.\]

\end{proof}
\begin{corollary}\label{c-cone} Let $(X,B+\bbeta )$ be a compact K\"ahler $4$-fold generalized klt pair such that $K_X+B$ is pseudo-effective. Then there are at most countably many rational curves $\{\Gamma _i\}_{i\in I}$ such that $-(K_X+B+\bbeta _X)\cdot \Gamma _i\leq 8$ for all $i\in I$ and  \[\overline{\rm NA}(X)=\overline{\rm NA}(X)_{(K_X+B+\bbeta _X) \geq 0}+\sum _{i\in I}\mathbb R ^+[\Gamma _i].\] \end{corollary}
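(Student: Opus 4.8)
The plan is to obtain Corollary~\ref{c-cone} as the special case $n=4$ of Theorem~\ref{t-gkltcone}. The essential point is that, in its parenthetical (pseudo-effective) form, Theorem~\ref{t-gkltcone} requires only that Conjecture~\ref{c-BDPP13} hold in dimension $n-1$; for $n=4$ this is the BDPP conjecture in dimension $3$, which is known (as recorded immediately after the statement of Conjecture~\ref{c-BDPP13}). So the task is merely to check that the remaining hypotheses of the parenthetical form of Theorem~\ref{t-gkltcone} are satisfied here, and then to read off the conclusion.

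First I would reconcile the pseudo-effectivity hypotheses: Theorem~\ref{t-gkltcone} asks that $K_X+B+\bbeta_X$ be pseudo-effective, whereas the corollary is stated with $K_X+B$ pseudo-effective. Since $\bbeta$ is a nef b-$(1,1)$ form, its trace $\bbeta_X$ is the pushforward $\nu_*\bbeta_{X'}$ under a resolution $\nu\colon X'\to X$ of the nef class $\bbeta_{X'}$; a nef class on a compact K\"ahler manifold is pseudo-effective, and pseudo-effectivity is preserved by pushforward, so $\bbeta_X$ is pseudo-effective, and hence so is $K_X+B+\bbeta_X$ since the pseudo-effective cone is convex. (If the corollary is instead intended with $K_X+B+\bbeta_X$ pseudo-effective, this step is vacuous.) Granting this, Theorem~\ref{t-gkltcone} with $n=4$ yields the countably many rational curves $\{\Gamma_i\}_{i\in I}$ with $0<-(K_X+B+\bbeta_X)\cdot\Gamma_i\leq 2\cdot 4=8$, together with the decomposition $\overline{\rm NA}(X)=\overline{\rm NA}(X)_{(K_X+B+\bbeta_X)\geq 0}+\sum_{i\in I}\mathbb R^+[\Gamma_i]$, which is exactly the assertion.

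The one point deserving a word is that Theorem~\ref{t-gkltcone} is stated for $\mathbb Q$-factorial $X$. If $X$ is already $\mathbb Q$-factorial there is nothing further to do; otherwise I would pass to a small bimeromorphic $\mathbb Q$-factorialization $\pi\colon X'\to X$, put $K_{X'}+B'+\bbeta'_{X'}=\pi^*(K_X+B+\bbeta_X)$ so that $(X',B'+\bbeta')$ is again generalized klt with $K_{X'}+B'+\bbeta'_{X'}$ pseudo-effective, apply Theorem~\ref{t-gkltcone} on $X'$, and then push the rational curves and the cone decomposition forward along $\pi_*\colon\overline{\rm NA}(X')\to\overline{\rm NA}(X)$: no $\Gamma'_i$ is $\pi$-exceptional since it meets $-\pi^*(K_X+B+\bbeta_X)$ strictly positively, so $\pi_*\Gamma'_i$ is a rational curve on $X$ with the same intersection number against $-(K_X+B+\bbeta_X)$, and the projection formula shows that $\pi_*$ carries $\overline{\rm NA}(X')_{K_{X'}+B'+\bbeta'_{X'}\geq 0}$ into $\overline{\rm NA}(X)_{(K_X+B+\bbeta_X)\geq 0}$. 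Thus the main (and only mild) obstacle is the routine descent of the cone decomposition along the small contraction $\pi$, including the surjectivity of $\pi_*$ onto $\overline{\rm NA}(X)$; everything else is a direct specialization of Theorem~\ref{t-gkltcone}, and if one simply includes $\mathbb Q$-factoriality among the hypotheses (as in Theorem~\ref{t-gkltcone}) the corollary is literally an instance of that theorem.
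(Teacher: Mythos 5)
Your reduction is in the right spirit -- the corollary is indeed meant to be the unconditional $n=4$ instance of the cone theorem, and your reconciliation of the hypotheses (since $\bbeta_X$ is the pushforward of a nef, hence pseudo-effective, class, $K_X+B$ pseudo-effective forces $K_X+B+\bbeta_X$ pseudo-effective) is a point that genuinely needs saying, as Proposition \ref{p-ray} is stated with the latter hypothesis. But this is not how the paper argues, and as written your route has two weak points. First, the paper's proof does not pass through Theorem \ref{t-gkltcone}: it deduces the corollary directly from Proposition \ref{p-ray} ``by standard arguments,'' following the proof of \cite[Theorem 1.3]{DH23} -- one perturbs by K\"ahler classes $\omega$, applies Proposition \ref{p-ray} (with Conjecture \ref{c-BDPP13} known in dimension $3$) to the nef and big, non-K\"ahler classes $K_X+B+\bbeta_X+\omega$ at the nef threshold to produce $\alpha$-trivial rational curves of degree at most $8$, and then runs the usual countability/accumulation argument to obtain the decomposition of $\overline{\rm NA}(X)$. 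Theorem \ref{t-gkltcone} is given no separate proof in the body of the paper; it is established by exactly these arguments. So invoking it wholesale outsources precisely the content that the proof of the corollary is supposed to supply, and within this paper it borders on circularity.

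Second, your $\Q$-factorialization fallback is not justified in this setting: the paper only proves existence of small $\Q$-factorializations locally, over relatively compact Stein open subsets (Theorem \ref{t-gkltlocal}); the existence of a \emph{global} small $\Q$-factorialization of a compact K\"ahler generalized klt $4$-fold is an MMP-level statement that is neither proved nor cited here, and the descent of the cone decomposition along such a small morphism (in particular the surjectivity of $\pi_*$ on $\overline{\rm NA}$ in the K\"ahler category) would also require a reference rather than being dismissed as routine. Note that Proposition \ref{p-ray} itself assumes $\Q$-factoriality, so the corollary should be read with that hypothesis implicit, exactly as your closing sentence suggests; with $\Q$-factoriality in place your argument is a correct specialization, modulo the first point that the ``standard arguments'' underlying Theorem \ref{t-gkltcone} and the corollary are what actually have to be carried out (or explicitly cited from \cite{DH23}).
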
 \begin{proof} This follows by standard arguments from Proposition \ref{p-ray} (see eg the proof of \cite[Theorem 1.3]{DH23}). \end{proof}

%%%%%%%%%%%%%%%%%%%%%%%%%%%%%%%%%%%%%%%%%%%%%%%%%%%%%%%%%%%%%%%%%%%%
%%%%%%%%%%%%%%%%%%%%%%%%%%%%%%%%%%%%%%%%%%%%%%%%%%

\section{Null loci}
In this section we generalize the main result of \cite{CT15} to the singular setting. To start with, we recall the notion of Lelong number of a closed positive current $T\geq 0$ on a normal complex space, since it plays a crucial role in the formulation of 
Proposition \ref{pro:null-non-kahler} below.

\subsection{Closed positive currents on normal complex spaces}

Let $\Omega\subset \C^n$ be the unit ball, and let $\phi$ be a psh function on $\Omega$. Then 
\[T:= \ddbar \phi\]
defines a closed positive current of $(1,1)$-type on $\Omega$. For example, if we take
$\phi= \log |f|^2$, with $f$ holomorphic, then up to a multiple, $T$ is equal to the 
current of integration along the analytic set $f=0$ (taking the multiplicities into account).
Thus, closed positive $(1,1)$ currents can be seen as natural generalizations of effective divisors. 
\smallskip

\noindent It turns out that the function
\[r\to \sup_{|z|=r}\phi(z) \]
is convex increasing of $\log r$, i.e. if $x:= \log r$, then the function $\displaystyle x\to \sup_{|z|= e^{x}}\phi(z)$
is convex increasing. It therefore follows that the limit
\[\nu(T, 0):= \lim\inf_{z\to 0}\frac{\phi(z)}{\log|z|}\]
exists, and it is called the Lelong number of $T$ at $z=0$. In the example above, this is precisely the multiplicity of the divisor $(f=0)$ at $0$. Moreover, one can see that the equality
\begin{equation}\label{new77}
\nu(T, 0)= \sup\{\nu\geq 0 | \phi(z)\leq \nu\log|z|+ \mathcal O(1)\}
\end{equation}
holds true.
\smallskip

\noindent We collect next a few facts about currents which will be needed later on.

\begin{theorem}\label{gen}\cite{DeBook}
Let $X$ be a complex manifold, and let $T\geq 0$ be a closed positive current of $(1,1)$-type on $X$. 
We consider an analytic hypersurface $A\subset X$, and let $\chi_A$ be the characteristic function 
of $A$. Then the following assertions hold true.
\begin{enumerate}

\item[\rm (1)] The function $a\to \nu(T, a)$ defined on the analytic set $A$ is constant in the complement of an at most countable union of analytic subsets of $A$, and it defines the generic Lelong number of $T$ along $A$.

\item[\rm (2)] The currents $\chi_AT$ and $\chi_{X\setminus A}T$ obtained by multiplying $T$ with the characteristic function of $A$ and its complement, respectively are closed (and of course, positive). Moreover, we have 
\[\chi_AT= \nu(T, A)[A],\]
where $\nu(T, A)$ is the generic Lelong number of $T$ along $A$.
\end{enumerate}
\end{theorem}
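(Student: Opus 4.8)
The plan is to derive Theorem~\ref{gen} from three classical pillars of pluripotential theory, all available in \cite{DeBook}: Siu's semicontinuity theorem (the Lelong upperlevel sets of a closed positive current are analytic), the Skoda--El Mir extension theorem, and the support theorem for closed positive currents (a closed positive $(1,1)$-current supported on an analytic set of codimension $\geq 2$ vanishes, and one supported on a hypersurface is a locally finite combination of the integration currents over that hypersurface's irreducible components). I would first reduce to the case where $X$ is a ball and $A$ is irreducible: both assertions are local, and passing to the irreducible components $A_j$ of $A$ only introduces their pairwise intersections $A_j\cap A_k$, which are analytic of codimension $\geq 2$. Throughout, $\nu(T,\cdot)$ denotes the pointwise Lelong number introduced in the excerpt.

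For assertion (1), I would invoke Siu's theorem: each $E_c(T):=\{x\mid \nu(T,x)\geq c\}$ is an analytic subset of $X$, and $c\mapsto E_c(T)$ is decreasing. Since $A$ is irreducible, for each $c$ either $A\subseteq E_c(T)$ or $A\cap E_c(T)\subsetneq A$ is a proper analytic subset. Setting $\nu(T,A):=\sup\{c\mid A\subseteq E_c(T)\}$, the supremum is attained (as $E_{\nu(T,A)}(T)=\bigcap_{c<\nu(T,A)}E_c(T)\supseteq A$), so $\nu(T,\cdot)\geq\nu(T,A)$ on $A$, while $\{a\in A\mid \nu(T,a)>\nu(T,A)\}=\bigcup_{k\geq 1}\bigl(A\cap E_{\nu(T,A)+1/k}(T)\bigr)$ is a countable union of proper analytic subsets of $A$, off which $\nu(T,\cdot)\equiv\nu(T,A)$. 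This proves (1) and simultaneously defines the generic Lelong number of $T$ along $A$.

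For assertion (2), the first step is that $T|_{X\setminus A}$ is closed, positive, and of finite mass near the (complete pluripolar) set $A$, so by Skoda--El Mir its trivial extension $\chi_{X\setminus A}T$ is closed positive on $X$; hence $\chi_AT=T-\chi_{X\setminus A}T$ is closed, and it is positive because multiplying the positive matrix of coefficient measures of $T$ by $\chi_A$ keeps it positive. Thus $\chi_AT$ is a closed positive $(1,1)$-current supported on the hypersurface $A$, and the support theorem gives $\chi_AT=\lambda[A]$ with $\lambda\geq 0$ (for reducible $A$ one first gets $\sum_j\lambda_j[A_j]$, then multiplying by $\chi_{A_k}$ and discarding the codimension-$\geq 2$ cross terms $\chi_{A_j\cap A_k}[A_j]=0$ — again by Skoda--El Mir and the support theorem — recovers the componentwise statement $\chi_{A_k}T=\lambda_k[A_k]$). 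To finish I would identify $\lambda$ with $\nu(T,A)$: by additivity of Lelong numbers under sums of closed positive currents, at a regular point $a$ of $A$ one has $\nu(T,a)=\nu(\chi_AT,a)+\nu(\chi_{X\setminus A}T,a)=\lambda+\nu(\chi_{X\setminus A}T,a)$ since $\nu(\lambda[A],a)=\lambda$; so by (1), for generic $a\in A$ the number $\nu(\chi_{X\setminus A}T,a)=\nu(T,A)-\lambda\geq 0$ is constant, and it must be shown to vanish.

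This last identification — the assertion that none of the divisorial mass of $T$ along $A$ escapes into the trivial extension $\chi_{X\setminus A}T$ — is the step I expect to be the main obstacle; everything else is formal bookkeeping with the three cited theorems. My plan for it is an argument by contradiction: if $c:=\nu(T,A)-\lambda>0$, then $\nu(\chi_{X\setminus A}T,\cdot)\geq c$ on a dense subset of $A$, so by Siu's analyticity theorem and irreducibility $A\subseteq E_c(\chi_{X\setminus A}T)$, whence Siu's decomposition inequality $S\geq\nu_{\mathrm{gen}}(S,A)[A]$ for closed positive $S$ (itself proved in \cite{DeBook} by comparing local psh potentials via the Lelong--Jensen formula) yields $\chi_{X\setminus A}T\geq c[A]$; but the trace measure of $\chi_{X\setminus A}T$ is $\chi_{X\setminus A}$ times the trace measure of $T$ and hence assigns zero mass to $A$, whereas $c[A]$ assigns positive mass to $A$, a contradiction. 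Therefore $\lambda=\nu(T,A)$ and $\chi_AT=\nu(T,A)[A]$. In short, the subtlety is entirely in pinning down the coefficient, i.e.\ in the fact that a closed positive $(1,1)$-current cannot carry nonzero density along a hypersurface except through the integration current itself.
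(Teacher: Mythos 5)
The paper gives no proof of this statement — it is recalled from \cite{DeBook} as a known fact — and your argument is precisely the standard one from that reference (Siu's analyticity theorem for part (1); Skoda--El Mir and the support theorem to get $\chi_AT=\sum\lambda_j[A_j]$; and the inequality $S\geq c[A]$ valid when $A\subseteq E_c(S)$ to pin down the coefficient), so it is correct and takes essentially the same route as the cited source. In particular, your closing step — deriving a contradiction from $\chi_{X\setminus A}T\geq c[A]$ because the trace measure of $\chi_{X\setminus A}T$ gives zero mass to $A$ — is exactly how the classical proof settles the coefficient identification you flagged as the main obstacle.
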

\medskip

\noindent Next, consider a surjective map $f:X\to Y$ between two compact complex manifolds. Given a 
closed $(1,1)$ current $T$ on the base $Y$, the pull-back $f^\star T$ is a well-defined, closed current on $X$ (this is not necessarily true for currents of other bi-degrees). The following result clarifies the connection between the Lelong numbers of $T$ and those of its inverse image
$f^\star T$.

\begin{theorem}\label{pull}\cite{Fav99}
Under the assumptions above, there exists a positive constant $C> 0$ such that we have 
\[C\nu(f^\star T, x)\leq \nu(T, y)\leq \nu(f^\star T, x),\]
for all $x\in X$ and $y= f(x)$.
\end{theorem}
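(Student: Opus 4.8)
The plan is to reduce everything to a statement about plurisubharmonic potentials and to prove the two inequalities separately: the first is soft, while the second rests on a uniform quantitative open mapping estimate for $f$. Both inequalities are local on $X$ and $Y$, so after choosing coordinates we may assume that $x$ and $y$ are the origins of balls in $\C^n$ and $\C^m$, that $T=\ddbar\phi$ for a plurisubharmonic function $\phi\le 0$ near $y$, and that $f$ is a holomorphic germ $(\C^n,0)\to(\C^m,0)$, so that $f^\star T=\ddbar(\phi\circ f)$ near $x$. For the inequality $\nu(T,y)\le\nu(f^\star T,x)$, I would use that $f$ is locally Lipschitz, $|f(z)-y|\le L|z-x|$ near $x$, with $L$ uniform over $X$ by compactness: if $\nu\ge 0$ satisfies $\phi(w)\le\nu\log|w-y|+O(1)$, then $\phi(f(z))\le\nu\log|z-x|+O(1)$, so by the characterization \eqref{new77} of the Lelong number applied to $\phi\circ f$ we get $\nu\le\nu(f^\star T,x)$; taking the supremum over all such $\nu$ gives the claim.

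For the inequality $C\,\nu(f^\star T,x)\le\nu(T,y)$ — the substantial one — I would proceed in three steps. First, reduce to the equidimensional case: cutting $X$ with $n-m$ sufficiently general hyperplane sections through $x$ yields a smooth germ $W\ni x$ of dimension $m$ with $\nu(f^\star T,x)=\nu\big((f^\star T)|_W,x\big)=\nu\big((f|_W)^\star T,x\big)$, since the Lelong number of a closed positive $(1,1)$-current at a point is computed on a generic slice of the appropriate codimension; replacing $f$ by $f|_W$ we may assume $\dim X=\dim Y$ and $f$ dominant, hence generically finite. Second, establish the key estimate
\[\Lambda_{\phi\circ f}(r):=\sup_{|z-x|\le r}\phi(f(z))\ \ge\ \sup_{|w-y|\le c\,r^{e}}\phi(w)=:\Lambda_\phi(c\,r^{e})\]
for small $r$, where $e$ is a suitable local multiplicity of $f$ at $x$: if $x$ is isolated in $f^{-1}(y)$ this is exactly the quantitative open mapping theorem for finite holomorphic maps, $f(B(x,r))\supseteq B(y,c\,r^{e})$, while the general case (with $x$ in the non-finite locus) is reduced to it by passing to a modification $\pi:X'\to X$ making $f\circ\pi$ locally monomial, on which the pullback of $\ddbar\log|w_i|$ can be read off the exponent matrix and the estimate is explicit — this being combined with the analogous comparison for the birational morphism $\pi$ itself, which one proves first by induction on the number of blow-ups via explicit computations over the smooth centres. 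Third, conclude: since $\Lambda_\phi$ and $\Lambda_{\phi\circ f}$ are convex nondecreasing functions of the logarithm of the radius, and $\nu(T,y)=\lim_{\rho\to0}\Lambda_\phi(\rho)/\log\rho$, $\nu(f^\star T,x)=\lim_{r\to0}\Lambda_{\phi\circ f}(r)/\log r$, dividing the displayed estimate by $\log r<0$ and letting $r\to0$ gives $\nu(f^\star T,x)\le e\,\nu(T,y)$.

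Finally, the exponent $e$ occurring above can be bounded by an integer $e_0$ depending only on $f$ (via the finitely many charts of the monomializing model), so $C:=1/e_0>0$ works uniformly over all $x\in X$, which together with the uniform Lipschitz constant in the first part yields the stated double inequality.

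The hard part will be the second step: proving the quantitative open mapping estimate with a multiplicity exponent that is uniform over $X$, and in particular handling the points at which $f$ fails to be finite. This is where the monomialization of $f$ — equivalently, a careful analysis of the integral closure of the ideal generated by $f^\star(w_1-y_1),\dots,f^\star(w_m-y_m)$ in $\OO_{X,x}$ — does the real work, and where one must be careful not to argue circularly when invoking the birational comparison for $\pi$.
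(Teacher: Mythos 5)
The paper does not actually prove this statement: it is imported from \cite{Fav99}, and the only comment offered is that the right-hand inequality $\nu(T,y)\le\nu(f^\star T,x)$ is immediate from \eqref{new77}. Your argument for that inequality (local Lipschitz bound on $f$ plus the sup-characterization of the Lelong number) is correct and coincides with that remark, and your treatment of the left-hand inequality at points where $f$ is finite is also sound: a Łojasiewicz bound $|f(z)-y|\ge c|z-x|^{e}$ on $\partial B(x,r)$ together with openness of finite maps gives $f(B(x,r))\supseteq B(y,cr^{e})$, hence $\nu(f^\star T,x)\le e\,\nu(T,y)$ by comparing suprema of the potential.

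The gap sits exactly where the substance of Favre's theorem lies: points $x$ not isolated in their fibre (generic slicing does not remove them, since a generic $m$-plane through $x$ still meets a fibre of excess dimension in positive dimension). There the open-mapping mechanism of your Step 2 fails outright: already for the blow-up chart $f(u,v)=(u,uv)$ the image of $B(x,r)$ is the horn $\{|w_1|<r,\ |w_2|<r|w_1|\}$, which contains no ball around $y$, so $\sup_{B(x,r)}\phi\circ f\ge\sup_{B(y,cr^{e})}\phi$ cannot be obtained from a set inclusion. (The inequality is still true, but it requires a genuinely different ingredient — e.g.\ the maximum principle in the base variable shows the horn contains the set $\{|w_1|=r,\ |w_2|\le r^2\}$, so one can pass to Kiselman's weighted Lelong numbers and use the comparison $\nu_a(\phi)\le(\max_j a_j)\,\nu(\phi)$; nothing of the sort is simply "read off the exponent matrix".) Your proposed fix has two further problems: monomialization of the morphism by modifying only the source is not an available standard tool (Hironaka gives principalization of the ideal $(f-y)\mathcal O_X$, which is weaker and depends on $y$, so the uniformity of the exponent in $x$ and $y$ is also left unaddressed); and the "birational comparison for $\pi$" you plan to prove first by induction on blow-ups is, in its hard direction, the theorem itself in a special case, with the same horn-region difficulty in each "explicit computation". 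Incidentally, the circularity you worry about is not there: passing from $f$ to $f\circ\pi$ only needs the trivial direction $\nu(\pi^\star S,x')\ge\nu(S,x)$ applied to $S=f^\star T$, so all the work is concentrated in the monomial-type case upstairs — which the sketch does not actually establish. As it stands you have the easy inequality and the finite case, not the theorem; this is precisely why the paper cites \cite{Fav99} rather than arguing it.
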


\noindent Note that the right-hand side inequality in Theorem \ref{pull} follows immediately form the definition
\eqref{new77}. Also, this sort of comparison inequalities is far from true in case of currents obtained by direct images, i.e. if one wishes to compare the Lelong numbers of 
a current $\Theta$ on $X$ with those of its direct image $f_\star \Theta$. For example, let $f:X\to Y$
be the blow-up of a point $y$ of $Y$. We assume that $Y$ is Kähler; then given any Kähler 
metric $\omega$ on $X$, the direct image $f_\star \omega$ has a positive Lelong number at $y$. 

\noindent Still in this context (i.e. $f$ is the blow-up at a point and $T\geq 0$ is a closed positive current on $Y$), we have the equality
\begin{equation}\label{new78}
f^\star T= \nu(T, y)[E]+ R 
\end{equation}
where $E$ is the exceptional divisor of the blow-up $f$, and $R\geq 0$ is a closed positive current whose
generic Lelong number along $E$ is equal to zero. In particular, we have
\begin{equation}\label{new79}
\chi_Ef^\star T= \nu(T, y)[E]. 
\end{equation}
\medskip

\noindent Let $f:X\to Y$ be a surjective map between compact complex manifolds, and let $\rho$ be a real, $(1,1)$ cohomology class on the target manifold $Y$. 
We have the following well-known remark.

\begin{lemma}\label{lem: inverse}Given any $(1,1)$ closed positive current 
\[T\in f^\star\rho,\] there exists a $(1,1)$ closed positive current $R$ on $Y$ such that 
\begin{equation}\label{new85}
T= f^\star R.\end{equation}\end{lemma}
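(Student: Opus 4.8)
The plan is to reduce everything to a descent statement for quasi-psh potentials. First I would fix a smooth closed real $(1,1)$-form $\theta$ on $Y$ with $[\theta]=\rho$, so that $f^\star\theta$ is a smooth form representing $f^\star\rho=[T]$. Since $[T]=[f^\star\theta]$ in $H^{1,1}_{\rm BC}(X)$, we may write $T=f^\star\theta+\ddbar\phi$, where $\phi\in L^1(X)$ is $f^\star\theta$-psh (because $T\geq 0$). The whole problem then becomes: show that $\phi$ descends to $Y$, i.e.\ that $\phi=f^\star\psi$ as currents for some $\theta$-psh function $\psi$ on $Y$. Granting this, $R:=\theta+\ddbar\psi$ is closed and positive (positivity being exactly the $\theta$-psh-ness of $\psi$), and $f^\star R=f^\star\theta+\ddbar(f^\star\psi)=f^\star\theta+\ddbar\phi=T$, as wanted.

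To carry out the descent I would first reduce to the case that $f$ has connected fibres (which is the only case needed in our applications) and pick a dense Zariski-open subset $Y_0\subseteq Y$ over which $f$ is a proper holomorphic submersion with connected fibres $F_y=f^{-1}(y)$. Since $f$ is constant on each $F_y$, $(f^\star\theta)|_{F_y}=0$; and by Fubini $\phi|_{F_y}\in L^1(F_y)$ — in particular $\not\equiv-\infty$ — for a.e.\ $y\in Y_0$. Using that the restriction of a quasi-psh function to a complex submanifold on which it is not identically $-\infty$ is again quasi-psh, we get that $\phi|_{F_y}$ is psh on the compact connected manifold $F_y$, hence constant; call this constant $\psi(y)$. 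This defines $\psi$ on $Y_0$ up to a null set, with $\phi=f^\star\psi$ almost everywhere on $f^{-1}(Y_0)$.

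The last step is to check that $\psi$ extends to a $\theta$-psh function on all of $Y$. Over an open set $V$ with $\theta=\ddbar h$, $h$ smooth, the function $g:=\phi+f^\star h$ is psh on $f^{-1}(V)$ and is constant on the fibres of $f$ over $V\cap Y_0$; lifting holomorphic discs through the submersion $f$ shows that the descended function $\psi+h$ is psh on $V\cap Y_0$, and being $L^1$ it is locally bounded above, so by the Riemann extension theorem for psh functions it extends to a psh function on $V$. Hence $\psi$ is $\theta$-psh on $Y$ and $R=\theta+\ddbar\psi$ is the desired closed positive current.

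I expect the main difficulty to be purely technical and to lie entirely in the descent step — controlling $\phi$ over the discriminant locus of $f$ and showing that the fibrewise-constant function $\psi$ is genuinely quasi-psh rather than merely defined off a null set — and it is here that the connectedness of the general fibre is used. The remaining ingredients (the Bott–Chern normalisation of $T$, the equivalence of positivity with quasi-psh-ness of a potential, and the compatibility of $f^\star$ with $\ddbar$) are routine.
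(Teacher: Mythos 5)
Your argument is correct and is essentially the paper's own proof: write $T=f^\star\theta+\ddbar\phi$ for a smooth representative $\theta\in\rho$, observe that $T|_{X_y}=\ddbar\bigl(\phi|_{X_y}\bigr)$ forces $\phi$ to be constant on the general compact connected fibre, and descend the potential to get $R=\theta+\ddbar\psi$ with $f^\star R=T$. You spell out the extension of $\psi$ across the discriminant locus (which the paper leaves implicit), and your caveat about connected fibres is well taken, since the statement as literally phrased can fail for maps with disconnected fibres (e.g.\ étale double covers), a hypothesis the paper's proof also uses tacitly.
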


\begin{proof} The matter is indeed clear: according to our hypothesis there exists an $L^1$ function $\phi$ on $X$ and a smooth representative $a\in \rho$ such that the following equality 
\[T= f^\star(a)+ \ddbar \phi\]
holds. Now the restriction $\displaystyle T|_{X_y}$ of the current $T$ to the general fibers $X_y$ of $f$ is a well-defined, closed positive current. On the other hand, we have
\[T|_{X_y}= \ddbar \phi |_{X_y}\]
which shows that $\displaystyle \phi |_{X_y}$ must be constant on $X_y$. Therefore our assertion follows.\end{proof}
\medskip

\noindent The notion of Lelong number of a closed positive current on a normal space will be needed in order to formulate the main result of this section. We recall it next.
\begin{defn}\label{defn_singl}
\cite{Dem82}.
Let $X$ be a normal complex space, and let $T\geq 0$ be a closed positive $(1,1)$-current on $X$. We consider a positive function $\varphi\in \mathcal C^2(X, \R_+)$, such that $\log \varphi$ is psh and such that $\Supp(T)\cap (\varphi< R)$ is relatively compact in $X$ for all 
$0< R\ll 1$ sufficiently small. The limit
\[\nu(T, \varphi):= \lim_{r\to 0}\frac{1}{(2\pi r)^{2n-2}}\int_{\varphi< r}T\wedge (\ddbar\varphi)^{n-1}\]
is called the Lelong number of $T$ with respect to $\varphi$.
\end{defn}
 
\noindent Let $y\in X$ be an arbitrary point. If we consider an embedding
\begin{equation}\label{eq202}
(X, y)\xhookrightarrow{} (\C^N, 0)
\end{equation}
then the coordinate functions $(z_i)_{i=1,\dots, N}$ on $\C^N$ restricted to $X$ induce a generating system $(g_i)_{i=1,\dots, N}$ of the maximal ideal of the ring $\mathcal O_{X, y}$. The function 
\begin{equation}\label{eq203}
\varphi_y:= \sum_i |g_i|
\end{equation}
is defined on some small open subset $U$ containing $x$, and then the Lelong number of $T$ at $y$ is defined as follows
\begin{equation}\label{eq204}
\nu(T, y):= \nu(T|_U, \varphi_y),
\end{equation}
where $T|_U$ is the restriction of $T$ to $U$, so the RHS is defined as in Definition \ref{defn_singl}.  

\begin{remark} It is not immediate that the limit in Definition \ref{defn_singl} exists, but this is a consequence of the 
Jensen formula
established in \cite{Dem82}, Théorème 3. Moreover, note that  
the Lelong number $\nu(T, y)$ is independent of the embedding \eqref{eq202}, as consequence of Théorème 4 in \emph{loc. cit.}
\end{remark}

\noindent Let $n$ be the dimension of $X$.
By composing the embedding map \eqref{eq202} with a generic linear projection 
on $\C^n$, we obtain a proper, finite map
\begin{equation}\label{eq209}
p:(X, y)\to (\C^n, 0)
\end{equation}
such that $p^{-1}(0)= {y}$. The function
\begin{equation}\label{eq210}
\wt \varphi_y:= \sum |p_j|
\end{equation}
(where $p_j$ are the components of $p$) verifies the inequalities 
\[\varphi_y^M\leq \wt \varphi_y\leq C\varphi_y\]
locally near $x$, for some positive constants $C$ and $M$. We can assume that it holds on the open subset $U$. By the comparison theorem 
 for Lelong numbers (cf. Th\'eor\`eme 4, page 46 in \cite{Dem82}), we have 
\begin{equation}\label{eq211}
\frac{1}{M}\nu(T, \wt \varphi_y)\leq \nu(T, y)\leq \nu(T, \wt \varphi_y).
\end{equation}

\begin{remark}\label{slope}
If $y\in X$ is a regular point, the equality
\[\nu(T, y)= \lim\inf_{z\to y}\frac{\varphi_T}{\log |z-y|}\]
holds, and it provides an alternative definition for the 
Lelong number of $T$ at $y$, as we have already mentioned. Simple examples (\cite{BEGZ10}, Appendix A) show that as soon as 
$y\in X_{\rm sing}$, the relation above is no longer verified in general. 
However, we always have the inequality
$\displaystyle \nu(T, y)\geq \lim\inf_{z\to y}\frac{\varphi_T}{\log |z-y|}$. 
\end{remark}
\smallskip

In connection with these topics, the following result was obtained very recently in \cite{P24}. 
\begin{lemma}\label{slope, III} \cite{P24}
Let $X$ be a normal complex space, and let $T\geq 0$ be a closed positive current on $X$. The following equivalence
\begin{equation}\label{eq212}
\nu(T, y)> 0 \iff \lim\inf_{z\to y}\frac{\varphi_T}{\log |z-y|}> 0
\end{equation}
holds true for any point $y\in X$.
In other words, the Lelong number of $T$ at $y$ and the slope of its potential (i.e. the RHS of \eqref{eq212})
are simultaneously positive or zero. 
\end{lemma}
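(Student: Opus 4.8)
The plan is to reduce the statement to the case where $y$ is a regular point of $X$ — where it is already contained in Remark \ref{slope} — by pulling $T$ back to a suitable resolution and exploiting the negativity of the exceptional locus.

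First, ``$\Leftarrow$'' is immediate: if $\varphi_T(z)\le c\log|z-y|+O(1)$ with $c>0$, then $\nu(T,y)\ge c>0$ by the inequality recorded in Remark \ref{slope}. So assume $\nu(T,y)>0$; I must produce $c>0$ with $\varphi_T(z)\le c\log|z-y|+O(1)$ near $y$. I would take $\rho\colon\widehat X\to X$ a resolution of singularities which is moreover a log resolution of the ideal $\mathfrak m_{X,y}$ generated by the $g_i$ of \eqref{eq203}, so that $\rho^{*}\mathfrak m_{X,y}\cdot\mathcal O_{\widehat X}=\mathcal O_{\widehat X}(-\sum_i b_iG_i)$, where $\bigcup_iG_i=\rho^{-1}(y)$ is a connected simple normal crossing divisor contracted onto $y$ and all $b_i>0$. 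Writing $s_{G_i}$ for local equations, this means $\varphi_y\circ\rho\asymp\prod_i|s_{G_i}|^{b_i}$, i.e. $|z-y|\asymp\prod_i|s_{G_i}(\widehat w)|^{b_i}$ for $z=\rho(\widehat w)$, near $\rho^{-1}(y)$. Since $\nu(T,y)=\nu(T|_U,\varphi_y)$ by \eqref{eq204}, the push-forward (projection) formula for generalized Lelong numbers applied to the modification $\rho$ gives $\nu(T,y)=\nu(\rho^{*}T,\varphi_y\circ\rho)$; as $\varphi_y\circ\rho$ has simple normal crossing analytic singularities along $\bigcup G_i$, Demailly's computation of this number (\cite{Dem82}) shows it vanishes unless $\rho^{*}T$ has a positive generic Lelong number along some stratum of $\bigcup G_i$. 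After one further blow-up I may assume this stratum is a divisor, i.e. $\lambda_{i_0}:=\nu(\rho^{*}T,G_{i_0})>0$ for some component $G_{i_0}$; write $\rho^{*}T=\sum_i\lambda_i[G_i]+R$ with $\lambda_i:=\nu(\rho^{*}T,G_i)\ge 0$ and $R\ge 0$ having no $G_i$ among its components.

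The key step is to propagate positivity from $G_{i_0}$ to every $G_i$. Since $\rho^{*}T$ is closed positive with class $\rho^{*}[T]$ and each $G_i$ is contracted to the point $y$, one has $[\rho^{*}T]\cdot G_i=[T]\cdot\rho_{*}G_i=0$ for every $i$; substituting the decomposition and using $R\cdot G_i\ge 0$ (valid because $G_i\not\subset\mathrm{Supp}(R)$) yields $\lambda_i\,|G_i^{2}|=\sum_{j\ne i}\lambda_j\,(G_j\cdot G_i)+R\cdot G_i\ge 0$, where $G_i^{2}<0$ because the intersection form on $\bigcup G_i$ is negative definite. Because $\bigcup G_i$ is connected, starting from $\lambda_{i_0}>0$ this forces $\lambda_i>0$ for all $i$. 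Hence $\rho^{*}T\ge\sum_i\lambda_i[G_i]$, so $\varphi_S:=\varphi_T\circ\rho$ satisfies $\varphi_S\le\sum_i\lambda_i\log|s_{G_i}|+O(1)$ near $\bigcup G_i$. Setting $c:=\min_i(\lambda_i/b_i)>0$ and using that each $\log|s_{G_i}|<0$, for $z=\rho(\widehat w)$ near $y$ I obtain
\[\varphi_T(z)=\varphi_S(\widehat w)\le\sum_i\lambda_i\log|s_{G_i}(\widehat w)|+O(1)\le c\sum_i b_i\log|s_{G_i}(\widehat w)|+O(1)\le c\log|z-y|+O(1),\]
whence $\liminf_{z\to y}\varphi_T(z)/\log|z-y|\ge c>0$, as required.

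The hard part is the propagation step above: it is precisely here that the geometry of the singularity — through the negativity of the exceptional intersection form — enters, and it is what separates the statement from the regular case, where $\nu(T,y)$ and the slope simply coincide. A secondary technical point is to set up carefully the push-forward formula and Demailly's computation of $\nu(\rho^{*}T,\varphi_y\circ\rho)$ for generalized Lelong numbers on the singular space $X$, in particular the claim that $\nu(T,y)>0$ produces, after at most one blow-up, a positive generic Lelong number of the total transform along an exceptional divisor over $y$ rather than merely a positive Lelong number at an isolated point.
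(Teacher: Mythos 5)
Your ``$\Leftarrow$'' direction is fine, and the overall skeleton (pass to a log resolution $\rho$ of $\mathfrak m_{X,y}$, show that $\varphi_T\circ\rho$ has a log pole along \emph{every} component $G_i$ of $\rho^{-1}(y)$, then convert $\prod_i|s_{G_i}|^{b_i}\asymp|z-y|$ into a slope bound) is the right shape of the problem. But the two steps that carry all the weight are not proved. The propagation step is argued with the intersection matrix $(G_i\cdot G_j)$ of the exceptional fiber and its negative definiteness; that is a surface phenomenon (Grauert--Mumford). The lemma is stated for normal complex spaces of arbitrary dimension, where the $G_i$ are divisors of dimension $\geq 2$, the symbols $G_i^2$ and $G_i\cdot G_j$ are not numbers, and no negative definite ``intersection form on $\bigcup G_i$'' is available to run the connectedness argument (cutting with powers of a K\"ahler class does not obviously produce a definite pairing, and you give no substitute such as a negativity-lemma argument). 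So the central step of your proof only addresses $\dim X=2$. Moreover, even in the surface case the identities you use are not justified: $T$ lives on a germ, so ``$[\rho^{*}T]\cdot G_i=[T]\cdot\rho_{*}G_i=0$'' is not a projection formula between cohomology classes; the natural attempt to prove it by Stokes, $\int_{G_i}dd^c\bigl((\varphi_T\circ\rho)|_{G_i}\bigr)=0$, breaks down exactly in the interesting case where $\varphi_T\circ\rho\equiv-\infty$ on the fiber; and the self-intersection term $\lambda_i[G_i]\wedge[G_i]$ implicit in your computation is not a defined positive current, so ``$\lambda_i|G_i^2|=\sum_{j\neq i}\lambda_j(G_j\cdot G_i)+R\cdot G_i$'' requires a Mumford-type numerical pull-back/local intersection theory for transcendental currents that you do not supply.

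There is a further gap at the very first step. From $\nu(T,y)>0$ you pass, ``by the projection formula and Demailly's computation, after one more blow-up,'' to a positive generic Lelong number of $\rho^{*}T$ along some exceptional divisor. The weight $\varphi_y\circ\rho$ has its polar set equal to the whole positive-dimensional fiber $\rho^{-1}(y)$, so the generalized Lelong number $\nu(\rho^{*}T,\varphi_y\circ\rho)$ and the Monge--Amp\`ere products entering it are not obviously defined; the projection formula for such weights under a modification is not a formal consequence of $\rho_{*}\rho^{*}T=T$; and positivity of such a ``mass at the fiber'' does not formally localize to a point or divisor of $\rho^{-1}(y)$ where $\rho^{*}T$ has positive Lelong number (for a current that is not a pull-back, e.g.\ a K\"ahler form on $\widehat X$, this localization is simply false, and for pull-backs it is again a numerical-triviality-on-the-fiber assertion of the same nature as the one left unproved above). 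In short, both the localization and the propagation --- which are precisely the content of the result quoted from \cite{P24}; the paper itself gives no proof --- remain unestablished in your argument.
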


\begin{remark}\label{slope, II}
Of course, one expects an inequality of type
\begin{equation}\label{eq2212}
\nu(T, y)\leq C\lim\inf_{z\to y}\frac{\varphi_T}{\log |z-y|}
\end{equation}
to be true for some constant $C> 0$, uniform on compact subsets of $X$.
\end{remark}

\bigskip

\noindent To finish this subsection we consider the following the set-up.
\begin{itemize}

\item $X$ is a normal, compact Kähler space and $\pi: \wh X\to X$ is generically finite,
such that $\wh X$ is also normal (and Kähler).

\item $T= \alpha+ \ddbar \varphi$ is a closed current on $X$, where $\alpha$ 
is smooth and locally $\ddbar$-exact, and such that 
\[T\geq \gamma\]
for some smooth locally $\ddbar$-exact $(1,1)$-form $\gamma$.

\item We assume moreover that $\pi^\star T= [E]+ \theta+ \ddbar\rho,$ where $E$ is effective, $\rho$ is a bounded real function on $\wh X$, and the form $\theta$ is locally given by the Hessian of a function bounded from above. 
\end{itemize}
\smallskip

\noindent Then we claim that the following inequality
\begin{equation}\label{eq100}
\theta+ \ddbar\rho\geq \pi^\star\gamma\end{equation}
holds true (this will be useful in the next sections). This is seen as follows: we only have to verify \eqref{eq100} locally near a point $x_0\in \supp(E)$ in the support of the divisor $E$. Let $U$ be an open subset of $X$
such that $\pi(x_0)\in U$ and such that $\gamma$ restricted to $U$ is
given by the Hessian of the smooth function $f_\gamma$. Assume that there exists an open subset $V$ containing $x_0$ such that we have 
\[\theta|_{V}= \ddbar f_\theta\]
where $f_\theta$ is bounded from above.
It then follows that the function 
\begin{equation}\label{eq101}
f_\theta- f_\gamma\circ\pi+ \rho|_{V_0}\end{equation}
is psh, where $\displaystyle V_0:= \big(V\cap\pi^{-1}(U)\big)\setminus \supp E$. On the other hand, the function in \eqref{eq101}
is bounded from above on $V_0$. Since $\wh X$ is normal, it extends as psh function 
locally near this point, by result due to Hartogs in the smooth case, see \cite{Dem85}, Théorème 1.7 for the version we need here. This is the analogue of the fact that 
bounded holomorphic functions on normal spaces extend.
\smallskip

\noindent In conclusion, the $(1,1)$-current obtained by taking the $\ddbar$ of the function
\eqref{eq101} is positive -- but this is simply 
\[\theta+ \ddbar\rho- \pi^\star\gamma|_V,\]
and our claim is proved.

\subsection{Main results} Prior to stating our results we set a few notations and conventions. 
\medskip

\begin{defn}\label{defn-null} Given a real $(1,1)$-class $\alpha$ we denote by ${\rm Null}(\alpha)$ the \emph{null locus} of $\alpha$, which is given by the union of analytic subsets $V\subset X$ such that ${\int _V\alpha ^{\dim V}=0}$.
\end{defn}
\smallskip

\noindent We next introduce and establish basic properties of a class of closed positive currents on normal varieties
which will play the role of \emph{Kähler currents} in \cite{CT15}.

\subsubsection{Currents with admissible singularities} We introduce the following class of singularities.
\begin{defn}\label{defn-adm-sing} Let $X$ be a normal, compact Kähler space, and let $\varphi:X\to [-\infty, \infty[$ be a function on $X$. We say that $\varphi$ has admissible singularities if 
\[\varphi= \max{(\varphi_1,\dots, \varphi_k)}\]
where each $\varphi_j$ has analytic singularities in the sense of \cite{Dem92} i.e. it can be locally expressed as 
\[\gamma\log(\sum |f_k|^2)\]
modulo a bounded function. In the expression above, $\gamma\geq 0$ is a real number and the functions $(f_k)$ are holomorphic.
\end{defn}

\noindent A more flexible version of this notion reads as follows.

\begin{defn}\label{defn-was}
Let $X$ be a normal, compact Kähler space, and let 
\[T=\alpha+ \ddbar \varphi\]
be a closed positive current of type $(1,1)$ on $X$, where we denote by $\alpha$ a smooth, real $(1,1)$-form on $X$, which is locally $\ddbar$--exact. We say that $T$ has weak analytic singularities if there exists: 
\begin{itemize} 

\item a biholomorphic map $\pi: \wh X\to X$ such that $\wh X$
is normal, and
\item a closed positive current 
\[\wh T= \pi^\star \alpha+ \ddbar \psi\geq 0\]
on $\wh X$ such that $\psi$ has admissible singularities and such that we have $\pi_\star \wh T= T.$
\end{itemize}
\end{defn}
\begin{remark}\label{rmk-sing} As consequence of the fact that the current $\wh T$ is assumed to belong to the class $\pi^\star \alpha$, we show that
\emph{the function $\psi$ above (in the second point of Definition \ref{defn-was}) is constant on every connected component of 
positive dimensional fibers of $\pi$}. This can be seen as follows: assume that the restriction of $\psi$ to a fiber $F$ of $\pi$ is not identically $-\infty$.
Then by Théorème 1.10 in \cite{Dem85} combined with the fact that $\psi$ has admissible singularities 
we infer that $\psi|_F$ is a psh function defined on a compact analytic space -- hence, it must be constant by the maximum principle.
So, there exists a function $\varphi_1$ on $X$
such that $\varphi_1\circ\pi= \psi$, and moreover, the difference $\varphi- \varphi_1$
is smooth (because it belongs to the kernel of the operator $\ddbar$).\footnote{Since
$\psi$ has admissible singularities, we expect that this should be the case for $\varphi_1$ (and $\varphi$) as well, but it is not clear how such a statement can be proved.}
It follows that Definition \ref{defn-was} is equivalent to the existence of a birational map
$\pi:\wh X\to X$ together with a current $\wh T= \pi^\star \alpha+ \ddbar \psi\geq 0$ on $\wh X$ such that 
$\pi^\star T= \wh T$. In other words, the "singular analogue" of Lemma \ref{lem: inverse} 
holds true.
\end{remark}
\smallskip

\noindent It turns out that this class of currents behaves very well under a few natural operations which will be needed in the proof of Proposition \ref{pro:null-non-kahler} below. In particular we have the following statement.  

\begin{lemma}\label{max_dir}
Let $X$ and $Y$ be normal compact Kähler spaces, and let 
$p: Y\to X$ be a holomorphic map. Let $\beta$ be a smooth, real and closed $(1,1)$--form on $X$.
We have the following assertions.
\begin{enumerate}
\smallskip

\item[\rm (a)] Let $T$ be a current with weak analytic singularities on $X$. Then the inverse image 
$p^\star T$ has weak analytic singularities.
\smallskip

\item[\rm (b)] For $i=1, 2$ let $T_i:= \beta+ \ddbar\varphi_i$ be two currents with weak analytic singularities 
in the class induced by the smooth form $\beta$. If we define $\varphi:= \max(\varphi_1, \varphi_2)$,
then the current $\displaystyle T:= \beta+ \ddbar\varphi$ has weak analytic singularities.
\smallskip

\item[\rm (c)] Assume moreover that $p$ is birational. If $\Theta\in p^\star(\beta)$ is a closed positive current with weak analytic singularities in the class $p^\star(\beta)$ on $Y$, then 
the direct image $T:= p_\star\Theta$ has weak analytic singularities.
\end{enumerate}
\end{lemma}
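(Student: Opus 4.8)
The plan is to reduce everything to the pull-back formulation of Definition \ref{defn-was} supplied by Remark \ref{rmk-sing}: a closed positive $(1,1)$-current $T=\alpha+\ddbar\varphi$ has weak analytic singularities exactly when there is a bimeromorphic $\pi:\widehat X\to X$ with $\widehat X$ normal such that $\pi^\star T=\pi^\star\alpha+\ddbar\psi$ for some $\psi$ with admissible singularities. Two elementary observations then do almost all the work. First, admissible singularities are stable under holomorphic pull-back: if $\psi=\max_j\psi_j$ with each $\psi_j$ locally of the form $\gamma_j\log\big(\sum_l|f_{j,l}|^2\big)+O(1)$ and $g$ is holomorphic, then $\psi\circ g=\max_j\big(\gamma_j\log\big(\sum_l|f_{j,l}\circ g|^2\big)+O(1)\big)$ is again admissible. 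Second, the maximum of two admissible functions is admissible, obtained by concatenating the two defining lists of $\max$'s.

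For part (a), pick $\pi:\widehat X\to X$ and $\widehat T=\pi^\star\alpha+\ddbar\psi$ realizing the weak analytic singularities of $T$, with $\pi^\star T=\widehat T$. Let $\widehat Y$ be the normalization of the main component of $Y\times_X\widehat X$, with projections $q:\widehat Y\to\widehat X$ and $\rho:\widehat Y\to Y$; since $\pi$ is bimeromorphic so is $\rho$, and $p\circ\rho=\pi\circ q$. Then $q^\star\widehat T=(\pi q)^\star\alpha+\ddbar(\psi\circ q)=\rho^\star(p^\star\alpha)+\ddbar(\psi\circ q)$ is a closed positive current on $\widehat Y$ whose potential is admissible by the first observation, and $\rho_\star(q^\star\widehat T)=\rho_\star\rho^\star(p^\star T)=p^\star T$ because $\rho$ is bimeromorphic and $q^\star\pi^\star=\rho^\star p^\star$. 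Since $p^\star T=p^\star\alpha+\ddbar(\varphi\circ p)\ge0$ with $p^\star\alpha$ smooth and locally $\ddbar$-exact, this exhibits $p^\star T$ as a current with weak analytic singularities. Part (c) is shorter still: if $\pi_Y:\widehat Y\to Y$ and $\widehat\Theta=\pi_Y^\star(p^\star\beta)+\ddbar\psi$ realize the weak analytic singularities of $\Theta$, then $\Pi:=p\circ\pi_Y:\widehat Y\to X$ is bimeromorphic, $\Pi^\star\beta=\pi_Y^\star(p^\star\beta)$, and $\Pi_\star\widehat\Theta=p_\star(\pi_Y)_\star\widehat\Theta=p_\star\Theta$; combined with $p_\star\Theta=\beta+\ddbar(p_\star u)$ for $\Theta=p^\star\beta+\ddbar u$ (with $p_\star u$ quasi-psh since $p$ is bimeromorphic), this gives the claim.

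For part (b), choose bimeromorphic $\pi_i:\widehat X_i\to X$ realizing the weak analytic singularities of $T_i$; resolving further we may assume each $\widehat X_i$, and a common model $\pi:\widehat X\to X$ dominating both via $q_i:\widehat X\to\widehat X_i$, are smooth. From $\pi_i^\star T_i=\pi_i^\star\beta+\ddbar\psi_i$ we get $\pi^\star T_i=\pi^\star\beta+\ddbar(\psi_i\circ q_i)$, hence $\ddbar\big(\pi^\star\varphi_i-\psi_i\circ q_i\big)=0$, so $g_i:=\pi^\star\varphi_i-\psi_i\circ q_i$ is pluriharmonic, in particular smooth; therefore $\pi^\star\varphi_i=\psi_i\circ q_i+g_i$ still has admissible singularities. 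Since $\pi^\star\varphi=\max(\pi^\star\varphi_1,\pi^\star\varphi_2)$, the second observation shows $\pi^\star\varphi$ has admissible singularities, and $\pi^\star\beta+\ddbar(\pi^\star\varphi)=\pi^\star T\ge0$ with $\pi_\star(\pi^\star T)=T$ since $\pi$ is bimeromorphic. Thus $T$ has weak analytic singularities.

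The main friction point I anticipate is the "pluriharmonic defect" $g_i=\pi^\star\varphi_i-\psi_i\circ q_i$ in part (b): one must know that a $(1,1)$-current annihilated by $\ddbar$ on a (smooth, or normal) compact Kähler space is represented by a locally bounded function — on the regular locus harmonicity yields smoothness, and boundedness across the codimension $\ge2$ singular locus follows from the removable-singularity theorem for harmonic functions — so that adding it to an admissible function preserves admissibility. A secondary, purely bookkeeping, difficulty is the bimeromorphy of $\rho:\widehat Y\to Y$ in (a): this is automatic when $p$ is dominant, since then the isomorphism locus of $\pi$ pulls back to a dense open subset of $Y$, and the general case is handled by restricting attention to $p(Y)$.
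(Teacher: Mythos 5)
Your proof is correct and follows essentially the same route as the paper: realize weak analytic singularities via a model with admissible potential (using the identity $\pi^\star T=\wh T$ from Remark \ref{rmk-sing}), pull back through a fiber product or common resolution for (a) and (b), and use that $p\circ\pi_Y$ is bimeromorphic for the direct image in (c). The only difference is that you spell out part (b) (common smooth model plus the pluriharmonic-defect observation and stability of admissibility under max), which the paper simply declares a direct consequence of (a).
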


\begin{proof} %[Proof of Lemma \ref{max_dir}]
Concerning the first point (a), consider the map
$\pi_X: \wh X\to X$ given in Definition \ref{defn-was}, so that the inverse image 
\[\wh T:= \pi_X^\star T\] 
is a closed positive current on $\wh X$, whose potential has admissible singularities. We can construct holomorphic maps 
\[\pi_Y:\wh Y\to Y, \qquad \wh p: \wh Y\to \wh X\]
such that the equality $p\circ\pi_Y= \pi_X\circ \wh p$ holds, the space $\wh Y$ is normal
and $\pi_Y$ is birational.

Consider the inverse image $\Theta:= \wh p^\star \wh T$.
By Definition \ref{defn-adm-sing}, it is clear that $\Theta$ has admissible singularities.
On the other hand, by Remark \ref{rmk-sing}, we may assume that 
\[\Theta= \wh p^\star \big(\pi_X^\star T\big)= \pi_Y^\star(p^\star T)\]
which shows that the current $p^\star T$ has weak analytic singularities.
\smallskip

\noindent Point (b) is a direct consequence of (a), so we will not give any further details.
\smallskip

\noindent For assertion (c) we argue as follows.
Given that $\Theta$ has weak analytic singularities, there exist a map $\pi_Y:\wh Y\to Y$ and a closed positive current \[\wh \Theta\in \pi_Y^\star(p^\star\beta)= (p\circ \pi_Y)^\star \beta\] as in Definition \ref{defn-was}, 
such that 
\[\pi_{Y\star}\wh \Theta= \Theta\]
and moreover we have $\wh \Theta= (p\circ \pi_Y)^\star\beta+ \ddc \psi$
for a function $\psi$ with admissible singularities. The map $p\circ \pi_Y: \wh Y\to X$
is birational, and we clearly have
\[(p\circ \pi_Y)_\star\wh \Theta= p_\star\Theta.\]
It therefore follows that the direct image $p_\star\Theta$ has weak analytic singularities. \end{proof}

\medskip

\begin{remark}\label{rmk_ana}
Assume that $X$ is a manifold and that $T\in \alpha$ is a Kähler current with weak analytic singularities. By the regularisation results in \cite{Dem92}, the class $\alpha$ contains a current with analytic singularities, meaning that the 
equality
\[\varphi|_{U_i}= \gamma_i\log\big(\sum_\alpha|f_{i\alpha}|^{2}\big)+ \psi_i\]
holds, where the $f_{i\alpha}$ are holomorphic and $\psi_i$ is bounded. 
We expect that this still holds in our context, i.e. in case $X$ is a normal, compact K\"ahler space. %\footnote{MP: If you want, we could try to get a proof of this in the Appendix, the argument I have in mind would take a couple of pages... ). CH: That would be great! MP: Unfortunately, it is not as simple as I thought, maybe we leave it as it is for the moment...} 
\end{remark}

\begin{remark}\label{rmk_ana, I}
It follows from Lemma \ref{max_dir} that in the definition of a current with weak analytic singularities (\ref{defn-was}) we can assume that the space $\wh X$ is non-singular. 
\end{remark}
%%%%%%%%%%%%%%%%%%%%%%%%%%%%%%%%%%%%%%%%%%%%%%%%%%%%%%%
%%%%%%%%%%%%%%%%%%%%%%%%%%%%%%%%%%%%%%%%%%%%%%%%%%%%%%%%%%%%%%%%%%%%%
\medskip

%%%%%%%%%%%%%%%%%%%%%%%%%%%%%%%%%%%%%%%%%%%%%%%%%%%%%%%%%%%%%%%%%%%%%%%%%%%%%%%
%%%%%%%%%%%%%%%%%%%%%%%%%%%%%%%%%%%%%%%%%%%%%%%%%%%%%%%%%%%%%%%%%%%%%%%%%%%%%%%%%%%%%%%%%%%%%

\medskip

\noindent We prove next another property of currents with weak analytic singularities. In the proof below, we use the generic notation $"C"$ for a constant that can change from one line to another. 

\begin{lemma}\label{ana} 
Let $T$ be a current with weak analytic singularities on a normal compact Kähler space $X$.
Then the set $E_+(T):= \bigcup_{c>0} E_c(T)$ is a closed, analytic subset of $X$.
\end{lemma}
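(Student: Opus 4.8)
The plan is to pull $T$ back to a smooth bimeromorphic model, where the analytic singularities reduce the statement to (essentially) the classical case, and then to transfer the conclusion back to $X$. Write $T=\alpha+\ddbar\varphi$ with $\alpha$ smooth and locally $\ddbar$--exact. By Remark~\ref{rmk_ana, I}, Definition~\ref{defn-was} and Remark~\ref{rmk-sing}, I would fix a bimeromorphic morphism $\pi\colon\widehat X\to X$ with $\widehat X$ a \emph{smooth} compact K\"ahler manifold such that $\widehat T:=\pi^\star T=\pi^\star\alpha+\ddbar\psi\ge 0$, where $\psi=\max_{1\le j\le k}\psi_j$ and each $\psi_j$ has analytic singularities, i.e.\ locally $\psi_j=\gamma_j\log\bigl(\textstyle\sum_\lambda|f_{j\lambda}|^2\bigr)$ modulo a bounded term, with $\gamma_j\ge 0$ and the $f_{j\lambda}$ generating a coherent ideal sheaf $\mathcal J_j\subseteq\mathcal O_{\widehat X}$. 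If some $\gamma_j=0$ then $\psi$, and hence $T$, is locally bounded and $E_+(T)=\varnothing$, so I may assume $\gamma_j>0$ for all $j$.

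On the manifold $\widehat X$ the statement is easy: $\pi^\star\alpha$ is smooth and so does not affect Lelong numbers, and on a manifold the Lelong number is the slope of the potential (Remark~\ref{slope}); hence $\nu(\widehat T,x)=\nu(\ddbar\psi,x)=\min_j\gamma_j\,\mathrm{ord}_x(\mathcal J_j)$, where $\mathrm{ord}_x(\mathcal J_j)\in\mathbb Z_{\ge 0}$ is the order of $\mathcal J_j$ at $x$. Each $x\mapsto\mathrm{ord}_x(\mathcal J_j)$ is upper semicontinuous and bounded (compactness of $\widehat X$), so $\nu(\widehat T,\cdot)$ takes only finitely many values; in particular $E_+(\widehat T)=\bigcap_j V(\mathcal J_j)$ is analytic and $E_c(\widehat T)=E_+(\widehat T)$ for all small $c>0$.

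The heart of the proof is then to establish
\[E_+(T)=\bigl\{\,y\in X:\ \pi^{-1}(y)\subseteq E_+(\widehat T)\,\bigr\}.\]
The inclusion ``$\subseteq$'' I would get cheaply: if $\nu(T,y)>0$, then by Lemma~\ref{slope, III} the local potential of $T$ at $y$ has positive slope in a local embedding $(X,y)\hookrightarrow(\mathbb C^N,0)$, i.e.\ $\varphi(z)\le c\log|z-y|+O(1)$; composing with $\pi$ and using $|\pi(w)-y|\le C|w-x|$ near any $x\in\pi^{-1}(y)$ shows $\psi=\varphi\circ\pi$ has positive slope, hence positive Lelong number, at every such $x$. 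For ``$\supseteq$'', assume $\pi^{-1}(y)\subseteq\bigcap_j V(\mathcal J_j)$ and choose a finite projection $p\colon(X,y)\to(\mathbb C^n,0)$ with $p^{-1}(0)=\{y\}$ as in \eqref{eq209}--\eqref{eq211}; then $\chi:=\bigl(\sum_l|p_l|\bigr)\circ\pi$ is a weight on $\widehat X$ with polar set exactly $\pi^{-1}(y)$, and since each $f_{j\lambda}$ vanishes on $\pi^{-1}(y)$, the R\"uckert Nullstellensatz (applied on the finitely many charts covering the compact set $\pi^{-1}(y)$) gives $|f_{j\lambda}|\le C\chi^{1/M}$, hence $\psi\le c_1\log\chi+O(1)$ near $\pi^{-1}(y)$ with $c_1>0$. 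By the comparison theorem for Demailly's generalized Lelong numbers \cite{Dem82}, the invariance of these numbers under $\pi$, and the equivalence \eqref{eq211} relating $\nu(T,y)$ to $\nu(T,\widetilde\varphi_y)=\nu(\widehat T,\chi)$, this forces $\nu(T,y)\ge c_1/M>0$.

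Finally, I would observe that $\{\,y:\pi^{-1}(y)\subseteq E_+(\widehat T)\,\}$ is analytic: it is contained in $\pi(E_+(\widehat T))$, which is analytic by Remmert's proper mapping theorem, and, decomposing $\pi^{-1}(\pi(E_+(\widehat T)))$ into irreducible components, the locus where the fibre of $\pi$ meets the complement of $E_+(\widehat T)$ is covered by the images of the (open, dense) complements $C_i\setminus E_+(\widehat T)$ in those components $C_i\not\subseteq E_+(\widehat T)$, so a Noetherian induction on dimension yields analyticity of its complement. (Alternatively, making the ``$\supseteq$'' step uniform in $y\in X$ shows $E_+(T)=E_c(T)$ for some $c>0$, and one concludes from the version of Siu's semicontinuity theorem valid for closed positive currents on normal complex spaces \cite{Dem82}.) The main obstacle is the inclusion ``$\supseteq$'': transporting the analytic-singularity estimates for $\psi$ on the smooth $\widehat X$ to a genuine lower bound for the Lelong number of $T$ at a point of the possibly singular base $X$ --- this is precisely where the {\L}ojasiewicz/Nullstellensatz estimates (the source of the constants ``$C$'') and the comparison theorem for generalized Lelong numbers do the work.
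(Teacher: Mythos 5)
Your overall strategy coincides with the paper's (pass to a smooth model where the admissible singularities make $E_+(\wh T)$ analytic, then identify $E_+(T)$ with a set downstairs), but your treatment of the hard inclusion is genuinely different. The paper proves, for an \emph{arbitrary} closed positive current, the inequality $C\nu(T,y)\geq \nu(\pi^\star T,x)$ for every $x\in\pi^{-1}(y)$, by combining a finite local parametrization $\tau:(X,y)\to(\C^n,0)$ with Demailly's comparison for direct images \cite{Dem82}, Favre's theorem on Lelong numbers under pull-back \cite{Fav99}, and the trace inequality $\tau^\star(\tau_\star T)\geq T$; this yields $E_+(T)=\pi\big(E_+(\wh T)\big)$ directly. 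You instead exploit the admissible-singularity structure: a R\"uckert/Nullstellensatz estimate along the whole compact fiber gives $\psi\leq c_1\log\chi+O(1)$, which forces $\nu(T,y)>0$. That mechanism is sound (and arguably more elementary), although your intermediate claim ``$\nu(T,\wt\varphi_y)=\nu(\wh T,\chi)$'' is asserted as an unexplained ``invariance under $\pi$''; it is essentially Demailly's direct-image theorem applied to the proper map $\pi$ with $\pi_\star\wh T=T$, and you could avoid it altogether by pushing your estimate down (using that $\psi-\varphi\circ\pi$ is pluriharmonic, hence locally bounded near the fiber, and that $\pi$ is proper surjective) to get $\varphi\leq c_1\log|z-y|+O(1)$ near $y$, and then concluding by Lemma \ref{slope, III}.

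There is, however, a genuine gap at the last step. You characterize $E_+(T)=\{y:\pi^{-1}(y)\subseteq E_+(\wh T)\}$ and then argue analyticity of this set by a Noetherian induction on components; but for a general analytic subset $A\subseteq\wh X$ the set $\{y:\pi^{-1}(y)\subseteq A\}$ need \emph{not} be analytic -- e.g.\ for $\pi$ the blow-up of a point $y_0$ of a surface and $A$ the strict transform of a curve $C$ through $y_0$, this set is $C\setminus\{y_0\}$, which is not closed -- so your argument as written cannot work. (The parenthetical alternative via a ``Siu theorem on normal spaces'' is likewise unsubstantiated here.) What saves the statement, and what the paper uses through Remark \ref{rmk-sing} and \eqref{eq300}, is that $E_+(\wh T)=(\psi=-\infty)$ is a \emph{union of $\pi$-fibers}, because $\psi=\varphi\circ\pi$ up to a locally bounded term; hence your set coincides with $\pi\big(E_+(\wh T)\big)$ and analyticity follows from Remmert's proper mapping theorem. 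With that saturation remark added (and the direct-image justification or the slope shortcut above), your proof closes correctly.
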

\begin{proof} We are using the notations in the previous Remark \ref{rmk-sing}. In particular we assume that $\wh X $ is non-singular, and moreover the set 
$(\varphi= -\infty)$ coincides with the image of $(\psi= -\infty)$ via the map $\pi$,
since we have 
\begin{equation}\label{eq300}
\psi= \varphi\circ \pi\end{equation}
modulo a bounded quantity.
Given that $\psi$ has admissible singularities, 
it follows that we have the equality
\[(\psi= -\infty)= E_+(\wh T),\]
and therefore we have $E_+(T)\subset \pi\big(E_+(\wh T)\big)$, since $E_+(T)\subset (\varphi= -\infty)$
as a consequence of upper-semicontinuity of $\varphi$.

\noindent Actually, more is true, namely the equality
\[E_+(T)= \pi\big(E_+(\wh T)\big)\]
holds. To see this, it would be enough to show the existence of a constant $C>0$ such that we have   
\begin{equation}\label{eq301}
C\nu (T, y)\geq \nu (\pi^\star T, x)= \nu (\wh T, x)
\end{equation}
where $y\in X$ is an arbitrary point and $x\in \pi^{-1}(y)$.
If we admit this for the moment, the proof of our lemma is complete.
\smallskip

\noindent In order to establish inequality \eqref{eq301}, we proceed as follows.  
Consider a local parametrization $\tau: (X, y)\to (\C^n, 0)$. It is a proper, finite map such that $\tau^{-1}(0)= y$ (as sets). In this context we have the following important estimate, cf. \cite{Dem82}, Th\'eor\`eme 6
\begin{equation}\label{eq219}
C\nu(T, y)\geq \nu(\Theta, 0),
\end{equation}
which we have already mentioned in \eqref{eq211}, where the constant $C$ here is very explicit (depending on a certain multiplicity associated to the map $\tau$) and 
$\Theta:= \tau_\star T$ is the direct image of $T$ with respect to the proper map $\tau$. 

\noindent By the main result in \cite{Fav99}, we have 
\begin{equation}\label{eq220}
C\nu(\Theta, 0)\geq \nu\big((\tau\circ\pi)^\star \Theta, x\big),
\end{equation}
which combined with \eqref{eq219} gives 
\begin{equation}\label{eq221}
C\nu(T, y)\geq \nu\big((\tau\circ\pi)^\star \Theta, x\big).
\end{equation}

It would therefore be sufficient to show that the inequality
\begin{equation}\label{eq222}
\tau^\star(\tau_\star T)\geq T,
\end{equation}
because then it follows that $\displaystyle \nu\big((\tau\circ\pi)^\star \Theta, x\big)\geq \nu(\pi^\star T, x)$. 

Let $T|_U= \ddbar \varphi_T$ be the local expression of the current $T$. Then we have 
\[\tau_\star T= \ddbar \psi\]
where $\displaystyle \psi(z):= \sum_{w\in \tau^{-1}(z)}\varphi_T(w)$ is the trace of the local potential $\varphi_T$ of $T$. It follows that the following formula 
\begin{equation}\label{eq223}
\tau^\star(\tau_\star T)= \ddbar \psi\circ \tau\geq \ddbar \varphi_T,
\end{equation}
holds. Indeed, the difference
\begin{equation}\label{eq224}
\psi\circ \tau (w)- \varphi_T(w)= \sum_{x\in F_w^\star}\varphi_T(x)
\end{equation}
is a psh function on $U$, where 
$\displaystyle F_w^\star := \{x\in U : \tau(x)= \tau(w), x\neq w\}$. The argument for this last claim is 
as follows: on the unramified locus of $\tau$ things are clear, and on the other hand 
the RHS of \eqref{eq224} is uniformly bounded from above.
Our proof is finished.
\end{proof}

\begin{remark}
The inequality \eqref{eq222} does not holds in general. For example, if instead of being finite and proper 
the map $\tau$ is the blow-up of $\mathbb C^2$ at $0$, then \eqref{eq222} certainly fails in case $T$ is the current of integration on the exceptional divisor. 
\end{remark}

\medskip

\noindent Finally, we introduce the following notion.

\begin{defn}\label{wsnK}
Let $X$ be a normal compact Kähler space, and let $\alpha$ be a nef and big real $(1,1)$-class on $X$
(in the Bott-Chern cohomology). The restricted non-Kähler locus of $\alpha$ is the following set
\[E_{nK}^{as}(\alpha):= \bigcap_{T\in \alpha} E_+(T)\]
where $T$ above is assumed to be a Kähler current with weak analytic singularities, and $E_+(T)\subset X$ is the (analytic) subset of $X$
for which the Lelong numbers of $T$ are strictly positive.
\end{defn}
\smallskip

\begin{remark} In the case of a non-singular Kähler space $X$, one defines $E_{nK}(\alpha)$ as the intersection of 
$E_+(T)$ for all Kähler currents $T\in\alpha$. 
Thus, the difference between $E_{nK}^{as}(\alpha)$ and $E_{nK}(\alpha)$ is that in the definition of the former we restrict ourselves to currents with weak analytic singularities. If $X$ is non-singular,  then we have $E_{nK}^{as}(\alpha)= E_{nK}(\alpha)$, thanks to the regularisation results in \cite{Dem92}. In the general case of a normal space, things are less clear, but we can at least say that 
\[E_{nK}(\alpha)\subset E_{nK}^{as}(\alpha)\subset E_{nK}(\alpha)\cup X_{\rm sing}\]
holds true. We actually expect the first inclusion to be an equality.
\end{remark}

\smallskip

\noindent The following statement will be important in what follows.

\begin{corollary} Let $X$ be a normal compact complex Kähler space, and let $\alpha$ be a nef and big 
$(1,1)$-class. Then $E_{nK}^{as}(\alpha)$ is an analytic subset of $X$, and there is a K\"ahler  current with weak analytic singularities $T\in \alpha$ such that $E_+(T)= E_{nK}^{as}(\alpha).$
\end{corollary}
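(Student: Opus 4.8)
The plan is to realize $E_{nK}^{as}(\alpha)$ as $E_+(T_0)$ for a single well-chosen K\"ahler current $T_0$ with weak analytic singularities in $\alpha$, by showing that the collection $\mathcal F$ of all such currents is non-empty, that the associated family of analytic sets $\{E_+(T):T\in\mathcal F\}$ is directed downward under the $\max$ operation, and that closed analytic subsets of the compact space $X$ satisfy the descending chain condition. This mirrors the smooth argument of \cite{CT15}; the only genuinely new ingredients are the passage to a resolution and the behaviour of push-forwards under modifications.

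First I would check $\mathcal F\neq\emptyset$. Choose a resolution $\pi:\wh X\to X$ which we may assume to be a compact K\"ahler manifold; then $\pi^\star\alpha$ is nef and big on $\wh X$ (bigness is preserved since $\int_{\wh X}(\pi^\star\alpha)^{\dim X}=\int_X\alpha^{\dim X}>0$), so by Demailly's regularisation theorem \cite{Dem92} it contains a K\"ahler current $\wh T=\pi^\star\alpha_{\rm sm}+\ddbar\psi$ with analytic — in particular admissible — singularities, where $\alpha_{\rm sm}\in\alpha$ is smooth. I then set $T:=\pi_\star\wh T$. This is a closed positive current in $\alpha$, and choosing K\"ahler forms $\wh\omega$ on $\wh X$ and $\omega_X$ on $X$ with $\wh T\geq\delta\wh\omega$ and $\pi^\star\omega_X\leq C\wh\omega$, we get $\wh T-\tfrac{\delta}{C}\pi^\star\omega_X\geq 0$, hence $T-\tfrac{\delta}{C}\omega_X=\pi_\star(\wh T-\tfrac{\delta}{C}\pi^\star\omega_X)\geq 0$ (using $\pi_\star\pi^\star\omega_X=\omega_X$), so $T$ is a K\"ahler current. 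Moreover $T$ has weak analytic singularities directly from Definition \ref{defn-was} applied to the datum $(\pi,\wh T,\psi)$. By Lemma \ref{ana}, $E_+(T)$ is then a closed analytic subset of $X$ for every $T\in\mathcal F$.

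Next I would prove that $\{E_+(T):T\in\mathcal F\}$ is directed downward. Given $T_1,T_2\in\mathcal F$, after subtracting a smooth function we may assume $T_i=\alpha_{\rm sm}+\ddbar\varphi_i$ for one common smooth representative, and we set $\varphi:=\max(\varphi_1,\varphi_2)$ and $T_3:=\alpha_{\rm sm}+\ddbar\varphi$. If $T_i\geq\epsilon_i\omega_X$, then each $\varphi_i$ is $(\alpha_{\rm sm}-\epsilon\omega_X)$-psh with $\epsilon=\min(\epsilon_1,\epsilon_2)$, hence so is their maximum and $T_3\geq\epsilon\omega_X$ is again a K\"ahler current; and $T_3$ has weak analytic singularities by Lemma \ref{max_dir}(b). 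Since locally (for the same smooth local potential of $\alpha_{\rm sm}$) the potential of $T_3$ dominates that of $T_i$, comparison of potential slopes together with Lemma \ref{slope, III} gives $E_+(T_3)\subset E_+(T_1)\cap E_+(T_2)$. Now closed analytic subsets of the compact complex space $X$ satisfy the descending chain condition (by induction on dimension, using finiteness of irreducible components), so the directed family $\{E_+(T)\}_{T\in\mathcal F}$ attains a minimal element $E_+(T_0)$ with $T_0\in\mathcal F$; for any $T\in\mathcal F$ the previous step produces $T'\in\mathcal F$ with $E_+(T')\subset E_+(T_0)\cap E_+(T)\subset E_+(T_0)$, and minimality forces $E_+(T')=E_+(T_0)$, hence $E_+(T_0)\subset E_+(T)$. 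Therefore $E_{nK}^{as}(\alpha)=\bigcap_{T\in\mathcal F}E_+(T)=E_+(T_0)$, which is analytic and is realised by the current $T_0$, as claimed.

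The step I expect to be the main obstacle is the first one: ensuring that $\alpha$ genuinely carries a K\"ahler current with weak analytic singularities. This needs a resolution of $X$ that is itself K\"ahler, the invariance of bigness under pull-back, the regularisation theorem \cite{Dem92} on $\wh X$, and — most delicately — the verification that the push-forward by the modification $\pi$ remains a K\"ahler current while acquiring weak analytic singularities (this last point is what Definition \ref{defn-was} is built to encode, and it relies on the normality of $X$ as in the discussion preceding \eqref{eq100}). Once $\mathcal F\neq\emptyset$ is known, the directedness via $\max$ and the extraction of a minimal analytic set are formal, exactly as in \cite{CT15}.
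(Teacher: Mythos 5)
Your proof is correct and follows essentially the same route as the paper: the key steps are Lemma \ref{max_dir}(b) to make the family $\{E_+(T)\}$ directed downward under the $\max$ construction, together with the descending chain condition for analytic subsets of the compact space $X$ (the paper phrases this as noetherian induction on a decreasing sequence $E_+(T_{m+1})\subset E_+(T_m)$). Your additional explicit verification that the family is non-empty — pulling back to a K\"ahler resolution, regularising via \cite{Dem92}, and pushing forward to get a K\"ahler current with weak analytic singularities — is a detail the paper leaves implicit (it is used elsewhere, e.g.\ in Step 3 of the proof of Theorem \ref{pro:null-non-kahler}), and it is carried out correctly.
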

\begin{proof}
By (b) of Lemma \ref{max_dir}, given two K\"ahler currents with weak analytic singularities $T_i\in \alpha$, we can construct $T\in\alpha$ such that 
\[E_+(T)\subset  E_+(T_1)\cap E_+(T_2)\]
and moreover $T$ is again a Kähler current with weak analytic singularities. We can therefore construct a sequence $T_k\in \alpha$ of such currents, for which the following assertions
\[E_+(T_{m+1})\subset E_+(T_{m}), \qquad E_{nK}^{as}(\alpha)\subset  \bigcap_k E_+(T_k)\]
are true for each $m\geq 1$. Since $X$ is compact and $E_+(T_{m})$ are analytic sets, the corollary follows because by notherian induction there exists an integer $m_0$ such that $E_+(T_{m+1})= E_+(T_{m})$ for 
all $m\geq m_0$.
\end{proof}

\medskip

\noindent In this context, we have the following statement, which represents the main result of this section.

 \begin{theorem}\label{pro:null-non-kahler}
       Let $X$ be a compact normal K\"ahler variety, and let $\alpha$ be a smooth (1,1)-form, which is locally $\ddbar$-exact and such that the corresponding class is nef and big. Then $E_{nK}^{as}(\alpha)= \Null(\alpha)$. In particular,
the set $\Null(\alpha)$ is analytic.   
\end{theorem}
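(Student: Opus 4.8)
The plan is to follow the strategy of \cite{CT15}, replacing Demailly regularisation and Lelong numbers on manifolds by the normal--space Lelong calculus and the class of currents with weak analytic singularities developed above, and to prove the two inclusions $\Null(\alpha)\subseteq E_{nK}^{as}(\alpha)$ and $E_{nK}^{as}(\alpha)\subseteq\Null(\alpha)$ separately, arguing by induction on $n=\dim X$ where needed. Throughout I would use that for an irreducible analytic $V\subseteq X$ with resolution $\mu_V\colon\tilde V\to V$ the class $\alpha|_{\tilde V}$ is again nef and locally $\ddbar$--exact, and that a nef class on a compact K\"ahler manifold is big precisely when its top self--intersection is positive, so that $\alpha|_{\tilde V}$ is big iff $\int_V\alpha^{\dim V}>0$. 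By the Corollary above, $E_{nK}^{as}(\alpha)$ is analytic and equals $E_+(T)$ for a single K\"ahler current $T$ with weak analytic singularities, so for the second inclusion it suffices to treat the irreducible components of $E_+(T)$.

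For $\Null(\alpha)\subseteq E_{nK}^{as}(\alpha)$ I would fix an irreducible $V$ with $\int_V\alpha^{\dim V}=0$ and any K\"ahler current $S=\alpha+\ddbar\varphi\geq\varepsilon\omega$ in $\alpha$ with weak analytic singularities (here $\omega$ is a fixed K\"ahler form on $X$), and suppose for contradiction that $V\not\subseteq E_+(S)$. Choosing a projective bimeromorphic $\pi\colon Y\to X$ with $Y$ smooth which is an isomorphism over the generic point of $V$ and along which the strict transform $\hat V$ is smooth, Lemma~\ref{max_dir}(a) and Theorem~\ref{pull} give that $\pi^\star S\geq\varepsilon\pi^\star\omega$ is a closed positive current with weak analytic singularities whose generic Lelong number along $\hat V$ vanishes; hence, by Theorem~\ref{gen}, the restriction $(\pi^\star S)|_{\hat V}$ is a well defined closed positive current in $(\pi^\star\alpha)|_{\hat V}$ with $(\pi^\star S)|_{\hat V}\geq\varepsilon(\pi|_{\hat V})^\star(\omega|_V)$. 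Since the latter class is nef with positive top self--intersection it is big, so $\alpha|_{\hat V}$ is the sum of a pseudoeffective and a big class, hence big, contradicting $\vlm(\alpha|_{\hat V})=\int_{\hat V}(\alpha|_{\hat V})^{\dim V}=\int_V\alpha^{\dim V}=0$. Thus $V\subseteq E_+(S)$ for every such $S$, i.e. $V\subseteq E_{nK}^{as}(\alpha)$.

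For the reverse inclusion I would fix $T$ with $E_+(T)=E_{nK}^{as}(\alpha)$ and aim to show $\int_Z\alpha^{\dim Z}=0$ for every irreducible component $Z$ of $E_+(T)$; assuming the contrary for some $Z$ of maximal dimension, the goal becomes to produce a K\"ahler current $S\in\alpha$ with weak analytic singularities such that $Z\not\subseteq E_+(S)$, which contradicts $Z\subseteq E_{nK}^{as}(\alpha)=\bigcap_{S'}E_+(S')$. Following the Nakamaye--type scheme of \cite{CT15} (and assuming $\cod Z\geq2$, the divisorial case being analogous but not requiring a blow--up), I would pass to an embedded resolution $\pi\colon\hat X\to X$ of $Z$ carrying a smooth prime divisor $D$ with $\pi|_D$ of connected fibres onto a resolution $\tilde Z$ of $Z$ and $-D|_D$ relatively ample, split $\pi^\star T=N+R$ into its divisorial part $N$ and residual part $R\geq0$ (Theorem~\ref{gen}), observe that $R\geq\varepsilon\pi^\star\omega$ by~\eqref{eq100} and $\nu(R,D)=0$, and set $\hat\alpha:=\pi^\star\alpha-[N]=[R]$. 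A fibrewise intersection computation using the relative ampleness of $-D|_D$ and $\int_{\tilde Z}(\alpha|_{\tilde Z})^{\dim Z}=\int_Z\alpha^{\dim Z}>0$ then shows $\hat\alpha|_D$ is big on $D$, so it carries a regularised K\"ahler current $\sigma$ (equivalently, by the inductive hypothesis, with $E_+(\sigma)=\Null(\hat\alpha|_D)\subsetneq D$); combining $\sigma$ with the positivity of $\hat\alpha$ transverse to $D$, a smooth form in the nearby K\"ahler class $\alpha+\delta\omega$, and the current $T$ (via the $\max$ operation of Lemma~\ref{max_dir}, which keeps us inside the class of currents with weak analytic singularities) should yield the desired $S$.

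The hard part will be this last step: manufacturing a \emph{global} K\"ahler current in $\alpha$ on $X$ out of the local positivity data along $Z$ and transverse to it, keeping its Lelong numbers zero at a general point of $Z$ (note in particular that naively pushing a current forward along $\pi$ would create spurious Lelong numbers along $Z$, so the construction must be carried out carefully, directly on $X$). Every Lelong--number estimate entering this gluing — comparison under the bimeromorphic and generically finite maps involved, the behaviour under restriction to a subvariety of vanishing generic Lelong number, and the stability of the relevant class of singularities under pullback, pushforward and $\max$ — has to be run on the normal space $X$, which is precisely what Theorems~\ref{pull}, \ref{gen}, Lemma~\ref{slope, III} and Lemma~\ref{max_dir} are designed to supply; assembling them into the gluing is what I expect to be the technical heart of the argument. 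Once both inclusions are in hand, the equality $E_{nK}^{as}(\alpha)=\Null(\alpha)$ follows, and analyticity of $\Null(\alpha)$ is inherited from that of $E_{nK}^{as}(\alpha)$.
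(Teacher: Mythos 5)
Your first inclusion $\Null(\alpha)\subseteq E_{nK}^{as}(\alpha)$ is fine and is essentially the paper's Step 1 (the paper restricts the current directly to $V$ rather than to a strict transform, but the mechanism — zero generic Lelong number along $V$ forces $\alpha|_V$ nef and big, contradicting vanishing volume — is the same). The problem is the reverse inclusion: there you describe a Nakamaye-type scheme (embedded resolution of $Z$, a divisor $D$ with $-D|_D$ relatively ample, a fibrewise computation giving bigness of $\hat\alpha|_D:=[\pi^\star T-N]|_D$, induction on dimension) and then explicitly defer the decisive step, namely the production of a \emph{global} K\"ahler current in $\alpha$ with weak analytic singularities which is bounded at a general point of $Z$. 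That deferred step is not a routine assembly of Theorems \ref{pull}, \ref{gen} and Lemma \ref{max_dir}: it is the actual content of the theorem, and it is exactly what the paper's Step 3 carries out. The paper does not go through a divisor over $Z$ at all; it works on $V$ itself: after a modification making $V$ smooth (and biholomorphic over a Zariski-open part of $V$, so that pushing forward at the end creates no spurious Lelong numbers), it takes a K\"ahler current $\Theta_V=\alpha|_V+\ddbar f$ with analytic singularities on $V$, extends $f$ (only over a pinched neighborhood, with log poles, following the key remark of \cite{CT15}, which forces the whole blow-up construction with the divisors $\Lambda_i$, the forms $\rho_i$ and the estimate \eqref{eq86}), uses the nefness of $\alpha$ to replace the global K\"ahler current $\Theta$ by a convex combination $\Theta_\ep$ with arbitrarily small singularities, dominates $\delta_0\psi_Z$ by $F_\ep$ near $Z$ via a Nullstellensatz argument on a resolution (equations \eqref{eq83}, \eqref{eq302}), and only then glues by $\max$ on a collar around $\partial U$. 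None of these ingredients appear in your outline, and without them your final sentence ("should yield the desired $S$") is an assertion, not a proof. In addition, the preliminary claims of your scheme (that $\hat\alpha|_D$ is big, that an inductive hypothesis applies to the possibly non-nef class $\hat\alpha|_D$) are themselves unproved and not obviously repairable in the transcendental setting.

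A second, smaller gap: you never rule out zero-dimensional components of $E_{nK}^{as}(\alpha)$. Since $\Null(\alpha)$ has no isolated points (a point has $\int\alpha^{0}=1$), the inclusion $E_{nK}^{as}(\alpha)\subseteq\Null(\alpha)$ requires showing that every component of $E_{nK}^{as}(\alpha)$ has dimension at least one; this is the paper's Step 2, proved by removing an isolated pole of the potential via a local $\max$ construction, and your argument by maximal-dimensional components does not substitute for it.
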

\smallskip

\begin{remark}
As we have already mentioned in Remark \ref{slope}, in singular setting the Lelong number of a positive current can be different from the slope of its potential at a given point. 
Therefore one might wonder why the former notion appears in Proposition \ref{pro:null-non-kahler} and not the latter. The explanation is given by Lemma \ref{slope, III}: considering slopes instead of Lelong numbers would lead to the same set $E_{nK}^{as}(\alpha)$.
\end{remark}

   \begin{proof} %[Proof of Proposition \ref{pro:null-non-kahler}] 
   The arguments which will follow combine \cite{CT15} (where this statement is established in case $X$ is a manifold), with additional inputs from \cite{DHP22}.
\medskip

\noindent \emph{Step 1: the inclusion $\Null(\alpha)\subset E^{as}_{nK}(\alpha)$ holds.} Indeed, let $V\subset X$ be an  irreducible component %of $X$ such that $V\subset N(\alpha)$, that is to say
of $\Null(\alpha)$ so that
\begin{equation}\label{eq80}\int_{V_{\rm reg}}\alpha^d= 0,\end{equation}
where $d$ is the dimension of $V$. If $V\not\subset E_{nK}^{as}(\alpha)$, then by the definition of this subset, there exists a 
Kähler current $\Theta\in \alpha$, whose potentials have weak analytic singularities and whose Lelong number 
at a generic point of $V$ is equal to zero. These two properties show that the restriction $\displaystyle \Theta|_V$
is a Kähler current, so in particular $\alpha|_V$ is nef and big. This contradicts the equality \eqref{eq80}. 
\medskip

\noindent \emph{Step 2: the set $E^{as}_{nK}(\alpha)$ does not have isolated points.} Let 
\[T:= \alpha+ \ddbar \phi\]
be a Kähler current in the class $\alpha$ (in the equality above we abusively denote by $"\alpha"$ a smooth representative of this class). Assume that the function $\phi$ has weak analytic singularities and moreover 
$x\in X$ is an isolated point in the set $\phi^{-1}(-\infty)$. Then we can remove the pole of $\phi$ at $x$ as follows (see \cite{Dem92} and the references therein). 

Consider $(X, x)\subset U\subset (\C^N, 0)$ a local embedding of $X$, such that \[\alpha|_{U\cap X}= \ddbar \tau_x\]
for some smooth function $\tau_x$. The sum $\displaystyle \tau_x+ \phi$ is the restriction of a function 
$\psi$ defined on $U$ and whose Hessian is bigger than a positive multiple of the Euclidean metric on $\C^N$. We now define
\[\widetilde \psi:= \max(\psi, C_1\Vert Z\Vert^2- C_2)\]
where $C_1$ is strictly positive and $C_2\gg 0$ is large enough, so that 
\[\psi|_{\partial(U\cap X)}> (C_1\Vert Z\Vert^2- C_2)|_{\partial(U\cap X)}\]
holds. This is indeed possible, since the restriction of $\psi$ to a small enough neighborhood of the boundary of $U\cap X$ in $X$ is smooth. We have denoted by $Z$ the coordinates in $\C^N$ and for each open set $\Omega$ we denote by 
$\partial (\Omega)$ its boundary.  

We then define $\widetilde \phi$ the function on $X$ given by $\widetilde \psi|_{U\cap X}- \tau_x$ on $U\cap X$ and $\phi$
on $X\setminus U$. This function has weak analytic singularities, and the current
\[\widetilde T:= \alpha+ \ddbar \widetilde\phi\]
is greater than a small multiple of the Kähler metric on $X$. Moreover, $x\not \in \widetilde \phi^{-1}(-\infty)$.
Therefore, the set $E^{as}_{nK}(\alpha)$ cannot contain isolated points, all its irreducible components must have dimension at least one.

\medskip

\noindent \emph{Step 3: the inclusion $E^{as}_{nK}(\alpha)\subset {\rm Null}(\alpha)$ holds.} Let $V\subset E^{as}_{nK}(\alpha)$ be any irreducible component. We have to show that $\displaystyle \int_{V_{\rm reg}}\alpha^d= 0$, where $d\geq 1$ is the dimension of $V$. Assume that this equality does not hold. Then given that the restriction $\alpha|_V$ is nef, the 
only alternative is 
\begin{equation}\label{eq81}\int_{V_{\rm reg}}\alpha^d> 0\end{equation}
and we show next that \eqref{eq81} leads to a contradiction. This will be done if we are able to construct a 
Kähler current in the class $\alpha$, whose potential has weak analytic singularities and such that 
it is bounded locally at some point of $V$.

A first important reduction is that we can assume that $V$ is non-singular. Indeed, as shown in the proof of \cite[Theorem 2.29]{DHP22} there exists a modification $p:\wh X\to X$ such that the following hold.
\begin{itemize}
\item The complex Kähler space $\wh X$ is normal. 

\item The proper transform $\wh V$ of $V$
is a smooth submanifold of $\wh X$. 

\item The map $p$ is an isomorphism locally over an open subset of $V$. 
\end{itemize}
If we are able to construct a Kähler current $\Theta\in p^\star\alpha$ with weak analytic singularities
such that $\Theta$ is bounded locally at a very general point of $\wh V$, then we are done 
by considering the direct image $p_\star\Theta$, cf. Lemma \ref{max_dir}, (b).
%Indeed,  . It is a Kähler current, and it has analytic singularities at least on $X\setminus W$, where 
%$W\subset X$ is such that the restriction map
%\[p:\wh X\setminus p^{-1}(W)\to X\setminus W\]
%is an isomorphism. Notice that the set $V$ is not contained in $W$, thanks to the previous items; moreover, the codimension of $W$ in $X$ is at least two. Since $X$ is normal, we can write 
%\[p_\star\Theta= \alpha+ \ddbar \psi\]
%where $\psi$ is quasi-psh on $X$ (this can be seen by first defining $\psi$ on $X\setminus W$, and then we use Hartogs extension theorem, which holds true in the context of normal spaces). The function $\psi$ is bounded from above, and by adding a sufficiently small multiple of a function with log poles on $W$, we can assume that $\psi|_W\equiv -\infty$.
%On the other hand, we have a K\"ahler current $T$ with analytic singularities in the class $\alpha$. We glue together by a regularized maximum construction $T$ and $p_\star\Theta$; the resulting K\"ahler current will have analytic singularities and will be nonsingular at a very general point of $V$.
Thus replacing $X$ with $\wh X$, we can assume from this point on that $V$ is non-singular. 

Thanks to inequality \eqref{eq81}, it follows that we can construct a Kähler current 
\[\Theta_V:= \alpha|_V+ \ddbar f\]
such that $f:V\to [-\infty, 0]$ has \emph{analytic singularities}, cf. Remark \ref{rmk_ana}. 

On the other hand, the class $\alpha$ is nef and big on $X$, so there exists a Kähler current 
\[\Theta:= \alpha+ \ddbar F\]
where $F$ has weak analytic singularities along $V\cup Z$, where $Z\subset X$ is an analytic subset of $X$ which does not contain $V$. 

 The idea is to remove the singularities of $\Theta$ at the generic point of $V$ (as we did in Step 2) by using $\Theta_V$, so that the resulting current will provide the sought-after contradiction. However, in the actual context additional complications arise due the singularities of $F$.
\medskip

 \noindent \emph{A particular case.} As "warm-up" for the rest of the proof, we provide here an argument in case the function $f$ is smooth. By the proof of \cite[Theorem 4]{Dem90} there exists a 
 smooth extension $\wt f$ defined in an open subset $U$ of $V$ such that 
 \[\alpha|_U+ \ddbar \wt f\]
is a Kähler current on $U$. Let now $\psi_Z$ be a quasi-psh function on $X$, with analytic poles along $Z$. If $\delta_0> 0$ is sufficiently small, then 
\begin{equation}\label{eq82}\alpha|_U+ \ddbar (\wt f+ \delta_0\psi_Z|_U)\end{equation}
is a Kähler current on $U$, and it has poles along the analytic set $Z\cap U$. 

Next we will use the hypothesis "$\alpha$ nef" in order to diminish the order of poles of $F$. This is necessary, as otherwise we will be unable to glue $\Theta$ with the form in \eqref{eq82}. Indeed, for any 
positive $\ep> 0$ there exists a smooth function $G_\ep$ on $X$ such that 
\[T_\ep:= \alpha+ \ddbar G_\ep \geq -\ep \omega_X\]
where $\omega_X$ is a reference Kähler metric on $X$. Let $\delta_1> 0$ be a strictly positive real number such that $\displaystyle \Theta\geq \delta_1\omega_X$. The convex combination
\[\Theta_\ep:= (1-\frac \ep{\delta_1})T_\ep+ \frac \ep{\delta_1}\Theta\]
is a Kähler current in the class $\alpha$. It is easy to check that it is greater or equal than
\[-\ep(1-\frac \ep{\delta_1})\omega_X+ \frac{\ep}{\delta_1} \delta_1\omega_X= \frac{\ep^2}{\delta_1}\omega_X.\]

On the other hand, the singularities of $\Theta_\ep$ are of order $\mathcal O(\ep)$, in other words we can make them as small as we want/need.
Let $\displaystyle F_\ep:= (1-\ep/\delta_1)G_\ep+ \ep/\delta_1 F$ be the potential of $\Theta_\ep$. It follows that given a point $z\in Z\setminus V$, there exists an open subset $U_z$ containing $z$ such that the following inequality 
\begin{equation}\label{eq83}F_\ep> \delta_0\psi_Z\end{equation}
holds for all $0<\ep\ll 1$
small enough. This can be seen as consequence of Hilbert's Nullstellensatz, as follows.
\smallskip

We recall that $E_+(\Theta)$ is equal to $Z\cup V$. Moreover 
there exists a birational map $\pi:\wh X\to X$ such that $\pi^\star\Theta$ has admissible singularities,
meaning in particular that
locally near $w\in \pi ^{-1}(z)$ we have
\[F\circ\pi|_{(\wh X, w)}= \log(\sum |f_i|^{2r_i})+ b\]
where $b$ is bounded, $(f_i)$ is a set of holomorphic functions and $r_i\geq 0$ are
positive real numbers. 

In the proof of Lemma \ref{ana} we showed that 
$\displaystyle E_+(\Theta)= \pi\big(E_+(\pi^\star\Theta)\big)$, which in particular implies 
that we have $\displaystyle \pi^{-1}\big(E_+(\Theta)\big)\supset E_+(\pi^\star\Theta)$. In fact, the equality 
\begin{equation}\label{eq302}
\pi^{-1}\big(E_+(\Theta)\big)= E_+(\pi^\star\Theta)\end{equation}
holds: let $x\in \pi^{-1}\big(E_+(\Theta)\big)$, so that $\nu\big(\Theta, \pi(x)\big)> 0$.
We claim that $\nu(\pi^\star\Theta, x)> 0$ is strictly positive as well. If this is not the case,
the local potential of $\pi^\star\Theta$ is \emph{locally bounded from below} near $x$. By the analogue of \eqref{eq300} in our setting here, it follows that the potential of $\Theta$
is equally locally bounded from below near $\pi(x)$. But this contradicts the fact that 
the Lelong number of $\Theta$ at $\pi(x)$ is positive.

Consider next the ideal $\mathcal I=(f_i)$ generated by the functions appearing in the expression of $F\circ\pi$ above. We remark that the set of zeros of $\mathcal I$ is contained in $(\pi^{-1}(Z), w)$ (this is only true because we are "far" from $V$). The function $\psi_Z$ is obtained by gluing functions 
of type $\log(\sum |h_j|^2)$, where $(h_j)$ are the local equations of $Z$ and so there exists an integer $N>0$ such that $(h_j\circ \pi)^N\in \mathcal I$. In particular we have $\sum |h_j\circ \pi|^{2N}\leq C(\sum|f_i|^2)$ for some positive constant $C> 0$. By arranging the constants, and taking into account the fact that $\pi^{-1}(z)$ is a compact set, this implies \eqref{eq83}. 
 
% , and let $(g_j)$ be the generators of the ideal associated to $\pi^{-1}(Z)$ locally near $w$. In particular $g_j$ vanishes on the set of zeroes of $\mathcal I$, and so there exists  $N\gg 0$ with  \[g_j^N\in \mathcal I,\] in particular we have $\sum |g_j|^{2N}\leq C(\sum|f_i|^2)$ for some positive constant $C> 0$. By arranging the constants, and taking into account the fact that $\pi^{-1}(z)$ is a compact set, this implies \eqref{eq83}. Here we recall that the function $\psi_Z$ is construction by gluing functions  of type $\log(\sum |h_j|^2)$, where $(h_j)$ are the local equations of $V$ --so in particular, we can define the set $(g_j)$ above as pull-back of $(h_j)$ via the map $\pi$.

\smallskip

We fix a value, say $\ep_0$, of $\ep$ small enough, such that \eqref{eq83} holds 
true for every point of $\partial U\cap Z$: this is possible, since $\partial U\cap V= \emptyset$.
Then we claim that
there exists a positive constant $C> 0$ such that the inequality 
\begin{equation}\label{eq84}\wt f+ \delta_0\psi_Z< F_0+ C\end{equation}
holds true pointwise in a small open subset containing the boundary $\partial U$ of $U$, where $\displaystyle F_0:= F_{\ep_0}$. 

\noindent In order to verify this claim, we note that $\partial U$ is a compact set, and let $x\in \partial U$ be one of its elements. If $\displaystyle x\in \partial U \cap Z$, we clearly have
$\displaystyle \delta_0\psi_Z< F_0$
locally near $x$, as consequence of the inequality \eqref{eq83}. Moreover, the function $\wt f$
is non-singular, so the existence of the constant $"C"$ such that \eqref{eq84} holds in a small open subset of $x$ is guaranteed. If $\displaystyle x\not\in \partial U \cap Z$, then things are clear. By compactness, we have established our claim.
\smallskip

\noindent The next claim is that the function 
\[\wh F:= \max(\wt f+ \delta_0\psi_Z, F_{0}+ C)\]
is defined on the whole space $X$, is bounded at the generic point of $V$, and moreover
\[\alpha+ \ddbar \wh F\]
is a Kähler current. Indeed, near the boundary $\partial U$ of the set $U$ where $\wt f$ is defined
the inequality \eqref{eq84} shows that $\wh F= F_0+C$, and thus 
we define $\wh F=F_0+C$ on the complement of $U$.

\medskip

 \noindent \emph{End of the proof.} 
In general, the function $f$ has no reason to be smooth, but nevertheless the line of arguments above remains valid, thanks to a very important remark in \cite{CT15}. The point is that instead of obtaining a smooth extension $\wt f$ of $f$ defined on an open subset $U\subset X$ as above, in the actual context we only get a function with log poles $\wt f$ and a "pinched" neighborhood $U$ of $V\setminus W$, where $W$ is an analytic subset of $V$, such that the analog expression \eqref{eq82} is a Kähler current. In our context, this can be seen as follows. 

Consider finitely many open subsets $(A_i)$ of $X$, such that each $A_i$ is an analytic subset the unit ball of some Euclidean space,
and such that $V\subset \cup A_i$. We can assume that the equality 
\[f|_{A_i\cap V}= \delta_i\log(\sum_{\alpha} |g_{i\alpha}|^2)+ \tau_i\]
holds, where $\delta_i> 0$, the functions $g_{i\alpha}$ defined on $A_i\cap V$ are holomorphic and $\tau_i$ are bounded (given that $f$ has analytic singularities). In particular, the equations $g_{i\alpha}=0$ define a global analytic set $W$ contained in $V$. 

Let $\mathcal I_W\subset \mathcal O_V$ be the ideal sheaf of $W$, and let 
$\mathcal J_W\subset \mathcal O_X$ be its pull-back via the projection map $\mathcal O_X\to \mathcal O_V$. Then there exists a birational map
\[\pi: \wh X\to X\]
obtained by blowing-up smooth centres contained in $V$, such that the following hold.
\begin{enumerate}

\item[(a)] \emph{The space $\wh X$ is normal and the proper transform $\wh V$ of V is non-singular. Moreover, the map $\pi$ is biholomorphic near the general point of $\wh V$.}

\item[(b)] \emph{The inverse image of $\mathcal J_W$ via the restriction of $\pi$ to $\wh V$
is equal to $\mathcal O_{\wh V}(-D)$, where $D$ is an effective divisor on $\wh V$ whose support is snc.}
\end{enumerate}
%Note that $\wh X$ might not be normal anymore, but luckily we do not need this property to hold in what follows.\footnote{Actually I think we can assume $\wh X$ is normal. The point is that once (a) and (b) are satisfied, we let $\tilde X \to \wh X$ be the normalization which is an isomorphism near general points of $\wh V$ and so $\tilde V\to \wh V$ is finite, birational and $\wh V$ is smooth and so $\tilde V\to \wh V$ is an isomorphism (in particular (a) and (b) still hold).}\footnote{MP: Perfect, this is indeed the case!}

\noindent For the construction of the map $\pi$  we use the same argument as in \cite{DHP22}.
We start by constructing a principalization of the ideal $\mathcal I_W\subset \mathcal O_V$:
this is achieved by a finite sequence of blow-ups 
\[p_k:V_{k+1}\to V_k\]
of smooth centres $\Sigma_k\subset V_k$, for $k=0,\dots ,N-1$ with $V_0:= V$. Next we interpret  $\Sigma_0\subset V$ as analytic subspace of $X$, and we blow-up $X$ along $\Sigma_0$; let 
\[\pi_1: X_1\to X\]
be the corresponding map. We get a closed immersion $\displaystyle V_1\to X_1$ (whose image is simply the proper transform of $V$), and we repeat this operation with $\Sigma_1$. Notice that the space $X_N$ is not necessarily normal, but the map $\displaystyle X_N\to X$ is biholomorphic at the generic point of $V_N$. Since $X$ is normal, it follows that the complement of a proper analytic subset of $V_N$ is contained in the set of normal points of 
$X_N$. Therefore, the normalization $\nu: \wh X\to X_N$ is biholomorphic 
near the proper transform $\wh V$ of $V_N$ (cf. \cite{GR}, Corollary, page 164).
 It follows that the map \[\pi:\wh X\to X\] 
obtained by composing the normalization of $X_N$ with the previous sequence of blow-ups has all the properties we need. 

 Consider next the pull-back current $ (\pi|_{\wh V})^\star\Theta_V$. 
Its singularities are concentrated along the snc divisor $D$ in $\wh V$, whose support is $\Lambda_1+\dots + \Lambda_N$. For each $i=1,\dots, N$ we 
denote by $\beta_i$ an arbitrary, smooth representative of the first Chern class
of $\Lambda_i$.
By blowing up $\wh X$ along $\Lambda_i$, we can add the following item to the properties of $\pi$ above:
\begin{enumerate}

\item[(c)] \emph{There exists a set $(\rho_i)_{i=1,\dots, N}$ of smooth $(1,1)$-forms on $\wh X$ such that the equality
\[\rho_i|_{\wh V}\equiv \beta_i\]
holds for each $i= 1,\dots, N$, meaning that the restriction of $\rho_i$ to $\wh V$ belongs to the 
class $\beta_i$.}
\end{enumerate}
Indeed, locally analytically we blow up the non-singular set $\Lambda _i$ in $\mathbb C^N$. The exceptional set of $\widetilde {\mathbb C^N}\to \mathbb C^N$ is a smooth divisor $F_i\to \Lambda _i$. Then $E_i:=\widetilde X\cap F_i$ where $\widetilde X\to \wh X$ is the strict transform and even if $E_i$ is neither reduced nor irreducible, it is nevertheless a Cartier divisor. Then $\mathcal O(E_i)$ is locally free, and so we can endow it with a smooth metric denoted by $h_i$ (this notion is defined precisely as in the usual case of a line bundle on a manifold). The curvature form corresponding to $h_i$ will be our $\rho_i$, for each index $i$. Moreover, notice that these additional transformations do not affect $\wh V$, since $\Lambda_i$ has codimension one in $\wh V$.
\medskip 

\noindent Given the properties (a)--(c) above, we obtain the decomposition
\[(\pi|_{\wh V})^\star\Theta_V= \alpha_1+ \sum a_i[\Lambda_i]+ \ddbar \wh f\]
where the
notations/conventions are as follows:
\begin{itemize}

\item The same symbol e.g. $\alpha$ is used to denote a cohomology class and some fixed representative contained in it (in case we do not intend to emphasize a particular representative of the said class).

\item $\Lambda_i$ are the hypersurfaces of $\wh V$ introduced before.

\item The smooth $(1,1)$-form $\alpha_1$ is defined as \[\alpha_1= (\pi|_{\wh V})^\star(\alpha|_V)- \sum a_i\rho_i|_{\wh V},\]
where $\rho_i$ are the forms in (c). 
\item The function $\wh f$ is bounded. 
\end{itemize}

\noindent 
We note that since $\Theta_V$ is a K\"ahler current, the inequality
\[(\pi|_{\wh V})^\star\Theta_V\geq \delta (\pi|_{\wh V})^\star(\omega _X|_V)\]
holds, for some $\delta> 0$. It follows that we have 
\[\wh\Theta_V:= \alpha_1+ \ddbar \wh f\geq \delta(\pi|_{\wh V})^\star(\omega _X|_V)\]
as well, since $\wh\Theta_V$ is smooth, and the inequality above holds in the 
complement of an analytic set of $\wh V$.

Then we claim that
for each $\ep > 0$ there exists an open subset
$U_\ep \subset \wh X$ containing $\hat V$, together with a smooth function $\wh f_\ep: U_\ep\to \mathbb R$ such that  
\begin{equation}\label{eq85}\pi^\star(\alpha)- \sum a_i\rho_i+ \ddbar \wh f_\ep\geq \delta\pi^\star(\omega_X)- \ep\omega_{\wh X}\end{equation}
pointwise on $U_\ep$, where $\omega_{\wh X}$ is a fixed Kähler metric on $\wh X$. Indeed, this is 
done in two steps: we first apply the regularisation result in \cite{Dem92} on $\wh V$ in order to convert $\wh f$ to a smooth function. The price to pay is a loss of positivity, which can be assumed to be of size $\displaystyle \frac{\ep}{2}\omega_{\wh X}|_{\wh V}$.
Then by the argument in \cite{DeBook} already used in the particular case above, we obtain $U_\ep$
and $\wh f_\ep$.
\smallskip

\noindent The inequality \eqref{eq85} above is in particular true if we construct the metric $\omega_{\wh X}$ as follows:
\[\omega_{\wh X}:= \pi^\star\omega_X- \sum \ep_i\rho_i\]
where $\ep_i> 0$ are well-chosen real numbers. Then we take $\displaystyle \ep:= \frac{\delta}{2}$ and if we denote by $U$ and $\wt f$ the corresponding set and function, respectively, then
all in all we have
\begin{equation}\label{eq86}\pi^\star(\alpha) + \ddbar \big(\wt f+ \sum (a_i+ \frac{\delta}{2}\ep_i)\log|s_{E_i}|_{h_i}^2\big)\geq \frac{\delta}{2}\pi^\star(\omega_X)\end{equation}
in the sense of currents on $U$.
\smallskip

\noindent On the other hand, we also have at our disposal the inverse image current $\displaystyle \pi^\star \Theta\geq \delta_1\pi^\star(\omega_X)$
which has log poles along $\wh V\cup Z$, where $Z\subset \wh X$ is an analytic set. The procedure we have used in the previous particular case applies here: indeed, we have only used the fact that $f$ is smooth in order to construct the open subset $U$. Then adding the 
function $\delta_0\psi_Z$ to it has the effect of diminishing a bit more the lower bound in 
\eqref{eq86}, but we can afford this since $\delta> 0$.

The current obtained after the gluing procedure on $\wh X$ has log poles and it is non-singular at the generic point of $\wh V$. But as we have already mentioned, the birational map
$\pi$ is a biholomorphism at the general point of $V$, so the direct image of the said current will be smooth at the generic point of $V$. Moreover, it has weak analytic singularities by definition --actually this is the main reason why we have introduced this class of singularities.
   \end{proof}

\section{On subadjunction and the canonical bundle formula}

\noindent 
Let $X$ be a compact Kähler manifold, and consider a real $(1,1)$-class ${\alpha}$ on $X$. We assume that ${\alpha}$ contains a closed positive current $R\geq 0$ with admissible singularities.
This means that we can write
\begin{equation}\label{new70}
R= \alpha+ \ddbar \phi
\end{equation}
where (abusing notation) $\alpha$ is a smooth representative of the class $\alpha$ and the function $\phi$ verifies the conditions in Definition \ref{defn-adm-sing}. 

\noindent We denote by $\pi:\wh X\to X$ a log-resolution of the integral closure of the ideal generated locally by the functions
$(f_i)$ in Definition \ref{defn-adm-sing}. The pull back of the current $R$ admits the decomposition
\begin{equation}\label{new71}
\pi^\star R= [D]+ R_0 
\end{equation}  
where we denote by $[D]$ the current of integration along the effective divisor $D$ and $R_0\geq 0$ is a 
closed, smooth and semi-positive $(1,1)$-form on $\wh X$. By analogy with the case in which $R$ is induced by an effective divisor
we call such a map $\pi$ a log-resolution of $R$.
\medskip

\noindent We assume next that there exists a log-resolution of $R$, such that the following additional requirements are satisfied.
\begin{enumerate}

\item The support of the $\pi$-exceptional divisor $E$ has simple normal crossings, and $\pi$ is obtained as composition of 
blow-ups of smooth centers.

\item We have $\displaystyle \pi^\star(K_X+ \alpha)\simeq K_{\wh X}+ \beta + S+\Xi_1- \Xi_2$, where:
\begin{itemize}

\item The notation above means that the relative canonical class $K_{\wh X/X}$ of $\pi$ plus 
the divisor $S+ \Xi_1- \Xi_2$ coincides with $ \pi^\star(\alpha)-\beta$. 

\item $\beta$ is a nef $(1,1)$-class on $\wh X$. 

\item $S$ is a smooth hypersurface on $\wh X$ whose image is denoted by $T:= \pi(S)$. 
\item The restriction $\pi|_S$ admits a decomposition
$$\pi|_S= \pi_W\circ f$$
where $\pi_W:W\to T$ is a desingularisation of $T$ and $f:S\to W$ is holomorphic.
\item The $\Xi_i$'s are effective $\R$-divisors on $\wh X$ such that their supports do not have common components, $S+\Xi_1+ \Xi_2$ is snc, $(\wh X, S+\Xi_1)$ is plt and 
any hypersurface $Y$ contained in the support of $\Xi_2$ is $\pi$-exceptional. 
\end{itemize}
\end{enumerate}
\smallskip

\noindent 
We let \begin{equation}\label{eq2}
\gamma:= \pi_W^\star(K_X+\alpha)|_T- K_{W}, %\qquad \Xi:= \{ \Xi_1- \Xi_2\}=\Xi_1+ \lceil\Xi_2\rceil- \Xi_2.
\end{equation}
and we write 
\begin{equation}\label{eq1}
f^\star(\gamma)+ \Xi_2|_S \simeq K_{S/W}+ (\beta + \Xi_1)|_S.
\end{equation}

Note that the pair $(\wh X, \Xi:=\{\Xi _1-\Xi _2\})$ is klt and that $T$ is the unique center of log canonical singularities for the generalized pair $(\hat X,S+\Xi _1-\Xi _2+\beta)$. In particular $T$ is normal. 
\medskip

%\noindent In this context, we can use the well-known arguments in birational geometry and perturb the class $\alpha$, so that the additionally to 1 and 2 above we have 
\begin{enumerate}

\item[3.] The class $\beta$ contains a smooth, positive representative.

\item[4.] The coefficients of the divisors $\Xi_i$ are rational.
\end{enumerate}
\medskip

\noindent A first result we establish here is the following.

\begin{theorem}\label{L2/m} Assume that the requirements 1-4 above are satisfied. %\footnote{CH: It seems that this immediately implies the nef case, {\bf assuming} that there is an effective and exceptional $\pi$-antiample divisor $F$, and that $X$ is klt. Then (rescaling $F$ so that ${\rm mult}_S(F)=1$ and letting  $\omega$ K\"ahler on $X$) we may assume that $\pi ^*(K_X+(1-\epsilon)\alpha +\epsilon \omega )=K_{\hat X}+\beta _\epsilon +S+\Xi_{1,\epsilon }-\Xi_{2,\epsilon }$ where $\beta _\epsilon \cong (1-\epsilon)\beta -\epsilon F+\epsilon \pi ^*\omega $ is K\"ahler. So, by a limit argument, the theorem also holds for $beta $ nef under this mild assumption (i.e. if $\beta$ is nef, then we get that $K_{S/W}+ (\beta + \Xi)|_S$ is nef. Maybe I should write this out and then remark that we also prove 6.2 and 6.3 which apply in a more general context?} 
Then
the class $K_{S/W}+ (\beta + \Xi _1)|_S$ contains a closed positive current $\Theta\geq 0$ such that: 
\begin{itemize}
\item[\rm (1)] For each general fiber $S_w= f^{-1}(w)$, the restriction $\displaystyle \Theta|_{S_w}$ is induced by the space of sections of the line bundle associated to $m\Xi_2|_{S_w}$, for $m$ large and sufficiently divisible.
\item[\rm (2)] Consider the divisor $\Xi_2'\leq \Xi_2|_S$ obtained by discarding the components of $\Xi_2|_S$ whose image is contained in the singular subset of $T$. %such that the support of the difference $\lceil\Xi_2\rceil- \lceil\Xi_2'\rceil$ projects into the singular subset of $T$.
Then, $\Theta\geq [\Xi_2']$ -- that is to say, the current $\Theta$ is singular along the divisor $\Xi_2'$.
\end{itemize}
\end{theorem}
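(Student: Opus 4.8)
The plan is to realise $\Theta$ as the curvature current of a singular hermitian metric on the $\mathbb R$-line bundle $N:=K_{S/W}+(\beta+\Xi_1)|_S$, built fibrewise from the $L^2$-theory of twisted pluricanonical bundles, in the spirit of Berndtsson and of \cite{PT18}, \cite{Gue20}. First I fix auxiliary data: a smooth \emph{positive} representative $\rho_\beta\in\beta$ (which exists by requirement 3), the canonical singular metrics $h_{\Xi_i}$ on $\mathcal{O}_S(\Xi_i|_S)$ with curvature $[\Xi_i|_S]$, a smooth metric on $K_{S/W}$, and a smooth form $\rho_\gamma\in\gamma$ on $W$; by requirement 4 I choose $m\gg0$ divisible so that $mK_{S/W}$, $m\Xi_1|_S$ and $m\Xi_2|_S$ are integral. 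Over the Zariski-open $W^{\circ}\subset W$ where $f$ is smooth and the fibres $S_w=f^{-1}(w)$ meet $\Xi_1+\Xi_2$ transversally, equation \eqref{eq1} restricts to an isomorphism of integral classes $N|_{S_w}\simeq\mathcal{O}_{S_w}(\Xi_2|_{S_w})$ (since $f^{\star}\gamma|_{S_w}=0$), so $N|_{S_w}$ is effective; moreover, using that $(S_w,\Xi_1|_{S_w}+\beta|_{S_w})$ is generalized klt --- a consequence of the plt hypothesis on $(\wh X,S+\Xi_1)$ --- and that $T$ is a minimal log canonical centre with $\Xi_2$ $\pi$-exceptional, one checks that $m\Xi_2|_{S_w}$ is exactly the fixed part of the linear system $|mN|_{S_w}|$.

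The metric is constructed by a short induction on $m$. Write $mN=K_{S/W}+\Lambda_m$ with $\Lambda_m:=(m-1)N+(\beta+\Xi_1)|_S$. For $m=1$ the twisting class $\Lambda_1=(\beta+\Xi_1)|_S$ carries the semipositive singular metric $h_{\Xi_1}\otimes e^{-\varphi_\beta}$ (with $\ddbar\varphi_\beta=\rho_\beta$), so the twisted curvature theorem for relative Bergman kernels (Berndtsson, in the form used in \cite{PT18}, \cite{Gue20}) produces on $N$ a semipositive singular metric $h^{(1)}$ whose restriction to a general $S_w$ is the Bergman metric of $H^{0}(S_w,N|_{S_w}\otimes\mathcal{I}(\Xi_1|_{S_w}))$. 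Feeding $(h^{(1)})^{\otimes(m-1)}\otimes h_{\Xi_1}\otimes e^{-\varphi_\beta}$ back in as a semipositive metric on $\Lambda_m$ and applying the theorem again yields a metric $h_m$ on $mN$ over $f^{-1}(W^{\circ})$ whose fibrewise restrictions are the Bergman metrics of $mN|_{S_w}$. The Ohsawa--Takegoshi lower bound for Bergman kernels bounds the weights of $h_m$ from above near the fibres over $W\setminus W^{\circ}$, so $h_m$ extends as a singular metric of semipositive curvature on all of $S$; I set $\Theta:=\frac{1}{m}\,\Theta_{h_m}$, and $\Theta\in N$ follows by comparing with $f^{\star}\rho_\gamma+[\Xi_2|_S]$.

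Property (1) is then immediate from the construction together with the identification $N|_{S_w}\simeq\mathcal{O}(\Xi_2|_{S_w})$: by definition $\Theta|_{S_w}$ is the Bergman current of $mN|_{S_w}$, i.e.\ of the space of sections of $\mathcal{O}(m\Xi_2|_{S_w})$ (for $m$ divisible the multiplier ideal $\mathcal{I}(h_{\Lambda_m}|_{S_w})$ does not affect this linear system, by the klt hypotheses). For property (2): comparing $h_m$ fibrewise with the singular metric on $mN$ of divisorial part $m\Xi_2|_S$ defined by the canonical section $\sigma$ of $\mathcal{O}(m\Xi_2|_S)$ (a relative section of $mN$) shows that the divisorial part of $\Theta$ is $\le\Xi_2|_S$; the reverse inequality along the components of $\Xi_2'$ is obtained as follows. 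If $Q\subset\Xi_2'$ is $f$-horizontal, then $Q\cap S_w$ lies in the fixed part of $|mN|_{S_w}|$ with full multiplicity $m\cdot\mathrm{mult}_Q(\Xi_2)$ for general $w$, so the weight of $h_m$ vanishes along $Q$ to at least that order, giving $\Theta\ge\mathrm{mult}_Q(\Xi_2)\,[Q]$. If $Q\subset\Xi_2'$ is $f$-vertical, the condition $\pi(Q)\not\subset T_{\rm sing}$ --- i.e.\ that $Q$ is not discarded --- means its image $V=f(Q)$ is not contracted by $\pi_W$ and (after possibly refining the resolution so that $f$ is equidimensional near the generic point of $V$) is not contained in the non-smooth locus of $f$; then for $w$ in a dense subset of $V$ the fibrewise Bergman kernel of $mN|_{S_w}$ again vanishes along $Q\cap S_w$ to order $m\cdot\mathrm{mult}_Q(\Xi_2)$, the weight of $h_m$ therefore has Lelong number $\ge\mathrm{mult}_Q(\Xi_2)$ along $Q$, and Siu's decomposition gives $\Theta\ge\mathrm{mult}_Q(\Xi_2)\,[Q]$. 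Summing over the components of $\Xi_2'$ yields $\Theta\ge[\Xi_2']$.

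The main obstacle is the construction step: producing a single semipositively curved metric $h_m$ on $mN$ over $f^{-1}(W^{\circ})$ with the prescribed fibrewise Bergman restrictions, and extending it across the non-smooth fibres without loss of mass. This is exactly where Berndtsson's variation theorem must be combined with Ohsawa--Takegoshi extension in this singular, boundary-twisted situation, keeping careful track of the klt hypotheses inherited from the plt pair $(\wh X,S+\Xi_1)$ and of the $\pi$-exceptionality of $\Xi_2$. By comparison, the verification that $m\Xi_2|_{S_w}$ is the relative fixed part and the case analysis for vertical components in property (2) are comparatively routine.
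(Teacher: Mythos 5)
Your construction of $\Theta$ follows the same general strategy as the paper (fibrewise $L^{2/m}$/Bergman metrics in the style of \cite{PT18}), but as written it stumbles on the point that is the actual content of Theorem \ref{L2/m1}: $\beta$ is a transcendental class, so there is no global function $\varphi_\beta$ with $\ddbar\varphi_\beta=\rho_\beta$ (that would force $\beta=0$ in Bott--Chern cohomology), and hence no global "singular metric $h_{\Xi_1}\otimes e^{-\varphi_\beta}$" on $\Lambda_1$ to feed into Berndtsson's theorem. The paper deals with this by localizing over the base: on $f^{-1}(U_i)$, where $\gamma|_{U_i}=dd^c\tau_i$, requirement (4) produces a genuine $\Q$-line bundle $F_i$ with curvature the smooth positive representative of $\beta$, the relative $L^{2/m}$ metrics are built there, and one checks that on overlaps the weights differ by $\Re(\tau_{ik}\circ f)$, so the curvature currents glue. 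You flag the construction as "the main obstacle" but do not supply this gluing mechanism, which is where the hypothesis that $K_{S/W}+(\beta+\Xi_1)|_S-f^\star\gamma$ is an honest divisor class is actually used.

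The more serious gap is in part (2). Your argument is fibrewise: you assert that $m\Xi_2|_{S_w}$ is the fixed part of $|mN|_{S_w}|$ (this rigidity is not proved, and $\pi$-exceptionality of $\Xi_2$ over $X$ does not by itself imply rigidity of $\Xi_2|_{S_w}$ inside the fibre $S_w$), and for a vertical component $Q$ you claim the Bergman kernel of $mN|_{S_w}$ vanishes along $Q\cap S_w$ for $w$ in a dense subset of $f(Q)$. But vertical components do not meet the general fibres over which $\Theta$ is actually constructed; over the special fibres the metric is only defined by upper-semicontinuous extension, so fibrewise data there gives no lower bound on the Lelong numbers of $\Theta$ along $Q$ -- controlling exactly this degeneration is the hard point. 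The paper's mechanism is different and global over $X$: one twists by an ample $A_\Omega$ on $\wh X_\Omega=\pi^{-1}(\Omega)$ for a Stein $\Omega\subset X$, extends every $L^2$ section of $L_k=(k(K_X+S+\Xi_1+F_\Omega)+A_\Omega)|_{S_U}$ to all of $\wh X_\Omega$ by the local invariance-of-plurigenera/Ohsawa--Takegoshi argument (Theorem \ref{thm-pg}), observes via \eqref{new16} that the extension is a section of $k\Xi_2+A_\Omega$ on a full $\pi$-preimage and therefore, by $\pi$-exceptionality of $\Xi_2$, vanishes along $\Xi_2$ to order at least $k-k_0$, and finally converts this into the Lelong-number bound $\Theta\geq[\Xi_2']$ by Demailly's approximation theorem \ref{JP} on the weakly pseudoconvex sets $S_U$. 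None of this extension-plus-exceptionality mechanism appears in your proposal (your Ohsawa--Takegoshi is only invoked to extend the metric across singular fibres), and your reinterpretation of the discarding condition ("$f(Q)$ not contracted by $\pi_W$, after refining the resolution") is not the condition in the statement -- the resolution and $\Xi_2$ are fixed, and the discarded components are those whose image lies in $T_{\rm sing}$, which in the paper's proof correspond to the correction divisors $E_i$ coming from the choice of the metric on $W$.
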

\smallskip

%\noindent For the proof of this result, we remark that the following holds: \emph{for each point $x\in W$ there exists an open subset $x\in\Omega\subset X$ such that the inverse image $\wh \Omega:= \pi^{-1}(\Omega)$ admits a Hermitian 
%line bundle whose curvature is strictly positive}.\footnote{CH: This is certainly true on high enough models of $\hat X$; if we just assume that $\hat X\to X$ is proper, then this may not be a locally K\"ahler morphism. I think we just want to assume that $\pi$ is projective (or argue that we can reduce to this case).} This is true even in the absence of a K\"ahler metric on $X$, and it is due to the property 1 above (i.e. the fact that the map $\pi$ is composed of successive blow-ups of smooth centres). Anyway, the main idea for the second point is that the current $\Theta$ extends 
%locally to a neighborhood  of $S_w$ and then the point (2) above follows, given \eqref{eq1} (this is analogue to the fact that 
%the holomorphic sections of a line bundle $\pi^\star L\otimes \OO _{\hat X}(E)$ vanish along $E$ as soon as $E$ is effective and exceptional).  
\medskip

\noindent The following results can be seen as "transcendental" versions of the 
canonical bundle formula. They can be used to refine Theorem \ref{L2/m}, but they are of independent interest as they apply to a variety of other contexts. 

\smallskip

\noindent Let $f:S\to W$ be a surjective map of compact Kähler manifolds. Let $P:= \sum P_i$ and $Q=\sum Q_j$ be two reduced, snc divisors on $S$ and $W$, respectively such that moreover $f^{-1}Q\subset P$. We decompose the divisor $P$
\[P= P^h+ P^v\]
into $f$-horizontal and $f$-vertical parts, and we assume moreover that 
the restriction of $f$ to the support of $P^h$ is relatively snc on the complement of the support of $Q$, and moreover $f(\supp P^v)= Q$.
\smallskip

\noindent Let $B= \sum d_iP_i$ be a $\mathbb Q$-divisor on $S$, 
and let $\beta$ be a $(1,1)$-class such that the following requirements are satisfied.
\begin{enumerate}

\item[(a)] The pair $(S, B)$ is sub-klt.

\item[(b)] The morphism $\mathcal O_W\to f_\star\mathcal O_S(\lceil-B\rceil)$
is surjective at general points of $W$. 

\item[(c)] We have $K_S+ B+\beta\simeq f^\star\gamma$ and moreover $\beta$ contains a smooth positive representative.

\item[($\star$)] For any point $z_0\in S$ and $w_0= f(z_0)\in W$ there exist
local coordinates $(x_1,\dots, x_{n+m})$ on $S$ centred at $z_0$ and $(t_1,\dots, t_m)$ on $W$ centred at $w_0$ such that 
$\displaystyle t_i\circ f(x)= \prod x_j^{k_{ij}}$ where
the $k_{ij}$ are non-negative integers such that $k_{ij}\neq 0$
 for at most one $i$ for each index $j$. 
\end{enumerate}
\medskip

\noindent Then the following result holds true -- the case $\beta=0$ corresponds to the original result of Y. Kawamata in \cite{Kawamata98}.

\begin{theorem}\label{lelong} Assume that conditions {\rm (a), (b), (c)} as well as $(\star)$ hold.
Then the class $\{\gamma\}$ can be decomposed as $K_W+ B_W+ \beta_W$ where 
$B_W$ is the discriminant divisor and $\beta_W$ is a cohomology class containing a closed positive current with zero Lelong numbers. In particular, $\beta_W$ is a nef class. 
\end{theorem}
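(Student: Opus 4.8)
The strategy is to run Kawamata's reduction for the canonical bundle formula \cite{Kawamata98}, using the fibrewise Bergman kernel construction underlying Theorems \ref{L2/m1} and \ref{L2/m} in place of the usual Hodge-theoretic positivity of the moduli part, and then to extract the vanishing of the Lelong numbers of $\beta_W$ from the hypothesis $(\star)$ together with the assumption that $\beta$ carries a smooth semipositive representative. The nefness assertion is a formal consequence of the latter.

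I would first fix the pieces. Using the log canonical thresholds $a_{Q_j}=a_{Q_j}(S,B+\beta)$ of the subsection on boundary and moduli parts, set $B_W:=\sum_j(1-a_{Q_j})Q_j$ (the discriminant divisor) and $\beta_W:=\{\gamma\}-K_W-B_W$. Pulling back $K_S+B+\beta\simeq f^\star\gamma$ gives $K_{S/W}+B-f^\star B_W+\beta\simeq f^\star\beta_W$. Condition $(\star)$ plays here the role of weak semistability with good horizontal divisors, so, as in Lemma \ref{l-semi} and Proposition \ref{p-BPs}, $B_W$ (and hence $\beta_W$) descends, and it suffices to produce the asserted current on the given model; moreover, by construction $B-f^\star B_W$ has no vertical component of coefficient $>1$ and agrees with $B$ along the general fibre, so hypotheses (a) and (b) persist after this twist. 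If needed, one first performs a finite base change (Lemma \ref{l-fin}) making $f$ weakly semistable before running the argument.

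Next I would construct a closed positive current in $\beta_W$. Working over a relatively compact open $U\subset W$, the class $\{\gamma\}$ is $\ddbar$-exact, so after trivialising we may treat $\beta_W|_U$ and $\beta|_{f^{-1}U}$ as $\Q$-line bundles with smooth weights, and by (b) the restriction to the general fibre of $L:=\mathcal O(K_{S/W}+B-f^\star B_W)\otimes\beta$ is $\Q$-trivial and effective. As in \cite{PT18} and the proofs of Theorems \ref{L2/m1} and \ref{L2/m}, for $m\gg0$ sufficiently divisible the $m$-th roots of a basis of the relevant fibrewise sections define a fibrewise positive current; these glue (transition data being absorbed into $\gamma$) to a closed positive current $\Theta_m$ on $f^{-1}U$ in the relative class $K_{S/W}+B+\beta$, satisfying $\Theta_m\geq [f^\star B_W]$ by the analogue of part (2) of Theorem \ref{L2/m}. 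Since $K_{S/W}+B-f^\star B_W+\beta=f^\star\beta_W$ is a pullback from $W$, the singular version of Lemma \ref{lem: inverse} (Remark \ref{rmk-sing}, cf. Lemma \ref{max_dir}) writes $\Theta_m-[f^\star B_W]=f^\star\Theta_{W,m}$ with $\Theta_{W,m}\geq0$ a globally defined closed positive current in $\beta_W$. In particular $\beta_W$ is pseudo-effective; alternatively this also follows from Theorem \ref{t-gue} applied fibrewise over subvarieties of $W$, exactly as in the proof of Theorem \ref{t-gp}.

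The main obstacle is the vanishing of Lelong numbers. Over $W\setminus\supp(Q)$ the local potential of $\Theta_{W,m}$ is $\tfrac1m\log$ of a sum of squares of sections modulo a smooth term, with $\beta$ smooth there, so $\Theta_{W,m}$ is smooth and semipositive and the issue is concentrated along $Q$. The key point to prove is that the logarithmic singularity of this potential along $Q$ is precisely the one recorded by the coefficients $1-a_{Q_j}$ of $B_W$, so that the residual current in $\beta_W$ has at worst $\log\log$-type (Mumford / Cattani–Kaplan–Schmid type) singularities along $Q$, which have zero Lelong numbers. This is where $(\star)$ is used, reducing the estimate near a boundary point to an explicit toric computation with the coordinates $t_i\circ f=\prod x_j^{k_{ij}}$, and where the smoothness and semipositivity of the representative of $\beta$ is used, so that the $L^2$ weights entering the Ohsawa–Takegoshi extension of the fibrewise sections carry no Lelong numbers of their own (Theorem \ref{pull} controls the descent of Lelong numbers through $f$ in the needed direction). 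Granting this uniform estimate, letting $m\to\infty$ and passing to a weak limit $\Theta_W$ of the $\Theta_{W,m}$ inside the fixed class $\beta_W$ — the limit being the relative Bergman / Narasimhan–Simha current, whose potential one computes directly — yields $\nu(\Theta_W,w)=0$ for every $w\in W$. Finally, a pseudo-effective class on a compact Kähler manifold containing a closed positive current with identically zero Lelong numbers is nef, by Demailly's regularisation theorem \cite{Dem92} (the loss of positivity in the regularisation is governed by the Lelong numbers, hence tends to zero); thus $\beta_W$ is nef, completing the proof.
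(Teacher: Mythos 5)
Your proposal follows the same route as the paper: reduce to $B_W=0$ by replacing $B$ with $B-f^\star B_W$, build the fibrewise $L^{2/m}$ Bergman-type current of Theorem \ref{L2/m1} from the sections supplied by hypothesis (b), show it dominates the relevant effective divisor, subtract that divisor so as to land in $f^\star\beta_W$, descend to $W$, and conclude nefness from vanishing Lelong numbers via \cite{Dem92} and Theorem \ref{mp}. Two corrections to the bookkeeping, though. The current produced from the fibrewise sections lives in the class $K_{S/W}+\{B\}+\beta$, i.e. in $f^\star\beta_W+\lceil-B\rceil$ once $B_W=0$, and the divisor along which it is singular --- this is Claim \ref{singsing}, proved by extending the normalized fibrewise sections with Ohsawa--Takegoshi and dividing by the canonical section of $\lceil-B\rceil$, which is precisely where (b) enters --- is $\lceil-B\rceil$, not $f^\star B_W$; your ``$\Theta_m\geq[f^\star B_W]$'' conflates the two reductions, and as written the classes do not close up unless you first perform the twist and then subtract $\lceil-B\rceil$ for the new $B$. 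Also, no limit $m\to\infty$ is taken or needed: Theorem \ref{L2/m1} is applied with one fixed, sufficiently divisible $m$, and passing to weak limits would require extra care since Lelong numbers may jump up in a weak limit. The detour through weak semistable reduction and Lemma \ref{l-fin} is likewise unnecessary here: the theorem concerns the given $f$, and $(\star)$ is exactly the local monomial structure one would otherwise have to arrange.

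The genuine gap is that the heart of the theorem is granted rather than proved. Vanishing of the Lelong numbers of $T=\Theta-\lceil-B\rceil$ rests entirely on the estimate for the fibre integrals
$F(w)=\int_{S_w}e^{-\psi_f-\phi_B}\,\omega^n\leq C\prod_j\log\frac{1}{|w_j|}$,
i.e. Theorem \ref{taka}, which uses both $B_W=0$ and $(\star)$ and is proved in the Appendix by a nontrivial computation: the explicit evaluation of $e^\psi$ in the monomial coordinates, the resulting bound on $e^{-\psi-\phi_B}$, and repeated integration by parts against the factors $dd^c\log|t_i-f_i(x)|^2$. This is not a routine toric verification --- the Remark at the end of the Appendix exhibits a map violating $(\star)$ for which the analogous integral grows like $(|t_1|^2+|t_2|^2)^{-1}$, so the conclusion genuinely fails without it --- and your ``the key point to prove is \dots\ granting this uniform estimate'' therefore leaves the decisive analytic step unestablished. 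Once that estimate is supplied, the rest of your chain (lower bound for the relative potential in terms of $1/F(w)$, hence at worst $\log\log$-type singularities and zero Lelong numbers for $T$, descent by Lemma \ref{lem: inverse} together with the elementary half of Theorem \ref{pull}, and nefness by regularisation) coincides with the paper's conclusion.
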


\begin{remark}
Note that the hypothesis {\rm (a), (b), (c)} are very natural, identical to the set-up in \cite{Kawamata98}. We expect Theorem \ref{lelong} to hold without the additional hypothesis $(\star)$,
but there are serious technical difficulties to overcome. 
\end{remark}
\medskip

\noindent One could ask the same type of questions in a more flexible and natural context, in which $\beta$ is only assumed to be nef (so that we start with a nef class on $S$ and the "output" is a nef class on $W$). It turns out that the situation is a bit more complicated --the reason being that a perturbation of $\beta$ could destroy the first hypothesis in (c)--,  and in order to treat the nef case we consider the following assumptions.
\begin{enumerate}

\item[(d)] There exists a Kähler metric $\omega$ on $S$ such that \[\omega= f^\star g+ \theta,\]
where $g$ is a Kähler metric on $W$ and $\theta$ is a \emph{rational} $(1,1)$-form on $S$. Therefore, 
for any coordinate subset $\Omega\subset W$ biholomorphic to a ball we have a $\mathbb Q$-line bundle 
$A_\Omega$ on $V:= f^{-1}(\Omega)$ whose curvature equals $\omega|_V$.

\item[(b')] For any fixed, sufficiently big and divisible integer $m_0> 0$, there is an integer $k_0$  such that the natural inclusion $f_\star\mathcal O_V(m_0A_\Omega+ k_0\lceil-B\rceil)\subset f_\star\mathcal O_V(m_0A_\Omega+ k\lceil-B\rceil)$ is an isomorphism for all $k\geq k_0$ (i.e. the local sections vanish along $(k-k_0)\lceil-B\rceil$),
where $A_\Omega$ is defined above.

\item[(c')] We have $K_S+ B+\beta\simeq f^\star\gamma$ and $\beta$ is a nef class.
\end{enumerate}

%\noindent If the class $\beta$ is nef and big, then (b') can be relaxed by only imposing  a similar condition to the direct image of $\mathcal O_X(k\lceil-B\rceil)$. Anyway, we have the following version of Theorem \ref{lelong}
\begin{theorem}\label{lelong_1}
Assume that conditions  {\rm (a), (b'), (c'), (d)}, as well as $(\star)$ hold. 
Then the class $\{\gamma\}$ can be decomposed as $K_W+ B_W+ \beta_W$ where
$B_W$ is the discriminant divisor and $\beta_W$ is a nef class. 
\end{theorem}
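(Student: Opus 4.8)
The content of the statement is entirely the nefness of the moduli class: once the discriminant divisor $B_W$ is defined in the usual way (via log canonical thresholds/discrepancies of $(S,B+\beta)$ along prime divisors of $W$, exactly as for the boundary b-divisor in the preliminaries), one simply \emph{defines} $\beta_W:=\{\gamma\}-(K_W+B_W)$, so that $\{\gamma\}=K_W+B_W+\beta_W$ holds tautologically. The plan is therefore to adapt the proof of Theorem \ref{lelong}, the only new difficulty being that when $\beta$ is merely nef it carries no smooth positive representative; the role of that representative is played, \emph{locally over $W$}, by the $\mathbb Q$-line bundles $A_\Omega$ produced by hypothesis (d). First I would run the standard reductions (weak semistable/toroidal modification, \cite{AK00}, \cite{Karu99}, Theorem \ref{t-aws}), so that the quasi-smoothness condition $(\star)$ is genuinely in force, $Q$ is reduced snc, $f^{-1}Q\subseteq P$, and $B_W$ is a reduced snc divisor supported on $Q$; note $(\star)$ is exactly the shape of $f$ coming out of that procedure. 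After this it suffices to produce a global closed positive current $T_W$ in the class $\beta_W$ (pseudo-effectivity of $\beta_W$), and then to upgrade ``pseudo-effective'' to ``nef''.

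\emph{Construction of the current.} Over a coordinate ball $\Omega\subseteq W$ with $V:=f^{-1}(\Omega)$, both $\gamma$ and $K_W$ are $\ddbar$-exact on $\Omega$, so in Bott--Chern cohomology of $V$ one has $K_{V/\Omega}+B|_V+\beta|_V\equiv 0$, hence on a general fibre $S_w$ the class $\beta|_{S_w}$ is numerically $-(K_{S_w}+B|_{S_w})$. For $m_0$ large and divisible $m_0A_\Omega$ is relatively ample (curvature $\omega|_V$), and combining hypothesis (b) with sub-kltness (a) one sees that $m_0A_\Omega\otimes\OO_V(k\lceil-B\rceil)$ has nonzero sections on $S_w$ for $k\geq k_0$, while (b$'$) says the relative section sheaf stabilises there. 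Feeding these sections into the $L^{2/m}$-construction of \cite{PT18}, \cite{Gue20} — i.e. Theorem \ref{L2/m1} and the mechanism of Theorem \ref{L2/m}, which also shows the resulting current is singular precisely along the relevant ``vertical'' divisor (item (2) there) — produces a closed positive curvature current $\Theta_V\geq 0$ on $m_0A_\Omega+k\lceil-B\rceil$ over $V$ restricting fibrewise to the normalised base-locus current. Removing the fixed divisorial part $\lceil-B\rceil$, a current representing $K_{V/\Omega}$, and $\tfrac{m_0}{k}\omega|_V$, and letting $k\to\infty$, yields a closed positive current in $f^\star\beta_W|_V$; hypothesis $(\star)$ makes the passage across $Q$ an explicit several-variable analogue of the elliptic-fibration model computation, so these local currents glue to a global closed positive current $\Theta\equiv K_{S/W}+B+\beta=f^\star(\gamma-K_W)$ with $\Theta\geq [f^\star B_W]$. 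By Lemma \ref{lem: inverse}, $\Theta=f^\star R$ for a closed positive current $R\in\{\gamma\}-K_W$ on $W$, and $R-[B_W]\geq 0$ represents $\beta_W$; thus $\beta_W$ is pseudo-effective.

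\emph{Upgrading to nef.} By \cite[Theorem 2.36]{DHP22} it is enough to show $\beta_W$ restricts to a pseudo-effective class on (a resolution of) every subvariety $Z\subseteq W$. As in the proof of Theorem \ref{t-gp} one distinguishes $Z\subseteq\operatorname{Supp}B_W$ (handled by adjunction to a component $S$ of the fibre boundary dominating a component $T$ of $B_W$: $(K_S+B_S+\beta_S)=(f|_S)^\star(\gamma|_T)$, and $\beta_W|_T$ is the moduli part of $(S/T,B_S+\beta_S)$, nef by induction on $\dim W$) from $Z\not\subseteq\operatorname{Supp}B_W$ (handled by base change of $f$ over $Z$: after redoing the modification of Theorem \ref{t-aws} one checks that (a),(b$'$),(c$'$),(d),$(\star)$ persist for $f_Z:S_Z\to Z$, so by induction $\beta_W|_Z$ — which agrees with the moduli part of $f_Z$ — is at least pseudo-effective, which is all we need here). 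The case $Z=W$ is the pseudo-effectivity proved above, and $\dim W=0$ is vacuous.

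\emph{Main obstacle.} The genuine difficulties are (i) controlling the transcendental error term $\tfrac{m_0}{k}\omega|_V$ and showing it contributes nothing to the class in the limit without destroying positivity — this is exactly why (b$'$) (stabilisation of the $m_0A_\Omega$-twisted section sheaf) and the rationality of $\theta$ in (d) are imposed — and (ii) the gluing of the fibrewise currents across the discriminant locus $Q$, where one must verify that the residual current after subtracting $f^\star B_W$ is semi-positive; here $(\star)$ is indispensable, and this is the step where the argument genuinely goes beyond Kawamata's setting. One should also be careful that the $L^{2/m}$ currents upstairs have zero generic Lelong number along the horizontal boundary (so as not to pollute the discriminant computation), which follows from sub-kltness (a). I expect (ii), i.e. the cross-discriminant gluing, to be the hardest part; note that the alternative of imitating Theorem \ref{lelong} to get a zero-Lelong-number current directly is blocked because a positive perturbation of $\beta$ cannot be kept inside a pullback class, which is precisely why one passes through the subvariety-restriction route for nefness.
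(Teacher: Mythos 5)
Your construction of a closed positive current representing the pullback of $\beta_W$ (hence pseudo-effectivity) is essentially sound and close in spirit to what the paper does: perturb $\beta$ by $\frac{m_0}{k}\omega$, use (d) to realize the perturbation over $f^{-1}(\Omega)$ as the curvature of a $\Q$-line bundle, run the $L^{2/k}$ construction of Theorem \ref{L2/m1}, and use (b') to extract the divisorial part $(1-\delta_k)\lceil-B\rceil$. The step where you upgrade pseudo-effectivity to nefness is where the proposal breaks down. You propose to apply Theorem \ref{mp}(2) (i.e. \cite[Theorem 2.36]{DHP22}) and to argue by induction on $\dim W$, restricting over subvarieties $Z\subset W$ as in the proof of Theorem \ref{t-gp}. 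For that you must know that the hypotheses (a), (b'), (c'), (d) and, above all, $(\star)$ persist for the induced families $S_Z\to Z$ (resp. $S\to T$), and that the moduli part of the restricted family coincides with $\beta_W|_Z$; neither is established, and both are serious. The condition $(\star)$ is a toroidal normal form of the fixed morphism $f$ and is destroyed by base change to an arbitrary $Z$; restoring it would require semistable reduction for the restricted, merely K\"ahler, family (which the paper explicitly says is not available outside the projective-morphism case) and would change the birational model, so that comparing the new moduli part with $\beta_W|_Z$ needs the BP-stability machinery of Section 1 — none of which is part of the hypotheses of Theorem \ref{lelong_1}. Moreover, in the normalization $B_W=0$ (obtained by absorbing $f^\star B_W$ into $B$, which you implicitly use) the pair $(S,B)$ is sub-klt, so there is no coefficient-one component of the fibre boundary to which your adjunction branch could apply; that half of your dichotomy is empty here, and the whole weight falls on the unproven base-change branch.

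The paper avoids all of this and never restricts to subvarieties. It bounds the Lelong numbers of $T_k=\Theta_k-(1-\delta_k)\lceil-B\rceil$ uniformly by $\delta_k C$, using the auxiliary section $u\otimes s_{\lceil-B\rceil}^{\otimes k}$ of $k(A_k+\lceil-B\rceil)$ over $f^{-1}(\Omega)$ together with the fibre-integral estimate of Theorem \ref{taka} — this is exactly where $(\star)$ and the reduction to $B_W=0$ enter — then applies Demailly's regularization to obtain smooth representatives $\widetilde T_k\geq -C_S\delta_k\omega$ of the perturbed classes, and lets $k\to\infty$ to conclude that the pullback class $K_{S/W}+B+\beta$ is nef; nefness then descends to $W$ by Theorem \ref{mp}(1). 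In particular your closing remark, that imitating Theorem \ref{lelong} is blocked because a positive perturbation of $\beta$ cannot be kept inside a pullback class, is mistaken: one does not need the perturbed currents to descend, only the uniform smallness of their Lelong numbers, and that is precisely the route the paper takes. So the genuine missing ingredient in your argument is either a proof that the hypotheses and the moduli part are compatible with restriction (which is not in reach under the stated assumptions), or the Lelong-number estimate via Theorem \ref{taka} that replaces it.
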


\begin{remark}
The proof of Theorem \ref{L2/m} will show that the hypothesis (b') is quite natural, 
in the sense that if the map $f:S\to W$ is induced by a birational transformation $\pi$, then these hypothesis hold true.
\end{remark}
\medskip

\noindent The content of the following sections is organized as follows.  There are two techniques of constructing closed positive currents in twisted relative classes of a map between compact K\"ahler manifolds. One can either use fiberwise holomorphic sections (normalized in a canonical manner), or fiberwise Kähler-Einstein metrics, cf. \cite{Gue20} and the references therein. Here we will use the former, since the latter is not sufficiently general to be implemented in our context.
\smallskip

\noindent Indeed, given a holomorphic surjective map $f:S\to W$ between two Kähler manifolds
and a Hermitian line bundle $(L, h_L)\to X$, the spaces 
\[H^0\big(S_w, (K_{S_w}+ L|_{S_w})\otimes\mathcal I(h_L|_{S_w})\big)\]
of $L^2$ sections (for $w\in W$ general) can be "pieced together" in order to construct a metric on $K_{S/W}+L$, which is semi-positively curved e.g. in case the curvature current $\sqrt{-1}\Theta(L, h_L)\geq 0$ is positive, cf. \cite{BP08}. The same is true in the pluricanonical case, i.e. we can construct a 
positively curved metric on $mK_{S/W}+L$, by replacing the $L^2$ normalization with 
an $L^{\frac{2}{m}}$ condition. As a result, the rational class $\displaystyle K_{S/W}+\frac{1}{m}L$
contains a closed positive current, whose restriction to the general fiber of $f$ is induced by the subspace of sections of
\[H^0\big(S_w, mK_{S_w}+ L|_{S_w})\]
which satisfy an $L^{\frac{2}{m}}$ integrability condition.

\noindent In section \ref{cons} we show that the same holds true if we replace $\frac{1}{m}L$
with a $(1,1)$-class $\alpha$, provided that the we can still define the space above, i.e. 
the restriction $\alpha|_{S_w}$ of our class to the fibers of $f$ is rational. This will settle the first part of Theorem \ref{L2/m}. The singularities of the current constructed are analyzed by using techniques borrowed from extension of pluricanonical forms. 
\smallskip

\noindent Concerning Theorem \ref{lelong}, recall that any nef class is pseudo-effective, but in general the two cones are different. Nevertheless, if a $(1,1)$ class contains a closed positive current whose Lelong numbers at each point of the ambient space are equal to zero, then the class in question is psef, cf. \cite{Dem92}. The nefness of the moduli part (in our notations, the class $\beta_W$) in the canonical bundle formula was established by S. Takayama in \cite{Taka} along these lines. Here we will adopt the same strategy --i.e., we will conclude by showing that $\beta_W$ contains a closed positive current with zero Lelong numbers--, and by the same token, simplify a little the arguments in \emph{loc. cit.}

%%%%%%%%%%%%%%%%%%%%%%%%%%%%%%%%%%%%%%%%%%%%%%%
\medskip

\section{Positivity of the relative adjoint transcendental classes}\label{s-pos}

\noindent We begin this section by recalling the following results. 

\begin{theorem} \label{mp}
Let $f:S\to W$ be a surjective map between two compact complex manifolds. Let $\alpha$ be a real class of type $(1,1)$ on $W$. Then
\begin{enumerate} \item ${\alpha}$ is nef if and only if ${f^\star \alpha}$ is nef.
\item ${\alpha}$ is nef if and only if $\alpha|_Z$ is pseudo-effective for all irreducible proper subvarieties $Z\subset W$ and ${f^\star \alpha}$ is pseudo-effective.
\end{enumerate}
\end{theorem}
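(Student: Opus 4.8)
The plan is to reduce both statements to the case of pseudo-effective classes and then invoke the singular Demailly--Paun criterion for nefness, in the form of \cite[Theorem 2.36]{DHP22} used repeatedly above. Throughout one works, as elsewhere in the paper, under the (here implicit) hypothesis that $S$ and $W$ are compact K\"ahler, and one freely uses that nefness and pseudo-effectivity of $(1,1)$-classes are preserved under pull-back along surjective holomorphic maps and under restriction to smooth subvarieties. The only non-formal point in these stability facts is that, for fixed K\"ahler forms $\omega_S$ on $S$ and $\omega_W$ on $W$, one has $f^\star\omega_W\le C\,\omega_S$ for some $C>0$, so a representative of $\alpha$ bounded below by $-\epsilon\omega_W$ pulls back to a representative of $f^\star\alpha$ bounded below by $-\epsilon C\,\omega_S$; letting $\epsilon\to 0$ shows $f^\star$ sends nef classes to nef classes.

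\emph{Step 1 (pseudo-effectivity descends).} If $f^\star\alpha$ is pseudo-effective, choose a closed positive $(1,1)$-current $T\in f^\star\alpha$; by Lemma \ref{lem: inverse} we may write $T=f^\star R$ with $R\ge 0$ closed positive in $\alpha$, so $\alpha$ is pseudo-effective. Conversely the pull-back of a closed positive $(1,1)$-current is again closed positive, so pseudo-effectivity of $\alpha$ gives pseudo-effectivity of $f^\star\alpha$. The same reasoning, applied after base change, yields the descent statement needed for subvarieties: if $Z\subset W$ is irreducible, $S_Z$ is a component of $f^{-1}(Z)$ dominating $Z$, and $g\colon\widetilde S_Z\to S_Z$ is a resolution chosen (after a further blow-up of $\widetilde S_Z$) so that $f\circ g$ factors as a surjective map $\widetilde h\colon\widetilde S_Z\to\widetilde Z$ onto a resolution $\widetilde Z\to Z^\nu$, then pseudo-effectivity of $(f^\star\alpha)|_{\widetilde S_Z}=\widetilde h^\star(\alpha|_{\widetilde Z})$ forces, again by Lemma \ref{lem: inverse}, that $\alpha|_{\widetilde Z}$, hence $\alpha|_{Z^\nu}$, is pseudo-effective.

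\emph{Step 2 (part (2)).} If $\alpha$ is nef, then $f^\star\alpha$ is nef hence pseudo-effective by Step 1, and $\alpha|_Z$ is a restriction of a nef class, hence pseudo-effective, for every proper $Z$. For the converse, assume $f^\star\alpha$ is pseudo-effective and $\alpha|_Z$ is pseudo-effective for every proper irreducible $Z\subsetneq W$. By Step 1, $\alpha$ itself is pseudo-effective, so $\alpha|_{Z^\nu}$ is pseudo-effective for \emph{every} irreducible subvariety $Z\subseteq W$ (including $Z=W$); by \cite[Theorem 2.36]{DHP22} this forces $\alpha$ to be nef.

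\emph{Step 3 (part (1)) and main obstacle.} The implication ``$\alpha$ nef $\Rightarrow f^\star\alpha$ nef'' is the stability fact recalled above. Conversely, suppose $f^\star\alpha$ is nef; then it is pseudo-effective, so $\alpha$ is pseudo-effective by Step 1. Given a proper irreducible $Z\subsetneq W$, pick $S_Z$ and $\widetilde h\colon\widetilde S_Z\to\widetilde Z$ as in Step 1; the pull-back of the nef class $f^\star\alpha$ to $\widetilde S_Z$ equals $\widetilde h^\star(\alpha|_{\widetilde Z})$, which is nef and in particular pseudo-effective, so $\alpha|_{Z^\nu}$ is pseudo-effective by Step 1. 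Hence $\alpha$ satisfies the hypotheses of part (2), and therefore $\alpha$ is nef. The genuine content is concentrated entirely in the descent of positive currents along a surjective map (Lemma \ref{lem: inverse}) and in the Demailly--Paun type criterion \cite[Theorem 2.36]{DHP22}; the rest is bookkeeping, the one point requiring care being that all the subvarieties, fibre-product components and resolutions appearing above are again compact K\"ahler, so that Lemma \ref{lem: inverse} and \cite[Theorem 2.36]{DHP22} remain applicable.
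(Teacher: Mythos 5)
Your proposal is correct, and for part (2) it is essentially the argument the paper itself records as its second ("another way") proof: descend pseudo-effectivity of $f^\star\alpha$ to $\alpha$ (you do this via Lemma \ref{lem: inverse}, the paper cites \cite[Corollary 2.32]{DHP22} -- same content), and then conclude with the singular nefness criterion \cite[Theorem 2.36]{DHP22}. The differences are minor. The paper's primary proof of (2) is more hands-on: it takes the nef threshold $t$ of $\alpha$ with respect to a K\"ahler form $\omega$, uses \cite[Corollary 2.39]{DHP22} to reduce nefness to the non-negativity of the numbers $\int_W\alpha^k\wedge\omega^{d-k}$, and derives a contradiction by pulling these back to $S$, wedging with a K\"ahler class $\eta$, and using the pseudo-effectivity of $f^\star\alpha$ only through positivity of intersection numbers -- no current-theoretic descent to subvarieties is needed there. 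For part (1) the paper simply cites \cite[Lemma 2.38]{DHP22}, whereas you rederive it from part (2) by restricting the nef class $f^\star\alpha$ to resolved components of preimages of subvarieties and descending pseudo-effectivity along the induced surjections; this is correct (and your care about keeping all auxiliary spaces compact K\"ahler, consistent with the implicit K\"ahler hypothesis the paper's own proof also uses, is the right point to flag), but it amounts to reproving the cited lemma, so it costs a little extra bookkeeping where the paper spends a one-line citation.
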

\begin{proof} (1) follows immediately from \cite[Lemma 2.38]{DHP22}.

We will now show that (2) also follows from the proofs of  \cite[Theorem 2.36, Lemma 2.38]{DHP22}.
By \cite[Theorem 2.36]{DHP22}, we know that if $Z^\nu \to Z$ is the normalization of an irreducible proper subvariety of $W$, then $\alpha |_{Z^\nu}$ is nef. By \cite[Corollary 2.39]{DHP22}, it suffices to show that if $\omega$ is K\"ahler on $W$ and $\dim W=d$, then $\int _W\alpha ^k\wedge \omega ^{d-k}\geq 0$ for $0<k\leq d$. Suppose that $\alpha $ is not nef, then we let $t>0$ be the nef threshold so that $\alpha +t\omega$ is nef but not K\"ahler. Clearly  $(\alpha+t\omega) |_{Z^\nu}$ is K\"ahler and so by \cite[Corollary 2.39]{DHP22} $\int _W(\alpha+t\omega) ^k\wedge \omega ^{d-k}= 0$ for some $0<k\leq d$.
Let $F$ be a general fiber of $S\to W$, $\eta$ a K\"ahler class on $S$ and $\lambda =\int _F\eta ^{n-d}$ where $n=\dim S$. 
Then 
\[\lambda \cdot \int _W (\alpha+t\omega) ^k\wedge \omega ^{d-k} =\int_S f^* ((\alpha+t\omega) ^k\wedge \omega ^{d-k})\wedge \eta ^{n-d}\geq \int_S f^* (t^k\omega ^{d})\wedge \eta ^{n-d}=\lambda t^k \cdot \int _W\omega ^{d}\]
which is impossible as the LHS equals 0 and the RHS is strictly positive. Thus $t=0$ and $\alpha $ is nef.
\smallskip

\noindent Another way of establishing the point (2) is by using \cite[Corollary 2.32]{DHP22} : this shows that $\alpha$ contains a closed positive current, i.e. it is pseudo-effective. 
The conclusion follows by using \cite[Theorem 2.36]{DHP22}.
\end{proof}
\noindent Therefore, in order to show that the class $\beta_W$ in Theorem \ref{lelong} is nef, it is sufficient to 
show that this is true for its pull back via $f$. This will follow from the main results in the 
next two subsections. In the first one we collect a few results about the construction of closed positive currents.

\subsection{Closed positive currents in twisted relative canonical classes}\label{cons}

\noindent To start with, we introduce the following set of 
notations, \emph{which will only be used in this subsection}.
\begin{enumerate}

\item[(1)] $f:X\to Y$ is a surjective map with connected fibers between compact Kähler manifolds.

\item[(2)] $D= \sum a_i D_i$ is an effective, snc divisor with rational coefficients $0<a_1< 1$ and $L$ is a $\Q$-line bundle on $X$.  Thus there exists a positive integer $m_0$ such that the reflexive hull $L^{[m_0]}:=(L^{\otimes m_0})^{\vee \vee}$ is a genuine line bundle,
and a metric on $L$ will simply be given by a collection of functions $\displaystyle \Big(\frac{\varphi_i}{m_0}\Big)_{i\in I}$,
where $(\varphi_i)_{i\in I}$ are the weights of a metric on $L^{[m_0]}$. By abuse of notation we will often denote $L^{[m_0]}$ by $m_0L$. %{\color{red} We will not give a formal definition of a $\Q$-bundle since for most of our purposes, it would be enough to replace $L$ with a rational $(1,1)$ class.}

\item[(3)] $\alpha$ and $\gamma$ are real $(1,1)$--classes on $X$ and $Y$, respectively. Moreover, $\alpha$
contains a smooth positive representative denoted by $\theta$.

\item[(4)] The class $f^\star\gamma- \alpha$ coincides with the first Chern class of $K_{X/Y}+ D- L$.
\end{enumerate}
\smallskip

\noindent In this context we prove the next statement.

\begin{theorem}\label{L2/m1}
Assume that conditions {\rm (1), (2), (3)} and {\rm (4)} hold and that for some sufficiently big and divisible integer $m> 0$ we have $\displaystyle H^0(X_y, mL|_{X_y})\neq 0$ for general $y\in Y$. Then the class
\[K_{X/Y}+ D+ \alpha\]
contains a closed positive current $\Theta\geq 0$ whose restriction to the general fiber of $f$ is (well-defined and) induced by 
the sections of $mL$ restricted to the fibers of $f$.
\end{theorem}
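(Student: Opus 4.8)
The plan is to realize $\Theta$ as the curvature current of a fibrewise $m$-Bergman kernel metric, glued over the base, following the construction of \cite{PT18} and \cite{BP08}. First I would clear denominators: fix once and for all a large divisible $m$ with $mL$ a genuine line bundle and $H^0(X_y,mL|_{X_y})\neq 0$ for general $y$, and — writing $D=D^h+D^v$ for the horizontal/vertical decomposition — note that it suffices to treat the case $D=D^h$, since at the end one simply adds the closed positive current $[D^v]$ (recall $D^v\geq 0$). Next I would introduce the $\Q$-line bundle $A:=\OO_X(L)\otimes\OO_X(-K_{X/Y}-D)$. Hypothesis (4) says precisely that $c_1(A)=\alpha-f^*\gamma$, so $c_1(A)|_{X_y}=\alpha|_{X_y}$ is represented by the smooth semipositive form $\theta|_{X_y}$, and we have the identity of honest line bundles $mL=mK_{X/Y}+mD+mA$.

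Then I would localize over the base. Fixing a smooth representative of $\gamma$, cover $Y$ by coordinate balls $\Omega_\nu$ and write $\gamma|_{\Omega_\nu}=\ddbar g_\nu$ with $g_\nu$ smooth (local $\ddbar$-lemma); set $V_\nu=f^{-1}(\Omega_\nu)$. Since $[A]|_{V_\nu}=[\alpha]|_{V_\nu}$ (the difference $f^*\gamma|_{V_\nu}$ being $\ddbar$-exact), I may choose a smooth metric $h_{A,\nu}$ on $A|_{V_\nu}$ with curvature \emph{exactly} $\theta|_{V_\nu}\geq 0$; concretely, relative to a fixed global smooth metric $h_{A,0}$ on $A$ one has $h_{A,\nu}=h_{A,0}\,e^{g_\nu\circ f}$ after normalising an additive pluriharmonic ambiguity. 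Now comes the analytic heart: over $V_\nu$ the bundle $mL$ is written as $mK_{V_\nu/\Omega_\nu}+mD+mA$, with the twist $mA$ carrying the semipositive metric $h_{A,\nu}^{\otimes m}$ and with $(X,D)$ sub-klt (coefficients in $(0,1)$). The construction of \cite{PT18}/\cite{BP08} — fibrewise sections of $mL|_{X_y}$ normalised by an $L^{2/m}$ condition built from the canonical pairing on $m$-canonical forms, the weight $\prod|s_i|^{-2a_i}$ of $D$, and $h_{A,\nu}|_{X_y}$ — then produces a singular metric $h^{(m)}_\nu$ on $mL|_{V_\nu}$ whose curvature $\Theta_{h^{(m)}_\nu}\geq 0$ (positivity via Ohsawa--Takegoshi extension of the fibrewise sections, which uses the semipositivity of the twist and the snc hypothesis on $D$) and whose restriction to a general fibre $X_y$ is $\ddbar\log\sum_j|\sigma_j|^2$ for an $L^{2/m}$-orthonormal basis $(\sigma_j)$ of $H^0(X_y,mL|_{X_y})$. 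Put $\Theta_\nu:=\tfrac1m\Theta_{h^{(m)}_\nu}\geq 0$; then $[\Theta_\nu]=\tfrac1m c_1(mL)|_{V_\nu}=c_1(L)|_{V_\nu}=[K_{X/Y}+D+\alpha]|_{V_\nu}$ (using $c_1(L)=[K_{X/Y}+D+\alpha]-f^*\gamma$ from (4) together with the exactness of $f^*\gamma$ over $V_\nu$), and $\Theta_\nu|_{X_y}$ is the stated fibrewise current.

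The last step is to check the $\Theta_\nu$ glue. On $V_\nu\cap V_\mu$ the two twist metrics satisfy $h_{A,\mu}/h_{A,\nu}=e^{(g_\mu-g_\nu)\circ f}$, a positive function pulled back from $\Omega_\nu\cap\Omega_\mu$; since $g_\mu-g_\nu$ is pluriharmonic (both are local potentials of $\gamma$), this factor is in particular constant along every fibre contained in the overlap. Hence the $L^{2/m}$-norms defining $h^{(m)}_\nu$ and $h^{(m)}_\mu$ differ only by a fibre-constant, so $h^{(m)}_\mu=e^{(g_\nu-g_\mu)\circ f}\,h^{(m)}_\nu$, which gives $\Theta_\nu-\Theta_\mu=\tfrac1m\ddbar\bigl((g_\nu-g_\mu)\circ f\bigr)=\tfrac1m f^*\ddbar(g_\nu-g_\mu)=0$. (That the only discrepancy in the fibrewise norms is this fibre-constant uses that the $m$-canonical/$D$-part of the normalisation is intrinsic to $X_y$; the fact that a pluriharmonic function on a compact fibre is constant is the argument of Lemma \ref{lem: inverse}.) Therefore the $\Theta_\nu$ patch to a global closed positive $(1,1)$-current $\Theta$ on $X$ with $[\Theta]=[K_{X/Y}+D+\alpha]$, and $\Theta|_{X_y}$ is the intrinsic $m$-Bergman current of $mL|_{X_y}$, independent of $\nu$ since changing the auxiliary data alters the fibrewise norm only by an overall constant, which $\ddbar\log$ ignores; adding $[D^v]$ in the general case finishes the proof. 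I expect the main obstacle to be the fibrewise construction of $h^{(m)}_\nu$ together with its positivity and its extension across the non-smooth locus of $f$ — i.e. implementing the Berndtsson--P\u aun/P\u aun--Takayama $L^{2/m}$-machinery (Ohsawa--Takegoshi extension, upper semicontinuous regularisation of the fibrewise Bergman weights, control near the bad locus of $Y$) in the present K\"ahler, $\Q$-twisted, sub-klt-pair setting; by comparison the base-change and gluing bookkeeping of the previous paragraph, although it requires care, is soft.
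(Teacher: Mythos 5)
Your proposal follows essentially the same route as the paper's proof: use hypothesis (4) to regard $\alpha-f^\star\gamma$ as $c_1(L-K_{X/Y}-D)$, equip this $\Q$-line bundle over each $f^{-1}(\Omega_\nu)$ with a metric of curvature exactly $\theta$ by absorbing a local potential of $\gamma$, run the fiberwise $L^{2/m}$ construction of \cite{PT18} to obtain local positively curved relative metrics, and glue because on overlaps the weights differ by the pullback of a pluriharmonic function, with positivity deferred (as in the paper) to \cite[Theorem 4.2.7]{PT18}. Apart from cosmetic points (with the usual convention the twist metric should be $h_{A,0}e^{-g_\nu\circ f}$ rather than $h_{A,0}e^{g_\nu\circ f}$, for $m>1$ the fiberwise metric is given by the $L^{2/m}$ extremal characterization rather than an orthonormal sum, and the reduction to $D=D^h$ is harmless but unnecessary), this is the paper's argument.
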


\begin{proof}
To begin with, we remark that if $D=0$ and if some multiple of $\alpha$ belongs to $H^2(X, \mathbb Z)$, then the matter is clear. Indeed, in this case we can choose a $\Q$-line bundle $F$ on $X$ whose Chern class is $\alpha$ and such that 
\[(K_{X/Y}+ D+ F)|_{X_y}\simeq L|_{X_y}\]
for all general $y\in Y$. The results proved in \cite{PT18} show that the current $\Theta$ constructed  
fiber-wise by the $m^{\rm th}$ root of the sections of $mL|_{X_y}$ is positive.
\smallskip

\noindent Even though $\alpha$ may not be a rational class, hypothesis (4) implies that this is the case locally over $Y$. %when restricted to the fibers of $f$, 
This will allow us to conclude via an approach similar to the one in \cite{PT18}, \cite{CH20} (with slight modifications). The details are as follows.
\smallskip

\noindent We denote by $\gamma$ any closed, real $(1,1)$-form contained in the class $\gamma$ given by 
hypothesis (3) -and apologize for the abuse of notation. As consequence of the hypothesis (4) above, the $(1,1)$-form $\mu$ defined by the equality
\begin{equation}\label{new69}
\mu := \theta-f^{\star}(\gamma)
\end{equation}
is closed, real, and its corresponding class is \emph{rational}.
\smallskip

\noindent Let $h$ be a metric on $L-(K_{X/Y}+D)$ whose corresponding curvature form equals $\mu$ (here we are using the convention in (2) above). We consider a finite open cover $(U_i)_{i\in I}$ of $Y$ such that 
\begin{equation}\label{new80}
\gamma|_{U_i}= dd^c\tau_i 
\end{equation}
%$\displaystyle $
for some smooth real function $\tau_i$ defined on $U_i$. 

\noindent For each index $i$ we endow the restriction 
\[(L-(K_{X/Y}+D))|_{f^{-1}(U_i)}\]
with the metric $h_i:= e^{-\tau_i\circ f}h$. The equality
\begin{equation}\label{new2}
\theta|_{f^{-1}(U_i)}= dd^c(f^\star\tau_i)+ \mu|_{f^{-1}(U_i)} 
\end{equation}
shows that the curvature corresponding to the metric $h_i$ is equal to $\theta|_{f^{-1}(U_i)}$.
\smallskip

\noindent All in all, we can define a Hermitian $\Q$-line bundle $(F_i, h_i)$ on the inverse image $f^{-1}(U_i)$ such that the following hold.
\begin{enumerate}
\smallskip

\item[(a)] For each $i\in I$ the following holds 
\begin{equation}\label{new3}
(K_{X/Y}+ D)|_{f^{-1}(U_i)}+ F_i\simeq L|_{f^{-1}(U_i)}
\end{equation}
especially the sections of $\displaystyle mL|_{X_y}$ correspond to 
sections of $m(K_{X/Y}+ D+ F_i|_{X_y})$ for each general point $y\in U_i$, where $X_y:= f^{-1}(y)$.

\item[(b)] We describe here more precisely the metric $h_i$. Let $(V_j)_{j\in J}$ be an open covering of $X$, such that the restriction of the bundles $K_X, f^\star K_Y, mD, mL$
to each $V_j$ is trivial. Recall that we have fixed a metric $h$ on 
$L- K_{X/Y}- D$, and denote by $\rho_j$ its weight on the set $V_j$. Then the 
weight of the metric $h_i$ on the set $V_j\cap f^{-1}(U_i)$ is
\[\varphi_{ij}:= \rho_j|_{V_j\cap f^{-1}(U_i)}+ \tau_i\circ f|_{V_j\cap f^{-1}(U_i)}.\]
We stress the fact that the only "non-global" part of the metric $h_i$ corresponds to the pull-back of $\tau_i$.

\item[(c)] It follows that we have 
\begin{equation}\label{new4}
\sqrt{-1}\Theta(F_i, h_i)= \theta|_{f^{-1}(U_i)}. 
\end{equation}
and remark that even if $(F_i, h_i)$ is only locally defined (with respect to the base $Y$), the corresponding curvature 
is a global form on $X$.
\end{enumerate}
\medskip

\noindent Relation \eqref{new3} allows us to define a metric $h_{X/Y, i}$ on $(K_{X/Y}+ D)|_{f^{-1}(U_i)}+ F_i$ whose corresponding 
curvature is positive. This was done in \cite{PT18}, and we recall next the construction. Let $x_0\in f^{-1}(U_i)$ be an arbitrary point. We fix coordinates $(t_k)$ and $(z_l)$ on $U_i$ and near
$x_0$, respectively. Assume that $f$ is smooth over $Y\setminus \Sigma$. For each $y\in U_i\setminus \Sigma_i$ and $\displaystyle \xi\in V_{m, y}$ let 
\begin{equation}\label{new5}\Vert \xi\Vert^{\frac{2}{m}}_{y,i}:= \int_{X_y}|\xi|^{\frac{2}{m}}e^{-\varphi_D- \varphi_{i}} \end{equation}
be the $L^{2/m}$-seminorm on the space of sections
\[V_{m, y}:= H^0\big(X_y, mL_y\big)= H^0\big(X_y, m(K_{X_y}+ D_y+ F_{i,y})\big)\]
where the notations are explained below. The subscript $(\ldots )_y$ denotes restriction to the fiber $X_y$ and $m$ is assumed to be sufficiently divisible so that all divisors in question are Cartier. 
In \eqref{new5} the symbol $e^{-\varphi_i}$ means that we are using the metric $h_i$ (cf. (b) above) on the bundle $F_i$.
The section $\xi$ in \eqref{new5} is interpreted as a twisted pluricanonical form, so that the quantity under the integral is a $(n,n)$--form. 
\smallskip

\noindent Then the weight of the metric $h_{X/Y, i}$ at the point $x_0$ is equal to 
\begin{equation}\label{new6}
e^{\varphi_{X/Y, i}(x_0)}:= \sup_{\Vert \xi\Vert_{y_0, i}= 1}\left|\xi_0(x_0)\right|^{\frac{2}{m}}
\end{equation}
where $y_0:= f(x_0)$ and {$\xi_0$ is given by the equality}
\[\xi\wedge f^\star(dt^{\otimes m})= \xi_0dz^{\otimes m}\]
written locally near $x_0$. 
\smallskip

\noindent We have given such a detailed description of the metric $h_{X/Y, i}$ because thanks to it, it is easy to deduce its 
dependence on the index $i$. Indeed, we assume that we choose the same coordinates $t$ and $z$ on $U_i\cap U_k$
and near $x_0\in V_j$, respectively, where it is understood that $f(x_0)\in U_i\cap U_k$. Notice that the space of holomorphic sections $V_{m, y}$ involved in the definition of the relative metric is independent of $i$, but this may be not the case 
for the semi-norm \eqref{new5}. By (b) of \eqref{new3}, we can write
\begin{equation}\label{new8}
\int_{X_y}|\xi|^{\frac{2}{m}}e^{-\varphi_D- \varphi_{i}}= e^{-\tau_i(y)}
\int_{X_y}|\xi|^{\frac{2}{m}}e^{-\varphi_D- \rho_{j}},
\end{equation}
where the notation is indicating the weight $\varphi_D+ \rho_{j}$ we are using 
on the set $V_j\cap X_y$. This is a consequence of the definition of the metric $h_i$ in (b). Moreover,
we remark that the second factor of the product on the RHS of \eqref{new8} is independent on the index "$i$".

\noindent Thus, by \eqref{new5} we infer that the equality
\begin{equation}\label{new9}
\Vert \xi\Vert^{\frac{2}{m}}_{y,i}= e^{\tau_k(y)-\tau_i(y)}\Vert \xi\Vert^{\frac{2}{m}}_{y,k}
\end{equation}
holds for any point $y\in U_i\cap U_k$ and $\xi\in V_{m, y}$. 

\noindent Moreover, we can assume that the difference
\begin{equation}\label{new10}
\tau_k-\tau_i= \Re(\tau_{ik}) 
\end{equation}
is the real part of some holomorphic function $\tau_{ik}$ defined on the intersection $U_i\cap U_k$, since their respective Hessian forms coincide by \eqref{new2}.
\medskip

\noindent It follows that
\begin{equation}\label{new11}
\sup_{\Vert \xi\Vert_{y_0, i}= 1}\left|\xi_0(x_0)\right|= \sup_{\Vert \xi\Vert_{y_0, k}= e^{-\Re(\tau_{ik}(y_0))}}\left|\xi_0(x_0)\right|= e^{-m\Re(\tau_{ik}(y_0))}\sup_{\Vert \xi\Vert_{y_0, k}= 1}\left|\xi_0(x_0)\right|.
\end{equation}
Finally, we get 
\begin{equation}\label{new12}
{\varphi_{X/Y, i}(x_0)}= {\varphi_{X/Y, k}(x_0)- \frac{1}{2}\Re(\tau_{ik}(y_0))}
\end{equation} 
and since the point $x_0$ was arbitrary and $y_0= f(x_0)$ it follows that we have
\begin{equation}\label{new14}
{\varphi_{X/Y, i}}= {\varphi_{X/Y, k}- \frac{1}{2}\Re(\tau_{ik}\circ f)}
\end{equation}
locally near a fixed point on $f^{-1}(U_i\cap U_k)$. In particular we obtain the equality 
\begin{equation}\label{new13}
dd^c\varphi_{X/Y, i}= dd^c\varphi_{X/Y, k},
\end{equation}
on the overlapping $U_i$'s.
\medskip

\noindent In conclusion, \eqref{new13} shows that the curvature currents we construct locally on the base agree on the intersection of the corresponding sets,
and the construction of $\Theta$ is finished, since the positivity of this current was already established in \cite[Theorem 4.2.7]{PT18}.
\end{proof}
%%%%%%%%%%%%%%%%%%%%%%%%%%%%%%%%%%%%%%%%%%
%%%%%%%%%%%%%%%%%%%%%%%%%%%%%%%%%%%%%%%%%%%

\subsection{Singularities of the metric}

\noindent In order to prove Theorem \ref{L2/m} we can apply Theorem \ref{L2/m1} for the following data:
$X:= S, Y:= W, D= \Xi_1$ and $L= \Xi_2$, together with $\alpha:= \beta$. The output is a 
current 
\[\Theta\in c_1(K_{S/W})+ (\beta+ \Xi_1)|_S\]
with the properties stated in the point (1) of Theorem \ref{L2/m}. 
The assertion (2) will be established along the following lines. 
\medskip

\begin{proof}[Proof of Theorem \ref{L2/m}, (2)]

\noindent To begin with, we recall an important class of manifolds on which $L^2$ methods can be applied.
\begin{defn}\label{pc}
A manifold/complex space $X$ is called weakly pseudo-convex if it admits a smooth, plurisubharmonic exhaustion function $\psi$, so that the closure of the sets 
$(\psi< C)\Subset X$ are compactly contained in $X$, for any constant $C$. 
\end{defn}

\noindent Obviously, compact holomorphic manifolds have this property, but this is equally the case for any complex space which admits a proper map into a Stein manifold. In particular, consider the map $f:S\to W$ given in Theorem \ref{L2/m}; for any Stein open subset $U\subset W$
the inverse image $f^{-1}(U)\subset S$ is an example of weakly pseudo-convex manifold which will be important in what follows.
\medskip

\noindent Consider next the blow-up map $\pi:\wh X\to X$ introduced at the beginning of Section 5, and denote by $E= \sum E_j$ the corresponding 
exceptional divisor. We define the following form 
\begin{equation}\label{new1}
\omega_{\wh X}:= \pi^\star \omega_X+ \sum a_i\theta_i
\end{equation}
where $\omega_X$ is a Kähler metric on the base $X$, the coefficients $a_i$ are positive rational numbers and 
the forms $\theta_i$ belong to the Chern class of $\mathcal O_{\hat X}(-E_i)$. By an appropriate choice of the 
coefficients $a_i$, we can assume that $\displaystyle \omega_{\wh X}> 0$ -- so we have a Kähler metric on 
$\wh X$ for which the only "transcendental" part is pulled-back from the base $X$.
\smallskip

\noindent Let $w_0\in W$ be an arbitrary point, and let $\Omega\subset X$ be a Stein co-ordinate subset which contains
the image $\iota_W(w_0)$, cf. diagram \eqref{dia1} below.  
\begin{equation}\label{dia1} 
\begin{tikzcd}
S \arrow{r}{\iota_S} \arrow[swap]{d}{f} & \wh X \arrow{d}{\pi}\\
W \arrow{r}{\iota_W} & X
\end{tikzcd}
\end{equation}

\noindent Consider the following sets 
\begin{equation}\label{new15} 
\wh X_\Omega:= \pi^{-1}(\Omega), \qquad U:= \iota_W^{-1}(\Omega),\qquad S_U:= f^{-1}(U) 
\end{equation}
contained in $\hat X$, $W$, and $S$, respectively. We have the following statement.
\begin{lemma}\label{ample} There exist Hermitian line bundles $(A_\Omega, h)\to \wh X_\Omega$ and $(A_U, h)\to U$
on $\wh X_\Omega$ and $U$, respectively such that the corresponding curvature forms are multiple of Kähler forms, i.e.
\[\sqrt{-1}\Theta(A_\Omega, h)= N\omega_{\wh X}|_{\wh X_\Omega}, \qquad \sqrt{-1}\Theta(A_U, h)= N\omega_{W}|_{U}\] 
where $N$ is positive and sfficiently divisible.  
\end{lemma}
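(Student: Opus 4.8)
\textbf{Proof plan for Lemma \ref{ample}.}

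The plan is to produce $A_\Omega$ and $A_U$ by exhibiting explicit divisors (plus pulled-back data) whose associated line bundles carry smooth metrics with the prescribed curvature. First I would recall that $\omega_{\wh X}=\pi^\star\omega_X+\sum a_i\theta_i$ was arranged at \eqref{new1} so that the only transcendental ingredient is $\pi^\star\omega_X$, the $\theta_i$ being curvature forms of the genuinely holomorphic line bundles $\mathcal O_{\wh X}(-E_i)$ with $a_i\in\mathbb Q_{>0}$. Since $\Omega\subset X$ is a Stein coordinate subset, $\omega_X|_\Omega=\ddc\psi_\Omega$ for a smooth strictly plurisubharmonic function $\psi_\Omega$; hence on $\wh X_\Omega=\pi^{-1}(\Omega)$ the transcendental part becomes exact, $\pi^\star\omega_X|_{\wh X_\Omega}=\ddc(\pi^\star\psi_\Omega)$. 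Therefore after multiplying by a sufficiently divisible $N$ so that all $Na_i\in\mathbb Z$, the $\mathbb Q$-line bundle $\mathcal O_{\wh X_\Omega}\big(-\sum a_iE_i\big)$ has an integral power that is a genuine line bundle $A_\Omega$, and we endow it with the metric whose local weight is the given smooth metric on $\bigotimes\mathcal O(-Na_iE_i)$ multiplied by $e^{-N\pi^\star\psi_\Omega}$. The curvature of this metric is exactly $N\omega_{\wh X}|_{\wh X_\Omega}$ by construction. The same recipe on the base gives $A_U$: take $U=\iota_W^{-1}(\Omega)$, write $\omega_W|_U=\ddc\psi_U$ with $\psi_U$ smooth strictly plurisubharmonic (possible since $U$ is Stein, or because $W\to X$ is the resolution and we may pull back $\psi_\Omega$), and let $A_U$ be the trivial bundle $\mathcal O_U$ with metric weight $e^{-N\psi_U}$; its curvature is $N\omega_W|_U$.

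The key steps, in order, are: (i) invoke weak pseudo-convexity of $\wh X_\Omega$ and $S_U$ (Definition \ref{pc} and the remark that a proper map to a Stein manifold forces this) so that the line bundles we construct live on manifolds amenable to $L^2$ methods later; (ii) use Steinness of $\Omega$ to trivialize the transcendental classes $\omega_X$ and $\omega_W$ as $\ddc$ of smooth strictly psh potentials; (iii) clear denominators by choosing $N$ divisible enough that $N\omega_{\wh X}$ and all $Na_i$ are integral, so that $\mathcal O_{\wh X_\Omega}(-N\sum a_iE_i)$ is an honest line bundle; (iv) assemble the metric on $A_\Omega$ as the product of the algebraic metric on $\mathcal O(-N\sum a_iE_i)$ and the potential $e^{-N\pi^\star\psi_\Omega}$, and compute its curvature to be $N\omega_{\wh X}|_{\wh X_\Omega}$ using $\ddc(\pi^\star\psi_\Omega)=\pi^\star\omega_X$; (v) repeat on $U$ with the trivial bundle to produce $A_U$.

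I expect the main obstacle to be purely bookkeeping rather than conceptual: ensuring that a single $N$ works simultaneously to make $Na_i\in\mathbb Z$ for all $i$, to make the metric on $A_\Omega$ genuinely smooth across the exceptional divisors $E_i$ (one must check the local weights $Na_i\log|s_{E_i}|^2_{h_i}$ patch into a smooth Hermitian metric on a line bundle, which they do because the divisor $N\sum a_iE_i$ is integral and Cartier), and to make the choices on $\wh X_\Omega$ and on $U$ compatible with the diagram \eqref{dia1} so that the later gluing arguments in the proof of Theorem \ref{L2/m}, (2) go through. None of this is deep, but care is needed because $\wh X_\Omega$ is only quasi-projective/weakly pseudo-convex, not compact, so one cannot appeal to Kodaira-type statements and must instead produce the metric by hand as above.
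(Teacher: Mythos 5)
Your construction of $A_\Omega$ is essentially the paper's argument: on the Stein set $\Omega$ the transcendental part $\pi^\star\omega_X$ becomes $dd^c$-exact, so it is the curvature of the trivial bundle with weight $N\pi^\star\psi_\Omega$, and the remaining terms $\sum a_i\theta_i$ in \eqref{new1} come from the genuine line bundles $\mathcal O_{\wh X}(-E_i)$ once $N$ clears the denominators of the $a_i$; this is exactly how the paper obtains $A_\Omega$.

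The one place where your proposal as written would fail is $A_U$. You take $A_U$ to be the trivial bundle $\mathcal O_U$ with weight $e^{-N\psi_U}$, justified either by "$U$ is Stein" or by pulling back $\psi_\Omega$. But $U=\iota_W^{-1}(\Omega)$ is only the preimage of a Stein set under the proper map $\iota_W:W\to X$, which factors through the desingularization $\pi_W:W\to T$; over points of $T_{\rm sing}\cap\Omega$ it can contain compact positive-dimensional fibers, so $U$ is weakly pseudo-convex but not Stein, and a Kähler form such as $\omega_W$ cannot be $dd^c$-exact on any open set containing a compact curve. Pulling back $\psi_\Omega$ only produces a potential for $\iota_W^\star\omega_X|_U$, not for $\omega_W|_U$. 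The repair is precisely what Remark \ref{rmk1} is for: $\omega_W$ is built by the same recipe as \eqref{new1}, namely $\iota_W^\star\omega_X$ plus rational multiples of curvature forms of $\mathcal O_W(-E_i)$ for the exceptional divisors of $\iota_W$; so $A_U$ must be taken to be $\mathcal O_W(-N\sum a_iE_i)|_U$ twisted by the trivial bundle with weight $N\,\iota_W^\star\psi_\Omega$ — i.e. the same argument you already carried out on $\wh X_\Omega$, not the trivial bundle alone. With that adjustment your proof coincides with the paper's.
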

\begin{proof}
This follows by standard arguments. Since the restriction $\displaystyle \omega_X|_\Omega$ is $dd^c$-exact, it can be interpreted as 
trivial bundle over $\Omega$ endowed with a non-trivial metric whose corresponding curvature form is $\displaystyle \omega_X|_\Omega$. The rest follows as consequence of \eqref{new1} -- in particular we only need the positive integer $N$ in order to clear the denominators of the coefficients $a_i$. A similar argument applies for $\omega _W$.
\end{proof}

\begin{remark}\label{rmk1} Note that we may assume $\iota _W:W\to X$ is given by a finite sequence of blow ups whose centers are contained in the singular locus $T_{\rm sing}$ of $T$. The metric $\omega_W$ can be obtained by the same formula as in \eqref{new1},
so that the corresponding exceptional divisors $E_i\subset W$ map into the singular locus of the centre $X$.
\end{remark}
\medskip

\noindent After these preparations, we proceed with the second part of Theorem \ref{L2/m}. Let $\Omega\subset X$
be an open subset as above. By the same procedure as in the proof of Theorem \ref{L2/m1}, we can construct a $\Q$-line bundle 
$(F_\Omega, h_F)\to \wh X_\Omega$ such that the following relations hold
\begin{equation}\label{new16} 
K_{\wh X}+ S+ \Xi_1+ F_\Omega\simeq \Xi_2, \qquad \sqrt{-1}\Theta(F_\Omega, h_F)= \beta 
\end{equation}
on $\wh X_\Omega\subset \wh X$. 
\medskip

\noindent On the other hand, let $\rho$ be any non-singular $(1,1)$-form on $W$, such that $\rho\in c_1(K_W)$. We can assume that the 
constant $N$ in Lemma \ref{ample} is large enough, so that the following inequality
\begin{equation}\label{new17} 
\rho+ N\omega_W\geq 0
\end{equation}
holds point-wise on $W$. We then consider the closed positive current 
\begin{equation}\label{new18} 
\Theta+ f^\star(\rho+ N\omega_W)\geq 0
\end{equation}
which belongs to the class $\displaystyle K_{S}+ (\Xi_1+ \beta)|_S+ Nf^\star(\omega_W)$. 

\noindent Next, given the expression of the metric $\omega_W$ combined with Remark \ref{rmk1}, there exist integers $k_i$ and 
divisors $E_i$ such that the current
\begin{equation}\label{new19} 
\wh \Theta:= \Theta+ f^\star(\rho+ N\omega_W)+ \sum k_i[E_i|_S]
\end{equation}
has the following properties
\begin{itemize}

\item It belongs to the cohomology class $\displaystyle (K_{X}+ S+ \Xi_1+ \beta+ N\pi^\star(\omega_X))|_S$ 

\item The divisors $E_i$ appearing in \eqref{new19} project into $T_{\rm sing}$. 
\end{itemize}
\medskip

\noindent When restricted to the set 
\[S_U= S\cap \pi^{-1}\Omega\]
(cf. \eqref{new15} for the notations), the class $\beta$ corresponds to the line bundle $\displaystyle F_\Omega|_S$, so by abuse of 
notation we can write
\begin{equation}\label{new20} 
\wh \Theta|_{S_U}\simeq K_{X}+ S+ \Xi_1+ F_\Omega|_{S_U}
\end{equation} 
by which we mean that the bundle on the RHS admits a singular metric $h_\theta= e^{-\varphi_\theta}$ defined on $S_U$ whose curvature form is precisely the 
restriction $\displaystyle \wh \Theta|_{S_U}$. 
\smallskip

\noindent For each $k\geq 1$ sufficiently divisible 
we consider the line bundle \[L_k:= (k(K_{X}+ S+ \Xi_1+ F_\Omega)+ A_\Omega)|_{S_U}\]
and the corresponding Hilbert space of homolorphic sections
\begin{equation}\label{new21} 
\mathcal H_k:= \{s\in H^0(S_U, L_k) / \int_{S_U}|s|^2e^{-k\varphi_{\theta}- \varphi_A}dV< \infty\}
\end{equation}
in the multiplier ideal induced by the current $k\wh \Theta|_{S_U}$. Then we recall the following result,
basically proved in \cite[Section 13]{Dem09} and references therein.
\begin{theorem}\label{JP}
Let $\wh \Theta_k\geq 0$ be the closed positive current on $S_U$ given by a family of orthonormal sections of $\mathcal H_k$. Then we have 
\[\nu(\wh \Theta, x)= \lim_k\frac{1}{k}\nu({\wh \Theta_k, x})\]
where $x\in S_U$ is an arbitrary point and where we denote by $\nu(\wh \Theta, x)$ the Lelong number of 
$\wh \Theta$ at $x$.
\end{theorem}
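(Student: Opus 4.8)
The plan is to establish Theorem~\ref{JP} by combining the local $L^2$-estimates of Ohsawa--Takegoshi/Demailly type (valid on weakly pseudo-convex manifolds, which $S_U = f^{-1}(U)$ is, by Definition~\ref{pc}) with the sub-additivity and approximation properties of multiplier ideal sheaves as in \cite[Section 13]{Dem09}. First I would recall that $\wh\Theta|_{S_U}$ is the curvature of a singular metric $h_\theta = e^{-\varphi_\theta}$ on the $\Q$-line bundle $K_X + S + \Xi_1 + F_\Omega$ restricted to $S_U$, and that $\wh\Theta_k$ is, by construction, $\frac{1}{2\pi}dd^c$ of $\log\sum_j |s_j^{(k)}|^2$ for an orthonormal basis $(s_j^{(k)})$ of $\mathcal H_k$ from \eqref{new21}. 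The Bergman-kernel description gives that $\frac{1}{k}$ times the potential of $\wh\Theta_k$ is the regularized maximum $\frac{1}{k}\varphi_{B,k}$ where $\varphi_{B,k}(x) = \sup\{\log|s(x)|^2 : s\in\mathcal H_k,\ \|s\|\le 1\}$, so the statement reduces to the asymptotic behaviour of these Bergman-type potentials.

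The key steps, in order: (1) the inequality $\nu(\wh\Theta_k,x) \ge \nu(k\wh\Theta,x) - (\text{bounded error independent of }k)$, hence $\limsup_k \frac1k\nu(\wh\Theta_k,x)\le\nu(\wh\Theta,x)$ --- this follows because sections lying in the multiplier ideal $\mathcal I(k\varphi_\theta + \varphi_A)$ cannot vanish to order much less than $k\cdot\nu(\wh\Theta,x)$ at $x$, using that membership in $\mathcal I$ controls vanishing order from below via the standard local computation of Lelong numbers; here the twist by $A_\Omega$ (an ample line bundle on $\wh X_\Omega$ whose curvature is $N\omega_{\wh X}$ by Lemma~\ref{ample}) only contributes a bounded perturbation. (2) The reverse inequality $\liminf_k\frac1k\nu(\wh\Theta_k,x)\ge\nu(\wh\Theta,x)$: this is the Ohsawa--Takegoshi input. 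Given $\epsilon>0$, for $k$ large the multiplier ideal $\mathcal I(k\varphi_\theta)$ contains germs with prescribed jet data, and the $L^2$-extension theorem (applicable because $A_\Omega$ provides enough positivity to run the estimate on the weakly pseudo-convex $S_U$, and because $F_\Omega$ has a smooth positive metric by \eqref{new16}) produces global sections $s\in\mathcal H_k$ realizing vanishing order $\le k(\nu(\wh\Theta,x)+\epsilon)$ at $x$; letting $\epsilon\to0$ gives the bound. (3) Combine (1) and (2) to conclude the limit exists and equals $\nu(\wh\Theta,x)$.

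The main obstacle I expect is step~(2): one must verify carefully that the positivity available --- namely $\sqrt{-1}\Theta(F_\Omega,h_F)=\beta\ge0$ together with the ample twist $A_\Omega$, on the \emph{non-compact} but weakly pseudo-convex manifold $S_U$ --- is genuinely sufficient to invoke the Ohsawa--Takegoshi extension with good uniform constants as $k\to\infty$. In particular the weight $k\varphi_\theta + \varphi_A$ must be handled so that the curvature hypothesis of the extension theorem holds for all large $k$; this requires that $\beta$ contributes non-negatively to the curvature (which it does, being nef with smooth positive representative in our setting) so that only the fixed amount of positivity from $A_\Omega$ needs to compensate the Hessian of the base metric and the ramification of $f$. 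The snc structure from condition~$(\star)$ (equivalently the quasi-smoothness of the weakly semistable $f$) is what guarantees the local coordinate description $t_i\circ f = \prod x_j^{k_{ij}}$ needed to control the relative canonical twist in the $L^{2/m}$ estimates of \cite{PT18}, and hence to make the error terms in step~(1) genuinely bounded independently of $k$. Once these positivity bookkeeping issues are settled, the result follows from \cite[Section 13]{Dem09} essentially verbatim.
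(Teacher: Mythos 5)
Your overall strategy is the right one, and it is in fact what the paper itself does: the paper does not reprove Theorem \ref{JP} but invokes the Bergman-kernel approximation argument of \cite[Section 13]{Dem09}, remarking only that the proof written there for bounded pseudoconvex domains in $\C^n$ goes through on $S_U$ because $S_U$ is weakly pseudo-convex and the fixed twist $A_\Omega$ of Lemma \ref{ample} supplies the positivity needed to run the Ohsawa--Takegoshi step uniformly in $k$ --- exactly the point you single out as the main obstacle. (One side remark: condition $(\star)$ plays no role in Theorem \ref{JP}; it is a hypothesis of Theorem \ref{taka}, used later in the proof of Theorem \ref{lelong}, so your appeal to it here is extraneous.)

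There is, however, a genuine flaw in the way you have paired the two mechanisms with the two halves of the limit. The multiplier-ideal constraint in \eqref{new21} --- finiteness of $\int_{S_U}|s|^2e^{-k\varphi_\theta-\varphi_A}$ together with the bound $\varphi_\theta(z)\leq \nu(\wh\Theta,x)\log|z-x|+O(1)$ coming from \eqref{new77} --- forces every $s\in\mathcal H_k$ to vanish at $x$ to order at least $k\,\nu(\wh\Theta,x)-O(1)$, hence gives the \emph{lower} bound $\liminf_k\frac1k\nu(\wh\Theta_k,x)\geq\nu(\wh\Theta,x)$; it cannot give the limsup inequality you draw in step (1), since a lower bound on $\nu(\wh\Theta_k,x)$ only bounds the limit from below. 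Symmetrically, the Ohsawa--Takegoshi input produces, for each point $z$ with $\varphi_\theta(z)>-\infty$, a section of $\mathcal H_k$ of controlled norm and prescribed value at $z$, whence the pointwise comparison $\frac1k\log\sum_j|s_j^{(k)}|^2\geq\varphi_\theta-C/k$ and therefore the \emph{upper} bound $\limsup_k\frac1k\nu(\wh\Theta_k,x)\leq\nu(\wh\Theta,x)$ --- not the liminf bound you attribute to it in step (2); note also that $\varphi_\theta(x)$ may equal $-\infty$ at the very point of interest, so the extension argument should be phrased as this potential comparison at nearby points rather than as ``realizing vanishing order $\leq k(\nu(\wh\Theta,x)+\epsilon)$ at $x$''. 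Swapping the conclusions of your steps (1) and (2), and rephrasing (2) as the pointwise Bergman estimate, repairs the sketch and brings it in line with the argument of \cite[Section 13]{Dem09} that the paper cites.
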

\medskip

\begin{remark} We note that in \emph{loc. cit.} the result above is established in the setting of bounded
pseudo-convex subsets in $\C^n$, but the proof applies in the context of Theorem \ref{JP}, so we will not reproduce it here. As a matter of fact, it is at this point that the pseudo-convexity of the set 
$S_U$ (cf. Definition \ref{pc}) is very important.
\end{remark}
\smallskip

\noindent In other words, in order to evaluate the singularities of $\wh \Theta$ it would suffice to have a uniform lower bound for the vanishing orders of
the sections $s\in \mathcal H_k$ as $k\to \infty$. To this end, we recall that as a consequence of the results e.g. in \cite{BP10} the following local version of the \emph{invariance of plurigenera} holds true.

\begin{theorem}\label{thm-pg}
In the above set-up, any holomorphic section $s$ of the bundle $L_k$ extends to $\wh X_\Omega$ as section $\widetilde s$ of 
$k(K_{X}+ S+ \Xi_1|_{\wh X_\Omega}+ F_\Omega)+ A_\Omega.$
\end{theorem}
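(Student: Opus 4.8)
This is a local form of Siu's invariance of plurigenera, and the plan is to derive it from the $L^2$ extension theorem of Berndtsson--P\u aun \cite{BP10} (see also \cite[Section 13]{Dem09}), applied on the weakly pseudo-convex manifold $\wh X_\Omega=\pi^{-1}(\Omega)$, which is proper over the Stein set $\Omega$, cf. Definition \ref{pc}. First I would reduce to the case where $s$ lies in the Hilbert space $\mathcal H_k$ of \eqref{new21}, i.e. $\int_{S_U}|s|^2e^{-k\varphi_\theta-\varphi_A}\,dV<\infty$; a holomorphic section on $S_U$ is of this type after shrinking $\Omega$, and these are the only sections used in the proof of Theorem \ref{JP}.

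The extension will be organised as an Ohsawa--Takegoshi problem along the smooth hypersurface $S_U\subset\wh X_\Omega$. Using \eqref{new16} one writes, as $\mathbb Q$-line bundles on $\wh X_\Omega$,
\[
L_k=K_{\wh X}+S+M_k,\qquad M_k:=(k-1)(K_{\wh X}+S+\Xi_1+F_\Omega)+\Xi_1+F_\Omega+A_\Omega\simeq(k-1)\Xi_2+\Xi_1+F_\Omega+A_\Omega,
\]
so that by adjunction $L_k|_{S_U}=K_{S_U}+M_k|_{S_U}$ and $s\in H^0(S_U,K_{S_U}+M_k|_{S_U})$. I would then equip $M_k$ over $\wh X_\Omega$ with a singular metric $h_{M_k}=h_{\Xi_2}^{\otimes(k-1)}\cdot h_{\Xi_1}\cdot h_F\cdot h_A$, where $h_A$ is the smooth metric of Lemma \ref{ample} with curvature $N\omega_{\wh X}>0$, $h_F=e^{-\varphi_F}$ is a smooth metric with curvature $\beta\ge0$ (recall $\beta$ contains a smooth positive representative), $h_{\Xi_1}$ is the divisor metric of the effective $\mathbb Q$-divisor $\Xi_1$, and $h_{\Xi_2}$ is a metric on $\Xi_2$ over $\wh X_\Omega$ whose restriction to $S_U$ is the metric $h_\theta=e^{-\varphi_\theta}$ of curvature $\wh\Theta|_{S_U}$ constructed in the previous subsection. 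The curvature of $h_{M_k}$ is then $\ge N\omega_{\wh X}>0$. Since $(\wh X,S+\Xi_1)$ is plt the coefficients of $\Xi_1$ are $<1$ and $S\not\subset\Supp\Xi_1$, so $(S_U,\Xi_1|_{S_U})$ is klt and $e^{-\varphi_{\Xi_1}|_{S_U}}$ is locally integrable; as $\varphi_F|_{S_U}$ is bounded and $\varphi_\theta$ is locally bounded above, tracking the adjunction isomorphism $K_{S_U}+M_k|_{S_U}\cong k\Xi_2|_{S_U}+A_\Omega|_{S_U}$ converts the condition $s\in\mathcal H_k$ into the finiteness of $\int_{S_U}|s|^2_{h_{M_k}|_{S_U}}$. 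The extension theorem then produces $\widetilde s\in H^0(\wh X_\Omega,K_{\wh X}+S+M_k)=H^0(\wh X_\Omega,L_k)$ with $\widetilde s|_{S_U}=s$.

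The hard part is to supply the ambient metric $h_{\Xi_2}$ on $\Xi_2$ over all of $\wh X_\Omega$, since $h_\theta$ lives only on $S_U$. One option is to extend the closed positive current $\wh\Theta|_{S_U}$ in the class $\Xi_2|_{S_U}$ to a closed positive current on $\wh X_\Omega$ in the class $\Xi_2+\varepsilon A_\Omega$, which is possible because $\Omega$ is Stein and $A_\Omega$ is strictly positive, the $\varepsilon A_\Omega$-error being harmless. The mechanism actually used in \cite{BP10}, however, avoids extending $h_\theta$ and runs the extension inductively on $k$: having extended all sections of $j(K_{\wh X}+S+\Xi_1+F_\Omega)+A_\Omega$ for $j<k$, one uses the metric on $(k-1)(K_{\wh X}+S+\Xi_1+F_\Omega)$ generated by those extended sections, together with the strictly positive twist $A_\Omega$ to prime the induction, as input for the next Ohsawa--Takegoshi step. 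Performing this inductive extension in the presence of the $\mathbb R$-boundary $\Xi_1+F_\Omega$ on a merely weakly pseudo-convex base, and matching the $L^2$ norms through adjunction at each stage, is precisely the content of \cite{BP10}; the effectivity $K_{\wh X}+S+\Xi_1+F_\Omega\simeq\Xi_2$ and the ampleness of $A_\Omega$ are exactly what makes it apply here, and I expect this bookkeeping to be the only genuine obstacle.
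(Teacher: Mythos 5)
Your closing route---running the inductive invariance-of-plurigenera algorithm of \cite{BP10} on the weakly pseudoconvex set $\wh X_\Omega$, with the strictly positive twist $A_\Omega$ of Lemma \ref{ample} priming the induction and the identification \eqref{new16} converting the extended sections into sections of $k\Xi_2+A_\Omega$---is exactly the paper's argument: the paper also treats the statement as a local invariance of plurigenera, defers the algorithm to \cite{BP10}, and deals with the $\Q$-divisor $\Xi_1$ by the Hacon--McKernan/Ein--Popa variants, using that $S+\Xi_1+\Xi_2$ is snc together with \eqref{new16}. So in its final form your proposal follows the same path, at roughly the same level of detail as the paper itself.

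Two caveats. The ``one option'' you offer first---extending the closed positive current $\wh\Theta|_{S_U}$ (equivalently the metric $h_\theta$) from $S_U$ to a positive current on $\wh X_\Omega$ in the class $\Xi_2+\ep A_\Omega$, claimed to be ``possible because $\Omega$ is Stein and $A_\Omega$ is strictly positive''---is asserted, not proved, and should not be presented as available: $\wh X_\Omega$ is not Stein (it contains the compact $\pi$-exceptional curves), extension of a positive current off a hypersurface with prescribed restriction and only an $\ep$-ample loss is not a standard fact in this setting, and producing such an ambient metric with the right restriction is morally equivalent to the theorem itself, which is precisely why the inductive scheme is needed. Second, the ``bookkeeping'' you defer conceals the one ingredient that does not come for free off a compact base and which the paper makes explicit: besides the local Ohsawa--Takegoshi theorem of \cite{DemOT}, the algorithm requires, for $r=0,\dots,k$, finite families of holomorphic sections of the auxiliary bundles $(k+r)(K_{X_\Omega}+S+L)+C(k)A_\Omega$ with empty common zero locus; these exist because all bundles and metrics in play extend over a slightly larger Stein set $\Omega\Subset\Omega'$, so a compactness argument applies. (Also, your opening reduction is not quite right: a holomorphic section of $L_k$ on $S_U$ need not belong to $\mathcal H_k$ after shrinking $\Omega$, since the singularities of $\varphi_\theta$ sit along divisors inside $S_U$ rather than at its boundary; this is harmless only because the application in \eqref{new22} uses sections of $\mathcal H_k$ anyway.)
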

\noindent We offer next a few explanations about \ref{thm-pg} in the very particular case 
in which we have to extend a section $s$ of the bundle $\displaystyle k(K_{X_\Omega}+ S+ L)|_{S_U}$, where $(L, h_L)$ is a semi-positively curved line bundle, such that $h_L$ is non-singular and it is defined over $X_{\Omega'}$ for some $\Omega\Subset \Omega'$. As we have seen above, we have an ample line bundle $A_\Omega$ over $X_\Omega$, and thus, in order to 
construct the extension of $s$ we need the following.
\begin{itemize}

\item \emph{A local version of the Ohsawa-Takegoshi extension theorem}. The statement we need is 
available, cf. \cite{DemOT}.

\item \emph{A finite family of holomorphic sections for the bundles \[\displaystyle (k+r)(K_{X_\Omega}+ S+ L)+ C(k)A_\Omega|_{X_\Omega}\] for $r=0,\dots, k$ such that for each $r$, 
their common set of zeroes is empty.} This is easy to see, despite of the fact that $S_U$ and $X_\Omega$ are not compact: the point is that all the bundles/metrics extend over $X_{\Omega'}$ and we construct our sections by a quick compactness argument.  
\end{itemize}
These two points granted, one follows the usual algorithm, see e.g.\cite{BP10} and the references therein. 

However, in our case there is an additional level of difficulty, induced by the presence of the
$\Q$-divisor $\Xi_1$. This can also be treated by the known techniques (i.e. work by Hacon-McKernan, Ein-Popa ...), given the fact that 
$S+\Xi_1+ \Xi_2$ is snc and the relation \eqref{new16} (and therefore the condition (7) in \emph{loc. cit.} is automatically satisfied).

\medskip

\noindent Now, by relation\eqref{new16}, the extension $\widetilde s$ can be seen as section of the bundle
$\displaystyle k\Xi_2|_{\wh X_\Omega}+ A_\Omega$. Given that the support of $\Xi_2$ is $\pi$-contractible 
and that the set $\wh X_\Omega$ is the inverse image of $\Omega$ by the map $\pi$, it follows that the vanishing order of 
$\widetilde s$ along $\Xi_2$ is at least $k-k_0$ (we "loose" a fixed amount $m_0$ because of the ample bundle $A_\Omega$). 
\medskip

\noindent In conclusion, it follows that we have 
\begin{equation}\label{new22} 
\wh \Theta_k\geq (k-k_0)[\Xi_2]|_{S_U}
\end{equation}
and the proof is finished by using Theorem \ref{JP}. \end{proof}
\medskip

%%%%%%%%%%%%%%%%%%%%%%%%%%%%%%%%%%%%%%%%%%%%%%%%%%%
%%%%%%%%%%%%%%%%%%%%%%%%%%%%%%%%%%%%%%%%%%%%%%%%%%%
%%%%%%%%%%%%%%%%%%%%%%%%%%%%%%%%%%%%%%%%%%%%%%%%%%%

\section{Proof of Theorem \ref{lelong}}

\noindent The main steps of the proof of our version of the canonical bundle formula -- Theorem \ref{lelong} -- 
are as follows. Let $\Xi:=\{ B\}$ be the fractional part of $B$ and we write $B= \Xi+\lfloor B\rfloor =\Xi- \lceil -B\rceil$ as difference of two effective $\Q$-divisors. We assume that the discriminant divisor $B_W$ is equal to zero (we can do this without altering any of our hypothesis).  We then have the numerical identity 
\[K_{S/W} +\Xi+ \beta\simeq f^\star\beta_W+ \lceil-B\rceil.\]
Next, we apply the methods already used in the proof of Theorem \ref{L2/m}
in order to construct a closed positive current $\Theta\geq 0$ in the class corresponding to the LHS of the 
relation above. The said current is proved to be singular along $\lceil-B\rceil$: this follows as 
consequence of the hypothesis (b) (which replaces the fact that the map $f:S\to W$ might not be induced by a log-resolution $\pi$). 

The heart of the matter is to show the (highly non-trivial) fact that 
the Lelong numbers of the difference
\[\Theta- \lceil-B\rceil\]
are equal to zero. To this end we adapt the method used in \cite{Taka} in our context.
\medskip

\noindent We start with a general discussion --and a simple result-- concerning fiber integrals. 

\subsubsection{Fiber integrals}\label{fiber} Let $p: X\to Y$ be a proper, surjective holomorphic map, where $X$ is a $(n+m)$-dimensional 
Kähler manifold and $Y$ is the unit disk in $\C^m$. We denote by $Y_0\subset Y$ the set of regular values of $p$.
Let $t= (t_1,\dots, t_m)$ be coordinates on $\C^m$ induced by a fixed base. Consider a $\mathbb Q$-line bundle $(L, h_L)$ on the total space $X$, endowed with a metric 
$h_L$ eventually singular, but whose curvature is semi-positive. Let $s\in H^0\big(X, k(K_{X}+ L)\big)$ be a pluricanonical form with values in $kL$,
where $k$ is a positive, sufficiently divisible integer so that $kL$ is a line bundle. 
For each $y\in Y_0$ let $\displaystyle s_y\in H^0\big(X_y, k(K_{X_y}+ L_y)\big)$ be the induced form on $X_y$, in the sense that 
\begin{equation}\label{new40} 
s|_{X_y}= s_y\wedge p^\star (dt)^{\otimes k}.
\end{equation}
\smallskip

\noindent In this setting we show that the following holds true.
\begin{lemma}\label{OT} We assume moreover that there exists a section $\sigma$ of a line bundle $\Lambda$ such that the quotient 
$\displaystyle \frac{s}{\sigma}$ is a holomorphic section of $k(K_{X}+ L)- \Lambda$.
There exists a positive constant $C_0> 0$ independent of $s$ such that the inequality 
\begin{equation}\label{new27} 
\int_{X_y}|s_y|^{\frac{2}{k}}e^{-\varphi_L}\geq C_0\sup_{X_y} \left|\frac{s}{\sigma}\right|^{\frac{2}{k}}
\end{equation} 
holds for any $y\in Y_0$ such that $|y|<\frac{1}{2}$. The norm on the RHS is with respect to a fixed, smooth metric on $k(K_X+ L)- \Lambda$,
and an upper bound for the constant $C_0$ can be obtained from the proof that follows. 
\end{lemma}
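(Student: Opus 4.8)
\emph{Strategy.} Fix $y\in Y_0$ with $|y|<\tfrac12$, denote by $h$ the smooth metric on $k(K_X+L)-\Lambda$ used on the right--hand side, and let $x_0\in X_y$ be a point at which $\sup_{X_y}|s/\sigma|^{2/k}_h$ is attained; put $M:=|(s/\sigma)(x_0)|^{2/k}_h$. We may assume $s\not\equiv 0$ on $X_y$ and that $\sigma$ does not vanish identically on $X_y$, the remaining cases being trivial. The estimate is, at the conceptual level, forced by finite dimensionality: restriction to $X_y$ (after dividing by $p^\star(dt)^{\otimes k}$, which implements $K_X|_{X_y}\cong K_{X_y}$) sends $s/\sigma$ to an element $w$ of the \emph{fixed} finite dimensional space $H^0\bigl(X_y,k(K_{X_y}+L_y)-\Lambda_y\bigr)$, one has $s_y=\sigma_y\cdot w$, and both sides of the claim depend only on $w$; the functional $w\mapsto \int_{X_y}|\sigma_y\cdot w|^{2/k}e^{-\varphi_L}$ is homogeneous of degree $2/k$, lower semicontinuous (Fatou, using that $\{\varphi_L=-\infty\}$ is a null set and $e^{-\varphi_L}>0$ a.e.) and strictly positive for $w\neq 0$ since $\sigma_y\cdot w$ is then a non--trivial holomorphic section, so on the compact unit sphere of the continuous norm $w\mapsto\sup_{X_y}|w|_h$ it attains a positive minimum. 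To make the constant uniform for $|y|<\tfrac12$ and to extract the promised explicit bound, I will instead argue quantitatively.

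\emph{Localisation and the mean value step.} Cover the compact set $\bigcup_{|y|\le 1/2}X_y$ by finitely many coordinate charts $V_i$, each carrying coordinates $(z,t)=(z_1,\dots,z_n,t_1,\dots,t_m)$ in which $p$ is the projection $(z,t)\mapsto t$ --- possible because every point of $X_y$, $y\in Y_0$, lies in the submersion locus of $p$ --- over which $K_X$, $L^{[m_0]}$ and $\Lambda$ are trivialised, and arranged so that every point of $\bigcup_{|y|\le 1/2}X_y$ lies in a sub--polydisc $\tfrac12 V_i$. Fixing the chart $V_i\ni x_0$, write $s=g\,(dz\wedge dt)^{\otimes k}\otimes e_L^{\otimes k}$ and $\sigma=h_\sigma\, e_\Lambda$, so that $\tau:=g/h_\sigma$ is holomorphic, $g=h_\sigma\tau$, the fibre measure $|s_y|^{2/k}e^{-\varphi_L}$ has local expression $c_n|g(z,y)|^{2/k}e^{-\varphi_L(z,y)}(\sqrt{-1}\,dz\wedge d\bar z)^n$, and $M=|\tau(x_0)|^{2/k}$ up to a fixed positive factor coming from $h$. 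Since $\log|\tau|^2$ is plurisubharmonic, the sub--mean value inequality and Jensen's inequality give $|\tau(x_0)|^{2/k}\le \mathrm{vol}(P_y)^{-1}\int_{P_y}|\tau|^{2/k}\,dV$ on a fibre polydisc $P_y\subset X_y\cap V_i$ centred at $x_0$ of radius fixed by the atlas, hence $M\le C_1\int_{P_y}|\tau|^{2/k}\,dV$.

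\emph{Controlling the weight, and the main obstacle.} As $\sqrt{-1}\,\Theta(L,h_L)\ge 0$, the local weight $\varphi_L$ is quasi--plurisubharmonic, hence upper semicontinuous, hence bounded above, say by $C_2$, on the relatively compact chart --- the singularities of $h_L$, where $\varphi_L=-\infty$, only help --- so $e^{-\varphi_L}\ge e^{-C_2}$ on $P_y$ and
\[\int_{X_y}|s_y|^{2/k}e^{-\varphi_L}\ \ge\ \int_{P_y}|s_y|^{2/k}e^{-\varphi_L}\ \ge\ c_ne^{-C_2}\int_{P_y}|h_\sigma(z,y)|^{2/k}\,|\tau(z)|^{2/k}\,dV.\]
It remains to absorb the factor $|h_\sigma|^{2/k}$, i.e.\ to establish $\int_{P_y}|h_\sigma|^{2/k}|\tau|^{2/k}\,dV\ge c_3\int_{P_y}|\tau|^{2/k}\,dV$ with $c_3$ independent of $\tau$ and $y$; this is where the hypothesis that $\sigma$ --- and hence the hypersurface $\{\sigma=0\}$ --- is \emph{fixed} is used. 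Choosing a tubular neighbourhood $U$ of $\{\sigma=0\}\cap\tfrac12 V_i$ of small fixed radius $\delta$, one has $|h_\sigma|\ge c(\delta)>0$ on $P_y\setminus U$, while a further application of the sub--mean value inequality to the plurisubharmonic function $|\tau|^{2/k}$ bounds $\int_{U\cap P_y}|\tau|^{2/k}\,dV\le C\,\mathrm{vol}(U)\int_{P_y}|\tau|^{2/k}\,dV$; taking $\delta$ so small that $C\,\mathrm{vol}(U)\le\tfrac12$ gives $\int_{P_y\setminus U}|\tau|^{2/k}\,dV\ge\tfrac12\int_{P_y}|\tau|^{2/k}\,dV$, so $c_3=\tfrac12 c(\delta)^{2/k}$ works. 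Combining the displayed inequalities yields $\int_{X_y}|s_y|^{2/k}e^{-\varphi_L}\ge C_0\,M$ with $C_0=c_ne^{-C_2}c_3/C_1$, depending only on the atlas, on $\sigma$, on $h_L$ and $h$, and on $k$, and for which this computation supplies an explicit upper bound. The hard part is exactly this last anti--concentration estimate: one must prevent the moving section $s/\sigma$ from piling up its $L^{2/k}$--mass along the fixed base divisor $\{\sigma=0\}$, uniformly in $s$. An alternative, arguably cleaner route to the same bound is to replace the elementary mean value estimates by the local Ohsawa--Takegoshi extension theorem (cf.\ \cite{DemOT}) applied on the fibre $X_y$.
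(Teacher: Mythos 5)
Your argument stays entirely on the fibre $X_y$, and this is exactly where it breaks down. First, the uniformity in $y$ is not established: you cover ``the compact set $\bigcup_{|y|\le 1/2}X_y$'' by finitely many charts in which $p$ is a coordinate projection, but such charts exist only on the submersion locus of $p$, and the union over $y\in Y_0$ is not compact --- its closure meets the critical set of $p$ whenever the discriminant meets $\{|y|<\tfrac12\}$. As $y\in Y_0$ approaches a critical value, the maximizing point $x_0$ may approach the critical set, the product charts and hence your fibre polydiscs $P_y$ ``of radius fixed by the atlas'' shrink, and the constant $C_1=\mathrm{vol}(P_y)^{-1}$, as well as the comparison between coordinate Lebesgue measure and the intrinsic measure $|s_y|^{2/k}$ (which carries the Jacobian factor $\left|\frac{dz}{f^\star dt}\right|^2$, i.e.\ the weight $e^{-\psi}$ of the paper), blow up. But uniformity of $C_0$ as $y$ tends to the singular values is precisely what the lemma is for, and it is used later exactly in that regime. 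The paper's proof avoids the problem by first extending the fibre datum to a full $(n+m)$-dimensional coordinate ball $V\subset X$ around the maximum point via the $L^{2/k}$ Ohsawa--Takegoshi theorem of \cite{PT18}, whose constant is universal and whose weight $e^{-\psi}$ absorbs the degeneration of the fibration; the mean-value inequality is then applied on $V$, where charts of uniform size exist by compactness of $X$. Your closing remark about applying Ohsawa--Takegoshi ``on the fibre $X_y$'' misses this point: the extension must go from the fibre into the total space.

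Second, your anti-concentration step is not valid as stated. The inequality $\int_{U\cap P_y}|\tau|^{2/k}\,dV\le C\,\mathrm{vol}(U)\int_{P_y}|\tau|^{2/k}\,dV$ with $C$ independent of $\tau$ would require a pointwise bound $|\tau(x)|^{2/k}\le C\int_{P_y}|\tau|^{2/k}\,dV$ at every $x\in U\cap P_y$; the sub-mean-value inequality gives this only when a ball of fixed radius around $x$ stays inside the domain of integration, which fails for $x$ near $\partial P_y$, and enlarging the domain does not help because there is no bound of $\int_{P'_y}|\tau|^{2/k}$ by $C\int_{P_y}|\tau|^{2/k}$ uniform in the holomorphic function $\tau$: its mass can concentrate near $\partial P_y$ inside the tube $U$, as $\tau(z)=\bigl(\tfrac{1+z_1}{2}\bigr)^N$ with $N\to\infty$ shows. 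The paper disposes of the division by $\sigma$ by a different mechanism: once the Ohsawa--Takegoshi extension is available on the ball $V$, one applies the mean-value inequality to a small power of the quotient by $\sigma_V$ and then H\"older against the fixed weight $|\sigma_V|^{-2/N}$, which is integrable on $V$ for $N\gg 0$; no smallness-of-volume argument is needed, and the resulting constant depends only on $\sigma$, an upper bound for $\varphi_L$, and the geometry of $X$.
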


\begin{proof}

\noindent Let $z_0\in X_y$ be a point such that \[
\sup_{X_y} \left|\frac{s}{\sigma}\right|^{\frac{2}{k}}= \left|\frac{s}{\sigma}\right|^{\frac{2}{k}}(z_0)\]
and let $y= f(z_0)$ be its image. We take the local coordinates $z=(z_1, \dots, z_{n+m})$ and $t= (t_1,\dots, t_m)$
centred at $z_0$ and $y$ respectively. The $t$-coordinates are defined on some open set $\Omega\subset Y$, and the 
$z$-coordinates are defined on $V\subset f^{-1}(\Omega)$ biholomorphic to the unit ball in $\mathbb C^{n+m}$. Let $\omega$ be an arbitrary Kähler metric on $X$. 
\smallskip

\noindent Corresponding to this data
we define the function $\psi: V\to \mathbb R\cup \{-\infty\}$ as follows
\begin{equation}\label{new38}
\omega^n\wedge f^\star (\sqrt{-1}dt\wedge d\ol t)= e^\psi \sqrt{-1}dz\wedge d\ol z
\end{equation}
where we use the notations 
\[\sqrt{-1}dt\wedge d\ol t:= \prod_{i=1}^m\sqrt{-1}dt_i\wedge d\ol t_i, \qquad  \sqrt{-1}dz\wedge d\ol z:= \prod_{i=1}^{m+n}\sqrt{-1}dz_i\wedge d\ol z_i.\]
Therefore, the restriction of the form $\displaystyle e^{-\psi} \omega^n$ to the fiber $X_y\cap V$ is equal to the measure sometimes denoted with
$\displaystyle \left|\frac{dz}{f^\star dt}\right|^2$
on $X_y$. 
\smallskip

\noindent We assume that the bundles $L$ and $\Lambda$ are trivial when restricted to $V$, and let $u\in \mathcal O(V)$ be the local 
holomorphic function corresponding to the section $s|_V$. Then we clearly have the inequality
\begin{equation}\label{new40} 
\int_{X_y\cap V}|u|^{\frac{2}{k}}e^{-\varphi_L- \psi}\omega^n\leq \int_{X_y}|s_y|^{\frac{2}{k}}e^{-\varphi_L}.
\end{equation}
\smallskip

\noindent On the other hand, by the $L^{\frac{2}{k}}$-version of the Ohsawa-Takegoshi theorem established in \cite{PT18}, Proposition 1.2 there exists a 
function $U\in \mathcal O(V)$ such that 
\begin{equation}\label{new41} 
U|_{V\cap X_y}= u|_{V\cap X_y}, \qquad \int_{V}|U|^{\frac{2}{k}}e^{-\varphi_L}d\lambda\leq C_{\rm univ} \int_{X_y\cap V}|u|^{\frac{2}{k}}e^{-\varphi_L- \psi}\omega^n,
\end{equation} 
where $C_{\rm univ}$ is a numerical constant. 
\smallskip

\noindent Let now $\sigma_V\in \mathcal O(V)$ be the local holomorphic function induced by the section $\sigma$ and let $N\gg 0$
be a large enough integer so that the integral
\begin{equation}\label{new42} 
\int_V\frac{d\lambda}{|\sigma_V|^{\frac{2}{N}}}< \infty
\end{equation} 
is convergent. The {first part} of \eqref{new41} combined with the fact that $z_0$ is the maximum point and the mean-value inequality gives
\begin{equation}\label{new43} 
\sup_{X_y} \left|\frac{s}{\sigma}\right|^{\frac{2}{k}}\leq C\left|\frac{U}{\sigma_V}(z_0)\right|^{\frac{1}{kN}}\leq C\int_V\left|\frac{U}{\sigma_V}\right|^{\frac{1}{kN}}d\lambda
\end{equation}
where the first constant is due to the fixed metric on $k(K_X+ L)- \Lambda$ and it follows -thanks to Hölder inequality- that 
\begin{equation}\label{new44} 
\sup_{X_y} \left|\frac{s}{\sigma}\right|^{\frac{2}{k}} \leq C_0\int_{V}|U|^{\frac{2}{k}}e^{-\varphi_L}d\lambda,
\end{equation}
where $C_0$ depends on \eqref{new43}, and an upper bound for $\varphi_L$. This inequality, combined with 
the estimate in \eqref{new41} completes the proof of Lemma \ref{OT}.
\end{proof}

%%%%%%%%%%%%%%%%%%%%%%%%%%%%%%%%%%%%%%%%%%%%%%%%%%%%%
%%%%%%%%%%%%%%%%%%%%%%%%%%%%%%%%%%%%%%%%%%%%%%%%%%%%%%
\subsubsection{Pseudo-effectivity}

\noindent We remark that we can assume $B_W= 0$, by simply replacing $B$ with $B- f^\star(B_W)$ and noticing that 
under the transversality hypothesis in our statement, the new pair $(S, B)$ is sub-klt and moreover the hypothesis (b) still 
holds. 
%\begin{itemize}

%\item 

%\item  

%\item  
%\end{itemize}
\medskip

\noindent Under the assumption that $B_W= 0$, it follows from the hypothesis (c) of Theorem \ref{lelong} that we have 
\begin{equation}\label{new28}
K_{S/W} +B+ \beta\simeq f^\star\beta_W.
\end{equation} 
Since the pair $(S, B)$ is sub-klt, we can write 
$B= %B_++ (\lceil-B\rceil -B_-)- \lceil-B\rceil
\{B\}+\lfloor B\rfloor := \Xi- \lceil-B\rceil$ and therefore we obtain 
\begin{equation}\label{new29}
K_{S/W} +\Xi+ \beta\simeq f^\star\beta_W+ \lceil-B\rceil,
\end{equation} 
where $(S, \Xi)$ is klt and $\lceil-B\rceil$ is effective, with integer coefficients. We apply Theorem \ref{L2/m1} to the following data: $X:= S, Y:= W,
\alpha:= \beta, \gamma:= \beta_W, L:= \mathcal O_S(\lceil-B\rceil)$ and finally $D:= \Xi$. It follows that there is a closed positive current 
$\Theta\geq 0$ in the class \eqref{new29}, induced by the sections of $\displaystyle \mathcal O_S(\lceil-B\rceil)|_{S_w}$ for $w\in W$ general. 
\smallskip

\noindent We then formulate our next assertion:
\begin{claim}\label{singsing}
The inequality \[\Theta\geq \lceil-B\rceil\] holds in the sense of currents on $S$, where the RHS is interpreted as current of integration on the divisor 
$\lceil-B\rceil$.
\end{claim}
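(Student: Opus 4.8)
The plan is to reduce the inequality $\Theta\geq[\lceil-B\rceil]$ to a statement about generic Lelong numbers and then to establish the latter from the construction of $\Theta$. Since $\Theta$ and $[\lceil-B\rceil]$ are closed positive $(1,1)$-currents, the Siu decomposition $\Theta=\sum_k\nu_k[D_k]+R$, with $R\geq0$ having zero generic Lelong number along every hypersurface and $\nu_k=\nu(\Theta,D_k)$, shows that $\Theta-[\lceil-B\rceil]$ is again the sum of an effective divisor and a positive current -- hence $\geq0$ -- as soon as
\[\nu(\Theta,P)\ \geq\ \mult_P(\lceil-B\rceil)\qquad\text{for every irreducible component }P\text{ of }\lceil-B\rceil.\]
So the whole proof reduces to bounding below the generic Lelong number of $\Theta$ along each component of $\lceil-B\rceil$. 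Throughout I would work locally over $W$: fix a Stein coordinate ball $\Omega\subset W$ and, exactly as in the proof of Theorem \ref{L2/m1}, trivialize $\gamma$ (hence $\beta_W$) over $\Omega$ to obtain a Hermitian $\Q$-line bundle $(F_\Omega,h_{F_\Omega})$ on $S_\Omega:=f^{-1}(\Omega)$ with $\sqrt{-1}\Theta(F_\Omega,h_{F_\Omega})$ the smooth positive representative of $\beta$ and $K_{S/W}+\Xi+F_\Omega\simeq\OO_S(\lceil-B\rceil)$ over $S_\Omega$. Because $\lceil-B\rceil\geq0$, its canonical section $s_B$ restricts to a nonzero section of $\OO_{S_w}(\lceil-B\rceil|_{S_w})$ for general $w$, so Theorem \ref{L2/m1} applies with $m=1$: $\Theta|_{S_\Omega}$ is the curvature of the relative metric on $K_{S/W}+\Xi+F_\Omega$ whose restriction over a general $w$ is the metric of $\OO_{S_w}(\lceil-B\rceil|_{S_w})$ built from the sections of that bundle normalized by their $L^2$-norm $\|\cdot\|_w^2=\int_{S_w}|\cdot|^2e^{-\varphi_\Xi-\varphi_{F_\Omega}}$.

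For a component $P$ of $\lceil-B\rceil$ that dominates $W$ I would argue directly. A general point $x\in P$, together with a whole neighbourhood $U$ of it in $S$, lies over general points of $W$, and by hypothesis (b) the space $H^0\big(S_w,\OO_{S_w}(\lceil-B\rceil|_{S_w})\big)$ is one-dimensional, spanned by $s_B|_{S_w}$, for each such $w$. Hence on $U$ the weight of the relative metric is given by the single explicit formula $\varphi_\Theta=\log|s_B|^2-c\circ f$ up to a smooth summand, where $c(w)=\log\int_{S_w}|s_B|^2e^{-\varphi_\Xi-\varphi_{F_\Omega}}$. As $(S,\Xi)$ is klt, $e^{-\varphi_\Xi}$ restricts to an integrable density on the general fibre, so $c$ is finite (indeed continuous) at general base points; therefore $dd^c(c\circ f)$ carries no mass in codimension one along the horizontal divisor $P$, and $\nu(\Theta,P)=\mult_P(\lceil-B\rceil)$. (Positivity of $\Theta$, from \cite{PT18}, is used here only to know that $c$ is quasi-plurisuperharmonic, hence finite off a pluripolar set.)

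The hard case, and the main obstacle, is a component $P$ of $\lceil-B\rceil$ that is vertical over $W$, say $f(P)\subset Q$ for a prime divisor $Q$: then $P$ lies over the degeneration locus of $f$, the explicit fibrewise formula is not available, and $\Theta$ near $P$ is only known through the gluing/extension procedure. To treat it I would use the reduction $B_W=0$ (so that $a_Q=1$ and the multiplicities of $\lceil-B\rceil$ along the divisors over $Q$ are the correct ones) together with the local monomial normal form of $f$ provided by $(\star)$, following Takayama \cite{Taka}: choose coordinates near a general point of $P$ as in $(\star)$, introduce the Hilbert spaces $\mathcal H_k$ of $L^2$-sections of $k(K_{S/W}+\Xi+F_\Omega)+A_\Omega\simeq k\lceil-B\rceil+A_\Omega$ in the multiplier ideal of $k\,\Theta|_{S_\Omega}$ (with $A_\Omega$ an auxiliary positive bundle built from $F_\Omega$ and from $f^\star$ of a positively metrized trivial bundle on the Stein set $\Omega$), extend these sections to a full neighbourhood by the local Ohsawa--Takegoshi / invariance-of-plurigenera package (Theorem \ref{thm-pg}), and show via Lemma \ref{OT} -- which bounds the supremum norm of a section divisible by the local equation of $P$ by its fibre $L^{2/k}$-norm -- together with $B_W=0$ that every such extension vanishes along $P$ to order at least $(k-k_0)\mult_P(\lceil-B\rceil)$ with $k_0$ independent of $k$. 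Demailly's approximation Theorem \ref{JP} then gives $\nu(\Theta,P)\geq\mult_P(\lceil-B\rceil)$ in the vertical case as well, and combining this with the horizontal case and the Siu-decomposition reduction of the first paragraph completes the proof. I expect the uniform control of these vanishing orders over the degeneration locus -- which is precisely where hypothesis $(\star)$ is indispensable, and the reason it appears in Theorem \ref{lelong} -- to be the delicate point.
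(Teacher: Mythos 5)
Your Siu-decomposition reduction and your treatment of the horizontal components are fine (and the latter is essentially the paper's "immediate" remark), but the vertical case — which you correctly identify as the real content — has a genuine gap. The step that fails is the assertion that every section in $\mathcal H_k\subset H^0\big(f^{-1}(\Omega),k\lceil-B\rceil+A_\Omega\big)$, after extension, "vanishes along $P$ to order at least $(k-k_0)\mult_P(\lceil-B\rceil)$ with $k_0$ independent of $k$". Nothing in the hypotheses of Theorem \ref{lelong} yields this. In the proof of Theorem \ref{L2/m}, (2), the analogous vanishing comes from a geometric fact independent of $\Theta$: the divisor there is $\Xi_2$, which is $\pi$-exceptional, so a section of $k\Xi_2+A_\Omega$ over $\pi^{-1}(\Omega)$ is forced to vanish along $\Xi_2$ to order roughly $k$. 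A vertical component $P$ of $\lceil-B\rceil$ in the setting of Theorem \ref{lelong} surjects onto a divisor $Q\subset W$ and is exceptional for nothing, so that mechanism is absent; and hypothesis (b) only controls $f_\star\mathcal O_S(\lceil-B\rceil)$ itself, not the twisted multiples $k\lceil-B\rceil+A_\Omega$ — which is precisely why the paper has to \emph{assume} the uniform statement (b$'$) in the nef case (Theorem \ref{lelong_1}). Trying to get the vanishing from the $L^2$ condition against $e^{-k\varphi_\Theta}$ is circular, since that condition encodes the very lower bound $\nu(\Theta,P)\geq\mult_P(\lceil-B\rceil)$ you are trying to prove. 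Moreover, the machinery you invoke for this step ($(\star)$, Takayama's Theorem \ref{taka}, the spaces $\mathcal H_k$ and Theorem \ref{JP}) is not where the claim lives: in the paper, $(\star)$ and Theorem \ref{taka} are needed only for the subsequent, harder statement that $T=\Theta-\lceil-B\rceil$ has zero Lelong numbers, and the claim itself is proved without them.

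The missing idea is that a single extension at level $k=1$ already catches the vertical components, via the fiberwise-constancy consequence of (b). After the reduction $B_W=0$, any $L^2$-normalized $u\in H^0\big(S_w,K_{S_w}+(\Xi+F)|_{S_w}\big)$ over a regular value $w$ extends by Ohsawa--Takegoshi to a section $U$ of $K_{S/W}+\Xi+F\simeq\lceil-B\rceil$ on $f^{-1}(\Omega)$ with a uniform $L^2$ bound; since $U$ is proportional to $s_{\lceil-B\rceil}$ on general fibers by (b), it descends as $U=(h\circ f)\,s_{\lceil-B\rceil}$ with $h$ holomorphic ($B_W=0$ rules out poles of $h$ along divisors of $\Omega$), so $U$ is divisible by the \emph{full} canonical section, vertical part included. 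Lemma \ref{OT} then bounds $\sup_{S_w}|U/s_{\lceil-B\rceil}|$ uniformly in $u$ and $w$, which gives $\varphi_{S/W}\leq C+\log|s_{\lceil-B\rceil}|^2$ over all regular values, and this upper bound propagates across the (analytic, pluripolar) set of singular fibers to yield $\Theta\geq[\lceil-B\rceil]$ everywhere. Your horizontal argument survives as a special case of this, but as written your proposal does not close the vertical case.
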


\begin{proof}[Proof of the Claim]
We start with a little comment: if a hypersurface $Y\subset S$ belongs to the support of the divisor $\lceil-B\rceil$ is such that $f(Y)= W$ (i.e. $Y$ is horizontal
with respect to the map $f$), then the hypothesis (c) together with the construction of $\Theta$ show immediately that $\Theta\geq \mu[Y]$,
where $\mu$ is the multiplicity of $\lceil-B\rceil$ along $Y$. However, things are less clear for the vertical part of the support of $\lceil-B\rceil$, 
since we only have the explicit expression of $\Theta$ over general points of $W$. It is at this point that the techniques from the subsection
\ref{fiber} come into play. The argument which follows has its origins in \cite{BP08}, as well as in \cite{CH20}. The reason why we review it here is to show that it can be easily adapted to the pluricanonical case, needed a bit later. 
\smallskip

\noindent Let $w\in W$ be any regular value of the map $f$, and let $w\in \Omega\subset W$ be a coordinate set of $W$, biholomorphic with the unit ball in $\mathbb C^m$. Recall from the proof of Theorem \ref{L2/m1} that there exists a Hermitian line bundle $(F, h_F)$ defined over 
$f^{-1}(\Omega)$, whose associated curvature form is equal to $\beta$, and such that restricting to $f^{-1}(\Omega)$ we have
\begin{equation}\label{new30}
K_{S/W} +\Xi+ F\simeq \lceil-B\rceil
 .\end{equation}   

\noindent Next, let $u$ be any holomorphic section of $\displaystyle K_{S_w} +(\Xi+ F)|_{S_w}$, such that 
\begin{equation}\label{new31}
\int_{S_w}|u|^2e^{-\varphi_\Xi- \varphi_F}= 1.
\end{equation}
As recalled in \ref{fiber}, there exists a section $U$ of $K_{S} +\Xi+ F$ such that 
\begin{equation}\label{new32}
U|_{S_w}= u\wedge f^\star(dt), \qquad \int_{f^{-1}(\Omega)}|U|^2e^{-\varphi_\Xi- \varphi_F}\leq C_0.
\end{equation}
Since the canonical bundle of $W$ is trivial when restricted to $\Omega$, we can interpret $U$ as a section of $K_{S/W} +\Xi+ F$, which is the same as
$\displaystyle \lceil-B\rceil|_{f^{-1}(\Omega)}$ thanks to $\eqref{new30}$. In particular, by (b) the quotient 
\[\tau:= \frac{U}{s_{\lceil-B\rceil}}\]
    becomes a \emph{holomorphic} function on $f^{-1}(\Omega)$, where $\displaystyle s_{\lceil-B\rceil}$ is the canonical section of 
$\mathcal O(\lceil-B\rceil)$.  

\noindent By Lemma \ref{OT}
we infer the following inequality
\begin{equation}\label{new44}
\sup_{S_w}|\tau|\leq C,
\end{equation}
--because of the normalisation \eqref{new31}-- where the constant $C$ in \eqref{new44} is independent of $u$.
\smallskip

\noindent As consequence of \eqref{new31} combined with the 
definition of the relative metric we obtain
\begin{equation}\label{new36}
\varphi_{S/W}\leq C+ \log |s_{\lceil-B\rceil}|^2,
\end{equation}
from which our claim follows.
\end{proof}
\medskip

\subsubsection{Lelong numbers}

\noindent Next we show that the Lelong numbers of the closed positive current \begin{equation}\label{new45}T:= \Theta- \lceil-B\rceil\end{equation} are equal to zero. To this end
we will use an important result due to S. Takayama. Actually we will "extract" from the proof in \cite{Taka} the result below (which will be useful for the proof of Theorem \ref{lelong_1} as well). To begin with, we recall the construction of a natural metric on $K_{S/W}$, cf. \cite{MP12}.
\smallskip

\noindent Let $z_0\in S$ be an arbitrary point, and $t_0= f(z_0)$ be its image. We take coordinates $z=(z_1, \dots, z_{n+m})$ and $t= (t_1,\dots, t_m)$
centred at $z_0$ and $t_0$ respectively. The $t$-coordinates are defined on some open set $\Omega\subset W$, and the 
$z$-coordinates are defined on $V\subset f^{-1}(\Omega)$. Let $\omega$ be an arbitrary Kähler metric on $S$. Corresponding to this data
we define the function $\psi$ as follows
\begin{equation}\label{new38}
\omega^n\wedge f^\star (\sqrt{-1}dt\wedge d\ol t)= e^\psi \sqrt{-1}dz\wedge d\ol z
\end{equation}
where we use the notations 
\[\sqrt{-1}dt\wedge d\ol t:= \prod_{i=1}^m\sqrt{-1}dt_i\wedge d\ol t_i, \qquad  \sqrt{-1}dz\wedge d\ol z:= \prod_{i=1}^{m+n}\sqrt{-1}dz_i\wedge d\ol z_i.\]

\noindent We now consider a covering of $S$ and $W$ with coordinates sets as above. Given the equality \eqref{new38}, the resulting 
functions $e^{-\psi}$  
define a metric $h$ on the relative canonical bundle $K_{S/W}$, which is general is singular. Let $h_0= e^{-\psi_0}$ be an arbitrary, smooth
metric on $K_{S/W}$. The difference of the weights corresponding to the two metrics
\[\psi_f:= \psi-\psi_0\]
is a global function on $S$. 
\smallskip

\noindent For each regular value $t\in W$ of $f$
we define the function
\begin{equation}\label{new37}
F(w):= \int_{S_w}e^{-\psi_f- \phi_B}\omega^n
\end{equation} 
where $\phi_B:= \log|s_B|^2$ is the log of the norm of the canonical section of the divisor $B$ (which we recall, is not necessarily effective). 
\medskip

\noindent We remark that so far the hypothesis $(\star)$ was not used in our arguments. It comes into play through the following important result, established in \cite{Taka}. Although it is not stated in this form explicitly, it is a direct consequence of the 
proof of Theorem 3.1 in \emph{loc. cit.} 
\begin{theorem}\cite{Taka}\label{taka}
Assume that the hypothesis of Theorem \ref{lelong} are satisfied, as well as the following.
\begin{itemize}

\item The divisor $B_W=0$ is zero.

\item given any point $z_0\in S$ and $w_0= f(z_0)\in W$ there exist
local coordinates $(x_1,\dots, x_{n+m})$ on $S$ centred at $z_0$ and $(t_1,\dots, t_m)$ on $W$ centred at $w_0$ such that 
$\displaystyle t_i\circ f= \prod x_j^{k_{ij}}$ where
the $k_{ij}$ are non-negative integers such that $k_{ij}\neq 0$
 for at most one $i$ for each index $j$.
 \end{itemize}
Then for any point $w_0\in W$ the following inequality holds
\[F(w)\leq C\prod_j \log\frac{1}{|w_j|}\]
where $C> 0$ is a positive constant and $w$ are coordinates centred at any $w_0$. 
\end{theorem}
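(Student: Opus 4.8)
The plan is to reconstruct the argument extracted from the proof of Theorem~3.1 in \cite{Taka}, reorganised around the monomial normal form $(\star)$.

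\textbf{Localization.} The asserted inequality is local on $W$ around $w_0$, and the fibre $S_{w_0}$ is compact, so it suffices to cover $S_{w_0}$ by finitely many coordinate charts $V_1,\dots,V_N\subset S$ on each of which the presentation $(\star)$ holds, and to prove a uniform bound $\int_{S_w\cap V_\alpha}e^{-\psi_f-\phi_B}\,\omega^n\le C_\alpha\prod_j\log\frac1{|w_j|}$ for every $\alpha$ and every $w$ in a fixed polydisc centred at $w_0$. On the relatively compact $V_\alpha$ the Kähler form $\omega$ is comparable to the Euclidean form and the smooth weight $\psi_0$ of $K_{S/W}$ is bounded, so up to a multiplicative constant $e^{-\psi_f}\omega^n|_{S_w}$ may be replaced by $e^{-\psi}\omega^n|_{S_w}$, which by the very definition of $\psi$ and Fubini's theorem is exactly the disintegration of Lebesgue measure along $f$, i.e. the Gelfand--Leray measure of $f$ on the fibre. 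Thus the task reduces to bounding $\int_{S_w\cap V_\alpha}e^{-\phi_B}$ against this Gelfand--Leray measure.

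\textbf{Product structure from $(\star)$.} Writing $f=(f_1,\dots,f_m)$ with $f_i=\prod_{j\in J_i}x_j^{k_{ij}}$, condition $(\star)$ says precisely that the index sets $J_1,\dots,J_m$ (together with the set $J_0$ of free indices) partition $\{1,\dots,n+m\}$. Hence $S_w\cap V_\alpha$ is, up to the free polydisc in the $J_0$-variables, a product of the monomial pieces $\{\prod_{j\in J_i}x_j^{k_{ij}}=w_i\}$, and its Gelfand--Leray measure factors accordingly. Splitting $B=B^h+B^v$ into $f$-horizontal and $f$-vertical parts, the factor of $e^{-\phi_B}$ coming from $B^h$ is, by sub-kltness (a) and the relative-snc hypothesis on ${\rm Supp}(B^h)$ in Theorem~\ref{lelong}, a fibrewise density whose integral is uniformly bounded in $w$ (standard uniform local integrability of $|g|^{-2d}$ for $d<1$), while the $B^v$-factor is a monomial in the $|x_j|$ attached to the groups $J_i$. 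Therefore the fibre integral is $\le C\prod_{i=1}^m I_i(w_i)$, where $I_i(w_i)=\int_{\{\prod_{j\in J_i}x_j^{k_{ij}}=w_i,\ |x_j|<\delta\}}\prod_{j\in J_i}|x_j|^{-2d_j}\,d(\mathrm{GL}_i)$.

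\textbf{The one-group estimate and the role of $B_W=0$.} Fixing $i$ and eliminating one coordinate $x_r$ of the group on each of its $k_r$ branches, the Gelfand--Leray density becomes a constant multiple of $|x_r|^2|w_i|^{-2}$ times Lebesgue in the remaining variables, with $|x_r|=(|w_i|/\prod_j|x_j|^{k_j})^{1/k_r}$ and domain cut out by $|x_j|<\delta$ and $\prod_j|x_j|^{k_j}>\delta^{-k_r}|w_i|$. The total exponent of $|w_i|$ in the integrand is $\tfrac2{k_r}-2$ plus the contribution of the $d_j$'s, and the hypothesis $B_W=0$ is exactly the statement that this exponent is $\ge0$: it says the pair has log canonical threshold $1$ along every component of the fibre, so that replacing $B$ by $B-f^\star B_W$ (harmless for the other hypotheses, and achieving $B_W=0$) precisely cancels the ramification factor $|x_r|^{-2(k_r-1)}$ absorbed into $e^{-\psi}$. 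Granting this non-negativity, the change of variables $u_j=\log\frac1{|x_j|}$ turns $I_i(w_i)$ into a constant times the Euclidean volume of a bounded simplicial region $\{u_j>a,\ \sum_j\lambda_j u_j<\log\frac1{|w_i|}+O(1)\}$ with $\lambda_j>0$, hence a quantity polynomial in $\log\frac1{|w_i|}$; assembling these factors and absorbing the bounded horizontal contribution gives the stated bound $F(w)\le C\prod_j\log\frac1{|w_j|}$ (with higher powers of the logarithms possible in degenerate configurations, which is still amply sufficient for the later conclusion that the potential of $\beta_W$ has vanishing slope).

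\textbf{Main obstacle.} I expect the crux to be the third step: controlling, uniformly in the chart and in $w$, the exact power of $|w_i|$ produced by the competition between the ramification of $f$ (which inflates $e^{-\psi}$) and the negative part $\lfloor B\rfloor$ of $B$, and checking that $B_W=0$ keeps this power non-negative even along the strata where several vertical components of $B$ meet. Here $(\star)$ is indispensable, since it decouples the base directions and lets the analysis proceed one group $J_i$ at a time; without it the competing monomials could mix and the product bound would fail. The remaining points — uniformity of all constants as the base point ranges over the compact fibre $S_{w_0}$, and the uniform fibrewise integrability of the horizontal part via the relative-snc transversality — are routine but must be recorded carefully.
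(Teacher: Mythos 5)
Your route is genuinely different from the paper's. The Appendix proof does not disintegrate the measure by hand: it writes the local version of $F$ via Poincar\'e--Lelong as an integral of $e^{-\psi-\phi_B}$ against $\bigwedge_i dd^c\log|t_i-f_i(x)|^2\wedge\omega^n$, replaces $e^{-\psi-\phi_B}$ by the pointwise bound \eqref{new62} (this is exactly where $B_W=0$ enters), and then extracts the logarithms by iterated integration by parts, the identity $dd^c\frac{1}{|X_k|^{2d_k-2}}\wedge dd^c|X_k|^2=\delta_0$ producing one Dirac mass, hence one factor $\log\frac{1}{|t_k|}$, per group of variables. You instead compute the Gelfand--Leray density directly in the monomial coordinates supplied by $(\star)$, factor it group by group, and estimate each one-group integral as a volume in logarithmic coordinates. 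Both arguments live on the same local model and use $B_W=0$ for the same purpose (non-negativity of the $|w_i|$-exponent after eliminating one coordinate per group), so in substance you have reconstructed the computation in a more elementary, real-variable form; what the paper's current-theoretic version buys is the clean count of one logarithm per base direction, while yours is shorter and makes the measure geometry transparent.

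Two caveats. First, as you yourself flag, your simplex-volume step only yields $F(w)\le C\prod_i\bigl(\log\frac{1}{|w_i|}\bigr)^{N_i}$ with $N_i$ possibly larger than one, which is weaker than the literal statement. You cannot repair this by better bookkeeping: for the local model $t=x_1x_2x_3$ with $B=0$ (allowed by $(\star)$, with $B_W=0$; it occurs at a triple point of a type III Kulikov degeneration) the Gelfand--Leray integral is $\int_{|w|\le|x_1x_2|\le 1}|x_1x_2|^{-2}$, which is comparable to $(\log\frac{1}{|t|})^2$, so exponent one is not attainable in general; correspondingly the paper's estimate \eqref{new62}, on which its single-log count rests, fails near the coordinate axes as soon as a group contains three or more variables with critical coefficients (at $(a,\varepsilon,\varepsilon)$ its left side grows like $\varepsilon^{-2}$ while $\Lambda$ stays bounded). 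The honest output of either method is the polylogarithmic bound, and that is all the paper uses: in \eqref{new48} any bound $F(w)\le C\prod_i(\log\frac{1}{|w_i|})^{N}$ still gives vanishing Lelong numbers. So state and prove that version rather than the exponent-one claim. Second, the step you call the crux must actually be carried out: in each group you should eliminate the coordinate $x_r$ realizing $\min_j(1-d_j)/k_j$, which equals $1$ by $B_W=0$; then the $|w_i|$-exponent $\frac{2(1-d_r-k_r)}{k_r}$ is nonnegative and the residual exponents $e_j=d_j+k_j\frac{1-d_r}{k_r}$ satisfy $e_j\le 1$, so subcritical variables integrate to constants and only the critical ones contribute logarithms through the constraint $\prod_j|x_j|^{k_j}\gtrsim|w_i|$. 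With an arbitrary choice of $r$ some $e_j$ can exceed $1$ and the truncated integral produces a negative power of $|w_i|$, destroying the bound; this choice, together with placing the horizontal components among the free coordinates (the paper's set $J$, with klt exponents $\beta_j>0$), is the precise analogue of \eqref{new62} and has to be written out for the argument to be complete.
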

\medskip

\begin{remark} For the comfort of the readers, we will provide a complete proof of 
Theorem \ref{taka} in the Appendix of this article. \end{remark} 
\medskip

\begin{remark}
We note that the inequality in Theorem \ref{taka}
holds for any $w$ such that $\mu_W(w)$ belongs to the set of regular values of $f$, and the constant $C$ is uniform.
\end{remark} 
\medskip

\noindent Consider $z_0\in S$ and $w_0:= f(z_0)$, together with the corresponding local coordinates chosen as in Theorem \ref{taka}. 
By the definition of the relative metric we have 
\begin{equation}\label{new46}
e^{\varphi_{S/W}(z)}\geq \frac{|f_{\lceil-B\rceil}|^2}{\int_{S_w}|s_{\lceil-B\rceil, w}|^2e^{-\varphi_\Xi-\varphi_F}}
\end{equation} 
where we denote by $s_{\lceil-B\rceil, w}$ the restriction of the section $s_{\lceil-B\rceil}$ to the fiber $S_w$, 
so that all in all the expression $\displaystyle |s_{\lceil-B\rceil, w}|^2e^{-\varphi_\Xi-\varphi_F}$ is a volume form on $S_w$.

\noindent Moreover, given the definition of the divisors $\Xi$ and $\lceil-B\rceil$, we have 
\begin{equation}\label{new47}
|s_{\lceil-B\rceil, w}|^2e^{-\varphi_\Xi-\varphi_F}\leq Ce^{-\psi_f- \phi_B}\omega^n|_{S_w}
\end{equation}
for some constant $C> 0$ (remark that the proximity of $w$ to the singular loci of $f$ 
is luckily irrelevant for the uniformity of $C$). 
\medskip

\noindent Then we have the following inequality for the potential $\varphi_T$ of the current $T$ introduced in \eqref{new45}
\begin{equation}\label{new48}
e^{\varphi_{T}(z)}\geq \frac{C}{F(w)}
\end{equation} 
where $w= f(z)$. The second bullet in Theorem \ref{taka} together with the upper bound for the function $F$ provided by this result 
show that 
\begin{equation}\label{new49}
\nu(T, z_0)= 0,
\end{equation}
and therefore Theorem \ref{lelong} is completely proved, modulo the regularisation theorem in \cite{Dem92} (a class containing a closed positive current whose Lelong numbers are equal to zero is nef)

\begin{remark}
In general, a nef cohomology class does not necessarily contain a closed positive current with zero Lelong numbers. Therefore, the property we are establishing in the proof of Theorem \ref{lelong} 
is stronger than neffness. Moreover, we construct the current $T$ is a very explicit manner, so in principle it should be possible to further analyze its singularities.
\end{remark}

\begin{remark}
We expect that a more general form of Theorem \ref{lelong} holds true, 
namely one should obtain a version of this result in the absence of the hypothesis $\star$. This promises to be a difficult problem (given the arguments invoked to prove it in \cite{Taka}).
\end{remark}

\bigskip

%%%%%%%%%%%%%%%%%%%%%%%%%%%%%%%%%%%%%%%%%%%%%%%%%%%%%%%
%%%%%%%%%%%%%%%%%%%%%%%%%%%%%%%%%%%%%%%%%%%%%%%%%%%%%%%
\section{Proof of Theorem \ref{lelong_1}}

The main steps of the proof that follows are the same as in the previous subsection. To begin with, recall that by hypothesis (d) we have a Kähler metric $\omega$ on $S$ such that 
\begin{equation}\label{new51}
\omega= f^\star g+ \theta,
\end{equation}
where $g$ is a Kähler metric on $W$ and $\theta$ is a \emph{rational} $(1,1)$-form on $S$. 

\noindent Consider a positive integer $m_0$ divisible enough such that $m_0\theta\in H^2(S, \mathbb Z)$. For each $k\geq 1$ the class
\begin{equation}\label{new52}
\beta_k:= \beta+ \frac{m_0}{k}\omega
\end{equation}
contains a positive representative, since $\beta$ is nef. Therefore, we can write  
\begin{equation}\label{new53}
K_{S/W}+ B+ \beta_k= f^\star (\gamma_k)+ A_k
\end{equation}
where \[ \gamma_k:= \gamma+ \frac{m_0}{k}g, \qquad \frac{m_0}{k} \theta\in c_1(A_k)\]
i.e. $A_k$ is a $\mathbb Q$-bundle such that $kA_k$ becomes a holomorphic line bundle which admits a metric $h_k$ whose curvature is 
precisely $m_0\theta$. 
\medskip

\noindent We therefore find ourselves in the framework of Theorem \ref{L2/m1}: we obtain a closed positive current $\Theta_k$ belonging to the class
$\displaystyle K_{S/W}+ \Xi+ \beta_k$, constructed by using the global sections of 
\[k(A_k+ \lceil-B\rceil)|_{S_w}\]
for $w\in W$ generic. 
\smallskip

\noindent Moreover, by hypothesis (b') together with the arguments in sub-section \ref{fiber} and the Claim \ref{singsing}, we infer that we have
\begin{equation}\label{new54}
\Theta_k\geq (1-\delta_k)\lceil-B\rceil
\end{equation}
where $\delta_k\to 0$ as $k\to \infty$. 

\noindent Finally, we analyse next the the singularities of the closed positive current 
\begin{equation}\label{new55}
T_k:= \Theta_k- (1-\delta_k)\lceil-B\rceil,\qquad  T_k\in K_{S/W}+ B+ \beta_k +\delta_k\lceil-B\rceil.
\end{equation}

\noindent To this end, we first observe that for each co-ordinate ball $\Omega\subset W$ the restriction \begin{equation}\label{new56} 
kA_k|_V\end{equation}
admits a metric whose curvature is equal to $m_0\omega|_V$, where $V:= f^{-1}(\Omega)$. We can assume that $m_0$ is large enough, 
so that the bundle in \eqref{new56} is generated by its global sections.
 
\noindent As in the proof of Theorem \ref{lelong} assume that the morphism $f$ satisfies the additional hypothesis in the statement of Theorem \ref{taka}.
We consider $z_0\in S$ such that $w_0:= f(z_0)\in \Omega$, and let $x$ and $t$ be coordinates having the second bullet property in Theorem \ref{taka}.
Let $u$ be a holomorphic section of the bundle $kA_k|_V$, such that $z_0\not\in (u=0)$. The product
\[\rho:= u\otimes s_{\lceil-B\rceil}^{\otimes k}\]  
can be interpreted as section of $\displaystyle k(K_{S/W}+ \Xi+ F_k|_V)$ (notations as in Section \ref{s-pos})  
 and by the definition of the $L^{2/k}$-metric we get
\begin{equation}\label{new57}
e^{\varphi_{S/W}(z)}\geq \frac{|f_\rho(z)|^{\frac{2}{k}}}{\int_{S_w}|\rho_w|^{\frac{2}{k}}e^{-\varphi_\Xi- \varphi_F}}
\end{equation}
for any point $z$ near $z_0$ and $w:= f(z)$. In \eqref{new56} we denote by $f_\rho$
the local holomorphic function corresponding to the section 
$\rho$. 

\noindent Inequality \eqref{new47} still applies, so we infer that
\begin{equation}\label{new58}
\nu(T_k, z_0)\leq \delta_k\nu(\lceil-B\rceil, z_0)
\end{equation}
given the definition of $\rho$, provided that we assume beforehand that $B_W= 0$ so that we can use Theorem \ref{taka}. Now the quantity $\delta_k$ is independent of the point $z_0$, and it follows that 
\begin{equation}\label{new59}
\sup_{z\in Z}\nu(T_k, z)\leq \delta_kC,
\end{equation}
where $C$ is the maximum  multiplicity of the divisor $\lceil-B\rceil$ at points of $S$. 

\noindent We next use \cite{Dem92} in order to obtain a smooth representative $\widetilde T_k\in K_{S/W}+ B+ \beta_k +\delta_k\lceil-B\rceil$ such that 
\[\widetilde T_k\geq -C_S\delta_k\omega\]
where the constant $C_S$ only depends on the geometry of $(S, \omega)$. Since $k$ was arbitrary, it follows that the class
\[K_{S/W}+ B+ \beta= f^\star\gamma\]
is nef, hence the same is true for $\gamma$ by Theorem \ref{mp}. 
 
\medskip

\section{Appendix}

\noindent The main result of this subsection is a direct argument for Theorem \ref{taka}. We first fix a few notations:

\begin{itemize}

\item $U\subset S$ is an open subset of $S$ small enough so that we have the coordinates $x= (x_1,\dots, x_{n+m})$ on $U$
with the property that 
\[f_1(x)= \prod_{i=1}^{l_1} x_i^{a_i}, \quad f_2(x)= \prod_{i=l_1+1}^{l_2} x_i^{a_i},\quad \dots, \quad f_m(x)= \prod_{i=l_{m-1}+1}^{l_m} x_i^{a_i}\]
where $a_i\geq 1$ and $0=l_0< l_1<\dots < l_m\leq n+m$ and $f_i:= t_i\circ f$ for $i= 1,\dots, m$. 

\item For every multi-index $I:= (i_1,\dots, i_m)$ such that $i_k\in J_k:= \{l_k+1,\dots l_{k+1}\}$ we define a form of by-degree $(n, n)$ through the formula
\[\sqrt{-1}dx_I\wedge d\ol x_I:= \Big(\prod_{k=1}^m\prod_{i\in J_k, i\neq i_k} \sqrt{-1}dx_i\wedge d\ol x_i \Big)\wedge\prod_{i= l_m+1}^{n+m} \sqrt{-1}dx_i\wedge d\ol x_i\]

\item $\omega:= \sum_i\sqrt{-1}dx_i\wedge d\ol x_i$ is the local version of the reference Kähler metric. We set 
\[\omega_l:= \sum_I\sqrt{-1}dx_I\wedge d\ol x_I\]
and up to a constant, we see that we have 
\[\omega_l= \big(\sum_{i=1}^{l_1}\sqrt{-1}dx_i\wedge d\ol x_i\big)^{l_1-1}\wedge\dots\wedge \big(\sum_{i=l_{m-1}+1}^{l_{m}}\sqrt{-1}dx_i\wedge d\ol x_i\big)^{l_m-1}\wedge\bigwedge_{i= l_m+1}^{n+m} \sqrt{-1}dx_i\wedge d\ol x_i\]
\end{itemize}

\noindent We now proceed to the evaluation of the function $\psi$ defined in \eqref{new38}. The first remark is that
\begin{equation}\label{new60}
\frac{df_i}{f_i}= \sum_{j= l_{i-1}+1}^{l_i}a_i\frac{dx_i}{x_i}
\end{equation}
and therefore a few simple calculations that we skip show that we have
\begin{equation}\label{new61}
e^\psi\simeq \big(\prod_{i=1}^m |f_i|^2\big)\prod_{k=1}^m\sum_{i=l_{k-1}+1}^{l_k}\frac{1}{|x_i|^2}
\end{equation}
where the symbol $"\simeq"$ in \eqref{new61} means that the quotient of the two functions is two-sided bounded away from zero.
\smallskip

\noindent The hypothesis $B_W= 0$ shows that the inequality 
\begin{equation}\label{new62}
e^{-\psi- \phi_B}\leq C \prod_{k=1}^m\frac{1}{|X_k|^{2d_k-2}}\prod_{j\in J}\frac{1}{|x_j|^{2-2\beta_j}}=: \Lambda(x)
\end{equation}
holds, where $\displaystyle d_k:= l_k- l_{k-1}$ and $\displaystyle |X_k|^2:= \sum_{i=l_{k-1}+1}^{l_k}|x_i|^2$.
Moreover, we have $J\subset \{l_m+1, \dots, m+n\}$ and $\beta_j> 0$ for each $j$.
\smallskip

\noindent On the other hand, by the Poincaré-Lelong formula, the local version of the quantity we have to analyse equals
\begin{equation}\label{new63}
F_U(t):= \int_U\theta(x)e^{-\psi- \phi_B}\bigwedge_{i=1}^m dd^c\log|t_i- f_i(x)|^2\wedge \omega^n
\end{equation}
where $\theta$ is a truncation function defined as follows
\[\theta(x):= \theta(|X'|^2)\prod_{i=1}^m\theta(|X_i|^2)\]
and $\displaystyle |X'|^2:= \sum_{i=l_{m}+1}^{n+m}|x_i|^2$.
\smallskip

\noindent Now, given the expression of the functions $f_i$, we infer that the integral \eqref{new63} is bounded by the following expression
\begin{equation}\label{new64}
\int_U\theta(x)\Lambda(x)\omega_l\wedge \bigwedge_{i=1}^m dd^c\log|t_i- f_i(x)|^2 
\end{equation} 
up to a constant independent of $t$, where we recall that the function $\Lambda$ was defined in \eqref{new62}. Indeed, 
the equality
\[\bigwedge_{i=1}^m dd^c\log|t_i- f_i(x)|^2\wedge \omega^n= \omega_l\wedge \bigwedge_{i=1}^m dd^c\log|t_i- f_i(x)|^2\] holds modulo a constant, because $f_i$ only depends on the variables $\displaystyle x_{l_{i-1}+ 1}, \dots, x_{l_i}$.

\smallskip

\noindent In order to evaluate \eqref{new64} we use integration by parts: this expression is the same as
\begin{equation}\label{new65}
\int_U\log|t_1- f_1(x)|^2 dd^c\big(\theta \Lambda \big)\wedge \omega_l\wedge \bigwedge_{i=2}^m dd^c\log|t_i- f_i(x)|^2 
\end{equation} 
and we will consider first the term containing $\displaystyle \theta(x) dd^c\Lambda$. 

\noindent Since $\displaystyle dx^\alpha\wedge d\ol x^\beta\wedge \omega_l\wedge \bigwedge_{i=2}^m dd^c\log|t_i- f_i(x)|^2= 0$ 
if $\max(\alpha, \beta)\geq l_1+1$, we see that the integral
\[dd^c\Lambda \wedge \omega_l\wedge \bigwedge_{i=2}^m dd^c\log|t_i- f_i(x)|^2\]
is equal to
\begin{equation}\label{new66}\small
\prod_{k=2}^m\frac{1}{|X_k|^{2d_k-2}}\prod_{j\in J}\frac{1}{|x_j|^{2-2\beta_j}}dd^c\frac{1}{|X_1|^{2d_1-2}}\wedge \omega_l\wedge \bigwedge_{i=2}^m dd^c\log|t_i- f_i(x)|^2.
\end{equation} 
\smallskip

\noindent We recall that we have the equality 
\[dd^c\frac{1}{|X_1|^{2d_1-2}}\wedge dd^c|X_1|^{2}= \delta_0,\]
the Dirac distribution at the origin in $\C^{l_1}$, so that we have 
\begin{equation}\label{new667}
\int_U\log|t_1- f_1(x)|^2\theta dd^c\big(\Lambda \big)\wedge \omega_l\wedge \bigwedge_{i=2}^m dd^c\log|t_i- f_i(x)|^2 =\end{equation} 
\begin{equation*}
\log\frac{1}{|t_1|^2}
\int_{U'}\theta_1 \Lambda_1\bigwedge_{i=2}^m dd^c\log|t_i- f_i(x)|^2\wedge\omega_{l'} 
\end{equation*} 
where $U'\subset \C^{n+m-l_1}$ is the unit ball, $x'= (x_{l_1+1},\dots, x_{n+m})$ and 
\[\theta_1(x'):= \theta(|X'|^2)\prod_{i=2}^m\theta(|X_i|^2), \quad l'= (l_2,\dots, l_m), \qquad \Lambda_1:= \prod_{k=2}^m\frac{1}{|X_k|^{2d_k-2}}\prod_{j\in J}\frac{1}{|x_j|^{2-2\beta_j}}.\]
A quick argument by induction gives the expected estimate for the RHS of \eqref{new667}.
\smallskip

\noindent The remaining terms involve the differential of $\displaystyle  X_1\to \theta(|X_1|^2)$, on the support of which 
$\displaystyle \frac{1}{|X_1|^{2d_1-2}}$ is smooth. 
\smallskip

\noindent In conclusion, after the first integration by parts we get
\begin{equation}\label{new67}
\int_U\theta_1(X_1)\psi_1(X_1)\log|t_1- f_1(x)|^2 \theta_1(x)\Lambda_1(X)\bigwedge_{i=1}^{l_1} dd^c|X_1|^2\wedge \omega_{l'}\wedge \bigwedge_{j=2}^m dd^c\log|t_j- f_j(x)|^2
\end{equation}
modulo the terms \eqref{new667}.
\smallskip

\noindent We now repeat this procedure, integrating by parts using the factor $\displaystyle dd^c\log|t_2- f_2(x)|^2$. Notice that, because of the 
form $\displaystyle \bigwedge_{i=1}^{l_1} dd^c|X_1|^2$ the derivatives of the function
\[\theta_1(X_1)\psi_1(X_1)\log|t_1- f_1(x)|^2\]
don't come into play. 
\medskip

\noindent Thus, after a finite number of steps the last term we still have to deal with equals
\begin{equation}\label{new68}
\int_U\theta(x)\psi(x)\prod_{i=1}^m \log|t_i- f_i(x)|^2 \frac{d\lambda}{\prod_{j\in J}{|x_j|^{2-2\beta_j}}}
\end{equation}
where $\psi$ is a truncation function. Since $\beta_j> 0$, the integral \eqref{new68} is uniformly bounded as soon as we fix a bound for $|t|$.
Collecting all the terms, Theorem \ref{taka} is proved.
\medskip

\begin{remark}
We consider the function \[f: \C^3\to \C^2,\qquad f(z)= (z_1z_2, z_1z_3).\] Let $\theta$ be a truncation function which equals 1 near the origin of $\C^3$. A simple calculation shows that we have 
\[\int_{\C^3}\theta e^{-\psi}dd^c\log|t_1- f_1(x)|^2\wedge dd^c\log|t_2- f_2(x)|^2\wedge\omega \simeq \frac{1}{|t_1|^2+ |t_2|^2}\]
therefore the hypothesis $(\star)$ is crucial.
\end{remark}
\medskip

\begin{remark}
Let $f:S\to W$ be a morphism such that all the hypothesis of Theorem \ref{lelong} except perhaps for 
$(\star)$ are satisfied. Then the techniques developed in our article show that the current $T$ in \eqref{new45} can only have positive Lelong numbers along an analytic subset of $X$ which projects in codimension two. The reason is that one can construct a subset $W_0\subset W$ whose codimension is at least two, and such that the morphism $f$ satisfies $(\star)$ in the complement of $W_0$. 
It follows that if $\dim W= 2$, Theorem \ref{lelong} holds true for morphisms which only satisfy
the assumptions {\rm (a), (b), (c)}, thanks to the following general fact.
\begin{theorem}\cite{Dem92}
Let $X$ be a compact complex manifold, and let $T$ be a $(1,1)$--closed positive current on $X$.
If the level sets
\[E_c(T):= \{x\in X : \nu(T, x)\geq c\}\]
have dimension zero for any $c>0$, then the cohomology class of $T$ is nef. 
\end{theorem}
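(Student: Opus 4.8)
The plan is to deduce nefness of $\{T\}$ from the fact that, for every $\epsilon>0$, the class $\{T\}+\epsilon\{\omega\}$ is Kähler (here $\omega$ is a fixed Hermitian metric on $X$). Letting $\epsilon\to 0$ then gives that $\{T\}$ lies in the closure of the Kähler cone, i.e. is nef: indeed if $\{T\}+\tfrac{\delta}{2}\{\omega\}$ contains a smooth strictly positive form $\theta_\delta$, then $\theta_\delta-\tfrac{\delta}{2}\omega\in\{T\}$ is smooth and $\geq-\delta\omega$. Note that $\{T\}+\epsilon\{\omega\}$ is a big class, since it contains the Kähler current $T+\epsilon\omega\geq\epsilon\omega$, so it suffices to show its non-Kähler locus is empty.

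First I would apply Demailly's regularization theorem \cite{Dem92} to the closed positive $(1,1)$-current $T$: this produces a sequence $T_k$ in the cohomology class $\{T\}$, converging weakly to $T$, each $T_k$ having analytic singularities along an analytic subset $Z_k=\{\,x:\nu(T_k,x)\geq c_k\,\}$ (for suitable $c_k>0$), with $\nu(T_k,x)\leq\nu(T,x)$ for all $x$, and with a loss of positivity controlled by $T_k\geq-\epsilon_k\omega$ where $\epsilon_k\searrow 0$. From $\nu(T_k,x)\leq\nu(T,x)$ we get $Z_k\subset E_{c_k}(T)$, which by hypothesis is a $0$-dimensional analytic subset of the compact manifold $X$, hence a finite set of points. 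Fixing $k$ with $\epsilon_k<\epsilon/2$, the current $T_k+\epsilon\omega\geq(\epsilon-\epsilon_k)\omega\geq\tfrac{\epsilon}{2}\omega$ is a Kähler current in $\{T\}+\epsilon\{\omega\}$ whose singularities are concentrated at the finitely many points of $Z_k=\{x_1,\dots,x_N\}$.

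The next step is to remove these isolated poles, which is exactly the mechanism used in Step 2 of the proof of Theorem \ref{pro:null-non-kahler}. Near each $x_j$, take a local potential $\psi_j$ with $\ddbar\psi_j=T_k+\epsilon\omega$, $\psi_j(x_j)=-\infty$ and $x_j$ isolated in $\psi_j^{-1}(-\infty)$; replace $\psi_j$ by $\max(\psi_j,\,C_1\|z\|^2-C_2)$ in a small coordinate ball around $x_j$, where $C_1>0$ is chosen so that $C_1\,dd^c\|z\|^2\leq\tfrac{\epsilon}{2}\omega$ there and $C_2\gg 0$ is chosen so that the maximum agrees with $\psi_j$ on a neighbourhood of the boundary sphere (possible since $\psi_j$ is smooth away from $x_j$). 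The resulting function glues back to the global potential, the modification is strictly local (so, taking the balls pairwise disjoint, we may do this at all $x_1,\dots,x_N$ successively without creating any new singularities), and at each stage the current stays $\geq\delta\omega$ for some $\delta>0$. This yields a Kähler current $T'\in\{T\}+\epsilon\{\omega\}$ with empty polar set. A final standard regularization of $T'$ (which has locally bounded potentials, hence vanishing Lelong numbers) produces a smooth strictly positive form in $\{T\}+\epsilon\{\omega\}$, so this class is Kähler; equivalently one invokes the Demailly–Pǎun description of Kähler classes via the non-Kähler locus. As $\epsilon>0$ was arbitrary, $\{T\}$ is nef.

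The main obstacle is the quantitative input from Demailly's regularization: one needs both that the loss of positivity $\epsilon_k$ tends to $0$ and that the Lelong numbers of the approximants $T_k$ do not exceed those of $T$, so that $Z_k$ genuinely sits inside the dimension-zero set $\bigcup_{c>0}E_c(T)$ and is therefore finite — without this, the singular locus of the Kähler current could be positive-dimensional and the pole-removal trick would not apply. The second, more routine but still delicate, point is to verify that iterating the $\max$-construction over the finite set $\{x_j\}$ does not reintroduce singularities elsewhere; this follows from the strict locality of the modification, but should be spelled out. Everything else reduces to the arguments already carried out in the proof of Theorem \ref{pro:null-non-kahler}.
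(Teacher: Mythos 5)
Your argument is correct in substance, and it is essentially the proof of the cited result: the paper does not prove this statement (it is quoted from \cite{Dem92}), and what you write is Demailly's original argument — regularize with analytic singularities and small loss of positivity, use the hypothesis to see that the singular set of the approximant is a finite set of points (an analytic set inside the countable set $\bigcup_{c>0}E_c(T)$), remove the isolated poles by the $\max$ construction, and regularize once more a current with zero Lelong numbers — which is exactly the technique the paper itself deploys in Step 2 of the proof of Theorem \ref{pro:null-non-kahler} and invokes again at the end of the proof of Theorem \ref{lelong}.

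One caveat: the statement is for an arbitrary compact complex manifold, so $\omega$ is only Hermitian, not closed, and your reduction "show $\{T\}+\epsilon\{\omega\}$ is Kähler" (and the local potential for $T_k+\epsilon\omega$) does not literally parse outside the Kähler case. The fix is routine and does not change the structure: skip the addition of $\epsilon\omega$, keep the Demailly approximants $T_k\geq-\epsilon_k\omega$ as they are, perform the $\max$ construction on local potentials of $T_k$ (comparing both branches with a small multiple of $dd^c\Vert z\Vert^2$ on each ball, so the glued current stays $\geq-\epsilon'_k\omega$ with $\epsilon'_k\to 0$), and then a final regularization of the resulting current with locally bounded potentials yields smooth forms in $\{T\}$ bounded below by $-\epsilon\omega$, which is the definition of nef used here. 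With that adjustment your proof goes through on any compact complex manifold; as written it is fine for the Kähler applications in this paper.
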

\end{remark}
\medskip

%\subsection{Inverse images of currents with weak analytic singularities}

% \end{document}

\end{document}